\newcommand{\demiinf}[1]{\lfloor {#1}/2 \rfloor}
\def\T{{\mathcal{T}}}
\def\F{{\mathcal{F}}}
\newcommand{\B}{\ensuremath{\mathcal{B}}}
\renewcommand{\P}{\ensuremath{\mathcal{P}}}
\def\TT{\hat{{\mathcal{T}}}}
\def\FF{\hat{{\mathcal{F}}}}
\newcommand{\BB}{\ensuremath{\hat{\mathcal{B}}}}
\newcommand{\PP}{\ensuremath{\hat{\mathcal{P}}}}
\newcommand{\code}{\ensuremath{\phi}}
\newcommand{\phiPP}{\ensuremath{\hat{\Phi}_{\P}}}
\newcommand{\phiPbar}{\ensuremath{\bar{\Phi}_{\P}}}
\newcommand{\phiBB}{\ensuremath{\hat{\Phi}_{\B}}}
\newcommand{\phiFF}{\ensuremath{\hat{\Phi}_{\F}}}
\def\ie{{\em i.e., }}
\newcommand{\triples}{triples of non-intersecting paths\xspace}
\def\SYoung#1{\vbox{\smallskip\offinterlineskip
    \halign{&\vbox{##}\kern-\SThickness\cr #1}}}
\newdimen\SSquaresize \SSquaresize=4.5pt
\newdimen\SThickness \SThickness=.15pt
\newdimen\SCorrection \SCorrection=7pt
\def\SCarre#1{\hbox{\vrule width \SThickness
   \vbox to \SSquaresize{\hrule height \SThickness\vss
      \hbox to \SSquaresize{\hss$\scriptstyle#1$\hss}
   \vss\hrule height\SThickness}
   \unskip\vrule width \SThickness}
   \kern-\SThickness}
\newcommand{\Th}
{
\begin{array}{c}
\SYoung{
\SCarre{\cdot}&\SCarre{\bullet}&\SCarre{}\cr
\SCarre{\bullet}&\SCarre{}&\SCarre{\bullet}\cr}
\end{array}
}
\newcommand{\Tv}
{
\begin{array}{c}
\SYoung{
\SCarre{\cdot}&\SCarre{\bullet}\cr
\SCarre{\bullet}&\SCarre{}\cr
\SCarre{}&\SCarre{\bullet}\cr}
\end{array}
}
\newcommand{\Tc}
{
\begin{array}{c}
\SYoung{
\SCarre{\cdot}&\SCarre{\bullet}\cr
\SCarre{\bullet}&\SCarre{\bullet}\cr}
\end{array}
}
\newcommand{\Fp}{\begin{array}{c}\includegraphics[scale=0.7]{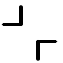}\end{array}}
\newcommand{\THaut}{\begin{array}{c}\includegraphics[scale=1.0]{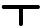}\end{array}}
\newcommand{\TBas}{\begin{array}{c}\includegraphics[scale=1.0]{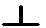}\end{array}}
\newcommand{\TGauche}{\begin{array}{c}\includegraphics[scale=1.0]{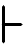}\end{array}}
\newcommand{\TDroit}{\begin{array}{c}\includegraphics[scale=1.0]{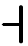}\end{array}}
\DeclareMathOperator\iso{iso}
\DeclareMathOperator\ins{rib} 
\newcommand{\scaleBTLT}{0.75}
\newcommand{\scaleExplPFP}{0.9}
\newcommand{\scalePFP}{0.75}
\newcommand{\PFPsegment}[4]{$[(#1,#2),(#3,#4)]$}
\newcommand{\scaleBaxterTLTAscentDescent}{0.65}
\newcommand{\scaleBaxterTLTAscentDescentFormsSitua}{0.75}
\newcommand{\scaleNILP}{0.8}
\newtheorem{theorem}{Theorem}
\newtheorem{proposition}[theorem]{Proposition}
\newtheorem{lemma}[theorem]{Lemma}
\newtheorem{corollary}[theorem]{Corollary}
\newtheorem{observation}[theorem]{Observation}
\newtheorem{definition}[theorem]{Definition}
\title{Baxter Tree-like Tableaux}
\author[J.-C. Aval]{Jean-Christophe Aval}
       \address[JCA]{LaBRI - CNRS, Universit\'e de Bordeaux, 351 Cours de la Lib\'eration, 33405 Talence, France.}
       \email{jean-christophe.aval.1@u-bordeaux.fr}
\author[A. Boussicault]{Adrien Boussicault}
       \address[AB]{LaBRI - CNRS, Universit\'e de Bordeaux, 351 Cours de la Lib\'eration, 33405 Talence, France.}
       \email{boussica@labri.fr}
 \author[M. Bouvel]{Mathilde Bouvel}
       \address[MB]{Institut für Mathematik, Universität Zürich, Winterthurerstr. 190, CH-8057 Zürich, Switzerland, and \\
       Université de Lorraine, CNRS, Inria, LORIA, F-54000 Nancy, France}
       \email{mathilde.bouvel@loria.fr}
\author[O. Guibert]{Olivier Guibert}
       \address[OG]{LaBRI - CNRS, Universit\'e de Bordeaux, 351 Cours de la Lib\'eration, 33405 Talence, France.}
       \email{olivier.guibert@labri.fr}
\author[M. Silimbani]{Matteo Silimbani}
       \address[MS]{LaBRI - CNRS, Universit\'e de Bordeaux, 351 Cours de la Lib\'eration, 33405 Talence, France.}  
\keywords{Tree-like tableau; Baxter number; floorplan; twisted Baxter permutation; non-intersecting lattice path}
\begin{document}

\begin{abstract}
Tree-like tableaux are objects in bijection with alternative or permutation tableaux.
They have been the subject of a fruitful combinatorial study for the past few years.
In the present work, we define and study a new subclass of tree-like tableaux enumerated by Baxter numbers.
We exhibit simple bijective links between these objects and three other combinatorial classes: 
(packed or mosaic) floorplans, twisted Baxter permutations and triples of non-intersecting lattice paths. 
From several (and unrelated) works, these last objects are already known to be enumerated by Baxter numbers, 
and our main contribution is to provide a unifying approach to bijections between Baxter objects,
where Baxter tree-like tableaux play the key role.
We moreover get new enumerative results about alternating twisted Baxter permutations. 
Finally, we define a new subfamily of floorplans, which we call alternating floorplans, 
and we enumerate these combinatorial objects.
\end{abstract}

\maketitle

\section{Introduction}

Baxter permutations are named after the mathematician Glen\footnote{Not to be confused with the physicist Rodney Baxter.} 
Baxter \cite{GBaxter}, who introduced them in 1964 (in an analysis context).
They are enumerated by {\em Baxter numbers} \cite[sequence \textsc{a001181}]{oeis}, 
whose formula was obtained by  Chung {\it et al.} \cite{CGHK78} 
(see also \cite{Viennot_SLC81} for a combinatorial proof):
\begin{equation*}
Bax_n = \frac{2}{n(n+1)^2} \sum_{k=1}^n {{n+1} \choose {k-1}} {{n+1} \choose {k}} {{n+1} \choose {k+1}}.
\end{equation*}
Since then, Baxter numbers have appeared to enumerate various classes of combinatorial objects:
pairs of twin binary trees \cite{DG96}, 
several kinds of standard Young tableaux with three rows \cite{DG,CEF}, 
plane bipolar orientations \cite{RBaxter,BBF}, 
and three other classes that we shall present in more details, as they play important parts in our work. 

The first one is the class of {\em twisted Baxter permutations}.
These permutations were defined a few years ago by Reading in an algebraic context \cite{reading05}: 
they naturally index bases for subalgebras of the Malvenuto-Reutenauer Hopf algebra of permutations.
Like Baxter permutations, twisted Baxter permutations may be characterized by pattern avoidance,
and West proved \cite{West} (by a recursive bijection) that they are enumerated by Baxter numbers.
These objects are also endowed with a nice Hopf structure, as revealed by the recent works \cite{LR,Gir}.

The second class we are interested in is a class of {\em triples of non-intersecting lattice paths}.
The Lindstr\"om-Gessel-Viennot lemma \cite{lind,GV} relates the enumeration of non-intersecting lattice
paths (NILP) to the computation of determinants of integer matrices. 
Because of that, NILPs are ubiquitous objects that appear in many contexts in combinatorics.
The class we shall consider here is known \cite{DG} to be in bijection with pairs of twin binary trees,
see their precise definition in Section \ref{sec:nilp}.

The third and last class we shall present here is the class of {\em mosaic floorplans}.
The notion of floorplans finds its origin in integrated circuits:
a floorplan encodes the relative positions of modules in a circuit.
A mosaic floorplan may be defined as an equivalence class of some 
rectangular partitions of a rectangle (called floorplans).
They were proved to be enumerated by Baxter numbers \cite{saka},
and a bijection was found with pairs of twin binary trees \cite{yao}.
We introduce here combinatorial objects that we call {\em packed} floorplans: 
they are canonical representatives of mosaic floorplans, 
in the sense that every mosaic floorplan contains exactly one packed floorplan. 

The goal of the present work is to link together these three combinatorial classes 
through the use of new objects that we call {\em Baxter tree-like tableaux}. 
Tree-like tableaux (TLTs) are combinatorial objects introduced in \cite{TLT}
as a new presentation of alternative or permutation tableaux \cite{pos,AT},
and have revealed interesting combinatorial properties \cite{TLT,ana}.
Baxter TLTs are defined in a very simple way 
by avoidance of {\em patterns} (a notion to be defined in Section~\ref{sec:tlt}) in TLTs. 
We mention here that Felsner {\it et.al.} also provide bijective links between combinatorial structures
enumerated by Baxter numbers in their paper \cite{ffno}.
But whereas their work is focused on twin binary trees and leads to Baxter permutations,
the central objects of this present article are Baxter TLTs and our bijections lead to twisted Baxter permutations.

The outline of the paper is as follows.
Section~\ref{sec:tlt} introduces our new class of Baxter tree-like tableaux.
Moreover, we recall in this section the recursive structure of tree-like tableaux,
which has already proved to be the key tool dealing with these objects, 
and will be essential for the work reported here. 
Next, Sections~\ref{sec:floorplans}, \ref{sec:bax} and \ref{sec:nilp}
are respectively devoted to packed floorplans, twisted Baxter permutations and triples of non-intersecting lattice paths:
we define these three combinatorial classes and in each case, we build a simple bijection with Baxter TLTs.
In Section~\ref{sec:special}, we consider the restriction of our construction to {\em alternating} objects. 
This allows us to obtain new combinatorial results, such as the enumeration of alternating twisted Baxter permutations 
(see Corollary~\ref{cor:product_of_Catalan}), and to identify several enumerative questions which remain open.

\section{Baxter tree-like tableaux}
\label{sec:tlt}

\subsection{Tree-like tableaux: definitions and useful tools}

We refer to \cite{TLT} for a detailed study of tree-like tableaux. Here, we shall only recall the main definition and a few important properties.

\begin{definition}[Tree-like tableau]
\label{defi:tlt}
A \emph{tree-like tableau} (TLT) is a Ferrers diagram (drawn in the English notation) 
where each cell is either empty or pointed (\ie occupied by a point), with the following conditions:
\begin{enumerate}
\item the top leftmost cell of the diagram is occupied by a point, called the \emph{root point};
\item for every non-root pointed cell $c$, there exists a pointed cell $p$ either above $c$ in the same column, 
or to its left in the same row, {but not both}; $p$ is called the \emph{parent} of $c$ in the TLT;
\item every column and every row contains at least one pointed cell.
\end{enumerate}
The \emph{size} of a TLT is the number of pointed cells it contains. 
\end{definition}

These objects were named tree-like tableaux because of the underlying tree structure they contain: 
recording the parent relations between the points of a TLT indeed produces a tree, 
whose root is the root point of the TLT. 
In this tree, every internal (\emph{i.e.}, non-leaf) vertex may have either a right child (shown by a horizontal edge), or a left child (shown by a vertical edge), or both. 
We refer to such trees as \emph{binary trees} (although they would more appropriately be called \emph{incomplete binary trees}).
Figure~\ref{fig:tlt} (left) shows an example of TLT, with its underlying binary tree. 
The reader interested in more details about the underlying trees of TLTs may find them in~\cite{ana,TLT}. 

\begin{definition}
A \emph{ribbon} in a TLT $T$ is a set $R$ of cells along the Southeast border of $T$,
that is connected (with respect to edge-adjacency), 
does not contain any $2\times 2$ square, 
and consists only of non-pointed cells.
Moreover it is required that the bottom leftmost cell of $R$ is to the right
of a pointed cell with no cell of $T$ below it,
and that the top rightmost cell of $R$ is below a pointed cell. 
\end{definition}

Figure~\ref{fig:tlt} (right) shows an example of a TLT of size $20$ with a ribbon indicated by shaded cells (of magenta color).
\begin{figure}[ht]
$$
\begin{array}{ccc}
\includegraphics[scale=0.329]{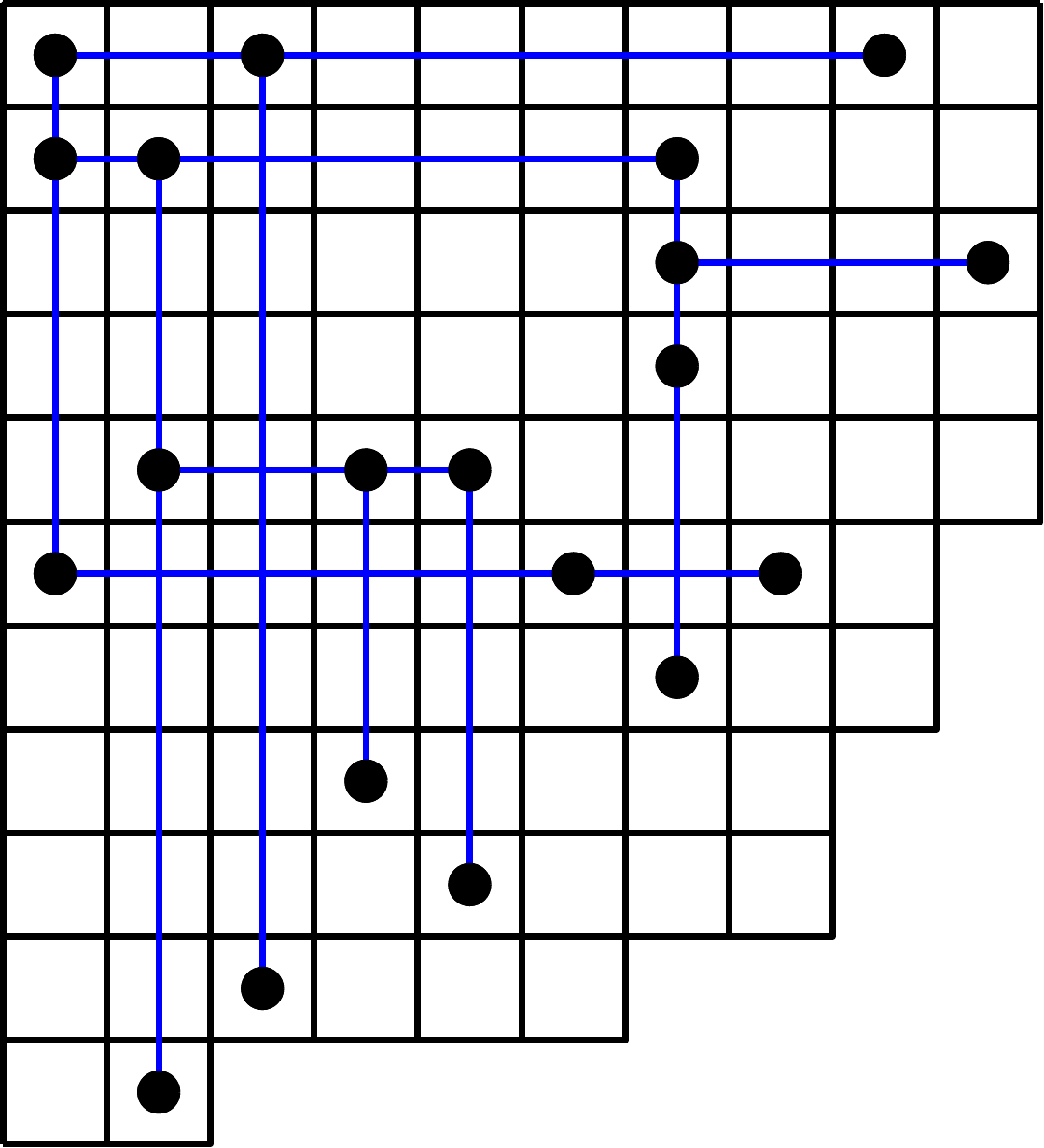} 
& \includegraphics[scale=0.7]{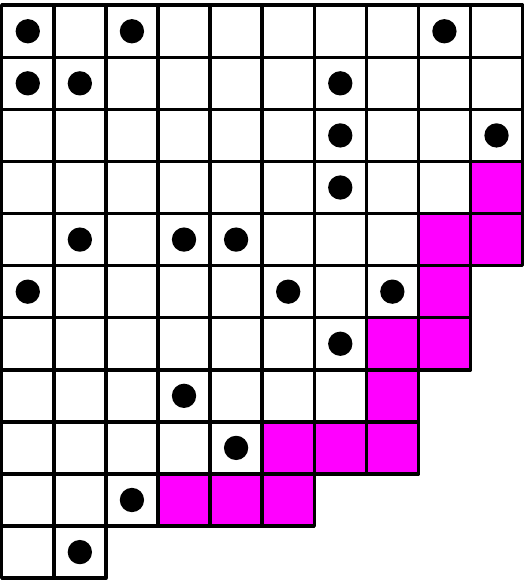} 
\end{array}
$$
\caption{Left: A tree-like tableau $T$ of size $20$, with its underlying binary tree.
Right: The same tree-like tableau $T$ with a ribbon in $T$.}
\label{fig:tlt}
\end{figure}

\medskip

In the article~\cite{TLT} that defines TLTs, the so-called \emph{insertion procedure} \emph{InsertPoint} is defined. 
It allows to generate all TLTs unambiguously from the unique TLT of size $1$ $\begin{tikzpicture}
\begin{scope}[scale=0.25]
\draw (0,0) -- (0,1) -- (1,1) -- (1,0) -- (0,0);
\draw (0.5,0.5) [fill] circle (.25);
\end{scope}
\end{tikzpicture}$ 
by insertion of points (together with a row or column, and possibly a ribbon, of empty cells) 
at the \emph{boundary edges} of TLTs, that is to say at edges of their Southeast border. 
We refer to~\cite{TLT} for details about this insertion procedure, 
and for proofs of statements about it in the remainder of this subsection. 

The reader familiar with \emph{generating trees} (as defined in~\cite{West_GT}) may note that 
this insertion procedure can also be interpreted as representing a generating tree for TLTs 
(although we won't use this fact in the present article). 
Indeed, the main result (Theorem 2.3) of~\cite{TLT} can be interpreted as follows: 
the infinite tree with root $\begin{tikzpicture}
\begin{scope}[scale=0.25]
\draw (0,0) -- (0,1) -- (1,1) -- (1,0) -- (0,0);
\draw (0.5,0.5) [fill] circle (.25);
\end{scope}
\end{tikzpicture}$, 
where all children of a given TLT $T$ are the TLTs obtained 
applying \emph{InsertPoint} on $T$ at each of the boundary edges of $T$, 
is a generating tree for TLTs. 

The insertion procedure on TLTs also induces a canonical labeling of the $n$ points of a TLT $T$ of size $n$
by the integers in $\{1,\dots,n\}$. 
It indicates the (unique) order in which the points of $T$ have been inserted to obtain $T$ from the empty TLT. 
This labeling is essential for the bijections that we define in Sections~\ref{sec:floorplans} and~\ref{sec:bax}, and we review it now. 
Actually, this labeling may alternatively be described as the order in which the points of $T$ should be {\em removed} with 
the procedure \emph{RemovePoint} of~\cite{TLT} to go from $T$ to $
\begin{tikzpicture}
\begin{scope}[scale=0.25]
\draw (0,0) -- (0,1) -- (1,1) -- (1,0) -- (0,0);
\draw (0.5,0.5) [fill] circle (.25);
\end{scope}
\end{tikzpicture}$ (the unique TLT of size $1$), 
and this is how we define it here. 
This is illustrated on Figure~\ref{fig:insertionhistory}. 

\begin{figure}[ht]
\begin{center}
$$
T = \ \raisebox{-.7cm}{\includegraphics[scale=0.8]{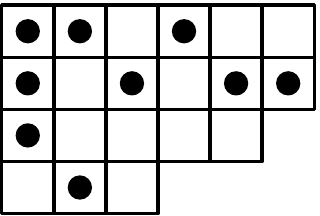}} \ = \ \raisebox{-.7cm}{\includegraphics[scale=0.8]{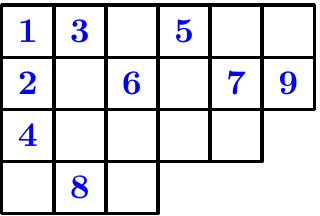}}
$$
$T = T_9 = \ \raisebox{-.7cm}{\includegraphics[scale=0.8]{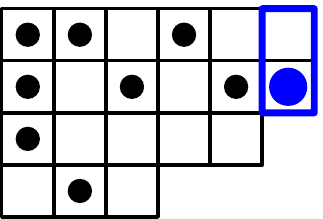}}$,
$T_8 = \ \raisebox{-.7cm}{\includegraphics[scale=0.8]{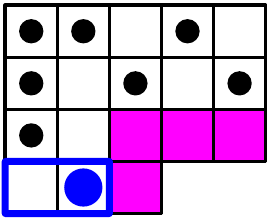}}$,
$T_7 = \ \raisebox{-.4cm}{\includegraphics[scale=0.8]{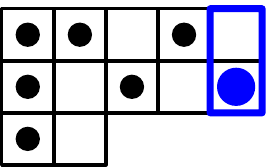}}$, \\ ~ \\~ \\
$T_6 = \ \raisebox{-.7cm}{\includegraphics[scale=0.8]{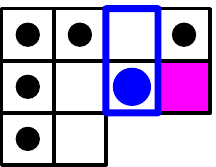}}$,
$T_5 = \ \raisebox{-.7cm}{\includegraphics[scale=0.8]{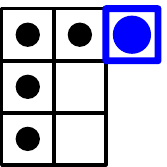}}$,
$T_4 = \ \raisebox{-.7cm}{\includegraphics[scale=0.8]{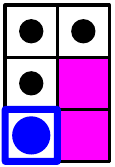}}$,
$T_3 = \ \raisebox{-.4cm}{\includegraphics[scale=0.8]{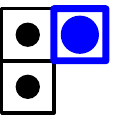}}$,
$T_2 = \ \raisebox{-.4cm}{\includegraphics[scale=0.8]{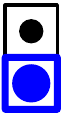}}$,
$T_1 = \ \raisebox{-.2cm}{\includegraphics[scale=0.8]{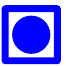}}$.
\end{center}
\caption{The labeling of the points of a TLT using the procedure \emph{RemovePoint} recursively.
The special points together with the associated columns (resp. rows) are denoted in boldface (of blue color),
whereas the ribbons are indicated by shaded cells (of magenta color).
}\label{fig:insertionhistory}
\end{figure}

Consider a TLT $T$ of size $n$. 
We define the \emph{special point} $s$ of $T$ as: the point at the bottom of its column, which is Northeastmost among such points. 
(Notice that $s$ always exists, since the bottom row of $T$ contains at least one point, by definition.) 
This special point $s$ gets the label $n$. 
To label the remaining $n-1$ points of $T$, we compute from $T$ and $s$ another TLT $T'$ of size $n-1$, by removing $s$ and some empty cells in $T$. 
The points of $T'$ are in immediate correspondence with those of $T$ except $s$, so that we may label in $T$ the special point of $T'$ with $n-1$, 
and proceed recursively. 
We will denote by $(T_n=T, T_{n-1}=T', T_{n-2}, \ldots, T_1 = 
\begin{tikzpicture}
\begin{scope}[scale=0.25]
\draw (0,0) -- (0,1) -- (1,1) -- (1,0) -- (0,0);
\draw (0.5,0.5) [fill] circle (.25);
\end{scope}
\end{tikzpicture}
)$ 
the corresponding sequence of TLTs (each $T_i$ having size $i$).

We now explain how to build $T'$ from $T$ and $s$. 
Unless $n=1$, $s$ is not the root point of $T$, and this implies that exactly one of the followings holds: 
either there is no point of $T$ above $s$ in the same column, or there is no point of $T$ to its left in the same row. 
In the former (resp. latter) case, we define the \emph{column} (resp. \emph{row}) \emph{of $s$} to be 
the cells above (resp. to the left of) $s$ in the same column (resp. row). 
If there is a cell adjacent to $s$ on its right, then this cell is empty (by definition of $s$). 
In this case, we claim that there is a ribbon in $T$ to the right of $s$. 
Indeed, this is derived from the two following facts: 
starting from the empty cell to the right of $s$, and
following the Southeast border of $T$, we eventually meet a pointed cell $p$, since the last column of $T$ contains a point; 
and $p$ has been reached from below, since otherwise $s$ would not be the special point. 
We call this set of empty cells the \emph{ribbon of $s$}. 
Now, $T'$ is obtained from $T$ by removing $s$, together with its column (resp. row) and its ribbon (when it exists). 

From now on, when we speak of the ribbons of $T$, 
we mean the ribbons removed 
when applying iteratively the procedure \emph{RemovePoint} from $T$ until $
\begin{tikzpicture}
\begin{scope}[scale=0.25]
\draw (0,0) -- (0,1) -- (1,1) -- (1,0) -- (0,0);
\draw (0.5,0.5) [fill] circle (.25);
\end{scope}
\end{tikzpicture}$ is reached. 

\begin{observation}\label{obs:ribbon}
For a TLT $T$ and two pointed cells $c$ and $c'$ with respective labels $i$ and $j$.
The following assertions are equivalent:
\begin{enumerate}
\item $c$ is (strictly) to the left and below $c'$ and $i=j+1$;
\item there is a ribbon of $T$ between $c$ and $c'$.
\end{enumerate}
\end{observation}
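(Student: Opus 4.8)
The plan is to argue through the recursive structure behind the labeling, namely the procedure \emph{RemovePoint}. Recall that the point with label $k$ is the special point $s_k$ of $T_k$, that the ribbons of $T$ are exactly those removed along the way (one at each step $k$ for which $s_k$ has a cell immediately to its right), and that when the ribbon of $s_k$ exists its bottom-left cell is immediately to the right of $s_k$ while its top-right cell is immediately below a well-determined pointed cell, which I denote $p_k$. I read ``a ribbon of $T$ between $c$ and $c'$'' as: a ribbon of $T$ whose bottom-left cell is immediately to the right of $c$ and whose top-right cell is immediately below $c'$. I will also use freely that removing a full column, a full row, or a ribbon along the Southeast border preserves the relative position (left/right and above/below) of any two surviving points — which is immediate from how \emph{InsertPoint}/\emph{RemovePoint} act — so that such relations may be read in $T$ or in any $T_k$ containing both points indifferently.

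Everything then reduces to two single-step claims. \textbf{Claim 1:} if $s_k$ has a cell to its right, then $p_k=s_{k-1}$ and $s_k$ lies strictly to the left of and below $p_k$. \textbf{Claim 2:} if $s_k$ has no cell to its right, then $s_k$ does not lie strictly to the left of and below $s_{k-1}$. Granting these, suppose (1) holds; then $i=j+1$, so $c=s_{j+1}$ and $c'=s_j$, and $s_{j+1}$ lies strictly left of and below $s_j$; by Claim~2 this forces a ribbon at step $j+1$, and by Claim~1 that ribbon is one ``between $c$ and $c'$'', so (2) holds. Conversely, if (2) holds, the ribbon in question is attached to $c$, so $c=s_i$ for its removal step $i$ and there is a ribbon at step $i$; by Claim~1, $p_i=s_{i-1}$ and $s_i$ lies strictly left of and below $s_{i-1}$, and since (2) says $p_i=c'$ we get $j=i-1$, i.e.\ $i=j+1$, and $c$ strictly left of and below $c'$; that is (1). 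Claim~2 is quick: if $s_k$ has no cell to its right it is the bottom-right corner of $T_k$, and exactly one of ``no point above $s_k$''/``no point to its left'' holds; in the first case the removed column of $s_k$ is the whole last column of $T_k$, so every point of $T_{k-1}$, in particular $s_{k-1}$, sits in a strictly earlier column than $s_k$; in the second case the removed row is full-width, so $T_{k-1}$ has no column to the right of where $s_k$ was; either way $s_k$ is not strictly to the left of $s_{k-1}$.

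The real work, and the step I expect to be the main obstacle, is \textbf{Claim 1}: that the pointed cell $p_k$ uncovered just above the ribbon is the new special point $s_{k-1}$. Its geometry is immediate, since a connected, $2\times2$-free strip along the Southeast border running from the cell right of $s_k$ up to the cell below $p_k$ forces $p_k$ to sit strictly to the right of and strictly above $s_k$; so the second half of Claim~1 is free. For $p_k=s_{k-1}$, write $T_{k-1}=T_k\setminus(\{s_k\}\cup C\cup R)$ with $C$ the column (or row) of $s_k$ and $R$ its ribbon, and use that $s_k$, being the Northeastmost point at the bottom of its column in $T_k$, lies in the rightmost column of $T_k$ with a pointed bottom cell. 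One then checks: (i) in $T_{k-1}$ everything of $p_k$'s column lying below $p_k$ in $T_k$ — all contained in $R$, since $R$ runs along the Southeast border with its top-right cell directly below $p_k$ — has been removed and nothing is added below $p_k$, so $p_k$ is at the bottom of its column; and (ii) no column strictly to the right of $p_k$'s meets $\{s_k\}\cup C\cup R$ (those cells all lie in columns weakly left of $p_k$'s, or, for $R$, in columns between $s_k$ and $p_k$), so such columns keep their non-pointed bottom cells. Hence in $T_{k-1}$ the rightmost column with a pointed bottom cell is that of $p_k$, i.e.\ $p_k=s_{k-1}$. The care needed is all in pinning down exactly which cells of $T_k$ are removed — in particular why $R$ occupies, in every column it meets, the bottom cell of $T_k$, and why removing it together with $C$ still leaves a Ferrers shape — which is precisely where the defining properties of ribbons are used.
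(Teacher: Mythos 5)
The paper states Observation~\ref{obs:ribbon} without any proof, treating it as immediate from the description of \emph{RemovePoint}; so there is no ``paper argument'' to compare against, and your reduction of the equivalence to the two one-step claims about consecutive special points is a sensible way to actually establish it. Your Claim~1 --- that when the special point $s_k$ has a ribbon, the point $p_k$ uncovered at its top is exactly $s_{k-1}$ --- is the real content, and your argument for it is correct: the ribbon occupies, in the column of $p_k$, precisely the cells below $p_k$, so $p_k$ becomes the bottom of its column in $T_{k-1}$; and all columns strictly to the right of $p_k$'s are disjoint from $\{s_k\}\cup C\cup R$ and had empty bottom cells in $T_k$ (by definition of the special point), so they still do in $T_{k-1}$.

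The justification of Claim~2, however, rests on assertions that are false in general. If $s_k$ has no cell to its right, it need \emph{not} sit in the last column (nor the last row) of $T_k$: the columns to the right of $s_k$ may simply be strictly shorter, with empty bottom cells and points higher up. Concretely, take the shape with rows of lengths $4,4,2,2$ and points at $(1,1)$, $(1,3)$, $(1,4)$, $(2,1)$, $(3,1)$, $(4,1)$, $(4,2)$: the special point is $(4,2)$, it has no cell to its right, the removed column is column~$2$ of a four-column diagram, and the points $(1,3)$, $(1,4)$ survive into $T_{k-1}$ in columns strictly to the right of $s_k$. So ``the removed column is the whole last column'', ``every point of $T_{k-1}$ sits in a strictly earlier column than $s_k$'', and (in the row case) ``$T_{k-1}$ has no column to the right of where $s_k$ was'' all fail. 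The conclusion of Claim~2 is nevertheless true, and the correct argument is the same one you already use in part~(ii) of Claim~1: every column strictly to the right of $s_k$'s column has an empty bottom cell in $T_k$ (else $s_k$ would not be the Northeastmost point at the bottom of its column), and these columns are untouched by the removal of $s_k$, its row or column, and its (here nonexistent) ribbon; hence $s_{k-1}$ lies in a column weakly to the left of $s_k$'s, so $s_k$ is not strictly to the left of $s_{k-1}$. With that repair the proof is complete.
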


To conclude the general properties of TLTs, we observe a property of the cells of its ribbons.

\begin{definition}
A \emph{crossing} in a TLT $T$ is an empty (\emph{i.e.}, non-pointed) cell such that 
there are pointed cells both above it in the same column and to its left in the same row. 
\end{definition}

This terminology has already been introduced in~\cite{TLT}. 
The choice of the word \emph{crossing} is explained because 
such cells are those where two edges of the underlying binary tree\footnote{The binary trees considered 
in~\cite{TLT} are a slight modification of the ones considered in this paper. 
Specifically, in the present paper, there is no vertical (resp. horizontal) edge leaving a point of a TLT which has no point below it (resp. to its right) -- see Figure~\ref{fig:tlt} (left). 
However, in~\cite{TLT}, there are edges leaving such points, and which extend until the boundary of the TLT. With these additional edges, 
we really ``see'' the crossings, wherever two edges intersect.} of $T$ cross each other. 
It will be used mostly in Sections~\ref{sec:floorplans} and~\ref{sec:bax}, but also on a few occasions before. 

\begin{definition}
\label{dfn:ins}
Let $T$ be a TLT of size $n$, and denote by $(T_n=T, T_{n-1}, T_{n-2}, \ldots, T_1 = 
\begin{tikzpicture}
\begin{scope}[scale=0.25]
\draw (0,0) -- (0,1) -- (1,1) -- (1,0) -- (0,0);
\draw (0.5,0.5) [fill] circle (.25);
\end{scope}
\end{tikzpicture}
)$ 
the sequence of TLTs (each $T_i$ having size $i$) obtained iterating the procedure \it{RemovePoint} starting from $T$.
For any cell $c$ of the ribbon removed from $T_i$ to obtain $T_{i-1}$, we define the label $\ins(c)=i$. 
\end{definition}

This labeling is illustrated on Figure~\ref{fig:ins}. It will be used in Lemma~\ref{lem:crossings_and_2+12}. 

Notice that there are cells with no $\ins$-label. Indeed, we have the following characterization of cells having a $\ins$-label: 

\begin{observation}
A cell has a $\ins$-label if and only if it is a crossing. 
\label{obs:ins}
\end{observation}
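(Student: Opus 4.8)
The plan is to prove both implications at once by induction on the size $n$ of $T$, exploiting the recursive description of the $\ins$-labelling through \emph{RemovePoint}. For $n=1$ there is nothing to prove, so assume $n\ge 2$, let $s=P_n$ be the special point of $T$ (the point receiving label $n$), and let $T'=T_{n-1}$ be obtained from $T$ by deleting $s$ together with its column or row and its ribbon. The cells of $T$ fall into four disjoint classes: the point $s$; the cells of the column (resp.\ row) of $s$; the cells of the ribbon of $s$; and the cells surviving in $T'$. By Definition~\ref{dfn:ins} the cells with $\ins$-label $n$ are exactly those of the ribbon of $s$, while a cell surviving in $T'$ carries an $\ins$-label in $T$ iff it carries one in $T'$, since the $\ins$-labelling of $T'$ is computed by the very same procedure. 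Since $s$ is pointed it is neither a crossing nor does it carry an $\ins$-label, so by induction the statement reduces to the three facts below, from which the equivalence follows immediately.

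\textbf{(a) No cell of the column or row of $s$ is a crossing of $T$.} This is immediate: in the column case $s$ has no pointed cell above it in its column, hence neither does any cell strictly above $s$ in that column; the row case is symmetric.

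\textbf{(b) Every cell of the ribbon $R$ of $s$ is a crossing of $T$.} Here I would first pin down the shape of $R$: tracing the Southeast border of $T$ from the empty cell immediately to the right of $s$ until a pointed cell $p$ is reached from below, and using that $R$ is connected and contains no $2\times 2$ square, one checks that $R$ is a hook — a (possibly empty) horizontal run of cells in the row of $s$, immediately right of $s$, followed by a (possibly empty) vertical run going upwards in the last column of $R$, whose topmost cell lies just below $p$. In particular every column met by $R$ has height equal to the row index of $s$, and the column immediately to the right of $R$ does not reach the rows occupied by the vertical run. From this, each cell of $R$ has a pointed cell to its left in its row — namely $s$, reached by scanning left past $R$, for cells of the horizontal run and for the bottom cell of the vertical run; and for a higher cell of the vertical run, the pointed cell guaranteed to exist in that row, which must lie to the left since that row has no cell to the right of $R$. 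And each cell of $R$ has a pointed cell above it in its column: for a cell of the horizontal run this follows from the axiom that its column contains a point together with the fact that this cell is the (empty) bottom cell of that column, while for a cell of the vertical run one may take $p$. Hence every cell of $R$ is a crossing.

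\textbf{(c) A cell $c$ surviving in $T'$ is a crossing of $T$ iff it is a crossing of $T'$.} Passing from $T$ to $T'$ deletes only one pointed cell, $s$, which sits at the bottom of its column and so lies above no cell of $T$; the remaining deleted cells are empty and, one checks from (b) and the row/column cases, lie below every surviving cell in each column they meet and to the right of every surviving cell in each row they meet. Thus deleting them affects neither the ``pointed cell above'' nor the ``pointed cell to the left'' condition for a surviving empty cell, and $c$ stays empty, so $c$ is a crossing of $T$ iff it is one of $T'$.

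I expect the main obstacle to be part (b): establishing the hook shape of the ribbon together with the attendant column-height constraints requires carefully unwinding the phrase ``along the Southeast border'' in the definition of a ribbon, after which the ``pointed cell above/to the left'' assertions must be read off from those constraints and from the axiom that every row and column of a TLT contains a point. Part (c) is routine but needs a little care to rule out that a deleted empty cell separates a surviving cell from a pointed cell above it or to its left.
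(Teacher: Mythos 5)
Your overall architecture (induction on the size via \emph{RemovePoint}, partitioning the cells of $T$ into $s$, the column/row of $s$, the ribbon of $s$, and the cells surviving in $T'$) is sound and genuinely different from the paper's argument, which handles the two directions directly: ribbon cells are crossings because of the no-empty-row/column axiom, and conversely a crossing $c$ is shown to lie in the ribbon removed at the smallest step $i$ for which $c$ still belongs to $T_i$. Your facts (a) and (c) are correct (though in (c) the assertion that the deleted empty cells lie ``to the right of every surviving cell in each row they meet'' is false for the cells of the column of $s$ when $s$ is not in the last column of its row; the correct, and simpler, reason (c) holds is that deleting \emph{empty} cells never affects the above/left conditions, and the unique deleted \emph{pointed} cell $s$ lies above no surviving cell and, since the ribbon exhausts the rest of the row of $s$, to the left of no surviving cell).

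The genuine gap is in (b): the ribbon of $s$ is \textbf{not} a hook in general. A ribbon is a border strip (connected, no $2\times 2$ square, along the Southeast border), and the Southeast border may turn several times between $s$ and $p$. Concretely, take the Ferrers shape with rows of lengths $3,3,2$ and points at $(1,1),(1,2),(1,3),(2,1),(3,1)$ (coordinates (row, column), English notation); this is a valid TLT of size $5$, its special point is $s=(3,1)$, and the ribbon of $s$ is $\{(3,2),(2,2),(2,3)\}$ with $p=(1,3)$: one step right, one step up, one step right. The cell $(2,2)$ is neither in the row of $s$ nor in the final column of the ribbon, so your case analysis does not apply to it: the point above it is not $p$, and the point to its left is not found by ``scanning left past $R$ in the row of $s$''. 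The correct general argument is the one the paper gestures at: the border path enters each column it meets at the \emph{bottom} cell of that column and leaves each row it meets from the \emph{rightmost} cell of that row; hence for every ribbon cell $c$, all cells of $T$ weakly below $c$ in its column and weakly to its right in its row are (empty) ribbon cells or $c$ itself, and the axiom that every row and column contains a point then forces a point strictly above $c$ and a point strictly to its left. With (b) repaired in this way, your induction goes through.
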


\begin{proof}
Note that TLTs have no empty rows nor columns. 
Therefore the definition of ribbons ensures that 
if a cell has a $\ins$-label then it is a crossing. 
Conversely, considering a crossing $c$ and the smallest $i$ such that the cell $c$ belongs to $T_i$, 
we obtain that $c$ belongs to the ribbon removed from $T_i$ to obtain $T_{i-1}$, hence has a $\ins$-label (equal to $i$). 
\end{proof}

\begin{figure}[ht]
\begin{center}
$\begin{array}{c}
	\includegraphics[scale=0.7]{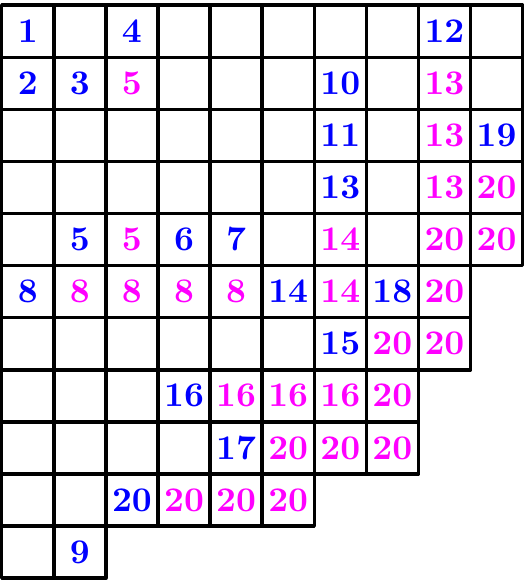}
\end{array}$
\hspace*{2cm}
$\begin{array}{c}
	\includegraphics[scale=0.8]{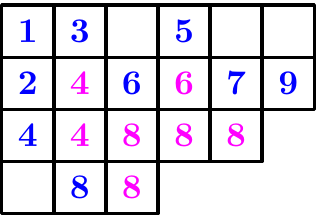}
\end{array}$
\end{center}
\caption{The labeling (of blue color) and the $\ins$-label (of magenta color) of the TLTs of Figures~\ref{fig:tlt} (left) and~\ref{fig:insertionhistory} (right).}
\label{fig:ins}
\end{figure}

\subsection{A family of TLTs enumerated by Baxter numbers}

In this work, we are interested in a family of TLTs restricted by pattern avoidance constraints. 
A TLT $T$ is said to contain the pattern $\Th$ if there exist $2$ rows and $3$ columns
in $T$ such that the restriction of $T$ to the $2\times3=6$ cells at their intersection is equal to $\SYoung{
\SCarre{}&\SCarre{\bullet}&\SCarre{}\cr
\SCarre{\bullet}&\SCarre{}&\SCarre{\bullet}\cr}
$ or $\SYoung{
\SCarre{\bullet}&\SCarre{\bullet}&\SCarre{}\cr
\SCarre{\bullet}&\SCarre{}&\SCarre{\bullet}\cr}
$. 
We define in the same way the pattern $\Tv$. 
With this kind of notation, the condition that every pointed cell in a TLT does not have pointed cells both 
above in the same column and to the left in the same row is expressed by the avoidance of the pattern $\Tc$.

\begin{definition}[Baxter tree-like tableau]
\label{defi:baxTLT}
A \emph{Baxter tree-like tableau} is a TLT which avoids (\ie does not contain any of) the patterns $\Th$ and $\Tv$.
We shall denote by $\T_{(k,\ell)}$ the set of Baxter TLTs with $k$ rows and $\ell$ columns and set: $\T_n=\sqcup_{k+\ell-1=n}\T_{(k,\ell)}$
where $\sqcup$ denotes the disjoint union.
\end{definition}

We may remark that  at each step of the procedure \emph{RemovePoint}, one point is removed and either a row or a column is removed. This implies that the size of a TLT is given by its semi-perimeter$-1$.
As a consequence, the size of any $T\in\T_n$ is $n$. 
Figure~\ref{fig:BTLT_size_4} shows all Baxter TLTs $(T^i)_{1 \leq i \leq 22}$ of size $4$.

We may also note (although we won't use it in this article) that the generating tree for TLTs induced by the procedure \emph{InsertPoint}, 
can be restricted to Baxter TLTs, yielding a generating tree for Baxter TLTs. 
Indeed, the procedure \emph{RemovePoint} applied to any Baxter TLT produces a Baxter TLT again 
(since applying \emph{RemovePoint} cannot create any occurrence of $\Th$ or $\Tv$). 

\begin{figure}[ht]
\begin{align*}
 T^{1} &=& & \hspace*{-1em} \begin{array}{l} \includegraphics[scale=\scaleBTLT]{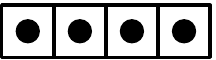} \end{array},  
&  T^{2} &=& & \hspace*{-1em} \begin{array}{l} \includegraphics[scale=\scaleBTLT]{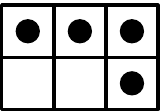} \end{array},  
&  T^{3} &=& & \hspace*{-1em} \begin{array}{l} \includegraphics[scale=\scaleBTLT]{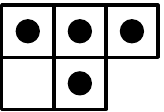} \end{array}, 
&  T^{4} &=& & \hspace*{-1em} \begin{array}{l} \includegraphics[scale=\scaleBTLT]{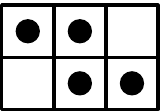} \end{array}, \\
 T^{5} &=& & \hspace*{-1em} \begin{array}{l} \includegraphics[scale=\scaleBTLT]{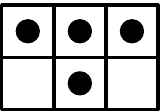} \end{array},  
&  T^{6} &=& & \hspace*{-1em} \begin{array}{l} \includegraphics[scale=\scaleBTLT]{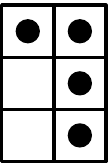} \end{array}, 
&  T^{7} &=& & \hspace*{-1em} \begin{array}{l} \includegraphics[scale=\scaleBTLT]{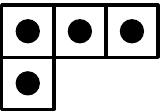} \end{array}, 
&  T^{8} &=& & \hspace*{-1em} \begin{array}{l} \includegraphics[scale=\scaleBTLT]{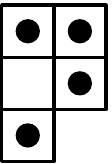} \end{array}, \\
 T^{9} &=& & \hspace*{-1em} \begin{array}{l} \includegraphics[scale=\scaleBTLT]{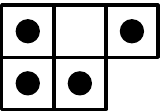} \end{array}, 
&  T^{10} &=& & \hspace*{-1em} \begin{array}{l} \includegraphics[scale=\scaleBTLT]{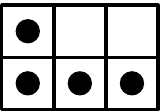} \end{array}, 
&  T^{11} &=& & \hspace*{-1em} \begin{array}{l} \includegraphics[scale=\scaleBTLT]{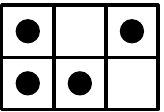} \end{array}, 
&  T^{12} &=& & \hspace*{-1em} \begin{array}{l} \includegraphics[scale=\scaleBTLT]{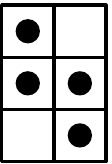} \end{array}, \\
 T^{13} &=& & \hspace*{-1em} \begin{array}{l} \includegraphics[scale=\scaleBTLT]{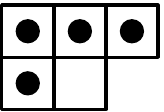} \end{array}, 
&  T^{14} &=& & \hspace*{-1em} \begin{array}{l} \includegraphics[scale=\scaleBTLT]{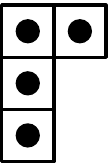} \end{array}, 
&  T^{15} &=& & \hspace*{-1em} \begin{array}{l} \includegraphics[scale=\scaleBTLT]{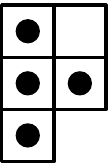} \end{array}, 
&  T^{16} &=& & \hspace*{-1em} \begin{array}{l} \includegraphics[scale=\scaleBTLT]{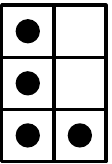} \end{array}, \\
 T^{17} &=& & \hspace*{-1em} \begin{array}{l} \includegraphics[scale=\scaleBTLT]{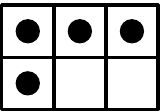} \end{array}, 
&  T^{18} &=& & \hspace*{-1em} \begin{array}{l} \includegraphics[scale=\scaleBTLT]{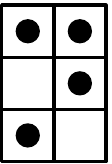} \end{array}, 
&  T^{19} &=& & \hspace*{-1em} \begin{array}{l} \includegraphics[scale=\scaleBTLT]{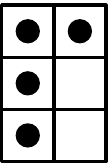} \end{array}, 
&  T^{20} &=& & \hspace*{-1em} \begin{array}{l} \includegraphics[scale=\scaleBTLT]{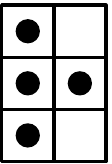} \end{array}, \\
 T^{21} &=& & \hspace*{-1em} \begin{array}{l} \includegraphics[scale=\scaleBTLT]{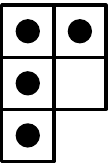} \end{array}, 
&  T^{22} &=& & \hspace*{-1em} \begin{array}{l} \includegraphics[scale=\scaleBTLT]{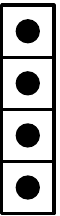} \end{array}.
\end{align*} 
\caption{The 22 Baxter TLTs of size $4$.} \label{fig:BTLT_size_4}
\end{figure}

Sections~\ref{sec:floorplans} to~\ref{sec:nilp} describe size-preserving bijections between Baxter TLTs and 
families of objects that are known to be enumerated by \emph{Baxter numbers}, hence their name. 
Specifically, Section~\ref{sec:floorplans} (resp.~\ref{sec:bax}, resp.~\ref{sec:nilp}) 
describes a bijection denoted $\Phi_{\F}$ (resp. $\Phi_\B$, resp. $\Phi_{\P}$) between Baxter TLTs and 
(packed) floorplans (resp. inverses of twisted Baxter permutations, resp. triples of non-intersecting lattice paths). 
These bijections are illustrated in size $4$ by Figures~\ref{fig:nexamples_pfp_size_4}, \ref{fig:twisted_size_4} and~\ref{fig:NILP_size_4} 
(p.~\pageref{fig:nexamples_pfp_size_4}, \pageref{fig:twisted_size_4} and~\pageref{fig:NILP_size_4}), 
where each TLT $T^i$ of Figure~\ref{fig:BTLT_size_4} is sent to $F^i$, $\sigma^i$ and $\pi^i$ 
by the bijections $\Phi_{\F}$, $\Phi_\B$ and $\Phi_{\P}$, respectively. 

\medskip 

Before moving on to the announced bijections between Baxter TLTs and other Baxter objects,
we make an observation that relates the tree structure of Baxter TLTs and the relative placement of their points. 

\begin{proposition}
Let $T$ be a Baxter TLT. 
Consider the bi-partition $(L,R)$ of the non-root points of $T$, where 
$L$ (resp. $R$) contains all points of $T$ that are in the left (resp. right) subtree pending from the root of the underlying tree of $T$. 
Then all points of $L$ are to the left and below all points of $R$.
\label{prop:decomposition_Baxter_TLT}
\end{proposition}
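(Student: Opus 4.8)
The plan is to split the assertion into two symmetric halves --- one controlling the columns occupied by $L$, the other the rows occupied by $R$ --- and to derive each half from the forbidden patterns.

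Write $\ell$ (resp.\ $\rho$) for the left (resp.\ right) child of the root in the underlying tree of $T$. If either is absent, the corresponding set $L$ or $R$ is empty and there is nothing to prove, so assume both exist. A vertical edge of the tree joins a point to a point directly below it in the same column, and a horizontal edge joins a point to a point to its right in the same row; hence $\ell$ lies in the first column, say in row $a\geq 2$, and $\rho$ lies in the first row, say in column $b\geq 2$. Following the tree downwards from $\ell$, the row index never decreases, so every point of $L$ lies in a row $\geq a$; similarly every point of $R$ lies in a column $\geq b$. It therefore suffices to prove: \textbf{(A)} every point of $L$ lies in a column $\leq b-1$; and \textbf{(B)} every point of $R$ lies in a row $\leq a-1$. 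Indeed, (A), (B) and the two monotonicity remarks confine $L$ to $\{\,\text{row}\geq a,\ \text{column}\leq b-1\,\}$ and $R$ to $\{\,\text{column}\geq b,\ \text{row}\leq a-1\,\}$, which is exactly the asserted (strict) separation. Since transposing a TLT exchanges $\Th$ with $\Tv$ and swaps the roles of $L$ and $R$ (as well as of \emph{left} and \emph{below}), statement (B) for $T$ is statement (A) for the transpose of $T$; hence it is enough to prove (A).

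To prove (A), I would argue by contradiction. Suppose some point $p\in L$ lies in a column $\geq b$, and consider the tree-path $\ell=v_0,v_1,\dots,v_m=p$. The column index is non-decreasing along it and rises from $1$ to a value $\geq b$, so there is an edge $v_j\to v_{j+1}$ across which it jumps from some $c_x\leq b-1$ to some $c_y\geq b$; as the column strictly increases, $v_{j+1}$ is the \emph{right} child of $v_j$, hence lies in the same row $i$ as $v_j$, with $i\geq a\geq 2$ because $v_j\in L$. Put $x=v_j$ and $y=v_{j+1}$. If $c_y=b$, then the pointed cell $y=(i,b)$ has both the pointed cell $\rho=(1,b)$ above it in its column and the pointed cell $x$ to its left in its row, contradicting the second condition of Definition~\ref{defi:tlt}. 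Hence $c_y>b$. Now the cells of row $i$ strictly between $x$ and $y$ are empty --- so $(i,b)$ is empty --- and $y$, being a right child, has no pointed cell above it in its column, so the cell $(1,c_y)$, which lies in the diagram because row $1$ is the longest, is empty. Then the six cells in rows $\{1,i\}$ and columns $\{c_x,b,c_y\}$ form an occurrence of $\Th$ (top row: anything, pointed, empty; bottom row: pointed, empty, pointed), contradicting that $T$ is a Baxter TLT. This establishes (A), and (B) follows by transposition, which completes the proof.

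I expect the crux to be the analysis of the single jump edge $x\to y$ in (A): one must observe that landing exactly on column $b$ is already impossible for a purely local reason (the point $\rho$ occupies that column above $y$), whereas landing strictly beyond column $b$ forces the full $\Th$ pattern. The remaining ingredients --- the column/row monotonicity along tree edges, and the transposition symmetry --- are routine bookkeeping with the Ferrers and tree structure.
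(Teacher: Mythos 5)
Your proof is correct and follows essentially the same strategy as the paper's: assume a point of $L$ violates the separation, use the monotonicity of row/column indices along tree paths to locate the offending edge, and exhibit an occurrence of $\Th$ or $\Tv$ (the paper picks a general first-row ancestor $r'$ of a point of $R$ where you use $\rho$ itself, and treats the two halves ``in a symmetric fashion'' where you invoke transposition). Your write-up is somewhat more explicit about why the auxiliary cells of the pattern occurrence are empty, a point the paper leaves implicit.
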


\begin{proof} 
The proof is by contradiction. 
Assume that there is a point $\ell \in L$ that lies to the right of a point $r$ of $R$. 
Among all ancestors of $r$ (including $r$) in the underlying tree of $T$, there is one which lies to the left of $\ell$ and above $\ell$. 
Indeed, all ancestors of $r$ are to the left of $r$, 
$\ell$ does not lie in the first row of $T$ (since in belongs to $L$), 
and $r$ has at least one non-root ancestor in the first row of $T$ (since it belongs to $R$). 
Denote $r'$ such an ancestor of $r$. We have then that $r'$ is above and to the left of $\ell$. 
Among all ancestors of $\ell$ (including $\ell$), denote by $\ell'$ the one closest to the root of $T$ such that $r'$ is above and to the left of $\ell'$. 
This ensures that $\ell'$ cannot be the root of $T$, and that together with the parent of $\ell'$, $\ell'$ and $r'$ form a pattern $\Th$ or $\Tv$, a contradiction.

Assuming instead that there is a point $\ell \in L$ that lies above of a point $r$ of $R$, 
we derive a contradiction in a symmetric fashion. 
\end{proof}

Several consequences of Proposition~\ref{prop:decomposition_Baxter_TLT} will be useful in proving properties of our bijections. 

\begin{corollary}
Any binary tree is the underlying tree of a unique {\em rectangular Baxter TLT}, 
that is to say a TLT with rectangular shape and which avoids the patterns $\Th$ and $\Tv$.
\label{cor:uniqueNAT}
\end{corollary}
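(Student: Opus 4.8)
The goal is to show that for each binary tree $B$ there is exactly one rectangular TLT whose underlying tree is $B$ and which avoids $\Th$ and $\Tv$. I would prove existence and uniqueness together by induction on the number of vertices of $B$, using Proposition~\ref{prop:decomposition_Baxter_TLT} to control how the left and right subtrees pending from the root must be placed. The base case $B$ a single vertex is the $1\times 1$ TLT, which is trivially rectangular and Baxter.

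For the inductive step, write $B$ with root having left subtree $B_L$ (possibly empty) and right subtree $B_R$ (possibly empty). By the induction hypothesis, each nonempty $B_\bullet$ is the underlying tree of a unique rectangular Baxter TLT $T_\bullet$, say of size $k_\bullet$. Proposition~\ref{prop:decomposition_Baxter_TLT} tells us that in any rectangular Baxter TLT $T$ with underlying tree $B$, the points of the left subtree all lie strictly to the left of and strictly below the points of the right subtree; combined with the fact that the root point occupies the top-left cell, this forces the points of $R$ into a block of columns to the right of (and rows below) the block occupied by $L$. I would argue that, because $T$ is rectangular, the restriction of $T$ to the rows and columns containing points of $L\cup\{\text{root}\}$ must itself be a rectangular TLT with underlying tree obtained from $B_L$ by re-attaching a root above/left — essentially $T_L$ with an extra root row or column — and similarly for $R$; and that these two rectangular blocks must share exactly one row or one column at the ``corner'' through which the root connects them (a horizontal root edge glues them along a shared row, a vertical root edge along a shared column). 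This pins down the shape and the placement of every point, giving uniqueness; and conversely, assembling $T_L$ and $T_R$ in this prescribed way produces a rectangular TLT, which one checks avoids $\Th$ and $\Tv$ because any such pattern would have to straddle the $L/R$ split in a way forbidden by the block structure, or else lie entirely within one block, contradicting that block's being Baxter by induction.

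The step I expect to be the main obstacle is showing rigorously that rectangularity forces the $L$-block and the $R$-block to each be rectangular and to overlap in exactly one line (row or column). A priori the region occupied by $L$ could have a staircase boundary, with $R$ filling in the complement to make the whole thing a rectangle; I need to rule this out. The key point is that a point of $R$ whose parent is the root lies in the first row (right child of root) or, after peeling, is reached through the root; more carefully, every point of $R$ has an ancestor chain passing through the root, so some ancestor sits in the root's row or column, and Proposition~\ref{prop:decomposition_Baxter_TLT} then squeezes all of $R$ into the rectangle below-and-right of $L$ — but if $L$'s region were not itself rectangular, the cells of the ambient rectangle not in $L$ and not below-and-right of $L$ would have to contain points, and those points would belong to $L$ (being above or left of $R$), a contradiction. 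Making this dichotomy airtight, and correctly handling the two cases (root has only a left child, only a right child, or both), is where the real work lies.

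Once the shape is determined, the verification that the assembled tableau is a valid TLT (every row and column has a point, the parent condition holds) and that it avoids $\Th$ and $\Tv$ is routine: the avoidance follows by localizing any hypothetical occurrence into one of the two Baxter sub-blocks or across the separating line, the latter being impossible since one of its two rows (resp. three of its columns) would have to meet both blocks while the blocks meet in a single line. I would present this as a short case analysis after the shape is fixed.
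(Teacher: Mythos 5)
Your plan is the paper's own argument: induction on $B$, decomposition at the root into the left and right subtrees, and Proposition~\ref{prop:decomposition_Baxter_TLT} to force the placement of the two inductively unique rectangular sub-tableaux, followed by a routine check of validity and of avoidance of $\Th$ and $\Tv$. The architecture is therefore correct and matches the paper.

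There is, however, a concrete error in the geometry, stated twice, which would derail the assembly if followed literally. Proposition~\ref{prop:decomposition_Baxter_TLT} (which you quote correctly once) puts the points of $L$ to the left of \emph{and below} those of $R$; hence $R$ occupies the North-east block and $L$ the South-west block, the root sits alone in the North-west block, and the South-east block consists entirely of empty cells (crossings). Your assertions that $R$ is forced ``into a block of columns to the right of (and rows below)'' $L$, and later that $R$ is squeezed ``into the rectangle below-and-right of $L$'', contradict this: the right child of the root is attached by a horizontal edge and so lies in the first row, which is impossible if $R$ sits below $L$. Once the orientation is fixed, the ``main obstacle'' you single out essentially dissolves: the proposition separates the rows into two contiguous intervals ($R$-rows on top, $L$-rows below) and likewise the columns; since a rectangular tableau has no missing cells and no empty row or column, the restriction of $T$ to the $L$-rows and $L$-columns is a full rectangle which is itself a rectangular Baxter TLT with underlying tree $B_\ell$, hence equals $T_\ell$ by induction, and similarly for $R$. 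No staircase interleaving can occur; this is exactly the content of Corollary~\ref{cor:decomposition_Baxter_TLT}, which you could invoke instead of re-deriving it.
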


We point out that a detailed study of TLTs with rectangular shapes 
(or alternatively of TLTs where we forget the underlying Ferrers diagram)
is provided in~\cite{ana}, 
where these objects are referred to as \emph{non-ambiguous trees}.

\begin{proof}
Consider a binary tree $B$, and denote by $B_{\ell}$ (resp. $B_r$) the left (resp. right) subtree pending from the root of $B$: 
$B = $ \tikz[level distance=15pt,baseline=-8pt]{ \node{$\bullet$} child {node {$B_\ell$}} child { node {$B_r$}};}. 
By induction (the base case of the induction, which corresponds to a tree with just one vertex, being clear), 
there are unique Baxter TLTs of rectangular shapes, denoted $T_{\ell}$ and $T_r$, whose underlying trees are respectively $B_{\ell}$ and $B_r$. 
We are looking for a Baxter TLT $T$ of rectangular shape whose underlying tree is $B$. 
Proposition~\ref{prop:decomposition_Baxter_TLT} leaves us no choice but to place all points of $T_{\ell}$ below and to the left of all points of $T_r$. 
That the resulting TLT (shown on the left of Figure~\ref{fig:uniqueNAT})
has a rectangular shape is ensured by the construction,
and the avoidance of the two patterns is immediate to prove. 
\end{proof}

Figure~\ref{fig:uniqueNAT} (right) shows an example of rectangular Baxter TLT associated with a binary tree by Corollary~\ref{cor:uniqueNAT}. 

\begin{figure}[ht]
\begin{center}
$\begin{array}{c}\includegraphics[scale=1.0]{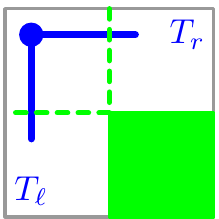}\end{array}$
\qquad \vline \qquad 
$B=\begin{array}{c}
	\includegraphics[scale=0.5]{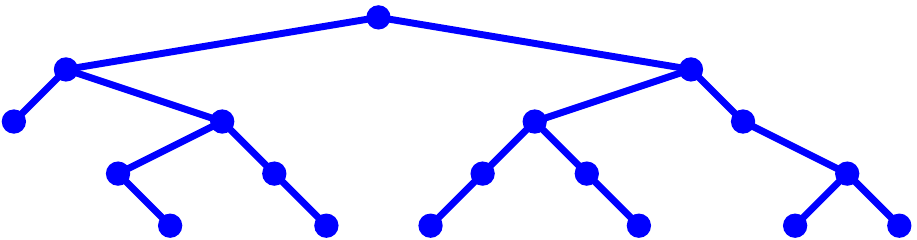}
\end{array}$ 
\quad and \quad 
$T=\begin{array}{c}
	\includegraphics[scale=0.5]{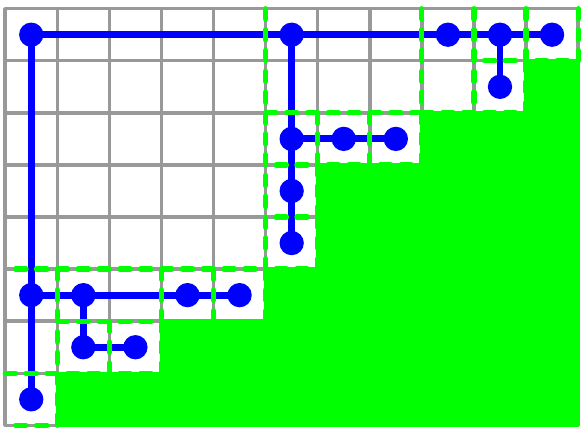}
\end{array}$
\caption{Left: Recursive construction of a TLT with rectangular shape. Right: A binary tree $B$, and the unique Baxter TLT $T$ with rectangular shape 
whose underlying tree is $B$.}
\label{fig:uniqueNAT}
\end{center}
\end{figure}

\begin{corollary}
\label{cor:decomposition_Baxter_TLT}
Let $T$ be a Baxter TLT, and $B$ be its underlying binary tree. 
Denote by $B_\ell$ (resp. $B_r$) the left (resp. right) subtree pending from the root of $B$. 
Consider the bi-partition $(L,R)$ of the non-root points of $T$, where 
$L$ (resp. $R$) contains all points of $T$ that are in $B_\ell$ (resp. $B_r$). 

We can split $T$ with two lines $\mathcal{V}$ and $\mathcal{H}$, uniquely defined by the following conditions: 
$\mathcal{V}$ is a vertical line leaving all points of $L$ to the left and all those of $R$ to the right, 
and $\mathcal{H}$ is a horizontal line leaving all points of $L$ below and all those of $R$ above. 

Provided that both $L$ and $R$ are non-empty, $\mathcal{V}$ and $\mathcal{H}$ split $T$ into four blocks, having the following properties. 
\begin{itemize}
 \item The Northwest block is a rectangle of empty cells, except for the North-westernmost cell, which contains the root of $T$.
 \item The Southwest block is a Baxter TLT, denoted $T_\ell$, whose underlying tree is $B_\ell$. 
 \item The Northeast block is a Baxter TLT, denoted $T_r$, whose underlying tree is $B_r$. 
 \item The Southeast block is a Ferrers diagram, possibly empty, and contains only crossings of $T$. 
\end{itemize}
\end{corollary}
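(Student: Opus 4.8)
The plan is to build on Proposition~\ref{prop:decomposition_Baxter_TLT} and essentially unpack what the bi-partition $(L,R)$ forces about the geometry of $T$. First I would fix the lines $\mathcal{V}$ and $\mathcal{H}$: Proposition~\ref{prop:decomposition_Baxter_TLT} says every point of $L$ is strictly to the left and strictly below every point of $R$, so the rightmost column containing a point of $L$ is strictly left of the leftmost column containing a point of $R$, and similarly for rows; place $\mathcal{V}$ and $\mathcal{H}$ in the gaps, which pins them down uniquely (up to the harmless choice of which grid line in the gap, but the block decomposition is the same). Since $T$ has no empty row and no empty column, the union of the $L$-columns is exactly the columns left of $\mathcal{V}$ and the union of the $R$-columns is exactly those right of it; likewise for rows and $\mathcal{H}$. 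The root point sits in the top-left cell, hence in the Northwest block.

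Next I would treat the four blocks in turn. For the \textbf{Southwest block}: it contains exactly the points of $L$ (the root is Northwest, the points of $R$ are to the Northeast by the Proposition), and its shape is the Ferrers diagram of $T$ intersected with the columns of $L$ and rows of $L$. I would argue this intersection is itself a Ferrers diagram (an initial segment of rows, each an initial segment of columns — this uses that $L$ occupies a contiguous bottom-left region and that $T$ is a Ferrers diagram), that every row and every column of it contains a point of $L$ (again because $T$ has no empty line and $L$ fills precisely these lines), and that the parent relation restricted to $L\cup\{\text{root}\}$ makes $L$ into a TLT once we check the root of $B_\ell$ has the top-left cell of this block as its cell — which is where the fact that the Northwest block is all empty except its corner comes in. The pattern avoidance is inherited: any $\Th$ or $\Tv$ occurrence inside $T_\ell$ is one in $T$. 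The underlying tree of $T_\ell$ is $B_\ell$ because the parent function restricted to $L$ is exactly the one inherited from $B$. The \textbf{Northeast block} is symmetric (swap the roles via transposing, or just repeat the argument with ``left/below'' replaced by ``right/above''), and one checks its top-left cell carries the root of $B_r$.

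For the \textbf{Northwest block}: it lies in the columns of $L$ and the rows of $R$. I claim it contains no point other than the root. A point there would be in an $L$-column and an $R$-row; it cannot be in $L$ (those are in $L$-rows, which are strictly below) nor in $R$ (those are in $R$-columns, strictly to the right) nor the root (which is in row~1, column~1, an $L$-row and $L$-column). So it is empty except for the corner. That it is a \emph{full} rectangle, rather than a staircase, follows because column~1 of $T$ reaches down at least to the bottom $L$-row (the root's column is the leftmost, so it is the tallest) and the last $R$-row reaches left at least to column~1 in the sense that the rows of $R$ span at least all the $L$-columns — here I would invoke that $T$ is a Ferrers diagram so the rows above $\mathcal{H}$ are at least as wide as the rows below, hence extend across all of the $L$-columns. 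Finally the \textbf{Southeast block}: it sits in $R$-columns and $L$-rows, so by the same case analysis it contains no point at all; it is the intersection of the Ferrers diagram $T$ with a set of trailing columns and leading rows, which is again a Ferrers diagram (possibly empty). Its cells all have a point above (in the corresponding $R$-column, since that column is non-empty and its points are in $R$-rows, above $\mathcal{H}$) and a point to the left (in the corresponding $L$-row, non-empty, points in $L$-columns, left of $\mathcal{V}$), so each such cell is a crossing of $T$ by definition.

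The step I expect to be the main obstacle is verifying that the Southwest and Northeast blocks are genuinely TLTs — in particular checking condition~(2) of Definition~\ref{defi:tlt} at the ``seams'': one must be sure that for a point of $L$ its parent in $T$ also lies in the Southwest block (not off in the Northwest rectangle or, worse, in $R$), and that the top-left cell of the block is pointed. Both reduce to showing the root of $B_\ell$ occupies the Northwest corner of the Southwest block, and dually for $B_r$; this is exactly the content of the first and third bullet points, so the four claims are proved together rather than independently. The shape claims (that each block is a Ferrers diagram) are routine given that $T$ is one and that $\mathcal{V},\mathcal{H}$ cut along grid lines, so I would state them quickly.
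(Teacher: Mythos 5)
Your proof is correct and follows the same route as the paper's: existence of $\mathcal{V}$ and $\mathcal{H}$ from Proposition~\ref{prop:decomposition_Baxter_TLT}, uniqueness from the absence of empty rows and columns, and the observation that each Southeast cell has a point of $R$ above it and a point of $L$ to its left, hence is a crossing. The paper simply declares the remaining block properties ``immediate,'' so your more careful verification of the seams (in particular that the root of $B_\ell$, resp.\ $B_r$, occupies the top-left cell of its block, and that the Northwest block is a full rectangle because the rows above $\mathcal{H}$ are at least as wide as those below) is a fully compatible elaboration of the same argument.
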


\begin{proof}
The existence of $\mathcal{V}$ and $\mathcal{H}$ is guaranteed by Proposition~\ref{prop:decomposition_Baxter_TLT}. 
Their uniqueness in ensured by the fact that there are no empty rows nor columns in TLTs. 
The properties of the four blocks identified by $\mathcal{V}$ and $\mathcal{H}$ are immediate. 
We just note that the cells in the Southeast block are indeed crossings because they are all empty, 
have a point of $R$ above them, and a point of $L$ to their left (again, because every row and column of $T$ contains at least one point). 
\end{proof}

\section{Bijection with packed floorplans}
\label{sec:floorplans}

\subsection{Packed floorplans: Definition and basic properties}
\label{ssec:def_PFP}

\begin{definition}[Packed floorplans]
\label{defi:PFP}
A {\em packed floorplan} (PFP) of size $(k,\ell)$ is a partition of a 
$k\times\ell$ rectangle (\emph{i.e.} a rectangle of height $k$ and width $\ell$)
into $k+\ell-1$ rectangular tiles 
whose sides have integer lengths 
such that the pattern $\Fp$ is avoided, meaning that:
for every pair of tiles $(t_1,t_2)$, 
denoting $(x_1,y_1)$ the coordinates of the bottom rightmost corner of $t_1$ 
and $(x_2,y_2)$ those of the top leftmost corner of $t_2$, 
it is not possible to have both $x_1 \leq x_2$ and $y_1 \geq y_2$. 

The set of packed floorplans of size $(k,\ell)$ will be denoted by 
$\F_{(k,\ell)}$, and we set: $\F_n=\sqcup_{k+\ell-1=n}\F_{(k,\ell)}$.
\end{definition}

\begin{figure}[htb]
 \centering
 \subfigure[Five packed floorplans.]
   {$\begin{array}{c}
	\includegraphics[scale=\scaleExplPFP]{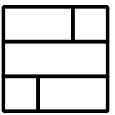}
	\end{array}
	\begin{array}{c}
	\includegraphics[scale=\scaleExplPFP]{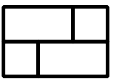}
	\end{array}
	\begin{array}{c}
	\includegraphics[scale=\scaleExplPFP]{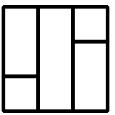}
	\end{array}
	\begin{array}{c}
	\includegraphics[scale=\scaleExplPFP]{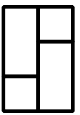}
	\end{array}
	\begin{array}{c}
	\includegraphics[scale=\scaleExplPFP]{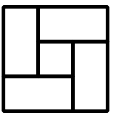}
	\end{array}$
} \qquad 
 \subfigure[These seven are not packed floorplans.]
   {$
   	\begin{array}{c}
		\includegraphics[scale=\scaleExplPFP]{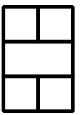}
	\end{array}
	\begin{array}{c}
		\includegraphics[scale=\scaleExplPFP]{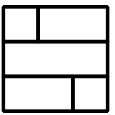}
	\end{array}
	\begin{array}{c}
		\includegraphics[scale=\scaleExplPFP]{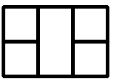}
	\end{array}
	\begin{array}{c}
		\includegraphics[scale=\scaleExplPFP]{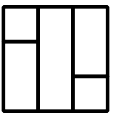}
	\end{array}
	\begin{array}{c}
		\includegraphics[scale=\scaleExplPFP]{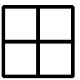}
	\end{array}
	\begin{array}{c}
		\includegraphics[scale=\scaleExplPFP]{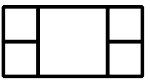}
	\end{array}
	\begin{array}{c}
		\includegraphics[scale=\scaleExplPFP]{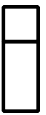}
	\end{array}
	$}
 \caption{Some examples and counterexamples of PFPs.} \label{PFPlist}
 \end{figure}

Some examples and counter-examples of PFPs are provided by Figure~\ref{PFPlist}, 
and Figure~\ref{fig:nexamples_pfp_size_4} shows all the packed floorplans of size $4$. 
These PFPs are new combinatorial objects, but they are in size-preserving bijection with {\em mosaic floorplans}~\cite{AcBaPi,saka,yao}. 
Indeed, as shown in the Appendix (see Proposition~\ref{prop:PFP_and_mosaicFP}), mosaic floorplans are equivalence classes of objects, 
and PFPs are canonical representatives of mosaic floorplans. 

\begin{figure}[htb]
\begin{align*}
 F^{1} &=& & \hspace*{-1em} \begin{array}{l} \includegraphics[scale=\scalePFP]{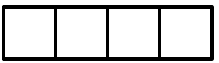} \end{array}, 
& F^{2} &=& & \hspace*{-1em} \begin{array}{l} \includegraphics[scale=\scalePFP]{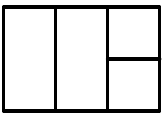} \end{array}, 
& F^{3} &=& & \hspace*{-1em} \begin{array}{l} \includegraphics[scale=\scalePFP]{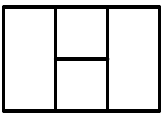} \end{array}, 
& F^{4} &=& & \hspace*{-1em} \begin{array}{l} \includegraphics[scale=\scalePFP]{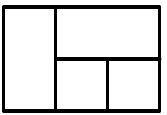} \end{array}, \\
 F^{5} &=& & \hspace*{-1em} \begin{array}{l} \includegraphics[scale=\scalePFP]{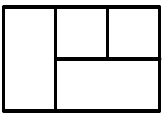} \end{array}, 
& F^{6} &=& & \hspace*{-1em} \begin{array}{l} \includegraphics[scale=\scalePFP]{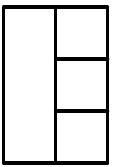} \end{array}, 
& F^{7} &=& & \hspace*{-1em} \begin{array}{l} \includegraphics[scale=\scalePFP]{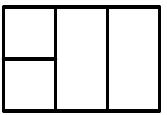} \end{array}, 
& F^{8} &=& & \hspace*{-1em} \begin{array}{l} \includegraphics[scale=\scalePFP]{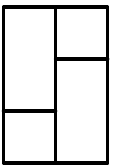} \end{array}, \\
 F^{9} &=& & \hspace*{-1em} \begin{array}{l} \includegraphics[scale=\scalePFP]{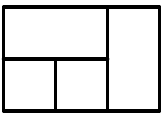} \end{array}, 
& F^{10} &=& & \hspace*{-1em} \begin{array}{l} \includegraphics[scale=\scalePFP]{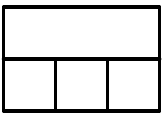} \end{array}, 
& F^{11} &=& & \hspace*{-1em} \begin{array}{l} \includegraphics[scale=\scalePFP]{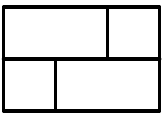} \end{array}, 
& F^{12} &=& & \hspace*{-1em} \begin{array}{l} \includegraphics[scale=\scalePFP]{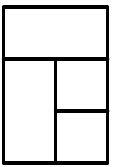} \end{array}, \\
 F^{13} &=& & \hspace*{-1em} \begin{array}{l} \includegraphics[scale=\scalePFP]{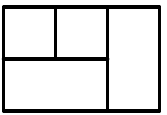} \end{array}, 
& F^{14} &=& & \hspace*{-1em} \begin{array}{l} \includegraphics[scale=\scalePFP]{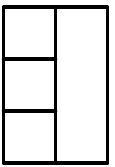} \end{array}, 
& F^{15} &=& & \hspace*{-1em} \begin{array}{l} \includegraphics[scale=\scalePFP]{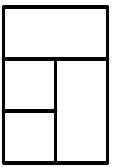} \end{array}, 
& F^{16} &=& & \hspace*{-1em} \begin{array}{l} \includegraphics[scale=\scalePFP]{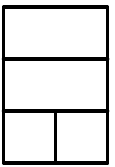} \end{array}, \\
 F^{17} &=& & \hspace*{-1em} \begin{array}{l} \includegraphics[scale=\scalePFP]{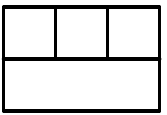} \end{array}, 
& F^{18} &=& & \hspace*{-1em} \begin{array}{l} \includegraphics[scale=\scalePFP]{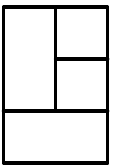} \end{array}, 
& F^{19} &=& & \hspace*{-1em} \begin{array}{l} \includegraphics[scale=\scalePFP]{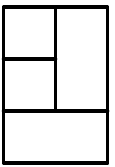} \end{array}, 
& F^{20} &=& & \hspace*{-1em} \begin{array}{l} \includegraphics[scale=\scalePFP]{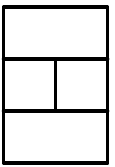} \end{array}, \\
 F^{21} &=& & \hspace*{-1em} \begin{array}{l} \includegraphics[scale=\scalePFP]{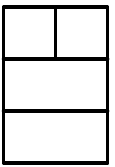} \end{array}, 
& F^{22} &=& & \hspace*{-1em} \begin{array}{l} \includegraphics[scale=\scalePFP]{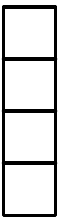} \end{array}. 
\end{align*}
\caption{The 22 packed floorplans of size 4. 
\label{fig:nexamples_pfp_size_4}}
\end{figure}

Some properties of PFPs follow easily from Definition~\ref{defi:PFP}. 
We first introduce notation. 
A \emph{T-junction} in a PFP $F$ is a point where the sides of the tiles of $F$ intersect in one of the following configurations: \label{pagedef:T-junction}
$
\begin{array}{c}
	\includegraphics[scale=\scalePFP]{images/tjunction_1}
\end{array}, 
\begin{array}{c}
	\includegraphics[scale=\scalePFP]{images/tjunction_2}
\end{array}, 
\begin{array}{c}
	\includegraphics[scale=\scalePFP]{images/tjunction_3}
\end{array}$ and $
\begin{array}{c}
	\includegraphics[scale=\scalePFP]{images/tjunction_4}
\end{array}
$. 
A \emph{segment} of a PFP $F$ is a union of sides of tiles of $F$ which forms a segment and which is maximal for this condition.  
Figure~\ref{fig:PFP_segments} shows a PFP and its segments. 
A segment which is not a side of the bounding rectangle is called {\em internal}.

\begin{figure}[ht]
\begin{center}
$\begin{array}{c}
	\includegraphics[scale=1.0]{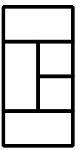}
\end{array}$
\hfill
\begin{minipage}{13cm}
All the horizontal segments of this PFP are (from bottom to top)
\PFPsegment{0}{0}{2}{0}, \PFPsegment{0}{1}{2}{1}, \PFPsegment{1}{2}{2}{2}, \PFPsegment{0}{3}{2}{3} and \PFPsegment{0}{4}{2}{4}. 
All its vertical segments are (from left to right) \PFPsegment{0}{0}{0}{4}, \PFPsegment{1}{1}{1}{3} and \PFPsegment{2}{0}{2}{4}.
\end{minipage}
\end{center}
\caption{A PFP of size $(4,2)$ and its segments.}
\label{fig:PFP_segments}
\end{figure}

\begin{observation}
\label{obs:PFP_obvious}
Let $F$ be a PFP. 
\begin{enumerate}[(i)]
 \item\label{item:PFP_obvious_Tjunction} Every corner of any of the tiles of $F$
either is a corner of the bounding rectangle of $F$ 
or forms a T-junction.
 \item\label{item:PFP_obvious_oneSegment} Every horizontal (resp. vertical) line of integer coordinate 
included in the bounding rectangle of $F$ contains exactly one segment of $F$.
 \item\label{item:PFP_obvious_oneCorner} Every horizontal (resp. vertical) line of integer coordinate 
included in the bounding rectangle of $F$ 
(except the bottom (resp. right) boundary of the bounding rectangle of $F$)
contains the top left corner of at least one tile of $F$. 
\end{enumerate}
\end{observation}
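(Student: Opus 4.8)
The plan is to establish the three items in the order (i), (iii), (ii): item~(ii) will use both (i) and the "at least one segment per line'' content of (iii), whereas (iii) needs only the exact tile count together with a small amount of the structure produced by (i). Throughout I write the bounding rectangle of $F$ as a rectangle of height $k$ and width $\ell$, so that it carries $k+1$ horizontal and $\ell+1$ vertical lines of integer coordinate and, by Definition~\ref{defi:PFP}, exactly $k+\ell-1$ tiles.

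For (i) I would argue by a purely local case analysis around a corner $c$ of a tile $t$ that is not a corner of the bounding rectangle. Since the tiles of $F$ tile the bounding rectangle, a small disc centred at $c$ is covered by the closures of the tiles, and since $t$ is a \emph{rectangle} with $c$ as a corner, $t$ covers exactly one of the (at most four) quadrants of this disc that lie inside the bounding rectangle. If $c$ is interior to the bounding rectangle, the three quadrants not covered by $t$ are covered by tiles distinct from $t$; since a rectangle cannot cover two \emph{opposite} quadrants, these three quadrants are covered either by exactly two tiles (one covering two adjacent quadrants) or by three tiles. In the first case the common side of the two tiles covering the two pairs of opposite quadrants runs straight through $c$ while the relevant side of $t$ ends at $c$, so $c$ is a T-junction. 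In the second case four distinct tiles meet at $c$, and then -- whatever the relative position of $t$ -- one tile has $c$ as its bottom rightmost corner and the tile occupying the opposite quadrant has $c$ as its top leftmost corner; writing $(x_1,y_1)=(x_2,y_2)=c$ we get both $x_1\le x_2$ and $y_1\ge y_2$, contradicting the avoidance of $\Fp$, so this case does not occur. Finally, if $c$ lies on a side of the bounding rectangle but is not one of its corners, only two quadrants lie inside; $t$ covers one and a different tile covers the other, and the side of the bounding rectangle through $c$ is the bar of a T-junction whose stem is the common side of these two tiles.

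For (iii) it suffices, by the left/right--top/bottom symmetry of the statement, to treat the horizontal version. First observe that the map sending a tile to its top leftmost corner is injective, since two tiles with a common top leftmost corner would overlap near it; hence $F$ has exactly $k+\ell-1$ distinct tile top-leftmost-corners, each of which by (i) is either the top leftmost corner of the bounding rectangle or a T-junction. The assertion is then that every horizontal line of integer height other than the one carrying the bottom side of the bounding rectangle contains the top side of some tile. For the height of the top side this is clear. For an intermediate height $h$, the key point is that if no tile had its top side there, then no tile would have its bottom side there either (a tile whose bottom side lies at height $h$ forces the tile directly below it to have its top side there), so the line at height $h$ would be covered entirely by tiles properly crossing it. \textbf{This is the step I expect to be the main obstacle}, and it genuinely uses the exact count $k+\ell-1$: for a rectangular partition with fewer tiles, e.g. a rectangle cut into vertical strips, the conclusion fails. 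I would try to exclude a fully-crossed interior line either by a ``collapse'' argument -- cut out a maximal ``transparent'' horizontal band, check that collapsing it preserves the structure (crossing-freeness and $\Fp$-avoidance), and conclude that the collapsed picture would be a packed floorplan with too many tiles for its smaller dimensions -- or, should that be delicate, by confronting, for each interior line, the number of tiles crossing it with the T-junction count obtained in the next step.

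For (ii) I would apply Euler's formula to the connected planar subdivision of the bounding rectangle induced by the sides of the tiles of $F$: by (i) every interior vertex is a degree-$3$ T-junction and the four vertices at the corners of the bounding rectangle have degree $2$, so handshaking together with $V-E+F=2$ (with $F=(k+\ell-1)+1$ faces) forces the number of T-junctions to equal $2(k+\ell-2)$ and hence the number of internal segments to equal $k+\ell-2$. Since each T-junction is an endpoint of exactly one segment and an interior point of exactly one other, the internal segments split into $a$ horizontal and $b$ vertical ones with $a+b=k+\ell-2=(k-1)+(\ell-1)$. By (iii) each of the $k-1$ interior horizontal lines carries at least one internal horizontal segment and each of the $\ell-1$ interior vertical lines at least one internal vertical segment, so $a\ge k-1$ and $b\ge \ell-1$; equality is therefore forced, and moreover each interior line carries exactly one internal segment. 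Adjoining the four sides of the bounding rectangle, which form one segment each, yields exactly (ii). (The other assertions of the displayed Observation then fall out: item~(iii) itself is ``at least one'' and is subsumed, and item~(i) has already been proved.)
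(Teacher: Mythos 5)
Your item~(i) is correct and is essentially the paper's argument (the paper phrases the case analysis as ``two or four tiles meet at the corner''; your quadrant/convexity formulation is a slightly more careful version of the same thing, with the four-tile case killed by $\Fp$-avoidance exactly as in the paper). Your Euler-formula computation in~(ii) is also sound and gives the same segment count as the paper's incidence count ($k+\ell-2$ internal segments). The problem is the logical architecture: you route~(ii) through~(iii), and your proof of~(iii) is not a proof. The entire content of~(iii) in your scheme is the claim that no interior horizontal line can be crossed properly by every tile meeting it, you explicitly flag this as ``the main obstacle,'' and you then offer two strategies (a collapse argument and a counting confrontation) without carrying either out. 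Neither is routine: the collapse of a unit band can create new occurrences of $\Fp$ (moving the upper tiles down only makes the inequality $y_1\ge y_2$ easier to satisfy), and even if $\Fp$-avoidance survived you would still need the separate fact that an $\Fp$-avoiding rectangular partition of a $k'\times\ell'$ rectangle has at most $k'+\ell'-1$ tiles to reach a contradiction. So as written, (iii) — and hence (ii) — is not established.

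The paper sidesteps this obstacle with an idea you are missing: instead of proving ``at least one segment per line'' directly, it proves ``\emph{at most} one segment per line'' as a direct consequence of $\Fp$-avoidance. If two maximal segments $s$ and $s'$ lay on the same horizontal line with $s$ to the left of $s'$, then by~(i) the right end of $s$ is a T-junction of type $\TDroit$ and the left end of $s'$ one of type $\TGauche$; the tile whose bottom-right corner sits at the former and the tile whose top-left corner sits at the latter satisfy $x_1\le x_2$ and $y_1=y_2$, i.e.\ they form a forbidden $\Fp$. Combining ``at most one per line'' with your Euler count (total number of segments equals the number of lines) forces exactly one segment per line by pigeonhole, which is~(ii); item~(iii) then falls out for free, since the unique segment on each non-bottom (resp.\ non-right) line supports the top (resp.\ left) side of some tile. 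If you replace your unproved ``at least one'' input by this ``at most one'' lemma, your proof closes with no other changes.
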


\begin{proof}
For the first item, assume there exists a corner of a tile which neither is a corner of the bounding rectangle of $F$ 
nor forms a T-junction. So, at this corner, either two or four tiles meet. 
In the first (resp. second) case, there exists then two tiles $t$ and $t'$ placed as 
$\begin{array}{c}\includegraphics[scale=0.9]{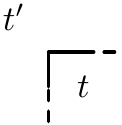} \end{array}$, up to rotation 
(resp. $\begin{array}{c}\includegraphics[scale=0.9]{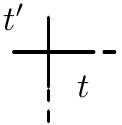}\end{array}$). 
In both cases, we derive a contradiction. 
For the first case, $t$ creates an inner corner in $t'$, hence $t'$ is not of rectangular shape.  
In the second case, the pair of tiles $(t',t)$ forms an occurrence of the pattern $\Fp$, which should be avoided. 

We prove the second item in two steps. 
First, we show that each line contains at most one segment. 
This is clearly true for the boundaries of the bounding rectangle, which are obviously segments themselves. 
Consider an internal line, that is to say a line which is not a boundary of the bounding rectangle, 
and assume it contains at least $2$ segments $s$ and $s'$. 
If $s$ and $s'$ lie on the same horizontal (resp. vertical) line, 
with $s$ to the left of (resp. above) $s'$, then because of $(\ref{item:PFP_obvious_Tjunction})$
the right (resp. bottom) end of $s$ is a T-junction $\TDroit$ (resp. $\TBas$)
and the left (resp. top) end of $s'$ is a T-junction $\TGauche$ (resp. $\THaut$). 
Consider the tile $t$ whose bottom right corner is the right (resp. bottom) end of $s$, 
and the tile $t'$ whose top left corner is the left (resp. top) end of $s'$. 
The pair $(t,t')$ forms a pattern $\Fp$, a contradiction. 

Next, we show that a PFP $F$ of size $(k,\ell)$ contains exactly $k+\ell+2$ segments. 
Since this is also the number of lines considered in $(\ref{item:PFP_obvious_oneSegment})$, 
it follows from our first step that each of them contains \emph{exactly} one segment. 
Denote by $n_c$ (resp. $n_t$, $n_j$, $n_s$) the number of corners of tiles (resp. of tiles, of T-junctions, of segments) of $F$. 
Each tile having $4$ corners, we have $4 n_t = n_c$. 
Also, from item~$(\ref{item:PFP_obvious_Tjunction})$, 
and since there are exactly two corners of tiles at any T-junction, 
$n_c = 2 n_j +4$. It follows that $n_j+2 = 2 n_t$. 
On the other end, every horizontal (resp. vertical) internal segment connects T-junctions of the form $\TGauche$ and $\TDroit$ (resp. $\THaut$ and $\TBas$). 
And every T-junction is an end of a segment. 
It follows that $n_j$ is twice the number of internal segments, which is $n_s-4$ taking into account the boundaries of the bounding rectangle of $F$. 
So, $n_j = 2 (n_s-4)$. 
Combining this equality with $n_j+2 = 2 n_t$ obtained earlier gives $n_t = (n_s-4) +1 = n_s-3$. 
Since $F$ is a PFP of size $(k,\ell)$, it contains $n_t=k+\ell-1$ tiles, and it follows that $n_s=k+\ell+2$ as wanted. 

Finally, the third item follows easily from the second one. 
Indeed, every line as in $(\ref{item:PFP_obvious_oneCorner})$ contains one segment. 
This segment, if horizontal (resp. vertical) is the support of the top (resp. left) side of at least one tile $t$, 
so it contains the top left corner of $t$. 
\end{proof}

\medskip

Our goal in this section is to describe a simple size-preserving bijection between Baxter TLTs and PFPs. 
The definition of our map $\Phi_{\F}$ from $\T_n$ to $\F_n$ is given in Subsection~\ref{subsec:dfn_phiF}. 
All that is needed to define it is the numbering of the points of TLTs induced by the procedure \emph{RemovePoint}, 
and reviewed in Section~\ref{sec:tlt}. 
The proof that $\Phi_{\F}$ is a bijection requires a few more properties of TLTs and PFPs. 
These are presented in Subsection~\ref{subsec:BijPFP}, and allow to describe the inverse of $\Phi_{\F}$, 
proving that it is indeed a bijection.  

\subsection{Definition of $\Phi_{\F}: \T_n \to \F_n$}
\label{subsec:dfn_phiF}

More precisely, we define $\Phi_{\F}:\T_{(k,\ell)} \to \F_{(k,\ell)}$, for all $k,\ell$. 

Let us consider $T\in\T_{(k,\ell)}$. 
As noticed earlier, $T$ contains $n=k+\ell -1$ points. 
These may be labeled by the integers in 
$\{1,\dots,n\}$ according to the insertion procedure of~\cite{TLT}, as explained in Section~\ref{sec:tlt}. 
We shall construct $\Phi_{\F}(T)$ as follows.

We start from a rectangular $k\times\ell$ box. We identify the unit cells of 
this box with the cells of $T$. 
For each label $j=n,\dots,1$, we iteratively add a tile, the largest possible, 
whose top leftmost cell $c$ is the one containing the point labeled by $j$ in $T$ 
(this tile is also said to have label $j$). 
To build this largest possible tile, we only have to draw two segments:
one vertical and one horizontal, each starting from the cell $c$ and going respectively to 
the South and to the East.
We denote the result by $\Phi_{\F}(T)$. See Figure \ref{fig:Phi} for an example.

\newlength{\espPhi}
\setlength{\espPhi}{-0.7cm} 
\begin{figure}[!ht]
$$
\begin{array}{lcr}
T = \raisebox{\espPhi}{\includegraphics[scale=1.0]{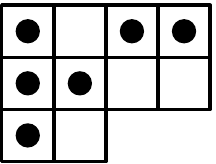}}
= \raisebox{\espPhi}{\includegraphics[scale=1.0]{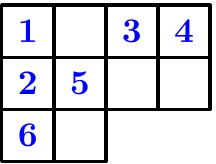}}
\mbox{ \quad and \quad } 
\Phi_{\F}(T) = \raisebox{\espPhi}{\includegraphics[scale=1.0]{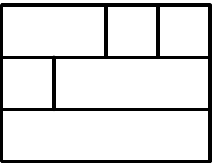}}
\end{array}
$$
\caption{The bijection $\Phi_{\F}$.}
 \label{fig:Phi}
\end{figure}

\begin{proposition}\label{prop:phiF_well_defined}
The mapping $\Phi_{\F}:\T_{(k,\ell)} \to \F_{(k,\ell)}$ is well-defined. 
\end{proposition}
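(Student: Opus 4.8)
The plan is to show that $\Phi_{\F}(T)$ satisfies all the requirements of Definition~\ref{defi:PFP}: it is a partition of the $k\times\ell$ rectangle into $k+\ell-1$ rectangular tiles with integer side lengths, and it avoids the pattern $\Fp$. The construction is driven by the \emph{RemovePoint} labeling, so it is natural to prove the statement by induction on $n=k+\ell-1$, using the recursive structure of Baxter TLTs provided by Corollary~\ref{cor:decomposition_Baxter_TLT}.

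First I would set up the induction. The base case $n=1$ is the trivial TLT, sent to the $1\times 1$ box with a single tile. For the inductive step, take $T\in\T_{(k,\ell)}$ and apply Corollary~\ref{cor:decomposition_Baxter_TLT}. If one of $L,R$ is empty, then $T$ has its second point either directly below or directly to the right of the root, and $T$ decomposes as a single row (or column) stacked on a smaller Baxter TLT; this case is handled directly, observing that the tile of label $1$ is exactly the top row (or left column). If both $L$ and $R$ are non-empty, the two lines $\mathcal V$ and $\mathcal H$ cut $T$ into the four blocks described in the corollary: a Northwest rectangle of empty cells whose top-left cell holds the root (label $1$), a Southwest block which is a Baxter TLT $T_\ell$, a Northeast block which is a Baxter TLT $T_r$, and a Southeast block of crossings. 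The key point is that the \emph{RemovePoint} labeling of $T$ restricts, in order, to the labelings of $T_\ell$ and $T_r$: by Proposition~\ref{prop:decomposition_Baxter_TLT} the special point of $T$ always lies among the points of $R$ until $R$ is exhausted, then among those of $L$; more precisely the labels on $R$ are $\{2,\dots,|R|+1\}$ (in the order given by the labeling of $T_r$) and those on $L$ are $\{|R|+2,\dots,n\}$ (in the order given by the labeling of $T_\ell$), with the root getting label $1$ last.

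Next I would analyze how $\Phi_{\F}$ builds the tiling from this labeling. Processing labels from $n$ down to $1$, we first place all tiles whose top-left cells lie in $L$ (the Southwest block and possibly cells of the Southeast block that belong to the row/column of a point of $L$), then all tiles whose top-left cells lie in $R$, and finally the tile of label $1$, whose top-left cell is the root and which, drawing its vertical and horizontal segments as far as possible, fills exactly the Northwest rectangular block (it is stopped on the South by the horizontal line $\mathcal H$, since the cell just below on the boundary is occupied, and on the East by $\mathcal V$). I would argue that the tiles with top-left cells in $R$ exactly tile $T_r$ together with the part of the Southeast block lying in rows of $R$ — i.e. the Northeast block — and form the PFP $\Phi_{\F}(T_r)$ there, because the segments drawn for a point of $R$ never cross $\mathcal V$ or $\mathcal H$ (a point of $R$ has no point of $T$ below it crossing into $L$'s rows, by the bi-partition property), so the restriction of the process to these labels coincides with running $\Phi_{\F}$ on $T_r$. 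Symmetrically the tiles with top-left cells in $L$ tile the Southwest block plus the columns of $L$ in the Southeast block, giving $\Phi_{\F}(T_\ell)$; here one must check that the horizontal segments drawn for points of $L$ stop at $\mathcal V$ (since the Northeast block sits to the right of $\mathcal V$ and above $\mathcal H$, its leftmost tiles already occupy those cells) — this uses that $R$-tiles were placed first. By induction $\Phi_{\F}(T_\ell)$ and $\Phi_{\F}(T_r)$ are PFPs, so in particular they tile their blocks exactly and avoid $\Fp$; together with the root tile this gives a partition of the whole $k\times\ell$ rectangle into $k+\ell-1$ rectangular tiles with integer sides.

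Finally I would verify $\Fp$-avoidance globally. Within the Southwest or the Northeast block, avoidance is inherited from $\Phi_{\F}(T_\ell)$, $\Phi_{\F}(T_r)$. For a pair of tiles $(t_1,t_2)$ with $t_1$ in one block and $t_2$ in another, I would check each case against the coordinate condition of Definition~\ref{defi:PFP}, exploiting the geometric layout: the root tile is the Northwest block and lies weakly above and weakly left of everything, so it can never be the ``lower-left'' tile $t_1$ of a forbidden pair in a bad way, and the candidate $t_2$ would have to be below-or-right of it, which it is; the delicate pairs are $(t_1\in\text{SW},\,t_2\in\text{NE})$, where $t_1$ lies strictly left of $\mathcal V$ or strictly below $\mathcal H$ while $t_2$ lies weakly right of $\mathcal V$ and weakly above $\mathcal H$ — so the bottom-right corner of $t_1$ and top-left corner of $t_2$ cannot simultaneously satisfy $x_1\le x_2$ and $y_1\ge y_2$ unless $t_1$ straddles into the Southeast block, and even then the crossing structure of that block forces the corner coordinates apart. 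The main obstacle I anticipate is exactly this last verification: making fully rigorous the claim that the greedy segment-drawing, when restricted to the labels of $R$ (resp. $L$), reproduces $\Phi_{\F}(T_r)$ (resp. $\Phi_{\F}(T_\ell)$) inside the correct block — i.e. that no tile ``leaks'' across $\mathcal V$ or $\mathcal H$. This requires carefully using that the Southeast block consists only of crossings (empty cells with points above and to the left), so no tile has its top-left corner there, and that $R$ is processed before $L$ so that the $L$-tiles' eastward segments are blocked precisely along $\mathcal V$.
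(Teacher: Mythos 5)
Your overall strategy --- induction on $n$ via the block decomposition of Corollary~\ref{cor:decomposition_Baxter_TLT} --- rests on a structural claim that is false, so the argument does not go through. You assert that the \emph{RemovePoint} labeling assigns to $R$ and $L$ two contiguous blocks of labels (so that all tiles of one side are placed before any tile of the other). In fact the removal order interleaves between $L$ and $R$. Concretely, take the $3\times 3$ Baxter TLT with points at positions $(1,1)$, $(2,1)$, $(3,1)$, $(3,2)$, $(1,3)$ (row, column, row $1$ on top): here $L=\{(2,1),(3,1),(3,2)\}$ and $R=\{(1,3)\}$. The special point (rightmost point with no cell below it) is $(3,2)\in L$, which gets label $5$; removing it together with its column and the ribbon $\{(2,3),(3,3)\}$ makes $(1,3)\in R$ the next special point, with label $4$; then $(3,1)$ and $(2,1)$ get labels $3$ and $2$. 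So $L$ receives $\{5,3,2\}$ and $R$ receives $\{4\}$ --- neither side is exhausted first, and the label sets are not intervals. (Note also that even in a hypothetical two-phase scenario your assignment is reversed: candidates for the special point must have \emph{no cells below them}, so columns of $R$ that extend into the Southeast block contribute no candidates, and the bottom row, which lies in $L$, tends to be removed first.)

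This false premise is load-bearing for the rest of your argument. In the example above, the tile rooted at $(1,3)$ (an $R$-point) extends \emph{south} across $\mathcal H$ into the Southeast block, and the tile rooted at $(3,2)$ (an $L$-point) extends \emph{east} across $\mathcal V$ into the Southeast block; which one claims which cells of that block depends on the interleaved global order. So it is not true that the $R$-tiles ``exactly tile the Northeast block'' nor that the $L$-tiles tile the Southwest block together with the Southeast block, and the inductive identification with $\Phi_{\F}(T_\ell)$ and $\Phi_{\F}(T_r)$ as stated collapses. (The restrictions of the tiles to the two blocks do happen to reproduce the smaller PFPs, but proving that is essentially the hard content of Lemma~\ref{lemma:order}, and your two-phase argument does not supply it.) The paper avoids all of this with a direct, non-inductive argument: if some tile fails to be rectangular, or if a pair of tiles forms $\Fp$ (chosen with the relevant corners at minimal distance), one exhibits three points of $T$ forming $\Th$ or $\Tv$, contradicting $T\in\T_{(k,\ell)}$. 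If you want to keep an inductive route, you would need to replace the contiguous-labels claim by a genuine analysis of how the greedy, label-ordered placement shares the Southeast block between the two sides; as written, the proof has a gap that cannot be patched locally.
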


\begin{proof}
Let $T \in \T_{(k,\ell)}$. 
Applying the above described construction, 
we obtain a tiling of a $k \times \ell$ rectangle by $n=k+\ell -1$ tiles. 
So, to check that the mapping $\Phi_{\F}$ is well-defined,
we are just left with checking 
that at each step the tile we construct is actually of
rectangular shape, and that the pattern $\Fp$ is avoided. 

First assume that some tile is not of rectangular shape, \ie has an inside corner (note that 
it has to be an inside SE corner because the tiles are added the largest possible). 
Denote by $q$ the point of $T$ in the top leftmost corner of this tile, 
$c$ the point of $T$ that creates the inside corner, 
and $p$ the parent of $c$ in $T$ (see Figure~\ref{fig:subfig_pattern_proof}$(a)$). 
Denoting $(X(z),Y(z))$ the Cartesian coordinates of any point $z$, 
this means that either $X(c) = X(p)$ and $Y(c) < Y(p)$, 
or $Y(c) = Y(p)$ and $X(c) > X(p)$. 
In the former (resp. latter) case, we claim that $Y(q) < Y(p)$ (resp. $X(p) < X(q)$). 
Indeed, assuming the contrary, 
the insertion procedure of~\cite{TLT} would insert the points $q$, $p$, and $c$ in this order, 
contradicting that $c$ is an inside corner of a tile. 
From these inequalities, we deduce that the points $q$, $p$, and $c$ form 
a pattern $\Tv$ (resp. $\Th$) contradicting that $T \in \T_{(k,\ell)}$.

Suppose now that there are two tiles $t_1$ and $t_2$ that form a pattern $\Fp$. 
Let us choose this pair such that the distance between the bottom rightmost corner of $t_1$ 
and the top leftmost corner of $t_2$ is minimal. 
By construction, there is a point $c$ of $T$ in the top leftmost corner of $t_2$. 
Also, because $t_1$ is constructed as large as possible, there is a point 
$q$ (resp. $u$) of $T$ immediately outside $t_1$ along its bottom (resp. right) border 
(see Figure~\ref{fig:subfig_pattern_proof}$(b)$).

\begin{figure}[ht]
 \centering
 \subfigure[$Y(q) < Y(p)$, or $c$ would not be an inside corner.]
   { 
$\begin{array}{c}
\begin{tikzpicture}
\begin{scope}[scale=0.6]
\draw[dotted] (1.2,0.8) -- (1.2,3);
\draw (0,0) -- (1,0) -- (1,1) -- (3,1) -- (3,2) -- (0,2) -- (0,0);
\draw (1.2,0.8) [fill] circle (.1);
\draw (1.2,3) [fill] circle (.1);
\draw (0.2,1.8) [fill] circle (.1);
\node at (1.5,0.5) {$c$};
\node at (1.5,2.8) {$p$};
\node at (0.4,1.5) {$q$};
\node at (-2,0) {~};
\node at (5,0) {~};
\end{scope}
\end{tikzpicture}
 \end{array}$
   } \qquad 
 \subfigure[$Y(q) < Y(p)$, or $(t_1,t_2)$ would not be minimal.]
{
$\begin{array}{c}
\begin{tikzpicture}
\begin{scope}[scale=0.6]
\draw[dotted] (2.2,0.8) -- (2.2,2.5);
\draw (2,0) -- (2,1) -- (3,1);
\draw (0,2) -- (1,2) -- (1,3);
\draw (2.2,0.8) [fill] circle (.1);
\draw (2.2,2.5) [fill] circle (.1);
\draw (1.2,2.8) [fill] circle (.1);
\draw (0.2,1.8) [fill] circle (.1);
\node at (2.5,0.5) {$c$};
\node at (2.5,2.3) {$p$};
\node at (0.4,1.5) {$q$};
\node at (1.5,2.8) {$u$};
\node at (0,2.5) {$t_1$};
\node at (3.3,0.2) {$t_2$};
\node at (-2,0) {~};
\node at (5,0) {~};
\end{scope}
\end{tikzpicture}
\end{array}$
}
 \caption{Illustrating the proof by contradiction that $\Phi_{\F}(T) \in \F_{(k,\ell)}$. } \label{fig:subfig_pattern_proof}
 \end{figure}
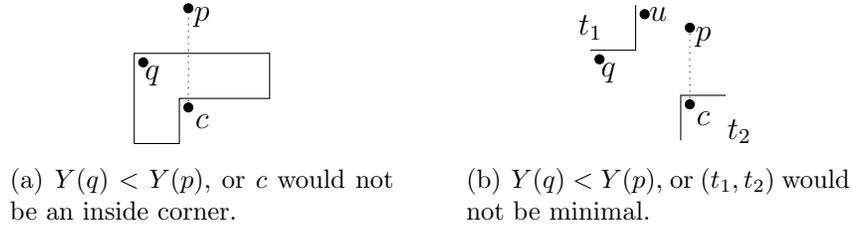

Denote by $p$ the parent of $c$ in $T$. We have $X(c) = X(p)$ and $Y(c) < Y(p)$, 
or $Y(c) = Y(p)$ and $X(c) > X(p)$. 
In the former (resp. latter) case, assume that $Y(p) \leq Y(q)$ (resp. $X(p) \geq X(u)$). 
Then $t_1$ and the tile whose top leftmost corner is $p$ would form a pattern $\Fp$, 
contradicting the minimality of $(t_1,t_2)$. 
Hence, we have $Y(p) > Y(q)$ (resp. $X(p) < X(u)$), 
so that $p,q,c$ (resp. $p,u,c$) form a pattern $\Tv$ (resp. $\Th$), 
contradicting that $T \in \T_{(k,\ell)}$.
\end{proof}

\subsection{Bijection between TLTs and packed floorplans}
\label{subsec:BijPFP}

The goal of the remainder of this section is to prove the following: 

\begin{theorem}\label{thm:FT}
For any $n$, $\Phi_{\F}$ is a bijection between $\T_n$ and $\F_n$.
\end{theorem}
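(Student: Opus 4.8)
The plan is to show that $\Phi_{\F}$ is a bijection by constructing an explicit inverse $\Psi_{\F}: \F_n \to \T_n$, and then checking that $\Psi_{\F} \circ \Phi_{\F} = \mathrm{id}$ and $\Phi_{\F} \circ \Psi_{\F} = \mathrm{id}$. Since Proposition~\ref{prop:phiF_well_defined} already guarantees $\Phi_{\F}$ is well-defined, the remaining work is entirely about the reverse direction. The key structural fact I would exploit is Observation~\ref{obs:PFP_obvious}: in a PFP $F$ of size $(k,\ell)$, every line of integer coordinate carries exactly one segment, and (with the bottom/right boundaries excluded) the top-left corner of at least one tile. This lets me read off a canonical ``removal order'' on the tiles of $F$, mirroring the \emph{RemovePoint} order on the points of a TLT.

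First I would define the inverse on a PFP $F$ by a recursive deletion procedure analogous to \emph{RemovePoint}. The candidate for the ``last'' tile (label $n$) is the tile whose top-left corner is the Northeastmost among all tiles whose top-left corner lies on an extreme segment — concretely, I expect the correct choice to be: take the tile $t$ such that deleting $t$ (by sliding together the segment it was attached to) yields a PFP of size $n-1$, and such that the cell of the would-be TLT carrying label $n$ is the ``special point.'' I would match this up with the $\Phi_{\F}$ construction: when $\Phi_{\F}$ inserts the tile with label $n$ first (largest possible, top-left corner = cell of point $n$), that tile occupies the Southeast region of the rectangle, consistent with point $n$ being the special point (bottom of its column, Northeastmost such). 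So the inverse step is: identify this tile $t$ in $F$, record that the point labeled $n$ in $T$ sits at the top-left cell of $t$, then contract $F$ along the side of the rectangle that $t$ was ``hanging from'' to obtain $F'$ of size $n-1$, and recurse. The ribbon/crossing cells of $T$ should correspond exactly to the interior cells of $t$ (other than its top-left corner), by Observation~\ref{obs:ins} and Corollary~\ref{cor:decomposition_Baxter_TLT}.

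The bookkeeping of the two inverse compositions I would handle by induction on $n$. For $\Psi_{\F}(\Phi_{\F}(T)) = T$: the last tile added by $\Phi_{\F}$ to $\Phi_{\F}(T)$ has top-left cell the point labeled $n$, which is the special point of $T$; I need to check that this is exactly the tile selected by $\Psi_{\F}$ as ``the one to delete,'' and that contracting it recovers $\Phi_{\F}(T_{n-1})$ where $T_{n-1}$ is the TLT obtained from $T$ by \emph{RemovePoint}. This requires verifying that the ribbon of $s$ in $T$ (if it exists) becomes precisely the strip of cells of $t$ to the right of its top-left corner and that the column/row of $s$ becomes the rest of $t$ — i.e.\ the last tile of $\Phi_{\F}(T)$ is the ``L-shaped-closure'' of $\{s\} \cup \text{col/row}(s) \cup \text{ribbon}(s)$, which by construction of $\Phi_{\F}$ it is. For $\Phi_{\F}(\Psi_{\F}(F)) = F$: starting from the TLT $T = \Psi_{\F}(F)$, I need that $T \in \T_n$ (avoids $\Th$, $\Tv$) — this follows because the PFP avoids $\Fp$ and the selection/contraction never creates the forbidden configurations, dualizing the argument in Proposition~\ref{prop:phiF_well_defined} — and that re-inserting tiles in the order $1, \dots, n$ via $\Phi_{\F}$ reconstructs $F$ tile by tile.

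The main obstacle I anticipate is \textbf{well-definedness of the deletion step on PFPs}: showing that there is a canonically determined ``last tile'' $t$ in any PFP $F$, that removing it and contracting the corresponding segment yields a genuine PFP of size $n-1$ (no inner corners created, no $\Fp$ pattern created), and — crucially — that this identification of $t$ is independent of any reference to TLTs, so that $\Psi_{\F}$ is genuinely a well-defined map $\F_n \to \T_n$ before we know anything about the composition. This is the PFP-side analogue of the delicate ``special point exists and the ribbon behaves'' discussion for TLTs, and it is where Observation~\ref{obs:PFP_obvious}(ii)--(iii) does the heavy lifting: the unique-segment-per-line property forces the structure of $F$ near the Southeast corner and pins down which tile must carry the top label. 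Once that is in place, the two compositional identities are a matter of matching up the recursive descriptions, which I would organize as a single induction carrying the statement ``$\Phi_{\F}$ and $\Psi_{\F}$ are mutually inverse bijections $\T_m \leftrightarrow \F_m$ for all $m < n$.''
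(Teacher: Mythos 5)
Your overall strategy (build an explicit inverse $\Psi_{\F}$, then check both compositions by induction on $n$) is the same as the paper's, and your identification of the tile labeled $n$ as the one sitting at the Southeast corner of the bounding rectangle is correct. But the proposal leaves unresolved exactly the step that carries the mathematical content, and you say so yourself: you ``anticipate'' as the main obstacle the existence of a canonically determined last tile, defined without reference to the TLT, together with a deletion-and-contraction step that stays inside $\F_{n-1}$. The paper's solution is different on both counts. It never contracts the floorplan at all: it defines a global \emph{tile-order} directly on $F$ (label $k$ is assigned to the rightmost among the unlabeled tiles whose bottom border touches only already-labeled tiles or the bottom boundary), and $\Psi_{\F}$ simply places pointed cells at the top-left corners of the tiles in this order, completing to a Ferrers shape and inserting a ribbon whenever the label-$k$ corner lies to the left of the label-$(k{-}1)$ corner. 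The two compositional identities then reduce to showing that tile-order and point-order coincide (Lemmas~\ref{lemma:order} and~\ref{lemma:order2}); the proof of Lemma~\ref{lemma:order} is a genuinely delicate ribbon analysis (ruling out a competing point $x$ by producing a ribbon from $x$ to the previously placed point and invoking Observation~\ref{obs:ribbon}), and nothing in your proposal substitutes for it. Note also that the paper deliberately avoids the claim you would need: in the proof of Lemma~\ref{lemma:order2} it observes that the object obtained by keeping only the tiles of small label is \emph{not} a PFP, so a recursion through $\F_{n-1}$ of the kind you sketch would require a separate (and nontrivial) argument that your contraction is well defined and preserves the $\Fp$-avoidance and the tile count.

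There is also a concrete error in your cell-level matching. You assert that the last tile of $\Phi_{\F}(T)$ is the ``L-shaped closure'' of $\{s\}\cup\mathrm{col/row}(s)\cup\mathrm{ribbon}(s)$. It is not: the tile with top-left corner at $s$ extends \emph{South and East} of $s$ (reaching the Southeast corner of the bounding rectangle, hence covering cells outside the Ferrers diagram of $T$ as well as the ribbon cells of $s$), whereas the column (resp.\ row) of $s$ removed by \emph{RemovePoint} consists of the empty cells \emph{above} (resp.\ \emph{to the left of}) $s$, which lie outside that tile. So the cells deleted from $T$ and the cells of the deleted tile do not correspond, and the ``matching up of the recursive descriptions'' you rely on would break at the first step. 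The correspondence that actually works is the one the paper uses: only the positions of the top-left corners and the order in which they are processed, not the cell supports of the tiles.
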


We start with a property on the (tree) structure of packed floorplans.
From now on we will use the term {\em corner} for ``top leftmost corner'' (\ie NW-corner).

\begin{lemma}\label{lemma:tree-PFP}
Let F be a PFP.
The set of (top leftmost) corners of the tiles of $F$ has a tree structure:
for any corner $c$ (different from the top leftmost corner of the bounding box),
there exists another corner $c'$ either above $c$ or to its left, but not both.
\end{lemma}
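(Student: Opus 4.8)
The plan is to argue directly from the structural Observation~\ref{obs:PFP_obvious}, which already does most of the heavy lifting.  Fix a PFP $F$ and a corner $c$ of a tile $t$, with $c$ distinct from the top leftmost corner of the bounding box.  I want to produce another corner $c'$ that lies weakly above $c$ in the same column or weakly to the left of $c$ in the same row, and to show both cannot happen simultaneously.  First I would dispose of the easy case: if $c$ lies on the left boundary of the bounding box (but is not its top corner), then walking up the left edge from $c$ eventually reaches the top left corner of the box, which is the corner of some tile, so $c'$ exists directly above $c$; symmetrically if $c$ lies on the top boundary.  So from now on assume $c$ is strictly interior to the bounding box in both the horizontal and vertical directions.

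Next, by Observation~\ref{obs:PFP_obvious}\eqref{item:PFP_obvious_Tjunction}, since $c$ is the corner of a tile and is not a corner of the bounding box, $c$ forms a T-junction.  A T-junction comes in one of the four shapes $\THaut,\TBas,\TGauche,\TDroit$, and in each of these exactly one of the two ``arms'' pointing away from the stem of the T goes into the interior: precisely, at $c$ either the vertical line through $c$ continues upward past $c$ as a side of $F$ and the horizontal continues only to one side, or vice versa — in any case there is a side of $F$ emanating from $c$ going either straight up or straight left (and exactly one of these two, since $c$ is a \emph{top leftmost} corner of $t$, so the tile $t$ occupies the region immediately to the lower-right of $c$, forcing the relevant sides of $F$ at $c$ to go up and/or left).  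I would make this case analysis explicit on the four T-junction pictures: in each case there is a segment of $F$ leaving $c$ going North, or a segment leaving $c$ going West, and — this is the key point — \emph{not both}, because one of the four directions at $c$ is occupied by the interior of $t$ and the opposite direction is then the stem, leaving exactly one of {North, West} as a genuine side of $F$.

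Now suppose a side of $F$ leaves $c$ going North (the West case being symmetric).  Follow this side upward.  By Observation~\ref{obs:PFP_obvious}\eqref{item:PFP_obvious_oneSegment} it lies on the unique vertical segment $s$ of its supporting line, and following $s$ upward we eventually reach the top endpoint of $s$.  That endpoint is either a corner of the bounding box or a T-junction (again by \eqref{item:PFP_obvious_Tjunction}); either way it is the corner of some tile, and it lies directly above $c$.  Actually, to land on a \emph{top leftmost} corner I would instead argue as follows: the vertical side leaving $c$ upward is the right side or the left side of some tile $t'$ sitting just above; if it is the left side of $t'$, the top left corner of $t'$ is on this line weakly above $c$; if it is the right side of $t'$, then $c$ itself is the bottom \emph{left} corner of another tile $t''$ lying to the right — but this contradicts $c$ being where we are; cleaner is to use \eqref{item:PFP_obvious_oneCorner} type reasoning, or simply note that the topmost point of $s$ that is still weakly above $c$ and is a top-left corner exists because every tile whose left side lies on $s$ contributes one.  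Either way I obtain the desired $c'$ above $c$.

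Finally, for the ``but not both'' clause: if there were corners $c'$ directly above $c$ and $c''$ directly to the left of $c$, both being top leftmost corners of tiles, then near $c$ there would be sides of $F$ going both North and West from $c$, hence all four quadrants around $c$ would be cut by sides of $F$, so four tiles would meet at $c$; but then, as in the proof of Observation~\ref{obs:PFP_obvious}\eqref{item:PFP_obvious_Tjunction}, two of those tiles form an occurrence of the forbidden pattern $\Fp$, a contradiction.  The main obstacle I anticipate is precisely this case analysis at the T-junction: making fully rigorous that ``top leftmost corner $+$ T-junction'' forces exactly one of the two sides (North, West) to be present, and cleanly extracting from the resulting segment an actual top-left corner of a tile rather than merely a point of $F$; the rest is routine bookkeeping on top of Observation~\ref{obs:PFP_obvious}.
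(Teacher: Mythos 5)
Your existence argument is essentially the paper's: by Observation~\ref{obs:PFP_obvious}~$(\ref{item:PFP_obvious_Tjunction})$ the junction at $c$ is a T-junction which, since the East and South arms are already supplied by the top and left sides of $t$, has exactly one further arm, going North (type $\TGauche$) or West (type $\THaut$); in the North case the tile occupying the quadrant Northeast of $c$ has $c$ as its bottom-left corner, so its top-left corner lies directly above $c$. This is exactly the paper's argument, and your detour about whether the North arm is a left or a right side is unnecessary --- it is the left side of that Northeast tile.

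The gap is in the ``but not both'' half. You assert that a corner $c''$ directly to the left of $c$ (together with one above) would force sides of $F$ emanating both West and North from $c$, so that four tiles meet at $c$. But a top-left corner elsewhere on the row of $c$ does not by itself produce a side of $F$ that reaches $c$: the top side of the tile cornered at $c''$ could stop well before $c$. Upgrading ``there is a corner to the left of $c$'' to ``there is a West arm at $c$'' is precisely where Observation~\ref{obs:PFP_obvious}~$(\ref{item:PFP_obvious_oneSegment})$ must be invoked: the top side of $c''$'s tile and the East arm at $c$ lie on horizontal segments supported by the same line, hence (by uniqueness) on the \emph{same} segment; but in the $\TGauche$ case that segment has its left end at $c$, so nothing of it --- in particular no tile corner --- can lie to the left of $c$, a contradiction. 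This is exactly how the paper concludes (two distinct segments on one line would contradict $(\ref{item:PFP_obvious_oneSegment})$). Without this step your inference is unjustified and the four-tiles/$\Fp$ contradiction never gets started; the fix is short, but as written the uniqueness half is not proved.
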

\begin{proof}
Let us consider the corner $c$ of a tile $t$, different from the top leftmost tile of $F$.
By Observation \ref{obs:PFP_obvious} $(\ref{item:PFP_obvious_Tjunction})$, the corner $c$ has to be either a $\TGauche$ or a $\THaut$. 
In the first case, the tile above $t$ with common left side has its corner above $c$.
If there were a corner $c_1$ to the left of $c$, 
then there would exist another segment supported by the same line as the top edge of $c$, 
in contradiction with Observation \ref{obs:PFP_obvious} $(\ref{item:PFP_obvious_oneSegment})$. 
This proves our statement in the first case, and 
a symmetric argument applies to the second case.
\end{proof}

Next, we define an order on the tiles of PFPs. 
A key step in proving Theorem~\ref{thm:FT} will be to show that this order coincides with the labeling of the points of the corresponding TLT. 

\begin{definition}
Let $F\in\F_n$. 
We define the {\em tile-order} of $F$ as the labeling of the tiles obtained in the following way.
We label the tiles from $n$ to $1$.
After assigning the labels $n,\dots,k+1$, we label with $k$ the tile
which is the rightmost among the unlabeled tiles whose bottom border does not touch any unlabeled tile  
(equivalently, its bottom border touches only labeled tiles or the bottom border of the bounding rectangle). 
\end{definition}

The notion of tile-order is illustrated at Figure \ref{fig:tile-order}.
\begin{figure}[ht]
 \includegraphics[scale=0.15]{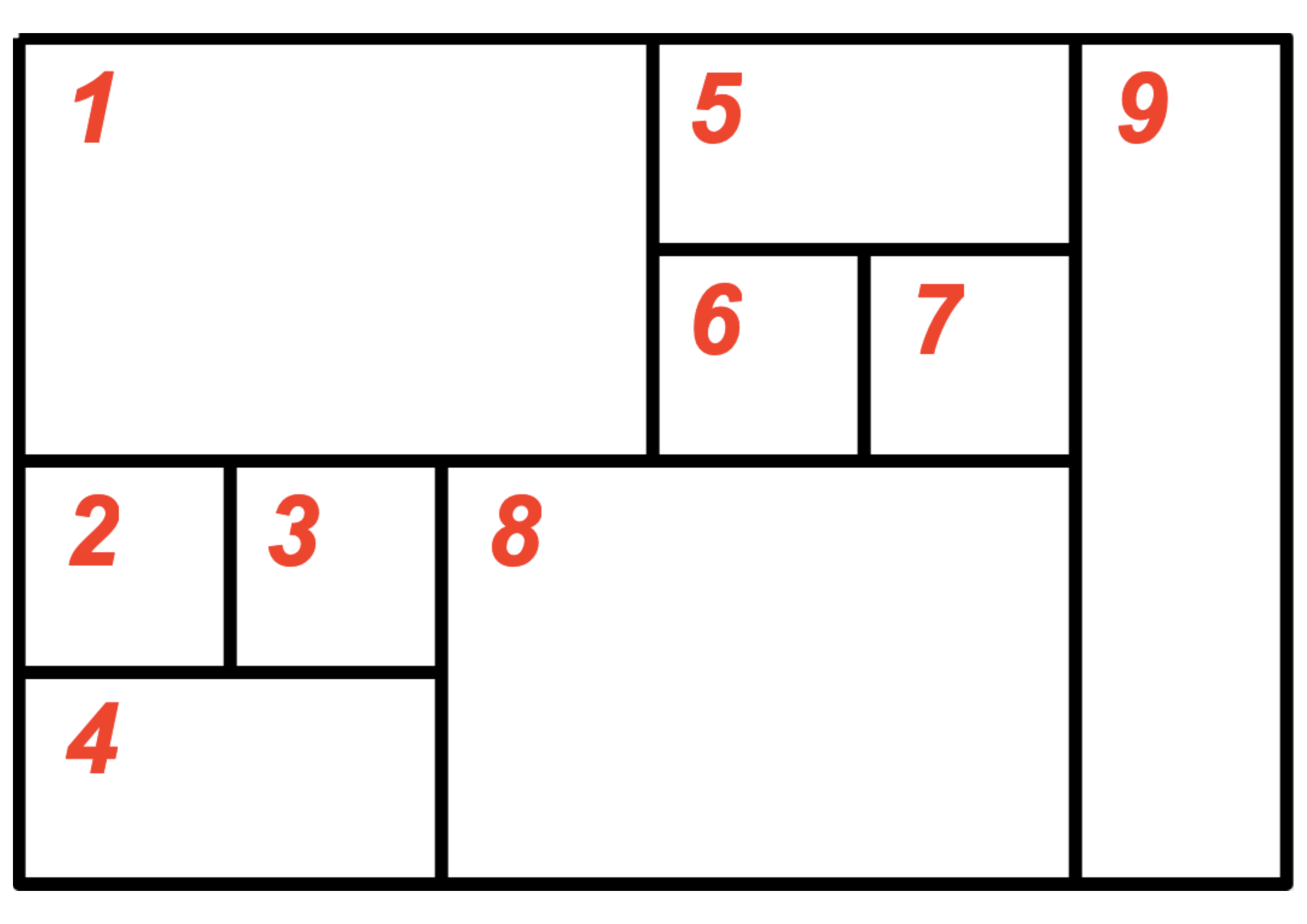} 
\caption{The tile-order on PFPs.}
\label{fig:tile-order}
\end{figure}

We shall now give an alternative description of the labeling of the pointed cells in a (Baxter) TLT, $T$, 
which is easily seen to be equivalent to the one given in Section~\ref{sec:tlt}. 
Indeed, this equivalent definition of the labels of $T$ makes it easier to prove that it coincides with the tile-order of $\Phi_{\F}(T)$.

\begin{definition}
Let $T\in\T_n$. 
We define the {\em point-order} of $T$ as follows.
We label the pointed cells from $n$ to $1$.
After assigning the labels $n,\dots,k+1$, we label with $k$ the point which is the rightmost 
among those with no cells below it.
If there is a cell to its right, we consider the ribbon of cells up to encountering a pointed cell, 
and we declare all cells of this ribbon ``dead''.
And in any case, we also declare ``dead'' the newly labeled pointed cell, and its empty row or column. 
Dead cells should be ignored (\ie treated as if they did not exist) in later iterations of this labeling procedure. 
\end{definition}

\begin{observation}\label{obs:point-order-region}
For a given pointed cell $c$ with label $j$, any pointed cell $c'$ which is weakly to the right and below $c$
has a label $j'>j$.
\end{observation}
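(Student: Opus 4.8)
The plan is to argue by contradiction, focusing on the step of the point-order at which $c$ receives the label $j$. First I would recall that the point-order is the \emph{RemovePoint} labeling of Section~\ref{sec:tlt}, so that the cells of $T$ still ``alive'' just before $c$ is labeled are exactly the cells of $T_j$, the term of size $j$ in the sequence $(T_n = T, T_{n-1}, \ldots, T_1)$ produced by iterating \emph{RemovePoint}. In particular this alive region is a genuine Ferrers diagram (each $T_i$ being a TLT), and within it $c$ is the special point, that is, the rightmost point lying at the bottom of its column of $T_j$. I would also note that a pointed cell can become ``dead'' only by being labeled, since the ribbons and the rows/columns removed along the way consist solely of empty cells; hence it suffices to prove that $c'$ is already dead when $c$ is labeled, equivalently that $c' \notin T_j$, which is exactly the assertion $j' > j$.

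So assume for contradiction that $c' \in T_j$, with $c' \neq c$. Let $r_c$ denote the row index of $c$ (rows numbered from the top); since $c$ sits at the bottom of its column in $T_j$, that column has height exactly $r_c$. If $c'$ lies in the same column as $c$, then being weakly below $c$ and distinct from it, $c'$ lies strictly below $c$ in that column of $T_j$, contradicting that $c$ is the lowest cell of the column. Otherwise $c'$ lies in a column strictly to the right of $c$; since the column heights of an English-notation Ferrers diagram are weakly decreasing from left to right, the column of $c'$ in $T_j$ has height at most $r_c$, while $c'$, being weakly below $c$, has row index at least $r_c$ and at most the height of its column. Hence $c'$ lies exactly in row $r_c$, at the bottom of its column in $T_j$. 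But then $c'$ is a pointed cell at the bottom of its column of $T_j$ and strictly to the right of $c$, contradicting the choice of $c$ as the rightmost such point. Either way we obtain a contradiction, so $c' \notin T_j$ and $j' > j$.

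I expect the only step requiring any care to be the identification of the alive region at the step labeling $c$ with the Ferrers diagram $T_j$ and of $c$ with its special point; for this I would simply invoke the stated equivalence of the point-order with the \emph{RemovePoint} labeling together with the fact, recalled in Section~\ref{sec:tlt}, that every $T_i$ in the removal sequence is a TLT. Once that dictionary is set up, the argument reduces to the short two-case comparison above, which uses nothing beyond the monotonicity of column heights in an English-notation Ferrers diagram, so I do not anticipate a genuine obstacle.
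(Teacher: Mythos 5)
Your argument is correct: identifying the alive region at the moment $c$ is labeled with the TLT $T_j$, recognizing $c$ as its special point, and using the weakly decreasing column heights of a Ferrers diagram in English notation is exactly the justification the statement calls for. The paper states this as an Observation with no proof at all, so your write-up simply supplies the routine verification that the authors leave implicit; there is no divergence of approach to report.
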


We now define an application $\Psi_\F$, which we will prove to be the inverse of $\Phi_\F$.

Let $F\in\F_n$.
We associate to each tile a label $k\in\{1,\dots,n\}$ using the tile-order, 
and we will construct a TLT $T=\Psi_\F(F)$ with the pointed cells at the same positions
as the corners of the tiles of $F$.
Let us denote by $U$ the object under construction.
We start with $U=\emptyset$, 
and for $k=1,\dots,n$:
\begin{enumerate}
\item we add a pointed cell to $U$ at the same position as the corner of the tile labeled by $k$ in $F$;
\item we complete in such a way that the shape of $U$ is still a Ferrers diagram 
(that is we add empty cells to the NW of the added pointed cell);
\item if the pointed cell labeled $k$ is to the left of the pointed cell labeled $k-1$, we place a ribbon
from $k$ to $k-1$.
\end{enumerate}
After dealing with $n$, we let $T:=U$.

Given the insertion of ribbons in the last item above, 
a straightforward induction allows to prove the following observation. 

\begin{observation}\label{obs:last-added-Psi}
In the computation of $\Psi_\F(F)$, after dealing with the tiles labeled $1$ to $j$, 
the latest pointed cell added 
(\emph{i.e.}, the pointed cell with label $j$ for the tile-order) is the rightmost among the pointed cell
without any cell below it.
\end{observation}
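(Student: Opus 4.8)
The plan is to prove Observation~\ref{obs:last-added-Psi} by induction on $j$, keeping track of the shapes produced during the construction of $\Psi_\F$. For $0\le j\le n$ write $U_j$ for the value of $U$ immediately after the tile labeled $j$ has been processed, so $U_0=\emptyset$, $U_n=\Psi_\F(F)$, and the pointed cells of $U_j$ are $c_1,\dots,c_j$, with $c_i$ sitting at the (top leftmost) corner of the tile labeled $i$ in $F$. Call a pointed cell of $U_j$ \emph{exposed} if $U_j$ contains no cell directly below it in the same column; the goal is then to show that for every $j\ge 1$ the cell $c_j$ is the rightmost exposed cell of $U_j$. The base case $j=1$ is immediate, since $U_1$ has exactly one pointed cell, namely $c_1$.

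For the inductive step, assume $j\ge 2$ and that $c_{j-1}$ is the rightmost exposed cell of $U_{j-1}$. Processing the tile labeled $j$ performs three operations: \emph{(a)} it places the pointed cell $c_j$ at the corner of that tile; \emph{(b)} it adds the empty cells strictly to the North-West of $c_j$ needed to keep a Ferrers shape, all of which lie weakly to the left of and strictly above $c_j$; and \emph{(c)} if $c_j$ lies strictly to the left of $c_{j-1}$, it inserts a ribbon from $c_j$ to $c_{j-1}$. When (c) occurs, Observation~\ref{obs:ribbon} together with the definition of a ribbon shows that $c_j$ lies strictly to the left and strictly below $c_{j-1}$, that the ribbon is a connected path of empty cells along the South-East border of the current shape whose bottom leftmost cell is immediately to the right of $c_j$ and whose top rightmost cell is immediately below $c_{j-1}$, and hence that the ribbon is confined to the columns strictly to the right of $c_j$ and up to that of $c_{j-1}$, meeting each such column in the cell just below the current bottom cell of that column.

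The induction then closes in two verifications. First, $c_j$ is exposed in $U_j$: the cells added in (b) lie strictly above $c_j$, those added in (c) lie strictly to the right of $c_j$, and nothing was already buried below $c_j$ in $U_{j-1}$ --- this last point follows from the tile-order, which selects the tile labeled $j$ so that its bottom border meets only tiles of larger label (equivalently, so that no tile of smaller label lies strictly below and weakly to the right of it), which prevents any cell below $c_j$ from having been created while completing the earlier points. Second, no pointed cell strictly to the right of $c_j$ is exposed in $U_j$: such a cell is some $c_i$ with $i<j$; if it was non-exposed in $U_{j-1}$ it stays so, while if it was exposed in $U_{j-1}$ then by the inductive hypothesis it lies weakly to the left of $c_{j-1}$, hence in a column strictly between those of $c_j$ and $c_{j-1}$, so operation (c) was triggered and its ribbon runs through the cell directly below $c_i$. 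Combining the two, $c_j$ is exposed and nothing lying strictly to its right is, so $c_j$ is indeed the rightmost exposed cell of $U_j$, which is the assertion for $j$.

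The step I expect to carry the real weight is the geometric description of the ribbon inserted in operation (c): one must check that, whenever it appears, it genuinely ends at the cell directly below $c_{j-1}$ and, following the South-East border, passes directly beneath every exposed cell of $U_{j-1}$ lying between the columns of $c_j$ and $c_{j-1}$ --- this is the precise mechanism that converts ``$c_{j-1}$ rightmost exposed in $U_{j-1}$'' into ``$c_j$ rightmost exposed in $U_j$''. The only other delicate point is the use of the tile-order guaranteeing that the corner of the tile labeled $j$ is never buried below cells already present in $U_{j-1}$; though it is a purely local consequence of the definition of the tile-order, it deserves to be spelled out rather than left implicit.
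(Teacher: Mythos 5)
Your proof is correct and follows exactly the route the paper has in mind: the paper's own justification of Observation~\ref{obs:last-added-Psi} is the single sentence ``given the insertion of ribbons\dots a straightforward induction,'' and your argument is precisely that induction carried out in detail, with the two load-bearing facts (the inserted ribbon passes directly beneath every exposed cell in the columns between $c_j$ and $c_{j-1}$, and $c_j$ itself ends up exposed) correctly identified. The only point I would tighten is the one you already flag: your justification that nothing of $U_{j-1}$ sits below the position of $c_j$ argues only about the \emph{tiles} of smaller label, whereas $U_{j-1}$ also contains Ferrers-completion and ribbon cells that are not a priori confined to those tiles; to close this you should carry through the induction the extra invariant that $U_{j-1}\subseteq\bigcup_{i<j}\mathrm{tile}_i$ (or, equivalently, that the bottom profile of $U_{j-1}$ never drops below that union), which combined with the fact that the tile-order makes $\bigcup_{i\ge j}\mathrm{tile}_i$ downward closed in each column yields the claim.
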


\begin{proposition}\label{prop:PsiHasCodomainT}
For any $F\in\F_n$, $\Psi_\F(F)$ is in $\T_n$.
\end{proposition}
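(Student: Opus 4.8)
The plan is to show that the object $T = \Psi_\F(F)$ satisfies the three conditions of Definition~\ref{defi:tlt} (so that it is a TLT) and that it avoids the patterns $\Th$ and $\Tv$ (so that it is a Baxter TLT), and finally that it has the right size $n$. The size is immediate: we add exactly $n$ pointed cells, one per tile of $F$, and since $F$ has $k+\ell-1 = n$ tiles the construction places $n$ points; moreover each step adds either a row or a column (by the Ferrers-completion in step (2)), so the semiperimeter grows by one at each step and the final shape has semiperimeter $n+1$, giving size $n$.

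First I would verify the TLT axioms. Condition (3) (no empty row or column) holds because by Observation~\ref{obs:PFP_obvious}$(\ref{item:PFP_obvious_oneCorner})$ every horizontal (resp. vertical) line inside the bounding box of $F$ contains the corner of at least one tile, hence every row and column of $T$ contains a pointed cell. Condition (1) (the top-left cell is pointed) holds because the top-left corner of the bounding box of $F$ is the corner of a tile, which receives tile-order label $1$ and hence becomes the first (root) point. For condition (2), I would use Lemma~\ref{lemma:tree-PFP}: the corners of the tiles of $F$ already have a tree structure, where each non-root corner $c$ has a unique ``parent'' corner $c'$ either strictly above $c$ in the same column or strictly to its left in the same row, but not both. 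It remains to check that in the constructed Ferrers shape $T$ this $c'$ is in fact the nearest such pointed cell and that there is no \emph{other} pointed cell of $T$ lying above $c$ in its column (resp. to its left in its row) when $c'$ is to the left (resp. above) — but this is exactly the content of Observation~\ref{obs:PFP_obvious}$(\ref{item:PFP_obvious_oneSegment})$ used as in the proof of Lemma~\ref{lemma:tree-PFP}: a second corner above $c$ together with the corner $c'$ to its left would force two horizontal segments on the same line. So condition (2) follows from the segment-uniqueness already established for PFPs.

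Next I would check pattern avoidance. Suppose for contradiction that $T$ contains an occurrence of $\Th$ (the case of $\Tv$ being symmetric): there are pointed cells $a$, $b$, $c$ with $b$ strictly northeast of $a$ in the configuration $\SCarre{\bullet}\,\SCarre{}$ over $\SCarre{\bullet}$, i.e.\ $a$ below-left, $b$ the shared ``corner'' cell, and the third point $c$ to the right of $a$ on $a$'s row, all as prescribed by the pattern. Translating back through the construction, $a$, $b$, $c$ are corners of three tiles of $F$. I would then argue that the tile $t_a$ whose corner is $a$, being a rectangle whose top-left corner is $a$ and whose right side must reach past $c$'s column (since nothing in $T$ blocks $a$'s row before $c$ and the tiles of a PFP tile the whole box), together with the tile $t_b$ whose corner is $b$, realizes the forbidden configuration $\Fp$ of Definition~\ref{defi:PFP}: the bottom-right corner of $t_b$ is weakly left of and weakly above the top-left corner $a$ of $t_a$. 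This contradicts that $F$ is a PFP. The precise bookkeeping — matching each pointed-cell inequality in $\Th$/$\Tv$ to the coordinate inequalities $x_1\le x_2$, $y_1\ge y_2$ forbidden in $\F_n$, using that every row and column of $T$ contains a point so the relevant tiles genuinely extend as claimed — is the main obstacle, and I expect it to require a short case analysis according to whether the ``parent'' relationship in the underlying tree of $F$ is horizontal or vertical, mirroring the two displayed forms of each pattern.

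Finally, I would note that steps (1)–(3) of the construction are consistent — the ribbon inserted in step (3) from label $k$ to label $k-1$ fits inside the Ferrers shape produced by step (2), as recorded in Observation~\ref{obs:last-added-Psi}, so that $U$ is a legitimate TLT-in-progress at every stage and in particular $T$ has no ill-formed ribbons. Combining the TLT axioms, the size computation, and the avoidance of $\Th$ and $\Tv$, we conclude that $\Psi_\F(F) \in \T_n$, which is the assertion of Proposition~\ref{prop:PsiHasCodomainT}.
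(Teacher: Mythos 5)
Your treatment of the TLT axioms follows the paper's proof (Lemma~\ref{lemma:tree-PFP} for the parent condition, Observation~\ref{obs:PFP_obvious} for the non-empty rows and columns, the size count), and your overall strategy for pattern avoidance --- the contrapositive, i.e.\ an occurrence of $\Th$ or $\Tv$ in $T$ yields an occurrence of $\Fp$ in $F$ --- is also the paper's. The genuine gap is in the execution of that last step, which is where the content of the proposition lies. With your labelling of an occurrence of $\Th$ ($a$ the lower-left point, $b$ the point strictly above and strictly to the right of $a$, $c$ the point to the right of $a$ in $a$'s row), the pair of tiles you exhibit cannot form $\Fp$: since $b$ lies strictly to the right of $a$, the whole tile $t_b$, and in particular its bottom-right corner, lies strictly to the right of the top-left corner $a$ of $t_a$, so the inequality $x_1\le x_2$ of Definition~\ref{defi:PFP} fails outright for the pair $(t_b,t_a)$. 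The auxiliary claim that the right side of $t_a$ ``must reach past $c$'s column'' is also unjustified: the three columns of the occurrence need not be adjacent, so other points of $T$ may sit in $a$'s row between $a$ and $c$, and even over an empty stretch of that row a tile whose top-left corner lies in a higher row may descend and stop $t_a$ short.

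The occurrence of $\Fp$ has to be built around $c$ (the far point of the horizontally aligned pair) and around a tile \emph{adjacent} to $t_b$, not $t_b$ itself, and this only works after normalizing the occurrence. The paper first chooses an occurrence in which the two aligned points are at minimal distance, which forces $a$ to be the parent of $c$ and hence forces $t_a$ to stretch along $a$'s row up to $c$'s column; it then takes the extremal (for $\Th$, leftmost) choice of the middle point $b$, and sets $t_1$ to be the tile whose right side is supported by the same segment as the left side of $t_b$. Because the cell of $a$'s row just to the left of that segment is covered by $t_a$, the tile $t_1$ cannot descend to $a$'s row, so its bottom-right corner is weakly above and strictly to the left of $c$, and $(t_1,t_c)$ is an occurrence of $\Fp$. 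Without the minimality of the occurrence (which is what pins down the extent of $t_a$) and the passage from $t_b$ to the adjacent tile $t_1$, the coordinate inequalities you need simply do not follow, so as written your argument does not close.
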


\begin{proof}
Let $F\in\F_n$, and $T:=\Psi_\F(F)$. 
By construction, the shape of $T$ is a Ferrers diagram.
Because the position of the pointed cells in $T$ corresponds to the position of the corners in $F$,
Lemma \ref{lemma:tree-PFP} implies that any pointed cell (with the exception of the root) has either a pointed cell above it 
or to its left but not both.
Observation \ref{obs:PFP_obvious}$(\ref{item:PFP_obvious_oneCorner} )$ 
implies that any row or column contains at least one pointed cell. 
Since the sizes clearly match, $T$ is a TLT of size $n$. 

To conclude the proof, it is enough to show that if $T$ contains $\Th$ or $\Tv$, 
then $F$ contains $\Fp$. 
So, assume $T$ contains $\Tv$ (the other case being similar). 
Consider an occurrence of $\Tv$ in $T$ where the distance between the two vertically aligned points of this occurrence is minimal. 
This implies that the topmost of these two points (denoted $p$) is the parent of the bottommost one (denoted $c$). 
By assumption, we know that there exists a point $q$ to the left of $p$ and $c$, 
which lies vertically between these two points. 
Consider the topmost $q$ among all such points. 
Denote by $t_1$ the tile of $F$ whose bottom side is supported by the same segment as the top side of the tile containing $q$. 
And denote by $t_2$ the tile containing $c$. 
This situation is displayed in Figure~\ref{fig:t1t2forbiddenpattern}. 
Then, $t_1$ and $t_2$ form a $\Fp$ pattern (since the right side of $t_1$ needs to be to the left of $p$). 
\end{proof}

\begin{figure}[ht]
\begin{tikzpicture}
\begin{scope}[scale=0.6]
\draw[dotted] (2.2,0.8) -- (2.2,2.5);
\draw (2,0) -- (2,1) -- (3,1);
\draw (2,1.8) -- (2,2.8) -- (3,2.8);
\draw (-0.5,2) -- (1.5,2) -- (1.5,3);
\draw (0,2) -- (0,1.2);
\draw (2.2,0.8) [fill] circle (.1);
\draw (2.2,2.5) [fill] circle (.1);
\draw (0.2,1.8) [fill] circle (.1);
\node at (2.5,0.5) {$c$};
\node at (2.5,2.3) {$p$};
\node at (0.4,1.5) {$q$};
\node at (0,2.5) {$t_1$};
\node at (3.3,0.2) {$t_2$};
\end{scope}
\end{tikzpicture}
 \caption{Proof of Proposition~\ref{prop:PsiHasCodomainT}: if $T$ contains a forbidden pattern then $F$ also does.} \label{fig:t1t2forbiddenpattern}
 \end{figure}
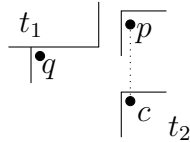

We can now turn to the proof (with the following two lemmas) that point-order and tile-order coincide. 

\begin{lemma}\label{lemma:order}
Let $T\in\T_n$ and $F=\Phi_{\F}(T)$.
When identifying the pointed cells of $T$ and the corners of the tiles of $F$,
the point-order of $T$ coincides with the tile-order of $F$.
\end{lemma}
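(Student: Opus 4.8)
The plan is to proceed by downward induction on the label $j$, running from $n$ to $1$, showing that the point with point-order label $j$ in $T$ is exactly the corner of the tile with tile-order label $j$ in $F=\Phi_{\F}(T)$. The geometric positions of points of $T$ and corners of tiles of $F$ coincide by the very definition of $\Phi_{\F}$, so the content is entirely about the two labelings being the same. Both labelings are defined by a ``peel from the top'' rule: the point-order picks, among currently-live points, the rightmost one with no live cell below it; the tile-order picks, among currently-unlabeled tiles, the rightmost one whose bottom border touches only labeled tiles (or the bounding box). I would set up a common inductive hypothesis asserting not merely that the first $n-j$ choices agree, but also that the ``remaining region'' on both sides is the same, so that the $(n-j+1)$-st choice is a comparison of the same selection rule on the same configuration.

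The key steps, in order. First, I would make precise what the ``live'' sub-object is after $k$ steps of the point-order: it is (the pointed cells of) $T_{n-k}$ in the sequence $(T_n,\dots,T_1)$ of Section~\ref{sec:tlt}, i.e.\ the step of the point-order coincides by construction with the step of \emph{RemovePoint}, which removes the special point together with its row/column and its ribbon; this is exactly the observation recorded just before Definition~\ref{dfn:ins}. Correspondingly, on the floorplan side, after labeling tiles $n,\dots,k+1$ I would claim the union of the remaining tiles is itself a PFP, and its corners are precisely the live points of $T_{n-k}$ --- this is the statement that removing the last tile in tile-order corresponds, under $\Phi_{\F}$, to removing the special point (with its row/column) in $T$. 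Second, granting this, I need the single-step compatibility: the rightmost tile whose bottom touches only already-removed tiles (or the box floor) has, as its corner, the rightmost live point with nothing live below it. Here I would use the definition of $\Phi_{\F}$: the tile with corner at point $p$ is drawn by extending a segment South and East from $p$ as far as possible, so the bottom side of that tile lies on a horizontal segment, and a point $p'$ is ``below $p$ with something live in between'' precisely when $p'$'s tile sits under $p$'s tile --- matching ``bottom border touches an unlabeled tile''. The ``rightmost'' tie-breaker transfers directly since the two objects share the same ambient $k\times\ell$ rectangle and the same horizontal/vertical coordinates of points and corners. Third, I would invoke Observation~\ref{obs:last-added-Psi}/Observation~\ref{obs:point-order-region} and Observation~\ref{obs:PFP_obvious}(\ref{item:PFP_obvious_oneCorner}) only as needed to guarantee the selection rule always has a unique valid choice, so the induction never stalls.

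The main obstacle I expect is the second step: cleanly arguing that ``$p'$ lies below $p$ with a live point strictly between them in $T$'' is equivalent to ``in $F$, the bottom side of the tile at $p$ is touched from below by the (still-unlabeled) tile at $p'$''. This requires unwinding the ``largest possible tile'' construction of $\Phi_{\F}$ and using that the tiles are added in \emph{decreasing} label order, so that when the tile at $p$ is created it is clipped exactly by the tiles with larger labels, which are the ones already removed in the point-order. One has to be careful about the ribbon cells (the crossings, by Observation~\ref{obs:ins}): a live point ``below'' $p$ should be read through the Ferrers shape of the live sub-object, not the full rectangle, so I would phrase the whole induction in terms of the shrinking Ferrers diagram $T_{n-k}$ rather than the rectangle, and note that the Southeast block of crossings (Corollary~\ref{cor:decomposition_Baxter_TLT}) never hosts a point and so never interferes with the ``nothing below'' test. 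Once this equivalence is in place, the induction closes routinely and, combined with Lemma~\ref{lemma:order}'s intended companion lemma, shows point-order and tile-order agree.
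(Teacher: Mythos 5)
Your overall architecture is the same as the paper's: a downward induction from $n$ to $1$ maintaining that the choices made so far agree, so that the $(n-k+1)$-st selection is a comparison of the two rules on matching configurations. The paper proves precisely your inductive statement — after placing the tiles for point-labels $n,\dots,k+1$, the tile at the point with point-label $k$ is the rightmost unlabeled tile whose bottom border touches only labeled tiles. Two small corrections to your setup: your indexing of the live object is inconsistent (after labeling tiles $n,\dots,k+1$ the remaining object corresponds to $T_k$, not $T_{n-k}$), and the union of the remaining tiles is \emph{not} a PFP — it is not even of rectangular shape, as the paper notes explicitly when introducing $\tilde F$ in the proof of the companion Lemma~\ref{lemma:order2}.

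The genuine gap is in your single-step compatibility. You reduce it to an equivalence of eligibility conditions and then assert that the ``rightmost'' tie-breaker ``transfers directly''; but the whole difficulty of the lemma is concentrated exactly there. Concretely, a point $x$ of $T$ with point-label $<k$, lying well to the right of the candidate (the point with point-label $k$), could a priori have a tile whose bottom border touches only already-labeled tiles: this happens precisely when every cell of $T$ below $x$ is a crossing lying on an already-removed ribbon, so that $x$ is ineligible for the point-order (it still has cells of $T$ below it, just dead ones) yet its tile is eligible for the tile-order and more ``rightmost'' than the candidate's. Your proposed tools do not rule this out: ``the tile at $p$ is clipped exactly by the tiles with larger labels'' is true but insufficient, and Corollary~\ref{cor:decomposition_Baxter_TLT} concerns the global root decomposition, not the step-$k$ configuration. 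What actually kills such an $x$ is a label computation on ribbon endpoints: one first shows (using the avoidance of $\Th$ and $\Tv$ and the minimality of the interfering configuration) that the ribbon whose right extremity sits at $x$ must emanate from one of the boundary points of the already-placed region, which has label at least $k+1$; Observation~\ref{obs:ribbon} then forces the point-label of $x$ to be at least $k$, contradicting that $x$ is still unlabeled. Without this argument the comparison of the two ``rightmost'' selections does not go through and the induction does not close.
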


\begin{proof}
We shall prove the following statement by induction on $k$ from $n$ to $1$: 
\begin{quote}
In the computation of $F=\Phi_{\F}(T)$, after dealing with the points of $T$ having point-labels $n, \dots k+1$, 
the tile associated with the point of $T$ with point-label $k$ is the rightmost among the unlabeled tiles whose bottom border does not touch any unlabeled tile. 
\end{quote}
Indeed, this ensures that at each step $k$, the point of $T$ with point-label $k$ corresponds to the tile of $F$ with tile-label $k$, and therefore proves the lemma. 

\medskip

We first prove our claim in the case $k=n$. 
The tile associated with the point of $T$ having point-label $n$ being added maximal, it reaches the bottom-right corner of the bounding rectangle of $F$, 
which obviously implies our claim. 

\medskip

We next examine the case $k=n-1$ (as preparation for the generic case). 
First observe that the point of $T$ of point-label $n-1$ cannot lie above and to the left of the point of label $n$ (see hatched area in Figure~\ref{fig:order1}, left), 
or the corresponding tile would not be of rectangular shape, contradicting Proposition~\ref{prop:phiF_well_defined}. 
We distinguish two subcases depending on whether this point is above and to the right of the point of label $n$ (case $\textcolor{blue}{(a)}$), or to the left and below it (case $\textcolor{blue}{(b)}$, 
both displayed in Figure~\ref{fig:order1}, left). 

\begin{figure}[ht]
\begin{tikzpicture}
\begin{scope}[scale=0.6]
\draw (0,0) rectangle (6,4);
\draw[fill=lightgray] (3,0) rectangle (6,2); 
\draw (3.2,1.8) [fill] circle (.1);
\node at (3.6,1.7) {\footnotesize $n$};
\draw[pattern=north west lines, pattern color=blue] (0,4) rectangle (3,2);
\node at (5.5,3.5) {$\textcolor{blue} {(a)}$};
\node at (2.5,0.5) {$\textcolor{blue} {(b)}$};
\end{scope}
\end{tikzpicture} 
\qquad 
\begin{tikzpicture}
\begin{scope}[scale=0.6]
\draw (0,0) rectangle (6,4);
\draw (1,0) rectangle (3,1.2); 
\draw[fill=lightgray] (3,0) rectangle (6,2); 
\draw[fill=lightgray] (4.5,2) rectangle (6,3.2); 
\draw (3.2,1.8) [fill] circle (.1);
\node at (3.6,1.7) {\footnotesize $n$};
\draw (1.2,1) [fill] circle (.1);
\node at (1.8,0.7) {\footnotesize $n-1$};
\draw (4.7,3) [fill] circle (.1);
\node at (5.1,2.9) {\footnotesize $x$};
\end{scope}
\end{tikzpicture}
\qquad 
\begin{tikzpicture}
\begin{scope}[scale=0.6]
\draw[draw=none,fill=lightgray] (0.5,0.4) rectangle (7,0);
\draw[draw=none,fill=lightgray] (1.2,1) rectangle (7,0);
\draw[draw=none,fill=lightgray] (3,2) rectangle (7,0);
\draw[draw=none,fill=lightgray] (4,2.6) rectangle (7,0);
\draw[draw=none,fill=lightgray] (4.8,3.3) rectangle (7,0);
\draw[draw=none,fill=lightgray] (5.7,3.7) rectangle (7,0);
\draw[draw=none,pattern=north west lines, pattern color=blue] (0,0.4) rectangle (0.5,4);
\draw[draw=none,pattern=north west lines, pattern color=blue] (0.5,1) rectangle (1.2,4);
\draw[draw=none,pattern=north west lines, pattern color=blue] (1.2,2) rectangle (3,4);
\draw[draw=none,pattern=north west lines, pattern color=blue] (3,2.6) rectangle (4,4);
\draw[draw=none,pattern=north west lines, pattern color=blue] (4,3.3) rectangle (4.8,4);
\draw[draw=none,pattern=north west lines, pattern color=blue] (4.8,3.7) rectangle (5.7,4);
\draw (0,0) rectangle (7,4);
\draw (0.5,0) -- (0.5,0.4) -- (1.2,0.4) -- (1.2,1) -- (3,1) -- (3,2) -- (4,2) -- (4,2.6) -- (4.8,2.6) -- (4.8,3.3) -- (5.7,3.3) -- (5.7,3.7) -- (7,3.7);
\draw[blue] (0,0) -- (0,0.4) -- (0.5,0.4) -- (0.5,1) -- (1.2,1) -- (1.2,2) -- (3,2) -- (3,2.6) -- (4,2.6) -- (4,3.3) -- (4.8,3.3) -- (4.8,3.7) -- (5.7,3.7) -- (5.7,4);
\draw (0.7,0.2) [fill] circle (.1);
\draw (1.4,0.8) [fill] circle (.1);
\draw (3.2,1.8) [fill] circle (.1);
\draw (4.2,2.4) [fill] circle (.1);
\draw (5,3.1) [fill] circle (.1);
\draw (5.9,3.5) [fill] circle (.1);
\draw (1.8,1) -- (1.8,1.7) -- (3,1.7);
\draw (1.95,1.55) [fill] circle (.1);
\node at (2.2,1.4) {\footnotesize $k$};
\end{scope}
\end{tikzpicture}
 \caption{Proof of Lemma~\ref{lemma:order}.} \label{fig:order1}
\end{figure}
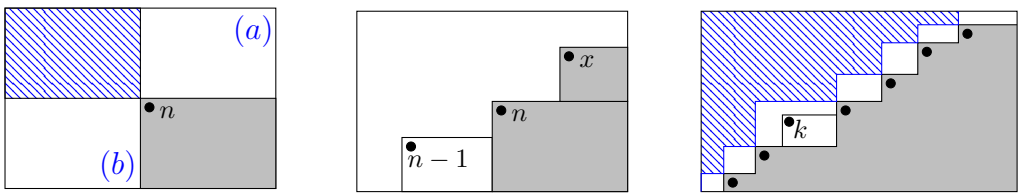

In case $\textcolor{blue}{(a)}$, the tile associated with the point of point-label $n-1$ being maximal, it reaches the topright corner of the tile labeled $n$, 
and therefore is the rightmost among the unlabeled tiles whose bottom border does not touch any unlabeled tile. 

In case $\textcolor{blue}{(b)}$ (and this is illustrated by Figure~\ref{fig:order1}, middle), 
we want to make sure that  there does not exist any point of $T$, located above and to the right of the point of label $n$, 
and whose tile reaches the topright corner of the tile labeled $n$. 
Indeed, only such a point could give rise a different tile of $F$ which would receive the tile-label $n-1$. 
So let us assume that such a point, $x$, of $T$ exists. 

We know that $x$ does not receive the point-label $n-1$. 
Since it is to the right of the point of $T$ receiving this point-label, 
it is necessary that the cell below $x$ is an empty cell of $T$. 
In its row, this empty cell has no point of $T$ to its right (by assumption on $x$), 
and therefore must have a point of $T$ to its left (its row being non-empty). 
This shows that this empty cell is a crossing of $T$. 
By Observation~\ref{obs:ins}, it follows that this empty cell belongs to a ribbon of $T$. 
Our next step is to show that the point of $T$ at the other extremity of this ribbon (denoted $y$) is the point of label $n$. 

Assume first that $y$ is located in a column to the left of the column of the point labeled $n$. 
It means that a cell of the ribbon we are considering belongs to the column of the point labeled $n$. 
This cell cannot be below the point labeled $n$ (otherwise it would not have received the point-label $n$). 
And it cannot be above the point labeled $n$, since a crossing above this point implies the existence of a forbidden pattern $\Tv$. 
So in this case we reach a contradiction. 

Assume next that $y$ lies in the column of the point labeled $n$ or further to the right, but is different from the point labeled $n$. 
Necessarily, $y$ lies above the point labeled $n$ (which has no point to its right and below, see e.g. Observation~\ref{obs:point-order-region}). 
Also, by Observation~\ref{obs:ribbon}, $y$ has point-label one larger than the point-label of $x$. 
Therefore, in the construction of $F=\Phi_{\F}(T)$, the tile associated with $y$ should be placed before the tile associated with $x$. 
This contradicts that the tile associated with $x$ reaches the topright corner of the tile labeled $n$ 
(instead, in this situation, the tile associated with $y$ should extend until this corner). 

Therefore, the point $y$ at the left extremity of the ribbon whose right extremity is $x$ is the point of label $n$. 
Observation~\ref{obs:ribbon} then implies that $x$ has point-label $n-1$, which  brings a contradiction. 
This concludes our analysis in case $\textcolor{blue}{(b)}$, proving that there exists no point $x$ as above, 
and therefore that the tile associated with the point labeled by $n-1$ also has tile-label $n-1$. 

\medskip

The case of generic $k \leq n-1$ is similar to the case $k=n-1$. 
Figure~\ref{fig:order1} (right) represents the construction of $F=\Phi_{\F}(T)$ after the placement of the tiles 
associated with points labeled $n$ to $k+1$. The gray area represents the tiles already in place. 
The Northwest boundary of this area is determined by a sequence of points $c_1, \dots, c_h$ ($h=6$ on the picture), 
which are those having point-label at least $k+1$ and no point with such labels above or to the left of them. 

First, we note that the point of $T$ labeled by $k$ cannot lie above and to the left of any $c_i$. 
Indeed, the tile associated with the point labeled $k$ would not be of rectangular shape in this case. 
In Figure~\ref{fig:order1} (right), the hatched area indicates this area where the point labeled by $k$ cannot lie. 

So, the point labeled by $k$ must lie in some ``white cell'' of Figure~\ref{fig:order1} (right). 
To ensure that the tile associated with the point labeled $k$ receives the tile-label $k$, 
similarly to the previous case ($k=n-1$), it is enough to show that 
in each ``white cell'' to the right of that containing the point labeled $k$ 
there is no point of $T$ whose tile reaches the bottom-right corner of this ``white cell''. 

To prove this claim, we proceed as in case $\textcolor{blue}{(b)}$ above. 
More precisely, assuming that a point $x$ as above exists, 
we consider the first point $c_i$ to the left of $x$ 
(this $c_i$ plays the role of the point labeled by $n$ in case $\textcolor{blue}{(b)}$ above), 
and we show that there exists a ribbon from $x$ to $c_i$. 
This part of the proof is identical to what was done in case $\textcolor{blue}{(b)}$. 

The label of $c_i$ being at least $k+1$, Observation~\ref{obs:ribbon} shows that $x$ has point-label at least $k$. 
On the other hand, $x$ has point-label less than $k$ (since it is a point not yet used for the construction of $F$ 
when placing the tile associated with the point labeled by $k$). 
This gives the contradiction showing that no such $x$ exists. 
And therefore, it guarantees that the point labeled by $k$ also has tile-label $k$, which concludes the proof. 
\end{proof}

\begin{lemma}\label{lemma:order2}
Let $F\in\F_n$ and $T=\Psi_{\F}(F)$.
When identifying the pointed cells of $T$ and the corners of the tiles of $F$,
the point-order of $T$ coincides with the tile-order of $F$.
\end{lemma}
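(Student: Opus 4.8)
The plan is to prove the statement by downward induction on $j$ from $n$ to $1$, showing that the point-order procedure applied to $T = \Psi_\F(F)$ removes (declares dead) its points in exactly the reverse of the order in which $\Psi_\F$ placed them. Write $\emptyset = U_0, U_1, \dots, U_n = T$ for the successive states of the object under construction during the computation of $\Psi_\F(F)$, so that $U_j$ is obtained from $U_{j-1}$ by adding the corner of the tile of $F$ having tile-label $j$, then the empty cells needed to keep a Ferrers shape, and possibly a ribbon. I will prove that, for each $j$ from $n$ down to $1$, after the point-order procedure has assigned the labels $n, n-1, \dots, j+1$ the set of non-dead cells is exactly $U_j$, and that the procedure then assigns point-label $j$ to the corner of the tile with tile-label $j$. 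Equivalently, this amounts to showing that a single iteration of the point-order procedure --- which, as noted in Section~\ref{sec:tlt}, is the procedure \emph{RemovePoint} --- transforms $U_j$ into $U_{j-1}$ while selecting the corner of the tile with tile-label $j$.

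The base case $j = n$ is immediate: $U_n = T$ (which is a TLT by Proposition~\ref{prop:PsiHasCodomainT}) and no cell is dead before the first iteration. For the inductive step, assume the non-dead cells form the TLT $U_j$. The point-order procedure selects the rightmost non-dead point having no non-dead cell below it, which is precisely the special point of $U_j$; by Observation~\ref{obs:last-added-Psi} this special point is the point added last in the construction of $U_j$, namely the corner of the tile of $F$ with tile-label $j$. So point-label $j$ is indeed assigned to that corner. Moreover, \emph{RemovePoint} applied to a TLT produces a TLT, so once the next point is proven, $U_{j-1}$ will automatically be a TLT, which propagates the hypothesis downward.

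It remains to check that the cells then declared dead --- the point just labeled, its empty row or column, and (when it has a cell to its right) the ribbon to its right up to the next point --- form exactly the set $U_j \setminus U_{j-1}$ added at the $j$-th step of the $\Psi_\F$ construction. Here one uses that each step of the $\Psi_\F$ construction adds exactly one new row or one new column (empty except for the newly placed point) together with at most one ribbon; this follows from the structure underlying Observation~\ref{obs:last-added-Psi}, together with the fact (Proposition~\ref{prop:PsiHasCodomainT}) that the final object has size $n$, which forbids a step from adding both a new row and a new column. The new row or column is matched with the empty row or column removed by \emph{RemovePoint}, on the side dictated by the parent direction of the newly labeled point; and, using Observation~\ref{obs:ribbon} (and Observation~\ref{obs:last-added-Psi} at level $j-1$, to identify the next point met along the Southeast border as the corner of the tile with tile-label $j-1$), the ribbon inserted by $\Psi_\F$ between the tiles labeled $j$ and $j-1$ is matched with the ribbon declared dead by \emph{RemovePoint}. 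This gives $U_j \setminus U_{j-1}$ exactly and completes the induction; since it shows that for every $j$ the point receiving point-label $j$ is the corner of the tile with tile-label $j$, the lemma follows.

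I expect the last paragraph to be the main obstacle: verifying in full that one step of the point-order procedure (equivalently, of \emph{RemovePoint}) is the exact inverse of one step of the $\Psi_\F$ construction --- in particular, that the ribbon detected along the Southeast border of $U_j$ is precisely the ribbon $\Psi_\F$ inserted between the tiles labeled $j$ and $j-1$, and that the newly dead row or column is removed on the correct side. Everything is dictated by the structural facts on TLTs recalled in Section~\ref{sec:tlt} and by Observation~\ref{obs:last-added-Psi}, but carrying out this bookkeeping cleanly is the technical heart of the argument.
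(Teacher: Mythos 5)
Your proof is correct and follows essentially the same route as the paper: a downward induction in which Observation~\ref{obs:last-added-Psi} identifies the special point of the intermediate object with the corner of the last-placed tile, combined with the fact that one step of \emph{RemovePoint} undoes one step of the $\Psi_{\F}$ construction. The paper dispatches that last matching step with a one-line ``by definition of $\Psi_{\F}$'' (comparing $\hat{T}$ and $\tilde{T}$), whereas you flag it as the technical heart and sketch the bookkeeping explicitly; otherwise the arguments coincide.
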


\begin{proof}
This is a consequence of the definition of $\Psi_{\F}$ and can be proved by induction. 
Note first that Observation~\ref{obs:last-added-Psi} (applied for $j=n$) ensures that the tile of $F$ with label $n$ and the point of $T$ with label $n$ coincide.

Now, let us suppose that the point-order in $T$ coincides with the tile-order of $F$
for labels $n,\dots,k$. We shall prove that they also coincide for label $k-1$. 

Let us consider the TLT, denoted $\hat{T}$, obtained by removing the cells of labels $n,\dots,k$ from $T$, with the procedure \emph{RemovePoint}.
On the other hand, consider the object $\tilde{F}$ obtained from $F$ by keeping only the tiles of tile-label strictly less than $k$. 
Observe that $\tilde{F}$ is not a PFP, since it is not of rectangular shape. 
We can nevertheless apply to $\tilde{F}$ the same construction as in the definition of $\Psi_{\F}$, 
obtaining a TLT $\tilde{T}$. 
By definition of $\Psi_{\F}$, we have that $\tilde{T} = \hat{T}$.
Consider the tile labeled by $k-1$ in $\tilde{F}$ and the corresponding pointed cell in $\tilde{T}$, denoted $c$. 
(So, $c$ has label $k-1$ for the tile-order.) 
Because of Observation \ref{obs:last-added-Psi}, $c$ is the rightmost of the pointed cell
without any cell below it. 
In other words, it is the special point of $\tilde{T}$. 
Thus it gets the label $k-1$ for the point-order, as required.
\end{proof}

We shall now conclude the proof of Theorem \ref{thm:FT}.

\begin{proof}[Proof of Theorem \ref{thm:FT}]
We can now conclude that the two applications $\Psi_{\F}$ and $\Phi_{\F}$ are inverse.

Indeed, let us consider $T\in\T_n$. We have proved that $\Phi_{\F}(T)$ is in $\F_n$,
thus we may define $T'=\Psi_{\F}(\Phi_{\F}(T))$.
By definition of $\Psi_{\F}$ and $\Phi_{\F}$, the pointed cell are the same in $T$ and $T'$.
Moreover, the point-order of these pointed cells coincide 
(Lemmas \ref{lemma:order} and \ref{lemma:order2}).
Hence the ribbon configuration is the same in $T$ and $T'$, which implies that $T=T'$.
This proves that $\Psi_{\F}\circ\Phi_{\F}=Id_{\T_n}$.

We prove in the same way that $\Phi_{\F}\circ\Psi_{\F}=Id_{\F_n}$.
\end{proof}

\section{Bijection with twisted Baxter permutations}
\label{sec:bax}

\subsection{A bijection between TLTs and permutations}
\label{subsec:bijectionTLTsPermutations}

Recall that TLTs are in size-preserving bijection with permutations. 
Indeed,~\cite{TLT} provides several bijections between them. 
Here, we define yet another bijection between TLTs and permutations, 
which is however related to the so-called \emph{code bijection} of~\cite{TLT} -- see Proposition~\ref{prop:code_is_isolabel}. 

Consider a TLT $T$ of size $n$. As described in Section~\ref{sec:tlt}, 
its pointed cells may be labeled by the integers $1, \ldots, n$, by the insertion procedure. 
We now describe a way to extend this labeling to the empty cells of $T$. 
First, every empty cell $c$ of the first column (resp. row) of $T$ 
takes the label of the closest pointed cell above (resp. to the left of) $c$ in the same column (resp. row). 
Notice that such a pointed cell always exists, because of the root of $T$. 
Second, we propagate this labeling to all empty cells of $T$, going from Northwest to Southeast, as follows. 
Consider an empty cell $c$ that has not yet been labeled. 
Proceeding iteratively, we can assume that $c$ has North, West, and Northwest neighboring cells in $T$, and that these have already been labeled. 
Denote by $y$, $z$ and $x$ their respective labels (see Figure~\ref{fig:code_bijection}, left). 
We then distinguish four cases to determine the label of $c$:
\begin{itemize}
 \item if there is a point above $c$ in the same column, and a point to the left of $c$ in the same row (recall that such cells are called \emph{crossings}, 
 a terminology that we will use again in Lemma~\ref{lem:crossings_and_2+12}),
 then $c$ receives the label $x$; 
 \item if there is a point above $c$ in the same column, but no point to the left of $c$ in the same row,
 then $c$ receives the label $y$; 
 \item if there is a point to the left of $c$ in the same row, but no point above $c$ in the same column, 
 then $c$ receives the label $z$; 
 \item if there is neither a point to the left of $c$ in the same row, nor a point above $c$ in the same column, 
 then an easy induction ensures that $x=y=z$, and $c$ receives this label.
\end{itemize}
Figure~\ref{fig:code_bijection} (right) shows an example. We shall denote by $\iso(c)$ 
the label associated with the cell $c$. 

Recall that we have defined another (partial) labeling, denoted $\ins$, in Definition~\ref{dfn:ins} (p.~\pageref{dfn:ins}); 
note that it is defined for crossings $c$ only, and that in general we have $\iso(c) \neq \ins(c)$.

\begin{figure}[ht]
$$
\begin{array}{lcr}
\raisebox{-1.5cm}{\includegraphics[scale=1]{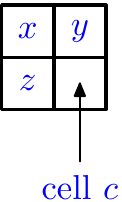}} & \hspace*{0.5cm}
\mbox{ For }T = \raisebox{-1.5cm}{\includegraphics[scale=0.8]{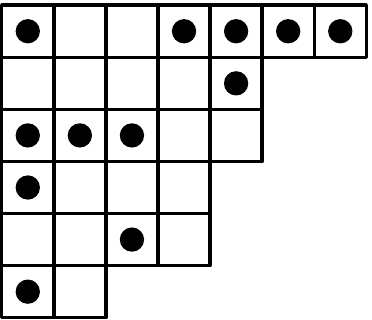}}
\mbox{ the labeling of $T$ is } \hspace*{0.1cm}
\raisebox{-1.5cm}{\includegraphics[scale=0.8]{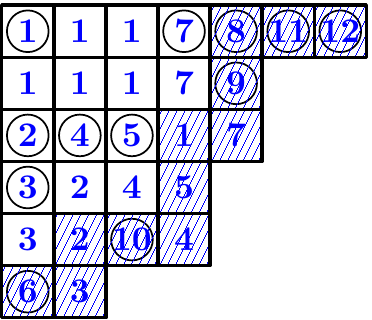}}
\end{array}
$$
\caption{The $\iso$-labeling of a TLT, which defines the bijection $\code$. 
Our notational convention in the rightmost part of this figure is that pointed cells of the TLT are indicated by circled entries. }
\label{fig:code_bijection}
\end{figure}

When all cells of $T$ are labeled, we define $\code(T)$ as follows. 
Starting from the bottommost cell of the first column of $T$, 
we go along its Southeast border until the rightmost cell of the first row of $T$ is reached 
(that is to say, at every step, we go one cell to the right if this is a cell of the TLT, one cell up otherwise). 
Then $\code(T)$ is just the sequence of labels that are met along this Southeast border. 
For example, for the TLT $T$ of Figure~\ref{fig:code_bijection}, we have $\code(T) = 6\, 3\, 2\, 10\, 4\, 5\, 1\, 7\, 9\, 8\, 11\, 12$. 

\begin{proposition}
$\code$ is a size-preserving bijection between TLTs and permutations. 
\label{prop:code_bijection}
\end{proposition}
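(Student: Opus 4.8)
The plan is to show that $\code$ is a bijection by exhibiting its inverse, or at least by a counting-free argument that it is both injective and surjective. Since the definitions here are purely combinatorial and built around the Southeast-border walk, I would first establish the structural backbone: the sequence $\code(T)$ has length $n+1$ when $T$ has size $n$? No — let me recount: a TLT of size $n$ has semiperimeter $n+1$, so the Southeast border walk from the bottom of the first column to the right end of the first row consists of exactly $n+1$ unit steps and visits $n+1$ cells (counting the corner cells appropriately), which is one too many for a permutation of $\{1,\dots,n\}$. So the first thing I would do is pin down precisely which cells are read — presumably one reads the label of each cell immediately before a step, or the convention gives exactly $n$ labels. (In the worked example $\code(T)$ has $12$ entries for a TLT that, reading the border, must have the right number of boundary cells; I would restate the walk so that it produces exactly $n$ symbols, and note this is where the bijection with $S_n$ becomes even conceivable.)

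Once the length is correct, the core claim is that every value in $\{1,\dots,n\}$ appears exactly once in $\code(T)$. For this I would prove two lemmas about the $\iso$-labeling. First, a "monotonicity/locality" lemma: the set of cells carrying a fixed label $j$ forms a connected staircase-shaped region whose unique "free" boundary portion on the Southeast border of $T$ consists of exactly one unit edge — equivalently, the label $j$ is read exactly once during the border walk. The natural tool is the recursive structure of TLTs via \emph{RemovePoint}: when the special point $s$ (label $n$) together with its row/column and its ribbon is removed to pass from $T$ to $T'$, the $\iso$-labeling of $T'$ is the restriction of that of $T$ (the four-case propagation rule is local and does not see the removed cells), and the border walk of $T$ differs from that of $T'$ by splicing in exactly the stretch of cells labeled $n$. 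This reduces the statement to the inductive hypothesis for $T'$ plus a direct check that label $n$ is read exactly once. I would formalize this as: $\code(T)$ is obtained from $\code(T')$ by inserting the symbol $n$ at a determined position (read off from where $s$ and its ribbon sit on the border), so by induction $\code(T)$ is a permutation of $\{1,\dots,n\}$ and, crucially, the insertion position records enough information to recover $s$.

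With that, surjectivity and injectivity come together: the map $T \mapsto \code(T)$ is shown to be a bijection by the standard "insertion tree" argument, i.e. the recursion $\code(T) = $ (insert $n$ into $\code(T')$ at position $p$) sets up a size-preserving bijection between the generating tree of TLTs (children of $T'$ = insertions of a new special point at each boundary edge, Theorem 2.3 of \cite{TLT}) and the generating tree of permutations where the children of $w \in S_{n-1}$ are the $n$ permutations of $S_n$ obtained by inserting $n$ in each of the $n$ possible slots. The one-to-one correspondence between boundary edges of $T'$ and slots in $w = \code(T')$ is exactly what the locality lemma gives, since each boundary edge of $T'$ sits between two consecutive symbols of $\code(T')$ (or at an end), and InsertPoint at that edge produces a TLT whose code is $w$ with $n$ inserted there. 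Matching the two generating trees level by level, with the base case $n=1$ being trivial, yields that $\code$ is a bijection.

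The main obstacle I anticipate is the locality lemma — precisely, proving that the $\iso$-labeling "commutes" with \emph{RemovePoint} and that on the Southeast border each label is seen exactly once. The subtlety is the ribbon of $s$: its cells are crossings, receive the label propagated diagonally (the $x$-case), and lie on the border of $T$ but not of $T'$, so one must verify that threading the border walk through the ribbon reads the same single label as the cell that $s$ "replaces" on the shorter border, and that no previously-read label is disturbed. I would handle this by a careful case analysis on whether $s$ has a row or a column (the two symmetric cases of Observation on crossings / ribbons), using Observation~\ref{obs:ribbon} to control which point sits at the ribbon's far end and hence which label is propagated along it. Everything else — the four-case propagation being well defined, the induction in the fourth ("neither point") case giving $x=y=z$, the length count — is routine once this is in place. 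I would also remark that Proposition~\ref{prop:code_is_isolabel} (cited but not yet stated) identifies this construction with the known code bijection of \cite{TLT}, which would give an alternative, shorter route: simply invoke that $\code$ equals an already-established bijection; but since that proposition comes later, the self-contained generating-tree argument above is the one I would write here.
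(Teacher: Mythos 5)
Your central lemma --- that removing the special point $s$ (with its row/column and its ribbon) from $T$ leaves the $\iso$-labels of all surviving cells unchanged, so that $\code(T)$ is $\code(T')$ with $n$ spliced in at a position determined by where $s$ sits on the border --- is exactly the paper's Lemma~\ref{lem:isolabel_and_insertion}, and your treatment of the ribbon (its cells are crossings, hence inherit their label diagonally and contribute nothing new to the border reading) matches the paper's argument. Where you diverge is in how bijectivity is then concluded: the paper proves (Proposition~\ref{prop:code_is_isolabel}) that $\code(T)=\Phi_1(T)^{-1}$ for the code bijection $\Phi_1$ of~\cite{TLT}, which obeys the same insertion recursion, and simply inherits bijectivity from~\cite{TLT}; you instead match the generating tree of TLTs (children of $T'$ = \emph{InsertPoint} at each boundary edge, Theorem~2.3 of~\cite{TLT}) level by level against the factorial generating tree of permutations. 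Your route is self-contained modulo~\cite{TLT}, but it rests on one assertion you state without proof: that the $n$ boundary edges of $T'$ correspond bijectively to the $n$ insertion slots of $\code(T')$, i.e.\ that distinct boundary edges yield children whose codes carry $n$ in distinct positions. Since both sets have cardinality $n$, injectivity is all that is needed, and it is true (it is in substance the content of the definition of $\Phi_1$ in~\cite{TLT}); but it requires unpacking \emph{InsertPoint} with the same care you devote to the ribbon analysis, and the paper's detour through $\Phi_1$ exists precisely to outsource this point. Finally, a small correction to your opening: the Southeast-border walk of a TLT of size $n$ visits exactly $n$ cells (it makes $n-1$ unit moves between consecutive cells; the quantity $n+1$ is the number of boundary \emph{edges}, not of cells), so the definition already produces a word of length $n$ and no restatement of the walk is needed.
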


Proposition~\ref{prop:code_bijection} is an immediate corollary of Proposition~\ref{prop:code_is_isolabel} below. 
Its proof uses the following lemma: 

\begin{lemma}
Let $T$ be a TLT of size $n+1$ and $T'$ be the TLT of size $n$ (defined in Section~\ref{sec:tlt}) 
obtained from $T$ by deletion of the special point $s$ of $T$ with its row (resp. column) and ribbon (if it exists). 
Let $\sigma = \code(T)$ and $\sigma'= \code(T')$. 
Then the deletion of $n+1$ in $\sigma$ gives $\sigma'$. 
Moreover, $n+1$ is located at the $j$-th position in $\sigma$ 
where $j$ is the number of cells (including $s$ itself) along the border of $T$ that are located at the Southwest of $s$. 
\label{lem:isolabel_and_insertion}
\end{lemma}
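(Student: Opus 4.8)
The plan is to track what happens to the $\iso$-labeling of $T$ when the special point $s$ (which carries the label $n+1$) is removed together with its row/column and ribbon. First I would set up the comparison between the cells of $T$ and those of $T'$: the procedure \emph{RemovePoint} removes exactly one point (namely $s$), one line (a row or a column of empty cells, the ``line of $s$''), and possibly a ribbon of empty cells to the right of $s$; every other cell of $T$ has an obvious counterpart in $T'$. So it suffices to show two things: (a) for every cell $c$ of $T$ that survives in $T'$, its $\iso$-label computed in $T'$ equals the one computed in $T$ with $n+1$ deleted (i.e. it is unchanged, since no surviving cell can carry the label $n+1$ — only $s$ and cells in its line or ribbon do, and those are exactly the ones removed); and (b) the Southeast border of $T$ and that of $T'$ differ only in that the border of $T$ makes a ``detour'' through the cells lying Southwest of and at $s$, whose labels are all $n+1$, whereas the border of $T'$ takes the shortcut, so the word $\code(T)$ is obtained from $\code(T')$ by inserting a block of copies of $n+1$ — but in fact this block is a single letter in $\sigma$ once we argue the label $n+1$ occurs only once along the border, or else we reconcile the ``$j$-th position'' statement with a possible block.

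Here is how I would organize the argument. Step 1: recall that the $\iso$-label of a pointed cell is its own insertion-label, and that the $\iso$-label of an empty cell $c$ equals the label of its North, West, or Northwest neighbor according to whether $c$ has a point above only, to the left only, or both; an easy induction from Northwest to Southeast. The label $n+1$ is the label of $s$; I claim that a cell of $T$ gets $\iso$-label $n+1$ if and only if it lies in the line of $s$ or the ribbon of $s$ or is $s$ itself. The ``if'' direction: cells above (resp. to the left of) $s$ in its column (resp. row) inherit $n+1$ by the first-column/first-row rule propagated down, and cells of the ribbon of $s$ are crossings whose Northwest-pointing chain of inheritances leads back to $s$ (using that the ribbon sits immediately to the right of $s$ and below a point of larger-or-equal history, so the relevant Northwest neighbor carries $n+1$). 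The ``only if'' direction: any other cell's $\iso$-label is inherited from a neighbor that was labeled earlier in the Northwest-to-Southeast sweep, so by induction its label is $\le n$. Since $T'$ is obtained precisely by deleting the cells labeled $n+1$ together with $s$, every surviving cell keeps its label, and the $\iso$-labelings of $T'$ and of ``$T$ restricted to surviving cells'' agree — here one must check the boundary cases where a surviving cell had, in $T$, a North/West/Northwest neighbor that got deleted, and verify that in $T'$ the new relevant neighbor carries the same label; this is where a careful case analysis on whether $s$ came with a row or a column, and on the shape of the ribbon, is needed.

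Step 2: compare the Southeast borders. The Southeast border of $T$ is a lattice path from the bottom of the first column to the right end of the first row. Removing $s$, its line, and its ribbon from $T$ changes this border exactly in a window around $s$: the border of $T$ goes around the cells that form the ``staircase'' Southwest of and including $s$, and after deletion the border of $T'$ cuts straight across. All cells of $T$ that lie on the border of $T$ but not on the border of $T'$ are cells lying (weakly) to the Southwest of $s$ — they are $s$ itself, cells of its line, and cells of its ribbon — hence all carry the label $n+1$ by Step 1. Conversely the border of $T'$ passes through no cell labeled $n+1$. Therefore $\code(T)$ is obtained from $\code(T')$ by inserting a consecutive run of copies of the letter $n+1$, at the position that in $\code(T')$ corresponds to where the ``shortcut'' was taken — that is, after the letters contributed by the part of the border strictly Southwest of the window. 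Counting: the number of border cells of $T$ Southwest of $s$ (inclusive of $s$) is exactly $j$, so the inserted run ends at position $j$. Finally, since $\code(T)$ must be a permutation of $\{1,\dots,n+1\}$ (Proposition~\ref{prop:code_bijection}, or rather the forthcoming Proposition~\ref{prop:code_is_isolabel}, though to avoid circularity I would instead argue directly that the border of $T$ meets exactly one cell that is $s$ and that every other border cell with label $n+1$ would force $s$ not to be the special point, or simply observe the run has length one because consecutive equal letters along the border would contradict the definition of the border visiting distinct cells of distinct labels), the inserted run is a single letter $n+1$ at position $j$, and deleting it recovers $\sigma'$.

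The main obstacle I anticipate is Step 1's ``only if'' claim together with the boundary reconciliation: precisely pinning down that the set of cells labeled $n+1$ is exactly $\{s\}\cup(\text{line of }s)\cup(\text{ribbon of }s)$, and that deleting them and recomputing $\iso$ on what remains gives the same labels, requires handling the two mirror-symmetric cases (row of $s$ vs.\ column of $s$) and the interaction of the ribbon with the sweep order; in particular one must make sure that a surviving crossing that in $T$ inherited from a now-deleted Northwest neighbor still inherits the correct (unchanged, $\le n$) label in $T'$. The counting in Step 2 — identifying $j$ with the number of Southwest border cells of $s$ — is then essentially bookkeeping once the picture of how the border changes is fixed, and establishing that the run has length one is the only remaining subtlety, which I would settle by noting that along the Southeast border every cell is visited once and (by the shortcut being a genuine staircase corner at $s$) only the single cell $s$ among the relabeled window lies on the border of $T$ while being adjacent, on the border, to cells of label $\ne n+1$ on both sides.
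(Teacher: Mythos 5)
Your overall plan---compare the $\iso$-labelings of $T$ and $T'$ cell by cell, then compare the two Southeast borders---is the same as the paper's, but the load-bearing claim of your Step 1 is false, and the error propagates into Step 2. You claim that the cells of $T$ with $\iso$-label $n+1$ are exactly $s$ together with the line (row or column) of $s$ and the ribbon of $s$. In fact $s$ is the \emph{only} cell labeled $n+1$: the $\iso$-label of an empty cell is always inherited from its North, West, or Northwest neighbor (or, in the first row or column, from the nearest point to the left or above), so labels propagate only from Northwest to Southeast. Since $s$ occupies the bottommost cell of its column, no cell lies below it, and the one cell that has $s$ as its West neighbor---the first cell of the ribbon of $s$, when it exists---is a crossing (its column contains a point, necessarily above it) and therefore inherits from its \emph{Northwest} neighbor, i.e.\ from the cell above $s$, not from $s$. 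Your ``if'' direction reverses the direction of propagation: the empty cells above $s$ (resp.\ to its left) inherit from their own West (resp.\ North) neighbors, never from the point $s$ sitting below (resp.\ to the right of) them. Note also that your claim is incompatible with $\code(T)$ being a permutation: all ribbon cells, and generically several cells of the line of $s$, lie on the Southeast border, so the border word would contain several copies of the letter $n+1$; and deleting that whole run, as your Step 2 does, would produce a word strictly shorter than $\sigma'$. Your own ``only if'' argument (every empty cell inherits from an earlier-labeled neighbor, hence is labeled at most $n$) in fact applies to the line and ribbon cells as well, which makes the proposal internally inconsistent.

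The fact you actually need is the opposite one, and it is what the paper proves: each cell of the row (resp.\ column) of $s$ carries the \emph{same} label as the cell immediately above it (resp.\ to its left), because it has no point to its left in its row (resp.\ no point above it in its column), so the second or fourth propagation rule applies; and each ribbon cell, being a crossing, carries the same label as its Northwest neighbor. This duplication is precisely what makes the border word invariant under \emph{RemovePoint}: each deleted border cell other than $s$ is replaced on the border of $T'$ by a surviving neighbor carrying the same label (which also settles your point (a) about surviving cells that used to inherit from a now-deleted neighbor), so the sequence read along the border changes only by the removal of the single letter $n+1$ contributed by $s$, which sits at position $j$ since the border cells preceding $s$ are exactly those weakly Southwest of it. Your Step 2 cannot be repaired without replacing the Step 1 claim by this correct statement.
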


\begin{proof}
That $n+1$ is located at the $j$-th position in $\sigma$ is clear by definition of $\code$. 
To prove that $\sigma$ and $\sigma'$ coincide up to the deletion of $n+1$ in $\sigma$, 
let us examine how the labeling of all the cells of $T$ (except $s$) is related to the labeling of all the cells of $T'$. 

Consider first the row (resp. column) of empty cells to the left of $s$ (resp. above $s$). 
The rules defining the labeling of $T$ ensure that 
each such cell has the same label as the one immediately above it (resp. immediately to its left). 
As a consequence, all cells of $T$ (except $s$) that are neither in the row (resp. column) 
nor in the ribbon of $s$ have the same label in $T$ as in $T'$. 
The only labels yet to determine are those of the cells of the ribbon of $s$, when it exists. 
Because there are no empty rows nor columns, such a cell is always a crossing, 
hence it has the same label as its Northwest neighboring cell. 

Whether or not the ribbon of $s$ exists, we can now compare the labelings of $T$ and $T'$. 
And it follows immediately that, up to $n+1$ which corresponds to $s$, 
the sequence of integers defining $\sigma$ that is read along the Southeast border of $T$ 
is the same as the sequence read along Southeast border of $T'$, \ie is $\sigma'$. 
\end{proof}

\begin{proposition}
Denote by $\Phi_1$ the \emph{code bijection} of~\cite[Theorem 3.4]{TLT} between TLTs and permutations. 
For any TLT $T$, denoting by $\tau$ the permutation such that $\Phi_1(T) = \tau^{-1}$, 
we have $\tau = \code(T)$. 
\label{prop:code_is_isolabel}
\end{proposition}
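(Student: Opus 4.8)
The plan is to prove the statement by induction on the size $n$ of the TLT $T$, using Lemma~\ref{lem:isolabel_and_insertion} as the inductive step and the fact that the \emph{code bijection} $\Phi_1$ of~\cite{TLT} admits an analogous recursive description via the procedure \emph{RemovePoint}. The base case $n=1$ is immediate, since there is a unique TLT of size $1$, $\code$ of it is the permutation $1$, and $\Phi_1$ of it is $1 = 1^{-1}$, so $\tau = 1 = \code(T)$.

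For the inductive step, let $T$ be a TLT of size $n+1$, with special point $s$, and let $T'$ be the TLT of size $n$ obtained from $T$ by deleting $s$ (together with its row or column and its ribbon). By induction, writing $\tau' = \code(T')$, we have $\Phi_1(T') = (\tau')^{-1}$. I would then recall the recursive behavior of the code bijection $\Phi_1$ from~\cite[Theorem 3.4]{TLT}: passing from $T'$ to $T$ by inserting the special point $s$ corresponds, on the permutation side, to inserting the value $n+1$ into $\Phi_1(T')$ at a position determined by the location of $s$ along the Southeast border of $T$. The key translation is that inserting a new \emph{value} $n+1$ at position $j$ into a permutation $\rho$ is the same, after taking inverses, as inserting a new \emph{position} hosting the value $j$ into $\rho^{-1}$ — i.e. $(\rho \text{ with value } n+1 \text{ inserted at position } j)^{-1}$ equals $\rho^{-1}$ with a new entry of value $j$ appended in the appropriate slot. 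Combining this with Lemma~\ref{lem:isolabel_and_insertion}, which tells us exactly that $\code(T)$ is obtained from $\code(T') = \tau'$ by inserting the value $n+1$ at the $j$-th position, where $j$ counts the cells of the Southeast border of $T$ weakly Southwest of $s$, I would check that the position $j$ and the insertion slot match on both sides.

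Concretely, the two facts to reconcile are: (a) Lemma~\ref{lem:isolabel_and_insertion} says $\code(T)$ is $\code(T')$ with $n+1$ spliced in at position $j$; and (b) the recursive structure of $\Phi_1$ says $\Phi_1(T)$ is $\Phi_1(T')$ modified by the same insertion event on $s$, but recorded on $\tau^{-1}$ rather than $\tau$. Since taking inverses swaps "insert value $v$ at position $p$" with "insert a new position whose value is $p$, placed so that the resulting permutation sends that position to $v$", and since this new-position insertion is governed by the same integer $j$ (the number of Southeast-border cells Southwest of $s$), the relation $\Phi_1(T) = \code(T)^{-1}$ follows from $\Phi_1(T') = \code(T')^{-1}$ once both insertion rules are seen to use the same parameter. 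I would therefore spell out the \emph{RemovePoint}-compatible description of $\Phi_1$ (essentially reading off~\cite[Theorem 3.4]{TLT}) and verify the position bookkeeping carefully.

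The main obstacle I anticipate is precisely this bookkeeping: making the recursive description of the code bijection $\Phi_1$ of~\cite{TLT} explicit in terms of \emph{RemovePoint} and the Southeast-border traversal, and then checking that the index $j$ appearing in Lemma~\ref{lem:isolabel_and_insertion} is literally the index produced by $\Phi_1$'s recursion, with no off-by-one discrepancy coming from the different conventions (values vs.\ positions, and the slightly different binary-tree conventions flagged in the footnote of Section~\ref{sec:tlt}). Once the two recursive insertions are pinned down to the same combinatorial datum attached to $s$, the inductive step — and hence the proposition — closes immediately, and Proposition~\ref{prop:code_bijection} follows as the stated corollary since $\Phi_1$ is already known to be a size-preserving bijection.
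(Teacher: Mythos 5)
Your proposal is correct and follows essentially the same route as the paper: induction on the size of $T$, with Lemma~\ref{lem:isolabel_and_insertion} handling the $\code$ side and the recursive insertion description of $\Phi_1$ from~\cite{TLT} handling the other side, the two being matched through the common index $j$. The paper simply asserts directly that $\Phi_1(T)^{-1}$ is obtained from $\Phi_1(T')^{-1}$ by inserting $n+1$ at position $j$, whereas you make the value-versus-position translation under inversion explicit; this is a presentational difference only.
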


\begin{proof}
The proof is by induction on the size of $T$. 
The base case of the induction is clear. 
Let $T$ be a TLT of size $n+1$ and $T'$ be the TLT of size $n$ 
obtained from $T$ by deletion of the special point $s$ of $T$ with its row (resp. column) and ribbon (if it exists). 
Consider the permutations $\sigma = \code(T)$, $\sigma' = \code(T')$, $\tau = \Phi_1(T)^{-1}$ and $\tau' = \Phi_1(T')^{-1}$. 
By induction hypothesis, $\sigma'=\tau'$. 
Moreover, by Lemma~\ref{lem:isolabel_and_insertion}, 
$\sigma$ is obtained from $\sigma'$ by insertion of $n+1$ at position $j$, 
where $j$ is the number of cells along the border of $T$ that are located at the Southwest of $s$. 
From the definition of $\Phi_1$ in~\cite{TLT}, we have similarly that 
$\tau$ is obtained from $\tau'$ by insertion of $n+1$ at the same position $j$. 
We deduce that  $\sigma=\tau$, concluding the proof.
\end{proof}

\subsection{Labels of crossings in the bijection~$\code$}

In the labeling of the cells of a TLT $T$ that defines the bijection~$\code$, 
the integers labeling the crossings of $T$ have a nice interpretation in the permutation $\code(T)$ -- see Lemma~\ref{lem:crossings_and_2+12}. 
This interpretation is essential for the study of the specialization of $\code$ to Baxter objects, 
in Subsection~\ref{subsec:SpecializationBijectionBaxterTLTs}. 
To explain it, we need to review the classical notions of vincular and bivincular patterns in permutations \cite{BCDK}.

\begin{itemize}
 \item A \emph{(classical) pattern} is simply a permutation; but for notational convenience, we insert a dash between any two adjacent entries. 
An \emph{occurrence} of a classical pattern $\tau$ in a permutation $\sigma$ is a subsequence of $\sigma$ which is order-isomorphic to $\tau$. 
 \item A \emph{vincular pattern} (or \emph{dashed pattern}) is a permutation in which every pair of adjacent entries may be linked by a dash. 
Occurrences of a vincular pattern $\tau$ in a permutation $\sigma$ are defined like in the case of classical patterns, 
with the additional restriction that two adjacent entries of $\tau$ that are not separated by a dash must correspond to adjacent entries in $\sigma$.
 \item \emph{Bivincular} patterns are a generalization of vincular patterns, where adjacency constraints are allowed not only on positions but also on values. 
\end{itemize}

Here, we will be interested in very simple bivincular patterns, with only one constraint on values. 
Such patterns of size $n$ can be represented as a vincular pattern whose entries are $\{1,2, \ldots, n-1\} \cup \{i^+\}$, 
for some $i \in \{1,2, \ldots, n-1\}$. 
In an occurrence of such a pattern in a permutation $\sigma$, 
we require that the entries of $\sigma$ corresponding to $i$ and $i^+$ have consecutive values 
(namely, that $i^+$ corresponds to $k+1$ when $i$ corresponds to $k$). 
The (bivincular) pattern $2^+-1-2$ will be of particular interest to us, so let us rephrase: 
An occurrence of a $2^+-1-2$ pattern in a permutation $\sigma$ is a subsequence 
$\sigma(i) \sigma(j) \sigma(k)$ of $\sigma$, with $i<j<k$ such that 
$\sigma(j) < \sigma(k)$ and $\sigma(i) = \sigma(k)+1$.

For example, consider the classical pattern $3-1-2$, the vincular pattern $3-12$, 
and the bivincular patterns $2^+-1-2$ and $2^+-12$. 
Their occurrences in $\sigma = 6\ 3\ 2\ 10\ 4\ 5\ 1\ 7\ 9\ 8\ 11\ 12$ 
are summarized in Figure~\ref{fig:occ}. 
Notice that this permutation $\sigma$ satisfies $\sigma = \code(T),$ where $T$ is the TLT of Figure~\ref{fig:code_bijection}.

\begin{figure}[ht]
\begin{center}
\begin{tabular}{|l|l|}
\hline
$\tau$ & Occurrences of $\tau$ in $\sigma$ \\
\hline
$3-1-2$ & $6\ 3\ 4$, $6\ 3\ 5$, $6\ 2\ 4$, $6\ 2\ 5$, $6\ 4\ 5$, $10\ 4\ 5$, $10\ 4\ 7$, $10\ 4\ 9$, $10\ 4\ 8$,  \\
 & $10\ 5\ 7$, $10\ 5\ 9$, $10\ 5\ 8$, $10\ 1\ 7$, $10\ 1\ 9$, $10\ 1\ 8$, $10\ 7\ 9$, $10\ 7\ 8$ \\
\hline
$3-12$ & $6\ 4\ 5$, $10\ 4\ 5$, $10\ 1\ 7$, $10\ 7\ 9$ \\
\hline
$2^+-1-2$ & $6\ 3\ 5$, $6\ 2\ 5$, $6\ 4\ 5$, $10\ 4\ 9$, $10\ 5\ 9$, $10\ 1\ 9$, $10\ 7\ 9$ \\
\hline
$2^+-12$ & $6\ 4\ 5$, $10\ 7\ 9$\\
\hline
\end{tabular}
\caption{Occurrences of several patterns in $\sigma = 6\ 3\ 2\ 10\ 4\ 5\ 1\ 7\ 9\ 8\ 11\ 12$. \label{fig:occ}}
\end{center}
\end{figure}

\begin{lemma}
The crossings of any TLT $T$ are in one-to-one correspondence with the occurrences of the pattern 
$2^+-1-2$ in $\sigma = \code(T)$. 
\\ \noindent
Under this correspondence, for any crossing $c$,
the value of $\sigma$ to which $1$ is mapped is $\iso(c)$ defined in Subsection~\ref{subsec:bijectionTLTsPermutations} 
and the value of  $\sigma$ to which $2^+$ is mapped is $\ins(c)$ defined in Definition~\ref{dfn:ins} (p.~\pageref{dfn:ins}). 
\label{lem:crossings_and_2+12}
\end{lemma}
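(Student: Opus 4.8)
The plan is to set up an explicit correspondence between crossings of $T$ and triples of positions along the Southeast border of $T$, and then transport this correspondence through $\code$ to triples of positions in $\sigma$. Recall that $\code(T)$ is read off the Southeast border of $T$, so each position in $\sigma$ corresponds to a boundary edge, which in turn records the $\iso$-label of a cell at that edge: a column's bottom edge records the $\iso$-label of the bottommost cell of that column, and a row's rightmost edge records the $\iso$-label of the rightmost cell of that row. The first step is therefore to understand, for a crossing $c$, which three boundary edges ``see'' the relevant labels. A crossing $c$ has a pointed cell $p_1$ above it in its column and a pointed cell $p_2$ to its left in its row; by Observation~\ref{obs:ins} it lies in the ribbon removed from $T_i$ to obtain $T_{i-1}$, where $i = \ins(c)$, and by Observation~\ref{obs:ribbon} this ribbon joins the cell of label $i-1$ (strictly left and below) to the cell of label $i = \ins(c)$ (which is $p_1$, the point at the top of $c$'s column, since the ribbon is entered from below). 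Meanwhile $\iso(c)$ equals the label of the Northwest neighbour of $c$, and by the propagation rules this label is carried Southeast all the way down the column of $c$ until it exits at the bottom boundary edge.

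Next I would identify the three positions $i_1 < i_2 < i_3$ in $\sigma$ attached to the crossing $c$. Reading the Southeast border from Southwest to Northeast, the column containing $c$ contributes (at its bottom edge) the value $\iso(c)$; call this position $i_2$. The point $p_1$ at the top of this column has label $\ins(c)$, and $p_1$ has no cell below it only in $T_{\ins(c)}$, not in $T$ in general — rather, $\ins(c)$ appears in $\sigma$ at the bottom edge of \emph{its own} column, which lies strictly to the left of $c$'s column, hence at a position $i_1 < i_2$. For the third position: the value $\ins(c)+1$ labels the cell that is strictly left and below $p_1$, equivalently it is the value read at some boundary edge; one checks that this edge lies to the Southeast of the bottom of $c$'s column along the border, giving $i_3 > i_2$. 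The claim is then that $\sigma(i_1) = \ins(c)$, $\sigma(i_2) = \iso(c)$, $\sigma(i_3) = \ins(c)+1 > \iso(c)$, which is precisely an occurrence of $2^+-1-2$ with $2^+ \mapsto \ins(c)$, $1 \mapsto \iso(c)$, $2 \mapsto \ins(c)+1$.

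For the converse and the bijectivity, the cleanest route is induction on the size of $T$, using Lemma~\ref{lem:isolabel_and_insertion}: passing from $T$ to $T'$ (deleting the special point $s$, its row/column, and its ribbon) deletes the value $n+1$ from $\sigma$ to get $\sigma'$. One checks that the crossings of $T$ are exactly the crossings of $T'$ together with the cells of the ribbon of $s$ (if it exists), and correspondingly that the occurrences of $2^+-1-2$ in $\sigma$ are exactly those of $\sigma'$ together with new occurrences involving the value $n+1$. Since in $T$ the ribbon of $s$ runs from the cell of label $n$ (namely $s$) to the cell of label $n+1$ — wait, more precisely $\ins$ assigns label $n+1$ to the ribbon removed from $T_{n+1}=T$, and that ribbon has $s$ at its top — the new crossings all have $\ins$-value $n+1$, matching occurrences of $2^+-1-2$ in which $2^+$ is the entry $n+1$; and the $\iso$-labels of these ribbon cells, being the labels of their Northwest neighbours, match the value to which $1$ is sent. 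Checking this matching of the ``new'' crossings against the ``new'' pattern occurrences, using the explicit description of how $\code$ reads the border near $s$ from the proof of Lemma~\ref{lem:isolabel_and_insertion}, is the combinatorial heart of the argument and the step I expect to require the most care — in particular verifying that the position of the entry $\ins(c)+1$ falls where claimed relative to the column of $c$, and that no spurious $2^+-1-2$ occurrences are created. The base case (size $1$) is trivial, with no crossings and no pattern occurrences.
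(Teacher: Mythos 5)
Your third paragraph lands on the same strategy as the paper's proof: induct on the size via Lemma~\ref{lem:isolabel_and_insertion}, split the crossings of $T$ into the crossings of $T'$ plus the cells of the ribbon of $s$, and match the latter with the occurrences of $2^+-1-2$ in which $2^+$ is sent to $n+1$. However, the concrete details you supply are wrong in ways that undercut exactly the part of the statement that identifies the values. First, the value of the ``$2$'' in the occurrence: by the paper's definition of $2^+-1-2$ one has $\sigma(i)=\sigma(k)+1$, i.e.\ the $2^+$ entry exceeds the $2$ entry by one, so with $2^+\mapsto\ins(c)$ the third entry must be $\ins(c)-1$, not $\ins(c)+1$ as you assert twice (for the ``new'' crossings this would be the nonexistent value $n+2$). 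The entry you actually need to locate is $\ins(c)-1$, which is the label of the special point of $T_{\ins(c)-1}$ sitting above the top-rightmost cell of the ribbon containing $c$. Second, you have the ribbon's orientation backwards: Observation~\ref{obs:ribbon} says the point that is strictly left and below carries the \emph{larger} label, so the ribbon removed from $T_i$ runs from the special point $s_i$ of label $i=\ins(c)$ at its bottom-left to the point of label $i-1$ above its top-right cell; moreover the pointed cell directly above a generic ribbon cell $c$ in its column is in general neither of these endpoints, so your identification of $p_1$ with the label-$\ins(c)$ point fails.

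There are also unjustified positional claims in your direct construction: the assertion that $\iso(c)$ is read at the bottom edge of $c$'s column is false in general, since the $\iso$-label propagates straight down a column only through cells that are not crossings, and a crossing lower in the column takes its Northwest neighbour's label instead. Finally, you explicitly defer the matching of new crossings to new pattern occurrences as ``the step I expect to require the most care''; that matching is the actual content of the lemma (in the paper it follows from noting that $s$ has label $n+1$, that the point at the right extremity of its ribbon is the special point $s'$ of $T'$ with label $n$, and that the boundary cells read between these two positions are exactly the ribbon cells), so leaving it unverified, on top of the value errors above, means the proof is not complete as written.
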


\begin{proof} 
The proof is by induction on the size of $T$, the base case of the induction being clear. 
Let $T$ be a TLT of size $n+1$ and $T'$ be the TLT of size $n$ 
obtained from $T$ by deletion of the special point $s$ of $T$ with its row (resp. column) and ribbon (if it exists). 

The crossings of $T$ are partitioned in two categories: the crossings of $T'$ and the cells of the ribbon of $s$. 
As explained in Observation~\ref{obs:ins}, note that all the cells of the ribbon of $s$ are crossings. 

Consider $\sigma = \code(T)$ and  $\sigma' = \code(T')$. 
From Lemma~\ref{lem:isolabel_and_insertion}, $\sigma'$ may be described as $\sigma$ from which $n+1$ has been deleted. 
Hence, the occurrences of $2^+-1-2$ in $\sigma$ are also partitioned in two categories: 
the occurrences of $2^+-1-2$ in $\sigma'$ and those where $2^+$ is mapped to $n+1$. 

By induction, it follows that the crossings of $T'$ are mapped to the occurrences of $2^+-1-2$ in $\sigma'$. 
The assertion about the values is readily checked, 
since for any such crossing $c$ of $T'$, $\iso(c)$ (resp. $\ins(c)$) 
is the same in $T$ and in $T'$.

Consider now a crossing of $T$ which is a cell of the ribbon of $s$. 
Recall that the label of $s$ is $n+1$. 
Observe now that the pointed cell located at the right extremity of the ribbon of $s$ is the special point $s'$ of $T'$, whose label is therefore $n$. 
From these observations, it is now clear that the occurrences of $2^+-1-2$ where $2^+$ is mapped to $n+1$ 
are in one-to-one correspondence with the cells of the ribbon of $s$. 
In this case, the assertion about the values follows immediately by definition of $\iso(\cdot)$ and $\ins(\cdot)$.  
\end{proof}

\subsection{Specialization of this bijection on Baxter TLTs}
\label{subsec:SpecializationBijectionBaxterTLTs}

\begin{definition}[Twisted Baxter permutations]
\label{defi:Tbax}
A {\em twisted Baxter permutation} is a permutation $\sigma$ which avoids the two vincular patterns $2-41-3$ and $3-41-2$ 
(\ie such that none of these patterns has any occurrence in $\sigma$).
We denote by $\B_n$ the set of inverses of twisted Baxter permutations of size $n$. 
\end{definition}

\begin{observation}
The permutations of $\B_n$ may alternatively be characterized as 
the permutations of size $n$ avoiding the patterns $2^+-1-3-2$ and $2^+-3-1-2$, 
or equivalently the patterns $3-14-2$ and $3-41-2$, 
\ie 
\[\B_n = Av_n(2^+-1-3-2, 2^+-3-1-2) = Av_n(3-14-2, 3-41-2).\]
\label{obs:twistedBaxter_patterns}
\end{observation}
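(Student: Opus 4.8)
The plan is to establish the two equalities in turn. Write $\beta_n$ for the set of twisted Baxter permutations of size $n$, so that by Definition~\ref{defi:Tbax} we have $\beta_n = Av_n(2\text{-}41\text{-}3,\, 3\text{-}41\text{-}2)$ and $\B_n = \{\sigma : \sigma^{-1}\in\beta_n\}$.

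\emph{First equality.} I would invoke the standard principle that passing from a permutation to its inverse exchanges the roles of positions and values, and therefore turns a vincular pattern (an adjacency constraint on \emph{positions}) into a bivincular pattern carrying the corresponding adjacency constraint on \emph{values}: precisely, $\mu$ avoids a vincular pattern $\tau$ if and only if $\mu^{-1}$ avoids the pattern obtained from $\tau$ by replacing its underlying permutation by the inverse permutation and re-reading each position-adjacency as a value-adjacency. It then remains to compute these transposed patterns. Since the inverse of the word $2413$ is $3142$ and the inverse of $3412$ is $3412$ itself, and since in both cases the position-adjacency of the ``$4$'' and the ``$1$'' is sent to a value-adjacency of the two entries that, after inversion, carry the second and third smallest values among the four, one reads off that $(2\text{-}41\text{-}3)^{-1}$ and $(3\text{-}41\text{-}2)^{-1}$ are exactly the bivincular patterns $2^+\text{-}1\text{-}3\text{-}2$ and $2^+\text{-}3\text{-}1\text{-}2$ (all dashes, the single value-constraint being the ``$+$''). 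This gives $\B_n = Av_n(2^+\text{-}1\text{-}3\text{-}2,\, 2^+\text{-}3\text{-}1\text{-}2)$.

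\emph{Second equality.} Here I would argue directly that $\sigma$ contains one of $2^+\text{-}1\text{-}3\text{-}2$, $2^+\text{-}3\text{-}1\text{-}2$ iff it contains one of $3\text{-}14\text{-}2$, $3\text{-}41\text{-}2$, after noting that each family admits a uniform description. An occurrence of $2^+\text{-}1\text{-}3\text{-}2$ or $2^+\text{-}3\text{-}1\text{-}2$ is a quadruple of positions $i<j<k<l$ with $\sigma(i)=\sigma(l)+1$ and $\{\sigma(j),\sigma(k)\}$ \emph{straddling} the interval $(\sigma(l),\sigma(i))$, meaning one of the two values lies below $\sigma(l)$ and the other above $\sigma(i)$. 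An occurrence of $3\text{-}14\text{-}2$ or $3\text{-}41\text{-}2$ is a quadruple $i<m<m+1<l$ (the two middle positions consecutive) with $\sigma(l)<\sigma(i)$ and $\{\sigma(m),\sigma(m+1)\}$ straddling $(\sigma(l),\sigma(i))$ in the same sense. For the implication bivincular $\Rightarrow$ vincular, starting from $i<j<k<l$ as above one scans the entries at positions $j,j+1,\dots,k$: since one of $\sigma(j),\sigma(k)$ is below $\sigma(l)$ and the other above $\sigma(i)$, taking $m$ maximal in $[j,k]$ with $\sigma(m)$ on the low side produces a consecutive pair $m,m+1$; the equality $\sigma(i)=\sigma(l)+1$ forces $\sigma(m)<\sigma(l)$ and $\sigma(m+1)>\sigma(i)$, so $i<m<m+1<l$ is an occurrence of $3\text{-}14\text{-}2$ or $3\text{-}41\text{-}2$. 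For the converse, starting from a vincular occurrence $i<m<m+1<l$ I would induct on the value gap $\sigma(i)-\sigma(l)\ge 1$: if it equals $1$ the same quadruple is already a bivincular occurrence; if it is $\ge 2$, pick a value $v$ with $\sigma(l)<v<\sigma(i)$, let $q$ be its position, and split on whether $q<m$ or $q>m+1$ (there is no position strictly between $m$ and $m+1$), replacing $i$ by $q$ in the first case and $l$ by $q$ in the second to obtain a new vincular occurrence of strictly smaller gap.

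The routine verifications — that all the relevant inequalities are strict (using injectivity of $\sigma$ and distinctness of the chosen positions) and that the replaced positions retain the correct left-to-right order — I would dispatch in a couple of lines. \textbf{The main obstacle} is the converse implication of the second equality: one must make the gap-contraction induction airtight, in particular checking that the position subcases for $q$ are exhaustive and, crucially, that each replacement still yields a genuine \emph{vincular} occurrence of one of $3\text{-}14\text{-}2$, $3\text{-}41\text{-}2$ (with the middle pair consecutive and straddling), not merely a classical $3142$- or $3412$-pattern. Everything else is bookkeeping, and the equivalence of the two reformulations above together with the inverse computation of the first part then closes the argument.
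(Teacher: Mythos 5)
Your proof is correct and follows essentially the same route as the paper: the first equality via the standard exchange of vincular (position-adjacency) and bivincular (value-adjacency) constraints under inversion, and the second by finding an adjacent ``switching'' pair of positions between the middle two entries in one direction, and a pair of consecutive values straddling the adjacent middle pair in the other (your gap-contraction induction is just a repackaging of the paper's one-step choice of the consecutive values in $[\sigma(l),\sigma(i)]$ that change sides). One small repair in the bivincular-to-vincular direction: ``take $m$ maximal in $[j,k]$ with $\sigma(m)$ on the low side'' breaks down when $\sigma(k)$ itself is low, since then $m=k$ and $m+1\notin[j,k]$, so $\sigma(m+1)$ need not be high; instead take any adjacent pair of positions in $[j,k]$ at which the low/high side changes (one exists because $\sigma(j)$ and $\sigma(k)$ lie on opposite sides, and every entry in $[j,k]$ is on one side or the other since $\sigma(i)=\sigma(l)+1$ leaves no values strictly in between), which yields an occurrence of $3-14-2$ or $3-41-2$ according to the direction of the switch.
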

\begin{proof}
When taking the inverse, the adjacency constraints in a vincular pattern are turned into constraints that two elements should have \emph{consecutive values}, 
(which can be represented by a bivincular pattern). 
This proves the first statement of Observation~\ref{obs:twistedBaxter_patterns}. 

The second statement of Observation~\ref{obs:twistedBaxter_patterns} is proved using classical arguments of permutation patterns analysis. 
We prove that a permutation $\sigma$ contains a pattern $3-14-2$ if and only if it contains a pattern $2^+-1-3-2$, 
the case of $3-41-2$ and $2^+-3-1-2$ being similar.

Suppose that  $\sigma$ contains a pattern $3-14-2$ 
where the $2$ is mapped to the entry $i$ and the $3$ is mapped to the entry $j$ of $\sigma$.
We consider the integers in the interval $\{i,\dots,j\}$.
They stand in $\sigma$ either to the left or to the right of the subpattern $14$, with $j$ to the left and $i$ to the right.
Thus, considering these integers in decreasing order, there exist two consecutive of them, $k$ and $(k+1)$ with $i\leq{k}<j$, which stand as: $(k+1)\dots14\dots k$.
This gives a pattern $2^+-1-3-2$.

Conversely, if $\sigma$ contains a pattern $2^+-1-3-2$, we consider the entries in the subword $1-3$. 
They are either strictly greater than $2^+$ or smaller than $2$, 
thus there are two of them which are at adjacent positions and form a pattern $2^+-13-2$ and hence $3-14-2$.
\end{proof}

Figure~\ref{fig:twisted_size_4} lists all permutations of $\B_4$.

\begin{figure}[ht]
\begin{center}
$\begin{array}{l@{\hspace{1pt}}c@{\hspace{1pt}}rl@{\hspace{1pt}}c@{\hspace{1pt}}rl@{\hspace{1pt}}c@{\hspace{1pt}}rl@{\hspace{1pt}}c@{\hspace{1pt}}rl@{\hspace{1pt}}c@{\hspace{1pt}}rl@{\hspace{1pt}}c@{\hspace{1pt}}r}
\sigma^{1} &=& 1234, & 
\sigma^{2} &=& 1243, & 
\sigma^{3} &=& 1324, & 
\sigma^{4} &=& 1342, & 
\sigma^{5} &=& 1423, & 
\sigma^{6} &=& 1432, \\ 
\sigma^{7} &=& 2134, & 
\sigma^{8} &=& 2143, &
\sigma^{9} &=& 2314, & 
\sigma^{10} &=& 2341, & 
\sigma^{11} &=& 2413, & 
\sigma^{12} &=& 2431, \\ 
\sigma^{13} &=& 3124, & 
\sigma^{14} &=& 3214, & 
\sigma^{15} &=& 3241, & 
\sigma^{16} &=& 3421, & 
\sigma^{17} &=& 4123, & 
\sigma^{18} &=& 4132, \\ 
\sigma^{19} &=& 4213, & 
\sigma^{20} &=& 4231, & 
\sigma^{21} &=& 4312, &  
\sigma^{22} &=& 4321. & &
\end{array}$
\end{center}
\caption{The 22 permutations of $\B_4$. }
\label{fig:twisted_size_4}
\end{figure}

There are several bijective proofs in the literature that $|\B_n| = Bax_n$ for all $n$, or more precisely 
that (some symmetry of) twisted Baxter permutations are enumerated by Baxter numbers. 
See~\cite{GuibertThese} for a recursive bijection between Baxter permutations 
and permutations avoiding $2-14-3$ and $3-14-2$ (whose reverses are twisted Baxter permutations), or 
\cite{West,Gir} for more recent bijections between Baxter permutations and twisted Baxter permutations. 
In these articles, 
the pattern-avoiding families of permutations are defined by the avoidance \emph{barred} patterns 
rather than by excluded \emph{dashed} patterns; 
but in each case the equivalence between both descriptions is easily proved, 
with simple arguments similar to those in the proof of Observation~\ref{obs:twistedBaxter_patterns}. 

\medskip

Let us denote by $\Phi_\B$ the restriction of the bijection 
$\code$ of Subsection~\ref{subsec:bijectionTLTsPermutations} 
to the set of Baxter TLTs. 

Our goal is to prove the following: 

\begin{theorem}\label{thm:TTW}
For any $n$, $\Phi_\B$ is a bijection between $\T_n$ and $\B_n$.
\end{theorem}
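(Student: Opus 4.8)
The plan is to show that $\Phi_\B = \code$ restricted to $\T_n$ maps $\T_n$ bijectively onto $\B_n = Av_n(2^+\!-\!1\!-\!3\!-\!2,\ 2^+\!-\!3\!-\!1\!-\!2)$. Since $\code$ is already known to be a size-preserving bijection between all TLTs and all permutations (Proposition~\ref{prop:code_bijection}), it suffices to prove that for a TLT $T$, one has $T \in \T_n$ (i.e.\ $T$ avoids the patterns $\Th$ and $\Tv$) if and only if $\code(T)$ avoids both $2^+\!-\!1\!-\!3\!-\!2$ and $2^+\!-\!3\!-\!1\!-\!2$. The natural bridge is Lemma~\ref{lem:crossings_and_2+12}: crossings of $T$ correspond bijectively to occurrences of $2^+\!-\!1\!-\!2$ in $\sigma = \code(T)$, with the crossing $c$ giving the triple where $1 \mapsto \iso(c)$ and $2^+ \mapsto \ins(c)$. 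So an occurrence of $2^+\!-\!1\!-\!3\!-\!2$ (resp.\ $2^+\!-\!3\!-\!1\!-\!2$) in $\sigma$ is an occurrence of $2^+\!-\!1\!-\!2$ together with an extra entry playing the role of the middle $3$; via the lemma this extra entry sits, in $T$, either strictly between the crossing's column-witness and row-witness in a way forced by its value. The strategy is therefore to translate each forbidden permutation pattern into a forbidden geometric configuration around a crossing of $T$, and show this configuration is present exactly when $T$ contains $\Th$ or $\Tv$.

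More concretely, first I would fix a crossing $c$ of $T$: by definition there is a pointed cell $p_{\mathrm{col}}$ above $c$ in its column and a pointed cell $p_{\mathrm{row}}$ to the left of $c$ in its row. Using the $\iso$-labeling rules I would identify, along the Southeast border walk that defines $\code$, exactly where the labels $\iso(c)$ and $\ins(c)$ appear relative to each other, and show that an entry $e$ with $\iso(c) < e < \ins(c)$ appearing between positions-of-$2^+$ and position-of-$2$ (for the $2^+\!-\!1\!-\!3\!-\!2$ case) must itself be the label of a pointed cell forming, together with $p_{\mathrm{col}}$, $p_{\mathrm{row}}$ or $c$, a row of three columns / two rows realizing $\Th$. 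The two forbidden dashed patterns $2^+\!-\!1\!-\!3\!-\!2$ and $2^+\!-\!3\!-\!1\!-\!2$ should correspond respectively to the "$3$ is inserted before the $1$" versus "after the $1$" in the border walk, which geometrically distinguishes whether the offending point lies so as to create a $\Th$ or a $\Tv$ pattern (the horizontal vs.\ vertical variant). Conversely, given an occurrence of $\Th$ or $\Tv$ in $T$, I would locate the induced crossing (the empty cell completing the $2\times 3$ block) and read off the corresponding occurrence of the bivincular pattern in $\sigma$ using Lemma~\ref{lem:crossings_and_2+12} together with the value relations between the three relevant points' labels.

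The main obstacle I expect is the bookkeeping in the $\iso$-labeling: showing that the middle entry $e$ of the occurrence, which a priori is just some value strictly between $\iso(c)$ and $\ins(c)$ sitting at a prescribed position on the border walk, is forced to be the $\iso$-label of an actual pointed cell (and not of an empty cell that merely inherited its label), and that this pointed cell is positioned correctly relative to $c$. This requires understanding which cells along the Southeast border carry pointed-cell labels versus propagated labels, and controlling how the value constraints $\iso(c) < e < \ins(c)$ interact with the Northwest-to-Southeast propagation rules. I would handle this by an induction on size paralleling the proof of Lemma~\ref{lem:crossings_and_2+12}: passing from $T$ to $T'$ by removing the special point $s$, occurrences of the forbidden patterns in $\sigma = \code(T)$ split into those inherited from $\sigma' = \code(T')$ and those involving the label $n{+}1$ of $s$ (equivalently, involving a cell of the ribbon of $s$), and in parallel occurrences of $\Th$/$\Tv$ in $T$ split into those already in $T'$ and those created by reinserting $s$ with its row/column and ribbon — since inserting a row or column of empty cells plus one point cannot destroy such patterns, and can create one only in a controlled way near $s$. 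Once the new-occurrences case is checked by hand, Proposition~\ref{prop:decomposition_Baxter_TLT} and Observations~\ref{obs:ribbon} and~\ref{obs:ins} should supply the needed geometric facts, and the theorem follows by combining this pattern-equivalence with Proposition~\ref{prop:code_bijection}.
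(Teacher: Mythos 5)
Your proposal follows essentially the same route as the paper: reduce via Proposition~\ref{prop:code_bijection} and Observation~\ref{obs:twistedBaxter_patterns} to showing that $T$ avoids $\Th$ and $\Tv$ if and only if $\code(T)$ avoids $2^+\!-\!1\!-\!3\!-\!2$ and $2^+\!-\!3\!-\!1\!-\!2$, use Lemma~\ref{lem:crossings_and_2+12} to match crossings with $2^+\!-\!1\!-\!2$ occurrences, and resolve the remaining "where does the $3$ sit" difficulty by tracking the \emph{RemovePoint} sequence, with the special point of the relevant $T_i$ playing the role of the $3$ adjacent to a ribbon cell of the crossing. The only cosmetic difference is that for the direction "pattern in $\sigma$ implies pattern in $T$" the paper works with a doubly extremal occurrence ($2^+$ maximal, $3$ minimal) rather than a plain induction, but this is exactly the "new-occurrences case checked by hand" that your plan defers to, and the same geometric facts carry it through.
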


This is an immediate consequence of the following proposition. 

\begin{proposition}
Let $\sigma$ be a permutation, of size $n$, and $T$ be the TLT defined by $T=\code^{-1}(\sigma)$. 
Then $\sigma \in \B_n$ if and only if $T \in \T_n$. 
\end{proposition}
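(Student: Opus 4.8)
The plan is to prove the biconditional by contraposition in both directions, translating pattern occurrences in $\sigma$ into configurations in $T$ via the correspondence of Lemma~\ref{lem:crossings_and_2+12}. By Definition~\ref{defi:baxTLT}, $T \notin \T_n$ means $T$ contains an occurrence of $\Th$ or $\Tv$; by Observation~\ref{obs:twistedBaxter_patterns}, $\sigma \notin \B_n$ means $\sigma$ contains $2^+-1-3-2$ or $2^+-3-1-2$. So it suffices to show: (i) if $T$ contains $\Th$ or $\Tv$, then $\sigma$ contains one of these two bivincular patterns; and (ii) if $\sigma$ contains one of these patterns, then $T$ contains $\Th$ or $\Tv$. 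The key tool throughout is that, by Lemma~\ref{lem:crossings_and_2+12}, crossings of $T$ correspond bijectively to occurrences of $2^+-1-2$ in $\sigma$, with the $1$-value being $\iso(c)$ and the $2^+$-value being $\ins(c)$; and extending a $2^+-1-2$ occurrence to a $2^+-1-3-2$ or $2^+-3-1-2$ occurrence amounts to inserting a suitably placed entry between the ``$1$'' and the ``$2$'' of the crossing.

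For direction (i), suppose $T$ contains a $\Th$ pattern: there are two rows and three columns whose intersection restricts to $\SYoung{\SCarre{}&\SCarre{\bullet}&\SCarre{}\cr\SCarre{\bullet}&\SCarre{}&\SCarre{\bullet}\cr}$ (or the variant with a point in the top-left cell). I would first argue that the bottom-middle empty cell of this configuration — which has a point to its left (in the bottom row) and a point above it (the top-middle point) — is a crossing $c$ of $T$. By Lemma~\ref{lem:crossings_and_2+12}, $c$ yields an occurrence of $2^+-1-2$ in $\sigma$ using $\iso(c)$ and $\ins(c)$. The task is then to locate a third entry of $\sigma$, read between the ``$1$'' and the ``$2$'' along the Southeast border, that completes a $2^+-1-3-2$ or $2^+-3-1-2$ occurrence; the natural candidate is the label of the bottom-right pointed cell of the $\Th$ (or a cell on the border near it), and I would verify using the $\iso$-labeling rules of Subsection~\ref{subsec:bijectionTLTsPermutations} that its value and position have the required relation to $\iso(c)$ and $\ins(c)$. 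The case of $\Tv$ is handled symmetrically, producing the other bivincular pattern.

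For direction (ii), suppose $\sigma$ contains $2^+-1-3-2$ (the case $2^+-3-1-2$ being symmetric). The sub-occurrence formed by the ``$2^+$'', ``$1$'', and ``$2$'' entries is an occurrence of $2^+-1-2$, hence by Lemma~\ref{lem:crossings_and_2+12} corresponds to a crossing $c$ of $T$ with $\iso(c)$ the ``$1$''-value and $\ins(c)$ the ``$2^+$''-value. The extra ``$3$'' entry of the bivincular pattern is an entry of $\sigma$ read on the Southeast border of $T$ strictly between the positions of the ``$1$'' and the ``$2$'', with value strictly larger than the ``$2^+$''-value. I would then translate this into the geometry of $T$: being read between those two border positions places the corresponding cell in a specific region relative to $c$, and its value being larger than $\ins(c)$ forces a pointed cell of $T$ in a position that, together with $c$ and one of $c$'s associated points (the point above $c$ or the point to its left), forms a $\Th$ or $\Tv$. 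Here I would use Observation~\ref{obs:ribbon} and the structure of ribbons to control exactly where the ``$3$''-entry's cell sits.

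\textbf{Main obstacle.} The delicate step is the careful bookkeeping in both directions connecting three data simultaneously: the \emph{positional} order of entries along the Southeast border of $T$ (which governs the dashes/adjacencies in the pattern), the \emph{values} $\iso$ and $\ins$ attached to cells, and the \emph{geometric} placement of rows/columns/ribbons in $T$. In particular, verifying that the ``$3$''-entry lands in precisely the right row-and-column region of $T$ — neither absorbed into the ribbon of $c$ nor pushed outside the relevant rectangle — requires invoking Observation~\ref{obs:ins}, Observation~\ref{obs:ribbon}, and the four-case definition of the $\iso$-labeling together. An induction on $n$ (mirroring the proofs of Lemma~\ref{lem:isolabel_and_insertion} and Lemma~\ref{lem:crossings_and_2+12}, peeling off the special point $s$) is likely the cleanest way to keep this bookkeeping under control, since then one only has to track how a single pattern occurrence interacts with the last ribbon inserted.
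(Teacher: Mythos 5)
Your skeleton matches the paper's: both directions pivot on Lemma~\ref{lem:crossings_and_2+12} (crossings of $T$ correspond to occurrences of $2^+-1-2$ in $\sigma$, carrying $\iso$ and $\ins$), and the whole problem reduces to locating the extra ``$3$'' geometrically. But the device that actually makes this localization work is absent from your plan, and without it both of your deferred verifications stall. For the direction ``$\sigma$ contains $2^+-1-3-2$ or $2^+-3-1-2$ implies $T$ contains $\Th$ or $\Tv$'', you propose to take an arbitrary occurrence, extract the crossing $c$ from its $2^+-1-2$ sub-occurrence, and then argue that the point whose label is the ``$3$''-value sits next to $c$. This is not justified for an arbitrary occurrence: the entry of $\sigma$ equal to the ``$3$''-value is read at a border cell that may lie far to the Southeast of the pointed cell actually carrying that label (since $\iso$-labels propagate along diagonals), and later insertions may separate that point from the ribbon containing $c$. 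The paper fixes this by choosing the occurrence with the ``$2^+$''-value $j$ \emph{maximal} and, among those, the ``$3$''-value $k$ \emph{minimal}; this extremality is exactly what guarantees that the ribbon from $j$ to $j-1$ is still intact in $T_{k-1}$, so that the point labeled $k$ is inserted immediately South or East of one of its crossings, creating the forbidden pattern.

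For the converse direction you pick ``the bottom-right pointed cell of the $\Th$'' in $T$ as the ``$3$''. Again, in $T$ itself there is no control over where in $\sigma$ the label of that point is read relative to the ``$1$'' and ``$2$'' of the crossing. The paper's remedy is to pass to the \emph{smallest} $i$ such that $T_i$ contains a forbidden pattern: minimality forces the South-easternmost point of that occurrence to be the special point $s_i$ of $T_i$, which sits on the border immediately South or East of a crossing $c$ of the ribbon just inserted, so the four relevant entries (the point labeled $\iso(c)$, the point $s_i$, and the two extremities of the ribbon of $c$) visibly form the bivincular pattern in $\code(T_i)$; Lemma~\ref{lem:isolabel_and_insertion} then transports this occurrence to $\sigma$. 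Your closing remark that an induction ``peeling off the special point'' should tame the bookkeeping points in the right direction, but the two extremal choices above are the actual content of the proof, and your plan does not supply them.
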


\begin{proof}
The key point is Lemma \ref{lem:crossings_and_2+12}.
Let $\sigma$ and $T$ be as in the statement of the proposition, and 
consider the sequence of TLTs $(T_n=T, T_{n-1}=T', T_{n-2}, \ldots, $ $ T_1 = 
\begin{tikzpicture}
\begin{scope}[scale=0.25]
\draw (0,0) -- (0,1) -- (1,1) -- (1,0) -- (0,0);
\draw (0.5,0.5) [fill] circle (.25);
\end{scope}
\end{tikzpicture}
)$ defined in Section~\ref{sec:tlt}.

We first prove that if $\sigma$ contains a pattern $2^+-3-1-2$ or $2^+-1-3-2$, 
then $T$ contains a pattern $\Tv$ or $\Th$. 
For this purpose, we consider the occurrence of one of the patterns $2^+-3-1-2$ and $2^+-1-3-2$ in $\sigma$ 
such that the value $j$ of the ``$2^+$'' is maximal among all possibilities, 
and such that the value $k$ of the ``$3$'' is minimal among these occurrences. 
From~\cite{TLT}, this guarantees that the ribbon $R$ from $j$ (the ``$2^+$'') to $j-1$ (the ``$2$'') is exactly the same in $T_j$ and $T_{k-1}$. 
In $T_k$, the pointed cell labeled by $k$ is either to the South or to the East of a crossing which belongs to $R$,
thus giving a pattern $\Tv$ or $\Th$ pattern in $T$.

Conversely, we prove that if $T$ contains a pattern $\Tv$ or $\Th$, then 
$\sigma$ contains a pattern $2^+-3-1-2$ or $2^+-1-3-2$. 
We consider the smallest $i$ such that $T_i$ contains a pattern $\Tv$ or $\Th$. 
The South-easternmost point of this pattern in $T_i$ is the special point $s_i$ of $T_i$ 
(because $T_{i-1}$ avoids the two patterns). 
Moreover, it is to the South or to the East of a crossing of $T_i$, which we denote $c$. 
Let us denote $\sigma_i = \code(T_i)$. We claim that $\sigma_i$ contains a $2^+-3-1-2$ or $2^+-1-3-2$ pattern. 
Indeed, such a pattern can be identified using Lemma~\ref{lem:crossings_and_2+12}. 
More precisely, it is enough consider the entries of $\sigma_i$ corresponding to the following cells of $T_i$:
the pointed cell labeled $iso(c)$ (which corresponds to the ``$1$'' in the pattern), 
$s_i$  (which is the``$3$'') 
and the two extremities of the ribbon of $c$ (which are the ``$2^+$'' and ``$2$''). 
To conclude, Lemma~\ref{lem:isolabel_and_insertion} implies that $\sigma_i$ is a pattern of $\sigma$, 
so that $\sigma$ also contains a $2^+-3-1-2$ or $2^+-1-3-2$ pattern. 
\end{proof}

Figure~\ref{fig:code_bijection-Baxter} shows an example of a Baxter TLT $T$ with the corresponding permutation $\Phi_\B(T)$.

\begin{figure}[ht]
\begin{center}
\includegraphics[scale=1]{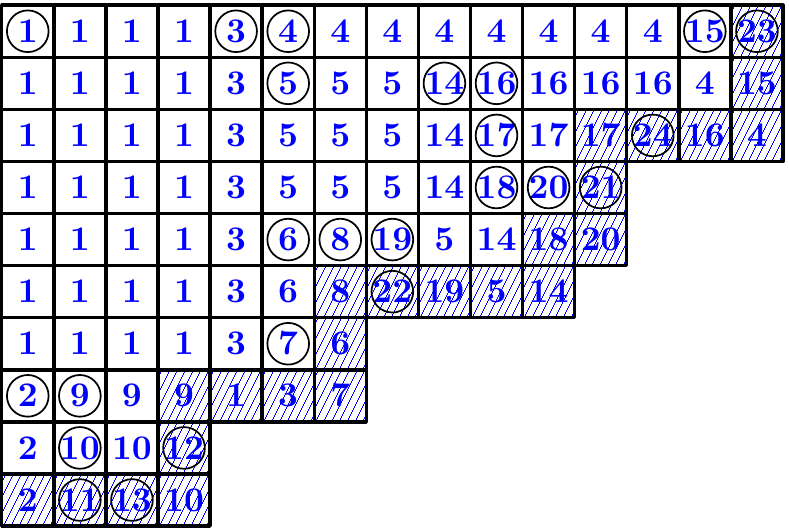}

\includegraphics[scale=0.9]{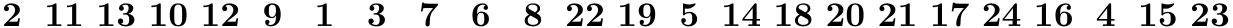}
\end{center}
\caption{A Baxter TLT $T$ (circled entries represent pointed cells), its labeling by $\iso(\cdot)$, 
and the permutation $\Phi_\B(T)$.}
\label{fig:code_bijection-Baxter}
\end{figure}

\subsection{$\Phi_\B$ and classical permutation statistics}

It it interesting to note that the $\iso$-labeling of a Baxter TLT 
allows to interpret some classical statistics on the permutation $\Phi_\B(T)$ directly on the Baxter TLT $T$. 
We describe here these interpretations for the descents and the left-to-right minima. 

\subsubsection{Descents}

Our notational convention in Figures~\ref{fig:propagation} to~\ref{fig:proof_row-ancestor} 
is that 
\includegraphics[scale=\scaleBaxterTLTAscentDescent]{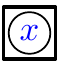} or \includegraphics[scale=\scaleBaxterTLTAscentDescent]{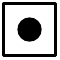} represents a pointed cell, 
\includegraphics[scale=\scaleBaxterTLTAscentDescent]{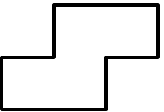} is a region of empty cells, 
\includegraphics[scale=\scaleBaxterTLTAscentDescent]{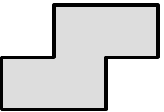} is a region of pointed and/or empty cells, and 
\includegraphics[scale=\scaleBaxterTLTAscentDescent]{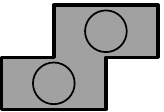} is a region of cells containing at least a point. 
Note that some regions contain only empty cells because of the excluded patterns that define Baxter TLTs.

\begin{lemma}
\label{lem:TTW-ascent+descent}
Let $T$ be a Baxter TLT. 
Let $c$ be a (pointed or empty) cell of $T$ that does not belong to the first column (resp. row) 
and without any pointed cell below it (resp. to its right). 
Denote by $a$ the $\iso$-label of $c$ and by $b$ the $\iso$-label of the cell\footnote{Note that this cell exists, 
because of our assumption that $c$ does not belong to the first column (resp. row).}
immediately to the left of (resp. above) $c$. 
Then $ba$ (resp. $ab$) is a factor\footnote{As in words, a \emph{factor} in a permutation is a sequence of symbols which appear consecutively.} of $\sigma = \Phi_\B(T)$. 
\end{lemma}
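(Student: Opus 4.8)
The plan is to unravel the definition of $\sigma = \code(T) = \Phi_\B(T)$, which reads the $\iso$-labels of $T$ along its Southeast border. The key preliminary observation I would record is the shape of that border path: it is a ``staircase'' that goes up along each column and, passing from one column to the next, makes a single step to the right, from the top of the stretch it traverses in the left column to the bottom cell of the right column. Consequently, two cells consecutive along the path contribute two entries at consecutive positions in $\sigma$; in particular, for any column other than the first, the label of its bottom cell is immediately preceded in $\sigma$ by the label of the cell just to its left.

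Consider first the ``column'' statement: $c$ is not in the first column, has no pointed cell below it, and one must show $ba$ is a factor of $\sigma$, where $a = \iso(c)$ and $b = \iso(c')$ with $c'$ the left neighbour of $c$. If $c$ is itself the bottom cell of its column $j$, the observation above already gives $ba$ as a factor. Otherwise all cells of column $j$ strictly below $c$ are empty (since $c$ has no pointed cell below it), and column $j$ contains a pointed cell above $c$. I would then use that $a$ occurs exactly once in $\sigma$---so the border cannot traverse two consecutive cells both labelled $a$---together with the $\iso$-propagation rules (an empty cell copies the label of its North, West, or Northwest neighbour according to whether a point sits above it or to its left) and the avoidance of $\Th$ and $\Tv$, in order to locate the unique border cell carrying the label $a$ and to show that the cell read just before it (by an ``up'' step) is a crossing whose Northwest neighbour is $c'$, hence carries the label $b$. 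This again produces $ba$ as a factor.

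The ``row'' statement is obtained by a parallel argument, with columns and rows (and ``up'' steps and ``right'' steps along the border) interchanged; it is not literally a transpose of the column case, because the labelling of points---hence the $\iso$-labelling---is not symmetric under transposition, so one reruns the bookkeeping rather than invoking a symmetry.

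The real work, and the step I expect to be delicate, is the middle of the second paragraph: controlling simultaneously the propagation of the label $a$ down column $j$ and of $b$ down column $j-1$, so as to pin down where the border reads them, and using the forbidden patterns together with the hypothesis that $c$ has no pointed cell below it to exclude the configurations in which a crossing appearing below $c$ would divert the label into column $j-1$ in an uncontrolled way, or in which column $j-1$ carries a pointed cell below $c'$. An alternative route that bypasses part of this is an induction on $|T|$ via deletion of the special point, in the spirit of the proofs of Lemmas~\ref{lem:isolabel_and_insertion} and~\ref{lem:crossings_and_2+12}, where the subtlety instead becomes checking that reinserting the largest letter does not split $b$ from $a$.
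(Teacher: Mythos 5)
Your base case is right and matches the paper's: when $c$ is the bottom cell of its column (resp.\ the rightmost cell of its row), the Southeast border reads $c$ immediately after its left neighbour (resp.\ immediately before the cell above it), so the factor appears by definition of $\code$. The gap is in the general case. The structural claim you plan to establish --- that the unique border cell labelled $a$ is entered by an \emph{up} step from a \emph{crossing} whose Northwest neighbour is the left neighbour of $c$ --- is false. Take the $3\times 3$ Baxter TLT with points in cells $(1,1),(1,2),(1,3),(2,3),(3,3)$ (row, column, from the top left), for which $\Phi_\B(T)=12543$; with $c=(1,2)$ one has $a=2$, $b=1$, the label $2$ propagates straight down column $2$ to its bottom cell $(3,2)$, and that border cell is entered by a \emph{right} step from $(3,1)$, which is not a crossing (it has no cell to its left) and has no Northwest neighbour --- it carries $b=1$ for a different reason, namely the propagation of $b$ down column $1$. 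In other examples (e.g.\ $T^{17}$ of Figure~\ref{fig:BTLT_size_4}, where $\sigma=4123$ and again $a=2$, $b=1$) the preceding cell is indeed a crossing with the right Northwest neighbour, but it is still reached by a right step. In general the labels $a$ and $b$ reach the border only after an arbitrarily long alternation of vertical and horizontal displacements, which your plan does not control; and your hope of using $\Th$- and $\Tv$-avoidance to \emph{exclude} the configurations where a crossing diverts the label, or where column $j-1$ has a point below the left neighbour of $c$, cannot succeed, because those configurations genuinely occur (already in the $2\times 2$ Baxter TLT with three points).

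What is missing is that the column statement and the row statement must be proved by a \emph{joint} induction, not by two parallel independent arguments as you propose. The paper pushes the \emph{pair} of adjacent cells carrying $(b,a)$ one step towards the border at a time: if the cell $c_1$ just below $c$ is not a crossing, the horizontal pair (left neighbour of $c$, $c$) slides down one row with labels $(b,a)$ intact, and the column statement is re-applied to $c_1$; if $c_1$ \emph{is} a crossing, then by $\Th$-avoidance it has no point to its right, it inherits the label $b$ of its Northwest neighbour while $c$, sitting just above it, still carries $a$, and one invokes the \emph{row} statement at $c_1$, whose conclusion is precisely that $ba$ is a factor. The diverting crossings are thus not obstacles to be ruled out but exactly the places where the induction switches between the two halves of the lemma. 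Your alternative suggestion (induction on $|T|$ by removing the special point, checking that reinserting $n+1$ does not separate $b$ from $a$) is a plausible route, but you have not carried out that reinsertion analysis, so as written neither of your two routes closes the argument.
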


\begin{proof}
If $c$ is the bottom-most (resp. rightmost) cell in its column (resp. row), then the claim immediately holds by definition of $\Phi_\B$. 
Otherwise, we prove that the cell immediately below (resp. to the right) of $c$ 
satisfies the same conditions as $c$, induction giving then the conclusion. 

Suppose that $c$ does not belong to the first column and has no pointed cell below it.
Consider the cell immediately below $c$, denoted $c'$. Note that our assumptions ensure that $c'$ is empty. 
Two cases may occur, as shown on Figure~\ref{fig:propagation} (left): 
either there is no pointed cell to the left of $c'$, or there is at least one. 
In the first case, $c'$ has no pointed cell below it, is labeled by $a$ and the cell to its left by $b$, 
so that we can proceed inductively, and obtain that $ba$ is a factor of $\sigma$. 
In the second case, let us first notice that there is at least a point above $c'$ (since the column of $c'$ contains at least a point). 
This implies that there is no point to the right of $c'$ in its row
(or otherwise, we would obtain a pattern $\Th$, with a point above $c'$ 
and two points to the left and right of $c'$). 
This also implies that the label of $c'$ is $b$ 
(by the first case of the rule for the propagation of the $\iso$-label). 
The cell above $c'$ is nothing but our original cell $c$, so it is labeled by $a$. 
We can therefore apply our inductive statement on $c'$, obtaining that $ba$ is a factor of $\sigma$. 

In the case where $c$ does not belong to the first row and has no pointed cell to its right, 
we proceed similarly, distinguishing cases as shown on Figure~\ref{fig:propagation} (right).
\end{proof}

\begin{figure}[ht]
\begin{center}
\begin{tabular}{cccc|cccc}
 \includegraphics[scale=\scaleBaxterTLTAscentDescent]{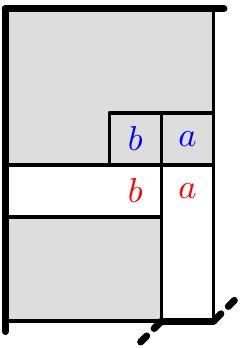} & \hfill & 
 \includegraphics[scale=\scaleBaxterTLTAscentDescent]{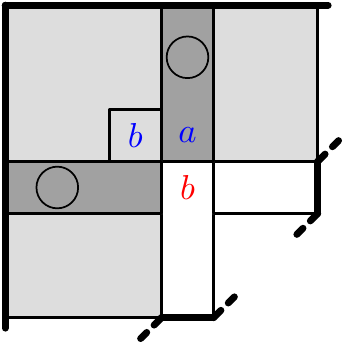} & \hfill & 
\hfill &  \includegraphics[scale=\scaleBaxterTLTAscentDescent]{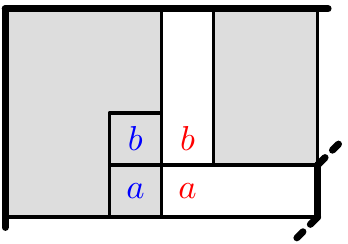} & \hfill & 
 \includegraphics[scale=\scaleBaxterTLTAscentDescent]{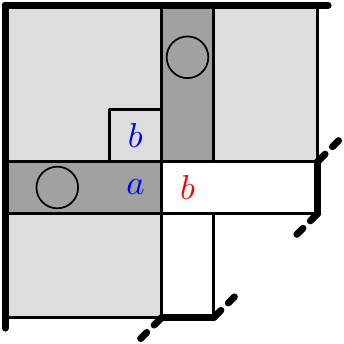} 
\end{tabular}
\end{center}
\caption{Proof of Lemma~\ref{lem:TTW-ascent+descent} when $c$ has no pointed cell below it (left), and when $c$ has no pointed cell to its right (right).}
\label{fig:propagation}
\end{figure}

\begin{definition}
\label{def:ancestors}
A point of a Baxter TLT is \emph{column-extremal} (resp. \emph{row-extremal}) if it does not belong to the first column (resp. row) 
and there is no pointed cell below it (resp. to its right). 

The \emph{column-ancestor} (resp. \emph{row-ancestor}) of a column-extremal (resp. row-extremal) point $x$ 
is the parent of the top-most (resp. left-most) point in the column (resp. row) of $x$. 
\end{definition}

Figure~\ref{fig:column-ancestor} illustrates this definition. 

In Definition~\ref{def:ancestors}, note that 
the top-most (resp. left-most) point in the column (resp. row) of $x$ is not the root of the TLT, 
since $x$ does not belong to the first column (resp. row). This ensures that it has a parent 
so that column- and row-ancestors are well defined. 
Note also that the column-ancestor (resp. row-ancestor) of a column-extremal (resp. row-extremal) point $x$ 
is located in a column to the left of $x$ (resp. in a row above $x$).

\begin{figure}[ht]
\begin{center}
\begin{tabular}{ccccc}
 \includegraphics[scale=\scaleBaxterTLTAscentDescent]{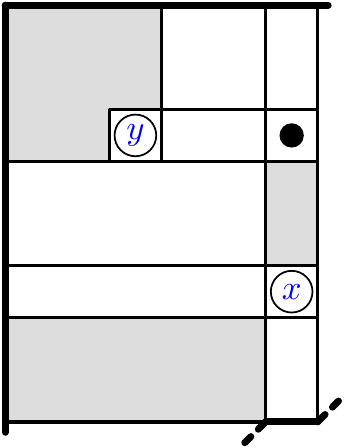} & 
 \includegraphics[scale=\scaleBaxterTLTAscentDescent]{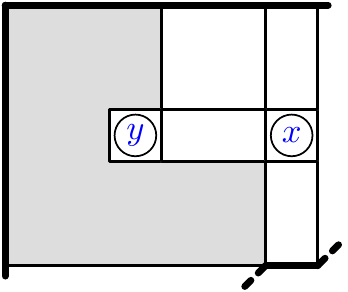} & \hfill & 
 \includegraphics[scale=\scaleBaxterTLTAscentDescent]{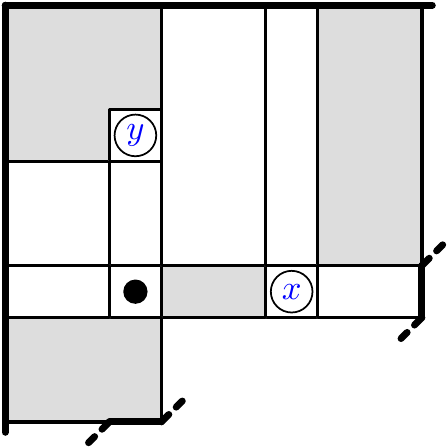} &
 \includegraphics[scale=\scaleBaxterTLTAscentDescent]{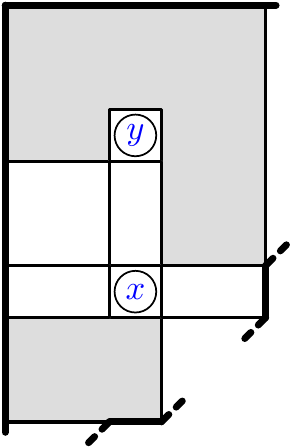} 
\end{tabular}
\end{center}
\caption{The column-ancestor (resp. row-ancestor) $y$ of a column-extremal (resp. row-extremal) point $x$ in a Baxter TLT,  
with the special case where $x$ is the only point in its column (resp. row) shown on the right.}
\label{fig:column-ancestor}
\end{figure}

\begin{proposition}
\label{prop:TTW-ascent+descent}
In a Baxter TLT $T$, consider a column-extremal (resp. row-extremal) point $x$ and its column-ancestor (resp. row-ancestor) $y$. 
Denote by $a$ the label of $x$ and by $b$ the label of $y$. 
Then $ba$ (resp. $ab$) is a factor of $\sigma = \Phi_\B(T)$ and forms an ascent (resp. descent) in $\sigma$. 
\end{proposition}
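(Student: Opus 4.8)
The plan is to deduce this from Lemma~\ref{lem:TTW-ascent+descent}, which already hands us the required factor of $\sigma$ as soon as we identify the $\iso$-label sitting next to the label of $x$. I would treat the column-extremal case in full; the row-extremal case is symmetric under transposition of the TLT, which exchanges $\Th$ with $\Tv$ and, since Lemma~\ref{lem:TTW-ascent+descent} already records the factor as $ab$ rather than $ba$, turns ``ascent'' into ``descent''.

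First I would fix notation: let $x$ be column-extremal, in column $C\geq 2$, let $z$ be the top-most point of column $C$, and let $r_1<\dots<r_m$ be the rows of the points of column $C$, so that $z$ is in row $r_1$ and $x$ in row $r_m$. I would then observe, from the parent condition defining TLTs, that every point of column $C$ below $z$ has no point to its left (it has one above it in the column), hence its parent is the point directly above it in column $C$; consequently $z$ is an ancestor of $x$ in the underlying tree, and so is its parent $y$, which is by definition the column-ancestor of $x$. Since $z$ is top-most in its column, $y$ lies in row $r_1$, strictly to the left of $z$, say in column $C'\leq C-1$, with all cells of row $r_1$ strictly between $y$ and $z$ empty. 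Next I would apply Lemma~\ref{lem:TTW-ascent+descent} with $c=x$: it yields that $b'a$ is a factor of $\sigma$, where $a=\iso(x)$ is the label of $x$ and $b'=\iso(w)$ with $w$ the cell just left of $x$ (row $r_m$, column $C-1$). It then remains to show $b'=b$, the label of $y$, and that $b<a$.

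The inequality $b<a$ is the easy part: $y$ is a strict ancestor of $x$, hence lies weakly to the Northwest of $x$, so Observation~\ref{obs:point-order-region} gives $b<a$, and thus $ba$ is an ascent. To prove $\iso(w)=\iso(y)$, the plan is to propagate the $\iso$-label of $y$ to $w$ along two legs. On the horizontal leg, each cell $(r_1,c)$ with $C'<c\leq C-1$ is empty and has $y$ to its left in row $r_1$; moreover it has no point above it in column $c$, since otherwise that point together with $y$ and $z$ would form an occurrence of $\Th$ (forbidden in a Baxter TLT). By the propagation rules each such cell then inherits the $\iso$-label of its left neighbour, so $\iso(r_1,C-1)=\iso(y)$. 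On the vertical leg, each cell $(r,C-1)$ with $r_1<r\leq r_m$ is empty and has no point to its left in row $r$: a point of a column $<C$ in a row strictly between two consecutive $r_i$'s would, with the two corresponding points of column $C$, form an occurrence of $\Tv$, and in a row $r_i$ with $i\geq 2$ the column-$C$ point has no point to its left. Hence each such cell inherits the $\iso$-label of its upper neighbour, so $\iso(w)=\iso(r_m,C-1)=\iso(r_1,C-1)=\iso(y)=b$. Combined with Lemma~\ref{lem:TTW-ascent+descent} this settles the column-extremal case (the case $m=1$, where $z=x$, being covered as well, the vertical leg then being empty).

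The main obstacle is this $\iso$-label propagation argument, and inside it the case analysis that rules out stray points via the excluded patterns: one has to check carefully that the candidate $3$-column (resp. $3$-row) configurations really are occurrences of $\Th$ (resp. $\Tv$), i.e. that every cell the pattern forces to be empty is indeed empty in $T$ — and this is precisely where the facts ``$z$ is top-most in its column'' and ``points of column $C$ below $z$ have no point to their left'' are used.
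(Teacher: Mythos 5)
Your proposal is correct and follows essentially the same strategy as the paper's proof: reduce to Lemma~\ref{lem:TTW-ascent+descent} and then show, by propagating the $\iso$-label of $y$ using the forbidden patterns $\Th$ and $\Tv$, that the cell adjacent to $x$ carries the label $b$. The only (cosmetic) difference is that you propagate along an L-shaped path from $y$ to the cell left of $x$, whereas the paper establishes the slightly stronger claim that the whole rectangle between $y$ and that cell is labeled $b$; the pattern-avoidance arguments are the same in both cases.
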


\begin{proof}
First, we will see that the cell immediately to the left of (resp. above) $x$ has $\iso$-label $b$. 
Lemma~\ref{lem:TTW-ascent+descent} will then ensure that $ba$ (resp. $ab$) is a factor of $\sigma$. 
The claim we actually prove is a bit more general: 
it states that all the cells in the rectangle extending from $y$ to the cell immediately to the left of (resp. above) $x$ have $\iso$-label $b$. 
(See Figures \ref{fig:proof_column-ancestor} and \ref{fig:proof_row-ancestor}.) 

Assume that $x$ is a column-extremal point and that $y$ is its column-ancestor, 
and consider the rectangle extending from $y$ to the cell immediately to the left of $x$. 
This case is illustrated on Figure~\ref{fig:proof_column-ancestor}. 
For the cells of the top row of this rectangle, our claim follows easily from the third case defining the propagation of the $\iso$-labels, 
since such cells have no point of $T$ above them (or a pattern $\Th$ would be created). 
It is just as easy for the cells of the leftmost column of this rectangle, for which the second case of the propagation rule applies, 
since they have no point of $T$ to their left (or a pattern $\Tv$ would be created). 
For the remaining cells of this rectangle, the last case of propagation rule applies, all together proving our claim. 

The case where $x$ is a row-extremal point and $y$ is its row-ancestor is handled similarly, 
as shown by Figure~\ref{fig:proof_row-ancestor}. 

Finally, the parent of any point $x$ in an TLT has a label smaller than the one of $x$, 
and so do all ancestors of $x$. This proves that $ba$ (resp. $ab$) is an ascent (resp. descent) of $\sigma$. 
\end{proof}

\begin{figure}[ht]
\begin{center}
\includegraphics[scale=\scaleBaxterTLTAscentDescentFormsSitua]{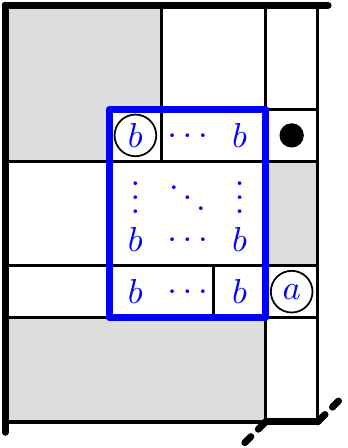} \qquad 
\includegraphics[scale=\scaleBaxterTLTAscentDescentFormsSitua]{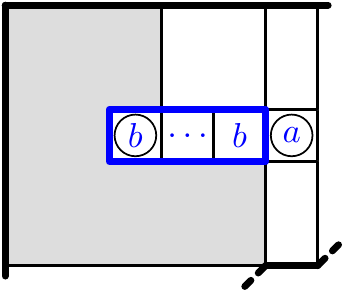} \qquad
\includegraphics[scale=\scaleBaxterTLTAscentDescentFormsSitua]{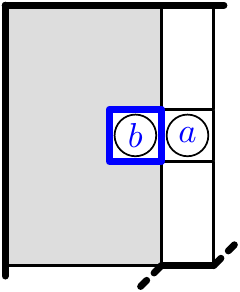}
\end{center}
\caption{Propagation of the $\iso$-label of the column-ancestor of a column-extremal point, in all possible configurations.}
\label{fig:proof_column-ancestor}
\end{figure}

\begin{figure}[ht]
\begin{center}
\includegraphics[scale=\scaleBaxterTLTAscentDescentFormsSitua]{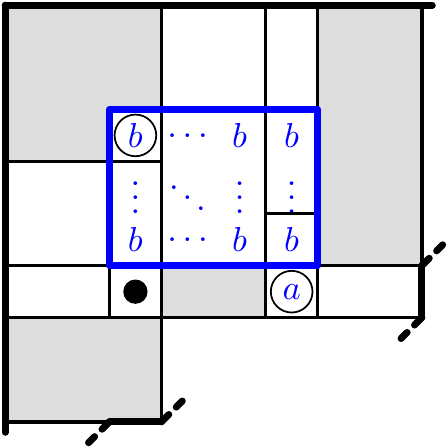} \qquad
\includegraphics[scale=\scaleBaxterTLTAscentDescentFormsSitua]{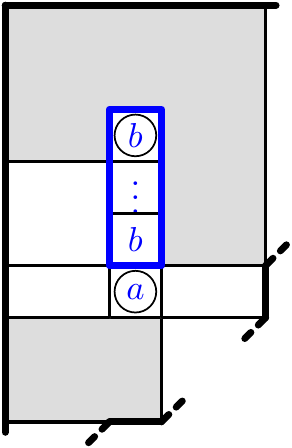} \qquad 
\includegraphics[scale=\scaleBaxterTLTAscentDescentFormsSitua]{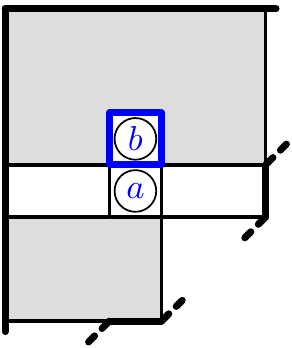}
\end{center}
\caption{Propagation of the $\iso$-label of the row-ancestor of a row-extremal point, in all possible configurations.}
\label{fig:proof_row-ancestor}
\end{figure}

\begin{corollary}
\label{cor:TTW-ascent+descent}
For a Baxter TLT $T$, all the ascents and descents of $\sigma = \Phi_\B(T)$ are those described in Proposition~\ref{prop:TTW-ascent+descent}.
\end{corollary}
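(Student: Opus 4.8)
The plan is to argue by counting, avoiding any direct walk along the Southeast border of $T$. Since $\sigma = \Phi_\B(T)$ is a permutation of size $n = k+\ell-1$, where $k$ and $\ell$ are the numbers of rows and columns of $T$, it has exactly $n-1$ pairs of consecutive entries, and every such pair is either an ascent or a descent. So it suffices to show that the factors produced by Proposition~\ref{prop:TTW-ascent+descent} are $n-1$ in number and occur at pairwise distinct positions of $\sigma$: this forces that proposition to describe every consecutive pair of $\sigma$, hence all of its ascents and descents.

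First I would count the points of $T$ to which Proposition~\ref{prop:TTW-ascent+descent} applies. Every column of $T$ contains at least one point, hence has a unique bottom-most point, and this point is column-extremal precisely when the column is not the first one; so $T$ has exactly $\ell-1$ column-extremal points. Symmetrically, $T$ has exactly $k-1$ row-extremal points. This gives $(\ell-1)+(k-1) = n-1$ points in total, and by Proposition~\ref{prop:TTW-ascent+descent} each of them yields a length-$2$ factor of $\sigma$: a column-extremal point $x$ with column-ancestor $y$ gives the factor ``(label of $y$)(label of $x$)'', while a row-extremal point $x$ with row-ancestor $y$ gives the factor ``(label of $x$)(label of $y$)''.

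The crux is then to check that these $n-1$ factors sit at pairwise distinct positions of $\sigma$, which, since $\sigma$ is a permutation, is equivalent to their first entries being pairwise distinct. The first entry of the factor attached to a row-extremal point $x$ is the label of $x$, and distinct row-extremal points are distinct points, hence carry distinct labels. The first entry of the factor attached to a column-extremal point $x$ --- the bottom-most point of some column $j \geq 2$ --- is the label of $y$, where $y$ is the parent of the top-most point $p$ of column $j$. Because $p$ is top-most in its column, its parent cannot lie above it, so $y$ lies in the row of $p$ to its left; equivalently, $p$ is the right child of $y$ in the underlying tree of $T$. Since a point of a TLT has at most one right child, the assignment $j \mapsto y$ is injective on the columns $j \geq 2$, so distinct column-extremal points produce distinct first entries. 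Moreover such a $y$ is never a row-extremal point, since it has the point $p$ immediately to its right in its row; together with the injectivity of labels on points, this shows a first entry coming from a column-extremal point can never coincide with one coming from a row-extremal point. All $n-1$ first entries are therefore distinct, which completes the argument.

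The only mildly delicate part is this last paragraph. The hard part will be the bookkeeping of the injectivity claims --- that the column-ancestor assignment is injective and that a column-ancestor is never row-extremal --- but both reduce to the elementary observation that every point of a TLT is the parent of at most one point lying to its right in the same row (its right child), which is immediate from Definition~\ref{defi:tlt}.
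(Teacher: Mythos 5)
Your proof is correct and follows essentially the same route as the paper: count the $(\ell-1)+(k-1)=n-1$ extremal points, observe that Proposition~\ref{prop:TTW-ascent+descent} attaches to each a length-$2$ factor of $\sigma$, and conclude since a permutation of size $n$ has exactly $n-1$ consecutive pairs. The only difference is that you carefully justify the distinctness of the $n-1$ factors (via injectivity of the column-ancestor assignment and the fact that a column-ancestor is never row-extremal), a point the paper's proof asserts without detail.
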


\begin{proof}
Denote by $n$ the common size of $T$ and $\sigma$. 
From Definition~\ref{defi:baxTLT}, $T$ has in total $n+1$ rows and columns, so that there are $n-1$ distinct pairs $(x,y)$ where 
$x$ is a column-extremal (resp. row-extremal) point and $y$ is its column-ancestor (resp. row-ancestor). 
Proposition~\ref{prop:TTW-ascent+descent} then gives $n-1$ distinct factors of length $2$ in $\sigma$, 
which are either ascents or descents as described in Proposition~\ref{prop:TTW-ascent+descent}, 
so that the ascents and descents of $\sigma$ are completely described.
\end{proof}

\subsubsection{Left-to-right minima}

\begin{proposition}
Let $T$ be a Baxter TLT, and denote by $T_\ell$ and $T_r$ the Baxter TLTs defined in Corollary~\ref{cor:decomposition_Baxter_TLT} (p.\pageref{cor:decomposition_Baxter_TLT}). 
Let $\sigma = \Phi_\B(T)$, and decompose $\sigma$ around its minimum value as $\sigma = \sigma_\ell 1 \sigma_r$. 
The permutations $\Phi_\B(T_\ell)$ and $\Phi_\B(T_r)$ are order-isomorphic to the sequences $\sigma_\ell$ and $\sigma_r$ respectively. 
\label{prop:placement_of_1}
\end{proposition}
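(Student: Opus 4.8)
The key tool is Corollary~\ref{cor:decomposition_Baxter_TLT}, which splits $T$ (when both $L$ and $R$ are nonempty) into four blocks via the lines $\mathcal{V}$ and $\mathcal{H}$: an all-empty Northwest block carrying the root, a Baxter TLT $T_\ell$ in the Southwest, a Baxter TLT $T_r$ in the Northeast, and a Ferrers diagram of crossings in the Southeast. I would first locate the entry $1$ of $\sigma = \Phi_\B(T) = \code(T)$: by construction of $\code$, the value $1$ appears at the position read off the Southeast border at the cell labeled $1$, which is the root of $T$. Since the root is the NW-most cell and the labeling propagates labels of empty cells from the boundary, every empty cell of the Northwest block receives $\iso$-label $1$, so $1$ is the value written in the entire NW block and in particular at the single cell where the border of $T$ crosses from the ``$T_\ell$ region'' to the ``$T_r$ region.'' Hence $\sigma = \sigma_\ell\, 1\, \sigma_r$ where $\sigma_\ell$ is the portion of the border walk through (roughly) the Southwest and Southeast blocks and $\sigma_r$ the portion through the Northeast block.

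Next I would compare $\iso$-labels. The points of $T$ other than the root are labeled by Proposition~\ref{prop:decomposition_Baxter_TLT} so that $L$ gets the labels $\{2,\dots,|L|+1\}$ and $R$ gets $\{|L|+2,\dots,n\}$ (this follows because \emph{RemovePoint} peels off points of $R$ before touching $L$, the special point always lying in the "upper-right" component; one can argue this from the relative placement in Proposition~\ref{prop:decomposition_Baxter_TLT} together with the definition of the special point). Restricting the $\code$-construction to $T_\ell$: the $\iso$-labeling of $T_\ell$ as a standalone TLT and the $\iso$-labeling of $T$ restricted to the cells of the Southwest block differ only by the relabeling $\{2,\dots,|L|+1\}\to\{1,\dots,|L|\}$ (shift by $-1$), because all cells outside the row/column of a removed point keep their labels, and the Southwest block is ``isolated'' from $T_r$ by the empty NW and crossing-only SE blocks. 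The portion of the Southeast border of $T$ that lies in the Southwest block is then exactly the Southeast border of $T_\ell$, so the subword of $\sigma$ read there is order-isomorphic to $\Phi_\B(T_\ell)$. The symmetric statement holds for $T_r$ and the Northeast block, with labels shifted by $-(|L|+1)$.

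The remaining point — and the main obstacle — is to check that the Southeast block of crossings contributes \emph{nothing new} to the border walk that would interleave with $\sigma_\ell$: I must verify that the Southeast border of $T$, after passing through the Southwest block, does not re-enter a region whose $\iso$-labels come from $T_r$. Here the structure of Corollary~\ref{cor:decomposition_Baxter_TLT} is exactly what is needed: the Southeast block is a Ferrers diagram \emph{below} $\mathcal{H}$ and \emph{right} of $\mathcal{V}$, so the border walk traverses it entirely within the ``$L$-region,'' and every crossing there has $\iso$-label equal to that of its NW neighbour, which by induction on the rows of that block lies among the labels of $T_\ell$ (or equals $1$ along the top edge adjacent to the NW block — but those cells are above $\mathcal{H}$, hence not on the walk here). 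Thus the border walk reads: first the Southeast border of $T_\ell$ (contributing $\sigma_\ell$), then the cell of label $1$, then the Southeast border of $T_r$ (contributing $\sigma_r$). Finally, the degenerate cases where $L=\varnothing$ or $R=\varnothing$ are handled directly: then $T$ consists of the root with only a right (resp. left) subtree, $T_r=T$ minus the root's column (resp. $T_\ell=T$ minus the root's row), $\sigma_\ell$ (resp. $\sigma_r$) is empty, and the claim reduces to the shift-by-$1$ observation above. This completes the argument.
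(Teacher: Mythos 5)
Your overall architecture (locate the entry $1$, match the two labelings of $T_\ell$ and $T_r$ up to order-isomorphism, check that the Southeast block of crossings only echoes labels of $T_\ell$ along the border walk) is the same as the paper's, but the step on which everything rests is false. You claim that \emph{RemovePoint} peels off all of $R$ before touching $L$, so that $L$ receives the labels $\{2,\dots,|L|+1\}$ and $R$ the labels $\{|L|+2,\dots,n\}$, and you deduce the order-isomorphism of the two labelings of $T_\ell$ as a literal shift by $-1$. This is not true: the special point is the \emph{rightmost point with no cell of $T$ below it}, and such a point can lie in $L$ even when $R\neq\varnothing$. Take the Baxter TLT $T$ of size $4$ with three rows and two columns, all six cells present, and points in cells (row,column) $(1,1)$, $(1,2)$, $(2,1)$, $(3,1)$ (this is the preimage of $\sigma^{19}=4213$ in Figure~\ref{fig:twisted_size_4}). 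Here $L=\{(2,1),(3,1)\}$ and $R=\{(1,2)\}$, but the only point with no cell below it is $(3,1)\in L$ (the point $(1,2)$ has the empty cells $(2,2)$ and $(3,2)$ below it), so $(3,1)$ is removed first and gets label $4$; the resulting labels of $(1,1),(2,1),(1,2),(3,1)$ are $1,2,3,4$, so the label sets of $L$ and $R$ are $\{2,4\}$ and $\{3\}$, which interleave. What you actually need --- and all the paper asserts --- is the weaker statement that the restriction to $L$ of the labeling of $T$ is \emph{order-isomorphic} to the intrinsic labeling of $T_\ell$ (true in the example: $(2,1)$ precedes $(3,1)$ in both); your derivation of it from the exact label sets does not go through, and a different justification is required (namely, that the relative order in which the points of $L$ become special is unaffected by the presence of $R$ and of the Southeast block).

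A second, related slip concerns the position of the entry $1$. You place it at a cell of the Northwest block met by the border walk, and you assert that the cells of the Southeast block carrying $\iso$-label $1$ lie above $\mathcal{H}$ and hence are not on the walk. Both claims are wrong: the Southeast border walk in general does not meet the Northwest block at all, and the value $1$ of $\sigma$ is read at a \emph{crossing of the Southeast block} lying on the Northeast--Southwest diagonal issued from the root (in the example above, at the cell $(2,2)$, whose Northwest neighbour is the root; the walk visits $(3,1),(3,2),(2,2),(1,2)$ with labels $4,2,1,3$). This diagonal inheritance is precisely the mechanism the paper invokes: all cells of the Southeast block are crossings, their $\iso$-labels are inherited along Northeast--Southwest diagonals, so the walk through $T_\ell$ and the Southeast block reads exactly the border word of $T_\ell$ in the propagated labeling, with the single letter $1$ inserted where the walk crosses the root's diagonal, before reading the border word of $T_r$. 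Your third paragraph gestures at this but, combined with the two errors above, does not establish it.
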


\begin{proof}
The claim obviously holds when $T_\ell$ or $T_r$ is empty, so assume they are not.  

Notice first that the points of $T_\ell$ (resp. $T_r$) may be equipped with two different labelings: 
one labeling is inherited from the labeling of the TLT $T$, 
and one is its own labeling as TLTs. 
These are not identical, but by construction, they are ``order-isomorphic'' 
(that is to say, the comparison between the labels of any two points is the same in both labelings). 

Now, both labelings may be propagated following the rules of propagation of the $\iso$-labeling, 
yielding two order-isomorphic labelings of the cells of $T_\ell$ (resp. $T_r$). 

To conclude, it is enough to prove that $\sigma_\ell$ (resp. $\sigma_r$) 
is the word that is read along the Southeast border of $T_\ell$ (resp. $T_r$), in the $\iso$-labeling propagated from the original labeling of $T$. 
And this claim holds because all cells in the Southeast block identified by Corollary~\ref{cor:decomposition_Baxter_TLT} 
are crossings, and because the $\iso$-labels of crossings are inherited following Northeast-Southwest diagonals. 
\end{proof}

\begin{corollary}
Let $T$ be a Baxter TLT, and let $\sigma = \Phi_\B(T)$. 
The left-to-right minima of $\sigma$ are the labels of the points of the left-most column of $T$. 
\label{cor:LtoR-min}
\end{corollary}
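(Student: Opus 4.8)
The plan is to prove Corollary~\ref{cor:LtoR-min} by induction on the size $n$ of $T$, leaning on the recursive decomposition of Corollary~\ref{cor:decomposition_Baxter_TLT} and on its compatibility with $\Phi_\B$ recorded in Proposition~\ref{prop:placement_of_1}. The base case $n=1$ is immediate: $T$ is the single root point, whose label is $1$, it is the only point of the left-most column, and $\sigma = 1$ has $1$ as its only left-to-right minimum.

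For the inductive step I would start from the decomposition $\sigma = \sigma_\ell\, 1\, \sigma_r$ used in Proposition~\ref{prop:placement_of_1}; here the entry $1$ is the label of the root of $T$, which lies in the left-most column (it is the top-leftmost cell). Since every entry of $\sigma_r$ is preceded by the value $1$, no entry of $\sigma_r$ is a left-to-right minimum of $\sigma$; and since $\sigma_\ell$ and the corresponding prefix of $\sigma$ carry exactly the same values, the left-to-right minima of $\sigma$ lying in $\sigma_\ell$ are precisely the left-to-right minima of $\sigma_\ell$. So the left-to-right minima of $\sigma$ are the entry $1$ together with the left-to-right minima of $\sigma_\ell$. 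By Proposition~\ref{prop:placement_of_1}, $\sigma_\ell$ is order-isomorphic to $\Phi_\B(T_\ell)$, hence has its left-to-right minima at the same positions, and I would then invoke the induction hypothesis applied to $T_\ell$.

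The step I expect to require the most care is the bookkeeping translating entries of $\sigma_\ell$ back into point-labels of $T$. Following the argument in the proof of Proposition~\ref{prop:placement_of_1}, listing in increasing order the $T$-labels of the points of $T_\ell$ gives, as its $i$-th element, the $T$-label of the point whose own label inside $T_\ell$ is $i$; since this correspondence $i \mapsto (i\text{-th such label})$ is increasing, a position is a left-to-right minimum of $\sigma_\ell$ exactly when it is a left-to-right minimum of $\Phi_\B(T_\ell)$, and the value of $\sigma$ there is the $T$-label of the point of $T_\ell$ realizing the corresponding left-to-right minimum of $\Phi_\B(T_\ell)$. The induction hypothesis then says these values are exactly the $T$-labels of the points in the left-most column of $T_\ell$. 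Finally I would invoke Corollary~\ref{cor:decomposition_Baxter_TLT} to see that the left-most column of $T$ is the disjoint union of the root of $T$ and the left-most column of $T_\ell$ — the Northwest block contributes only the root, and the Southwest block, which is $T_\ell$, shares its left-most column with $T$ — so that combining the contribution of $1$ (the root) with the $T$-labels coming from $\sigma_\ell$ yields precisely the labels of the points of the left-most column of $T$. (When $T_\ell$ is empty one uses the same statement with the convention that $\Phi_\B$ of the empty tableau is empty and the left-most column of $T$ reduces to the root.)
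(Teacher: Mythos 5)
Your proposal is correct and follows essentially the same route as the paper: an induction using the decomposition $\sigma = \sigma_\ell\,1\,\sigma_r$ of Proposition~\ref{prop:placement_of_1} together with Corollary~\ref{cor:decomposition_Baxter_TLT}, observing that the left-to-right minima of $\sigma$ are the entry $1$ (the root's label) plus those of $\sigma_\ell$, and that the left-most column of $T$ is the root together with the left-most column of $T_\ell$. Your extra care about the order-isomorphism between the $T$-labels and the $T_\ell$-labels is a detail the paper leaves implicit, but it is the same argument.
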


\begin{proof}
It follows easily from Proposition~\ref{prop:placement_of_1} by induction. 

Given $T$, denote by $T_\ell$ and $T_r$ the Baxter TLTs defined in Corollary~\ref{cor:decomposition_Baxter_TLT}. 
Decompose also $\sigma$ as $\sigma_\ell 1 \sigma_r$ like in the statement of Proposition~\ref{prop:placement_of_1}. 
The claim clearly holds when $T_\ell$ is empty: 
indeed, the only point of $T$ in the first column is the root of $T$, labeled by $1$, 
and $\sigma$ starts with $1$. 
If $T_\ell$ is not empty, the left-to-right minimal of $\sigma$ are those of $\sigma_\ell$ and $1$. 
Induction ensures that the left-to-right minima of $\sigma_\ell$ are the labels of the points in its first column. 
And $1$ is the label of the root of $T$, which is the only point in the first column of $T$ that is not in the first column of $T_\ell$. 
\end{proof}

\section{Bijection with non-intersecting lattice paths}
\label{sec:nilp}

\begin{definition}[Triples of non-intersecting lattice paths]
\label{defi:TP}
A \emph{triple of non-intersecting lattice paths of size $n$} is a set of three lattice paths, 
with unitary $N=(0,1)$ and $E=(1,0)$ steps, 
that never meet, 
which respectively start at $(1,0)$, $(0,1)$ and $(-1,2)$ and end at $(n-i,i)$, $(n-i-1,i+1)$ and $(n-i-2,i+2)$ for some $i \in [0..(n-1)]$ 
(thus each of the three paths has $n-1$ steps). 

Let us denote by $\P_n$ the set of triples of non-intersecting lattice paths of size~$n$.
\end{definition}

Figure~\ref{fig:psi} (p.~\pageref{fig:psi}) (right) shows an example of triple of non-intersecting lattice paths of size $18$, 
and Figure~\ref{fig:NILP_size_4} shows all triples of non-intersecting lattice paths of size $4$. 
On these figures, the extremities of the paths are indicated by circles; an additional $E$ (resp. $N$) step 
has been represented at the beginning (resp. end) of the middle and lower paths. 
The reason for this choice will appear clearly later. 

\begin{figure}[h!]
\begin{align*}
 \pi^{1} &=& & \hspace*{-1em} \begin{array}{c} \includegraphics[scale=\scaleNILP]{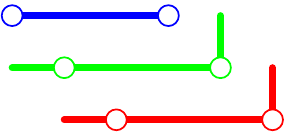} \end{array}, 
& \pi^{2} &=& & \hspace*{-1em} \begin{array}{c} \includegraphics[scale=\scaleNILP]{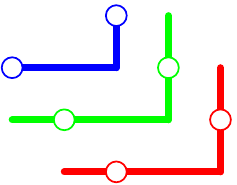} \end{array}, 
& \pi^{3} &=& & \hspace*{-1em} \begin{array}{c} \includegraphics[scale=\scaleNILP]{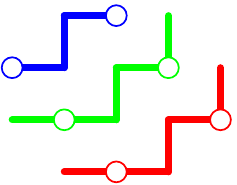} \end{array}, \\
 \pi^{4} &=& & \hspace*{-1em} \begin{array}{c} \includegraphics[scale=\scaleNILP]{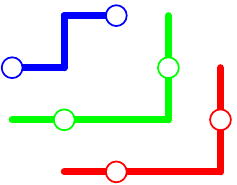} \end{array}, 
& \pi^{5} &=& & \hspace*{-1em} \begin{array}{c} \includegraphics[scale=\scaleNILP]{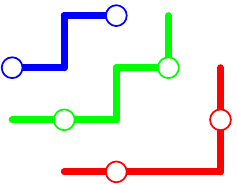} \end{array}, 
& \pi^{6} &=& & \hspace*{-1em} \begin{array}{c} \includegraphics[scale=\scaleNILP]{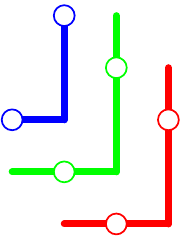} \end{array}, \\
 \pi^{7} &=& & \hspace*{-1em} \begin{array}{c} \includegraphics[scale=\scaleNILP]{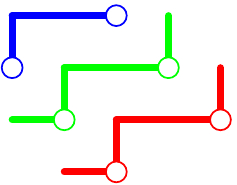} \end{array}, 
& \pi^{8} &=& & \hspace*{-1em} \begin{array}{c} \includegraphics[scale=\scaleNILP]{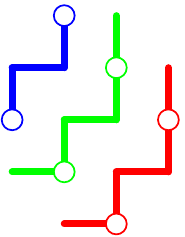} \end{array}, 
& \pi^{9} &=& & \hspace*{-1em} \begin{array}{c} \includegraphics[scale=\scaleNILP]{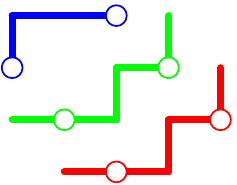} \end{array}, \\
 \pi^{10} &=& & \hspace*{-1em} \begin{array}{c} \includegraphics[scale=\scaleNILP]{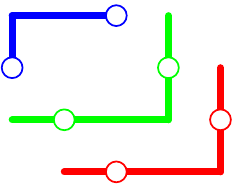} \end{array}, 
& \pi^{11} &=& & \hspace*{-1em} \begin{array}{c} \includegraphics[scale=\scaleNILP]{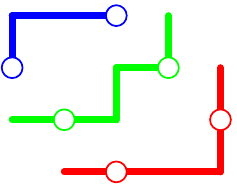} \end{array}, 
& \pi^{12} &=& & \hspace*{-1em} \begin{array}{c} \includegraphics[scale=\scaleNILP]{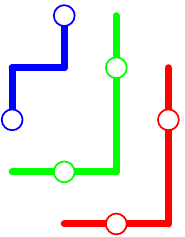} \end{array}, \\
 \pi^{13} &=& & \hspace*{-1em} \begin{array}{c} \includegraphics[scale=\scaleNILP]{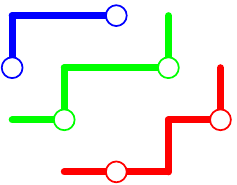} \end{array}, 
& \pi^{14} &=& & \hspace*{-1em} \begin{array}{c} \includegraphics[scale=\scaleNILP]{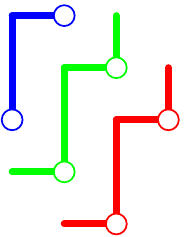} \end{array}, 
& \pi^{15} &=& & \hspace*{-1em} \begin{array}{c} \includegraphics[scale=\scaleNILP]{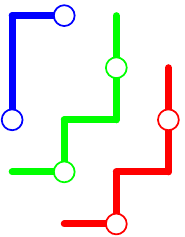} \end{array}, \\
 \pi^{16} &=& & \hspace*{-1em} \begin{array}{c} \includegraphics[scale=\scaleNILP]{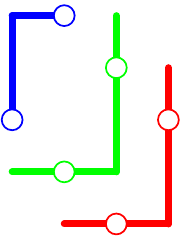} \end{array}, 
& \pi^{17} &=& & \hspace*{-1em} \begin{array}{c} \includegraphics[scale=\scaleNILP]{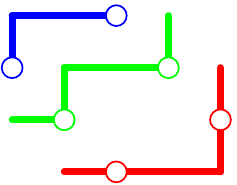} \end{array}, 
& \pi^{18} &=& & \hspace*{-1em} \begin{array}{c} \includegraphics[scale=\scaleNILP]{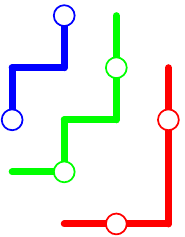} \end{array}, 
\end{align*}
\[\pi^{19} = \begin{array}{c} \includegraphics[scale=\scaleNILP]{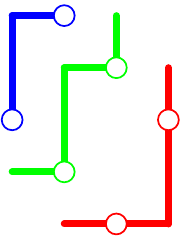} \end{array}, \quad 
\pi^{20} = \begin{array}{c} \includegraphics[scale=\scaleNILP]{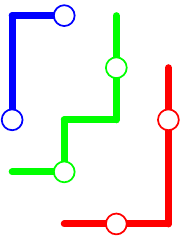} \end{array}, \quad
\pi^{21} = \begin{array}{c} \includegraphics[scale=\scaleNILP]{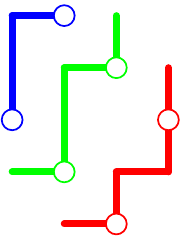} \end{array}, \quad 
\pi^{22} = \begin{array}{c} \includegraphics[scale=\scaleNILP]{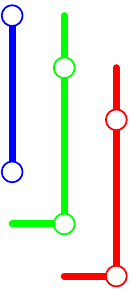} \end{array}. \]
\caption{The 22 triples of non-intersecting paths of $\P_4$. }
\label{fig:NILP_size_4}
\end{figure}

From the general techniques developed in~\cite{lind,GV} (and applied to the case of interest to us in~\cite{Viennot_SLC81}),  
we know that triples of non-intersecting lattice paths are enumerated by Baxter numbers. 
In the following, we exhibit a size-preserving bijection between these objects and Baxter TLTs. 

\subsection{A bijection between binary trees and pairs of non-intersecting lattice paths} 
The announced bijection actually extends a bijection between binary trees and pairs of non-intersecting lattice paths, which we describe below. 
In our context, a pair of non-intersecting lattice paths of size $n$ is a pair of lattice paths with unitary $N$ and $E$ steps, which never meet, 
starting at $(1,0)$ and $(0,1)$ and ending at $(n-i,i)$ and $(n-i-1,i+1)$ for some $i \in [0..(n-1)]$ (thus each of the two paths has $n-1$ steps). 

To any binary tree $B$, we associate a word $w(B)$ on the alphabet $\{L_\ell, L_r, E_\ell, E_r\}$ as follows. 
We complete $B$ by leaves, \ie we compute the complete binary tree $B_c$ whose internal nodes form the tree $B$. 
We perform the depth first traversal of $B_c$, starting on the left. We start with an empty word $w=\varepsilon$. 
Whenever a left leaf (resp. right leaf, left internal edge, right internal edge) is first encountered, 
we append $L_\ell$ (resp. $L_r$, $E_\ell$, $E_r$) to the end of $w$ (except for the first and the last leaves of $B_c$, in which case we do nothing). 
When the traversal of $B_c$ is over, we set $w(B) = w$.

We define $w_{1}(B)$ from $w(B)$ by deleting the letters $L_\ell$ and $L_r$ and by replacing letters $E_\ell$ (resp. $E_r$) by $N$ (resp. $E$). 
Similarly, we define $w_{2}(B)$ from $w(B)$ by deleting the letters $E_\ell$ and $E_r$ and by replacing letters $L_\ell$ (resp. $L_r$) by $E$ (resp. $N$). 
Finally, we set $\varphi(B) = (w_{1}(B),w_{2}(B))$. 
Our notational convention is that the word $w_1(B)$ (resp. $w_2(B)$) designate the upper (resp. lower) path of the pair, starting at $(0,1)$ (resp. $(1,0)$).

Note that treating the first and last leaves of $B_c$ like all others does not change much in the pair of words produced: 
it only results in replacing $w_{2}(B)$ by $E\cdot w_{2}(B) \cdot N$. 
In figures, we usually indicate those additional steps, before and after the circles marking the extremities of the paths. 

\begin{figure}[ht]
\begin{center}
$B = \raisebox{-0.5cm}{\includegraphics[scale=0.4]{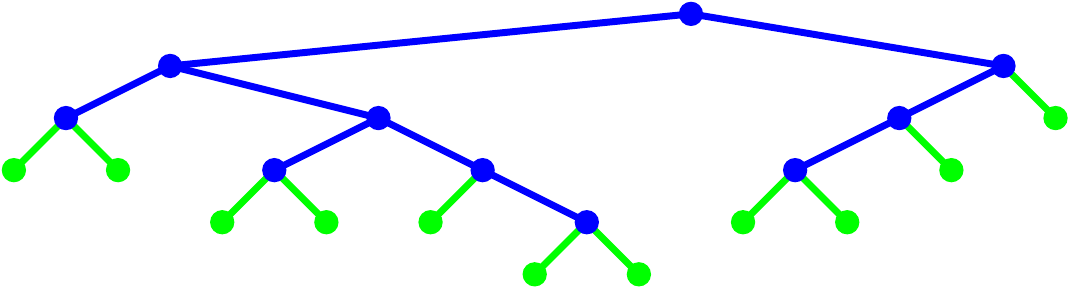}}$ \
and \
$\varphi(B) = \raisebox{-0.5cm}{\includegraphics[scale=0.5]{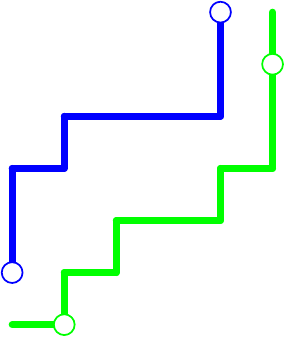}}$
\caption{The bijection $\varphi$. 
On the left, a binary tree $B$ is shown in blue, and the leaves added to obtain $B_c$ are drawn in green. 
On the right, the two paths $w_1(B)$ and $w_2(B)$ that define $\varphi(B)$ are shown respectively in blue and green. }
\label{fig:bij_tree_pair}
\end{center}
\end{figure} 

Figure~\ref{fig:bij_tree_pair} provides an example of this construction. 
For this particular tree $B$, we have $w(B) = E_\ell E_\ell L_r E_r E_\ell L_\ell L_r E_r L_\ell E_r L_\ell L_r E_r E_\ell E_\ell L_\ell L_r L_r$,
so that $w_{1}(B) = N N E N E E E N N$ and $w_{2}(B) = N E N E E N E N N$.

\begin{proposition}
$\varphi$ is a bijection between binary trees having $n$ nodes and pairs of non-intersecting lattice paths of size $n$. 
\label{prop:auxiliary_bijection}
\end{proposition}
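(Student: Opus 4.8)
The plan is to prove that $\varphi$ maps into the set of pairs of non-intersecting lattice paths of size $n$, and then to describe its inverse; together this yields the bijection. First, for well-definedness: counting letters, $w(B)$ has $2n-2$ letters, of which $n-1$ are of type $E_\ell/E_r$ (one per edge of $B$) and $n-1$ are of type $L_\ell/L_r$ (one per leaf of the completed tree $B_c$, minus the two skipped ones), so $w_1(B)$ and $w_2(B)$ each have $n-1$ steps. For the endpoints, let $i$ be the number of left edges of $B$ (equivalently, the number of nodes of $B$ that are left children). A one-line double count on $B_c$ — each of its $n$ internal nodes has exactly one left and one right child — gives that $B_c$ has exactly $i+1$ leaves that are right children and $n-i$ leaves that are left children. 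Hence $w_1(B)$ has $i$ steps $N$ (from the $E_\ell$'s) and $n-1-i$ steps $E$, ending at $(n-1-i,i+1)$, while $w_2(B)$ has $i$ steps $N$ (from the $L_r$'s, excluding the skipped last leaf) and $n-1-i$ steps $E$, ending at $(n-i,i)$; thus both paths match the prescribed pattern for the \emph{same} parameter $i$.

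Next I would establish non-intersection by induction on the recursive structure of $B$. It is convenient to use the word $\hat w(B)$ obtained by treating the first and last leaves of $B_c$ like the others (which, as already noted, only changes $w_2(B)$ into $E\cdot w_2(B)\cdot N$): then $\hat w(\emptyset)=\varepsilon$ and, for $B=\bullet(B_\ell,B_r)$, one has the clean factorization $\hat w(B)=x_\ell\,\hat w(B_\ell)\,x_r\,\hat w(B_r)$, where $x_\ell=E_\ell$ if $B_\ell\neq\emptyset$ and $x_\ell=L_\ell$ otherwise (and symmetrically for $x_r$). Translating this through the definitions of $w_1$ and $w_2$, the pair $\varphi(B)$ decomposes as a short upper/lower ``glue'' segment, then a shifted copy of (essentially) $\varphi(B_\ell)$, then a second glue segment, then a shifted copy of (essentially) $\varphi(B_r)$. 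The glue segments are precisely calibrated — the upper path makes a step $N$ exactly when $B_\ell\neq\emptyset$, the lower path a step $E$ exactly when $B_\ell=\emptyset$, and symmetrically at the second glue — so that the two sub-pairs are routed into two disjoint, correctly-nested regions of the plane. Maintaining through this step a suitable ``$P_1$ stays strictly to the Northwest of $P_2$'' invariant, together with the trivial base case $B=\emptyset$, shows that $P_1$ and $P_2$ never meet (deleting the two extra endpoints of the hatted lower path only helps).

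To finish, I would build the inverse $\psi$ by reversing this recursion: given a pair $(P_1,P_2)$ of non-intersecting lattice paths of size $n\ge 2$, locate the split point — characterised by the ``diagonal-touch'' condition that $P_2$ lie exactly one step Southeast of $P_1$ — which cuts $(P_1,P_2)$ into two smaller pairs of non-intersecting lattice paths; then set $\psi(P_1,P_2)=\bullet\bigl(\psi(\text{left part}),\psi(\text{right part})\bigr)$, reading the glue steps to decide whether either part is empty. One then checks $\psi\circ\varphi=\mathrm{id}$ and $\varphi\circ\psi=\mathrm{id}$ by routine inductions. (Alternatively, once one knows — for instance from the Lindstr\"om--Gessel--Viennot lemma — that pairs of non-intersecting lattice paths of size $n$ are counted by the Catalan number $C_n$, it suffices to prove that $\varphi$ is injective, which amounts to recovering $w(B)$, hence $B$, from $(w_1(B),w_2(B))$.)

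The main obstacle is exactly the interplay between non-intersection and the recursive decomposition: one must show that the split point of a non-intersecting pair is well defined and uniquely determined — equivalently, that the decomposition of $\hat w_1$ ``according to subtrees'' and that of $\hat w_2$ ``according to subtrees'' are forced to be compatible, and that this compatibility is equivalent to the paths being non-intersecting. Everything else — counting letters, computing the endpoints, the base case, and the two closing inductions — is routine.
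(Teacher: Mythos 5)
Your computation of the step counts and endpoints is correct, and amounts to the same fact the paper extracts from its edge--leaf correspondence $(\star)$ (each left internal edge pairs with a right leaf and vice versa, so $w_1(B)$ and $w_2(B)$ have the same number of $N$ steps). Your recursive route to non-intersection via the factorization $\hat w(B)=x_\ell\,\hat w(B_\ell)\,x_r\,\hat w(B_r)$ is a legitimate alternative to the paper's argument, which instead compares the coordinates of the two paths directly at each instant of the traversal using $(\star)$; with the endpoint bookkeeping for the glue steps spelled out, your induction would go through. The problem is the last third. Your primary plan for bijectivity --- an explicit inverse $\psi$ built by locating a ``diagonal-touch'' split point --- is exactly the step you yourself flag as ``the main obstacle,'' and you never resolve it: you neither prove that such a split point exists for every non-intersecting pair (which is what surjectivity requires) nor that it is unique and simultaneously compatible with the decompositions of both paths. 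As written, the proof is therefore incomplete at its crucial point.

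The fallback you mention only in parentheses is in fact the paper's actual strategy: since both sets have cardinality $C_n$ (binary trees classically, pairs of non-intersecting paths by Lindstr\"om--Gessel--Viennot), it suffices to check that the image lands in the right set and that $\varphi$ is injective. But injectivity is not the one-liner your parenthesis suggests: the pair $(w_1(B),w_2(B))$ records only the two subwords of $w(B)$, not their shuffle, so ``recovering $w(B)$'' needs an argument. The paper's proof takes the first index $j$ where $w(B)$ and $w(B')$ differ and runs a case analysis on the pair of letters $(w(B)_j,w(B')_j)$, using structural facts about depth-first traversal words (after a leaf the next edge discovered is a right edge; after an edge the next leaf discovered is a left leaf; a letter $E_r$ must follow a leaf letter; a letter $L_\ell$ must follow an edge letter) to show that in every case $w_1$ or $w_2$ already differs at the corresponding position. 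Some version of this case analysis --- or a genuinely completed construction of your inverse $\psi$ --- is needed to close the gap.
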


Notice that $\varphi$ has already been defined in~\cite{DG}, where it is stated that it provides an alternative description 
of the bijection of~\cite{DV} between binary trees and parallelogram polyominoes. 
Proposition~\ref{prop:auxiliary_bijection} follows directly from this statement, which is however not proved in~\cite{DG}. 
For this reason, we prefer to give here a proof of Proposition~\ref{prop:auxiliary_bijection}.

\begin{proof}
Denoting $C_n = \frac{1}{n+1} { {2n}\choose n}$ the $n$-th Catalan number, we know that there are 
$C_n$ binary trees with $n$ nodes 
as well as $C_n$ pairs of non-intersecting lattice paths of size $n$~\cite{lind,GV}. 
Therefore, to prove that $\varphi$ is a bijection as claimed, 
it is enough to prove that the image of $\varphi$ is included in the set of pairs of non-intersecting lattice paths, 
and that $\varphi$ is injective. 

Let $B$ be a binary tree with $n$ nodes. 
We want to prove that $\varphi(B)$ is a pair of non-intersecting lattice paths of size $n$. 
For this purpose, let us define the correspondence $(\star)$ 
between the left (resp. right) internal edges and the right (resp. left) leaves of $B_c$ (except the first and the last leaves) as follows: 
to any left (resp. right) internal edge whose lower node is $x$, 
associate the right (resp. left) leaf that is reached when following (in $B_c$) only right (resp. left) edges from $x$ until a right (resp. left) leaf is reached. 
A simple observation, which is however a key fact, is that this correspondence is a bijection. 
This is illustrated on Figure~\ref{fig:bij_tree_pair_corresNodesLeaves}. 
\begin{figure}[ht]
\begin{center}
$\includegraphics[scale=0.6]{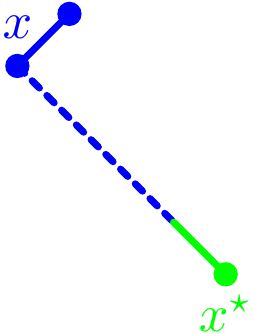}$ \
and \
$\includegraphics[scale=0.6]{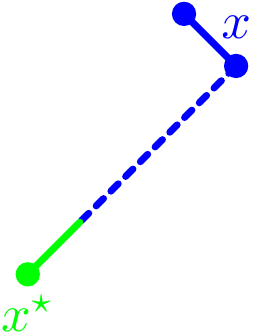}$
\qquad \vline \qquad 
$\includegraphics[scale=0.5]{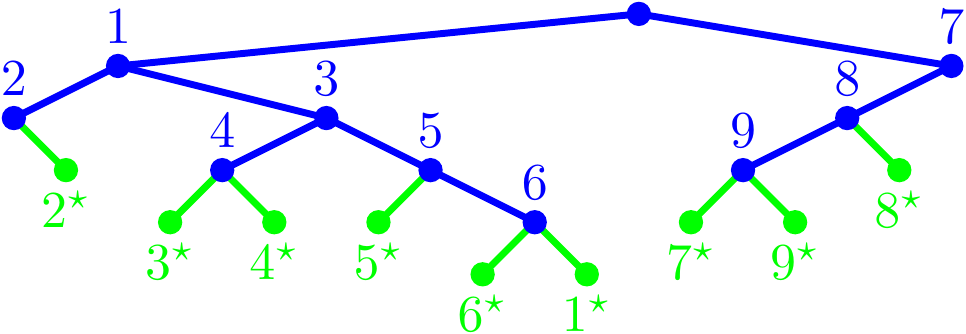}$
\caption{The correspondence $(\star)$: its definition (left) and an illustration with the example of Figure~\ref{fig:bij_tree_pair} (right). 
On the right, the correspondence is indicated by a labeling of the nodes by integers: 
the leaf labeled by $i$ is in correspondence with the internal edge whose lower node is labeled by $i$.}
\label{fig:bij_tree_pair_corresNodesLeaves}
\end{center}
\end{figure} 

Now let us denote by $i \in [0..(n-1)]$ the number of $N$ steps of $w_1(B)$. 
Since $w_1(B)$ starts at $(0,1)$, it ends at $(n-i-1,i+1)$. Because $(\star)$ is a bijection, 
$w_1(B)$ and $w_2(B)$ have the same number of $N$ (resp. $E$) steps, so that $w_2(B)$, starting at $(1,0)$, ends at $(n-i,i)$. 

Next, we claim that $w_1(B)$ is always strictly above $w_2(B)$. 
To prove this claim, let us consider the coordinates of the points visited by $w_1(B)$. 
They are of the form 
$($number of right edges visited$,1+$ number of left edges visited$)$ 
when we consider all instants of the depth first traversal of $B_c$. 
Similarly, the coordinates of the points of $w_2(B)$ are of the form $(1+$number of left leaves visited$,$ number of right leaves visited$)$ 
(always leaving aside the first and last leaves). 
So the claim will follow if we prove that at any instant of the traversal of $B_c$, 
if the number of left leaves visited (including the first one) is equal to the number of right edges visited, 
then the number of left edges visited is larger than or equal to the number of right leaves visited. 
This is easily proved using the correspondence $(\star)$ between leaves and edges of $B$. 
Indeed, in the traversal of $B$, any left (resp. right) edge is visited before the corresponding right (resp. left) leaf. 

\smallskip

It remains to prove that $\varphi$ is injective. 
Consider $B$ and $B'$ two different binary trees with $n$ nodes. 
Of course, the words $w(B)$ and $w(B')$ encoding the depth-first traversals of $B_c$ and $B'_c$ are different. 
We prove that $w(B) \neq w(B')$ implies that $\varphi(B) \neq \varphi(B')$. 

Consider the first time $w(B)$ and $w(B')$ differ: $w(B)_j \neq w(B')_j$ while  $w(B)_k = w(B')_k$ for all $k<j$. 
The possible values for the pair of letters $(w(B)_j, w(B')_j)$ (up to the order) are described in the following table, 
together with the subsequent difference between $\varphi(B)$ and $\varphi(B')$. 
In this table, we denote by $i_1$ (resp. $i_2$) the number of letters $E_r$ or $E_\ell$ (resp. $L_r$ or $L_\ell$) among the first $j$ letters of $w(B)$. 
Therefore, the letter $w(B)_j$ corresponds either to $w_1(B)_{i_1}$ or to $w_2(B)_{i_2}$ (depending on whether $w(B)_j \in \{ E_r ,E_\ell\}$ or $w(B)_j \in \{ L_r ,L_\ell\}$).

\begin{center}
\begin{tabular}{|c|c|l|l|}
\hline 
$w(B)_j$ & $w(B')_j$ & Difference & Fact(s) used\\
\hline
$E_\ell$ & $E_r$ & $w_1(B)_{i_1} =N$ and $w_1(B')_{i_1} =E$ & (obvious)\\
\hline
$E_\ell$ & $L_\ell$ & $w_1(B)_{i_1} =N$ and $w_1(B')_{i_1} =E$ & $(a)$\\
\hline
$E_\ell$ & $L_r$ & $w_1(B)_{i_1} =N$ and $w_1(B')_{i_1} =E$ & $(a)$\\
\hline
$E_r$ & $L_r$ & $w_2(B)_{i_2} =E$ and $w_2(B')_{i_2} =N$ & $(b)$\\
\hline
$E_r$ & $L_\ell$ & In this case, $w(B)_{j-1} \neq w(B')_{j-1}$, & $(c)$ and $(d)$\\
& & contradicting the minimality of $j$. & \\
\hline
$L_r$ & $L_\ell$ & $w_2(B)_{i_2} =N$ and $w_2(B')_{i_2} =E$ & (obvious)\\
\hline
\end{tabular}
\end{center}

All these cases follow from the following facts: 
\begin{itemize}
 \item[$(a)$] When the traversal reaches a leaf, then the next edge to be discovered is a right edge. 
 \item[$(b)$] When the traversal reaches an edge, then the next leaf to be discovered is a left leaf. 
 \item[$(c)$] In any depth-first traversal word $w(B)$, any letter $E_r$ follows a letter $L_r$ or $L_\ell$.
 \item[$(d)$] In any depth-first traversal word $w(B)$, any letter $L_\ell$ follows a letter $E_r$ or $E_\ell$. \qedhere
\end{itemize}
\end{proof}

\subsection{Extension to a bijection between $\T_n$ and $\P_n$}

To any TLT $T \in \T_n$, we may associate a binary tree $B(T)$ with $n$ nodes, as explained in Section~\ref{sec:tlt} (see also an example in Figure~\ref{fig:psi}, left). 
We define $\Phi_{\P}(T) = (w_{top}(T),w_{middle}(T), $ $w_{bottom}(T))$ as follows:
\begin{itemize}
 \item $w_{top}(T)$, from $(-1,2)$ to $(n-i-2,i+2)$, is $w_{1}(B(T))$.
 \item $w_{middle}(T)$, from $(0,1)$ to $(n-i-1,i+1)$, is $w_{2}(B(T))$.
 \item $w_{bottom}(T)$, from $(1,0)$ to $(n-i,i)$, is the Southeast border of $T$, except the first and the last edge.
\end{itemize}
Figure~\ref{fig:psi} illustrates this construction. 
Note that the TLTs of Figures~\ref{fig:psi} and~\ref{fig:uniqueNAT} (right) (p.\pageref{fig:uniqueNAT}) differ only by their underlying Ferrers diagrams.

\begin{figure}[ht]
\begin{center}
$T = \raisebox{-1.5cm}{\includegraphics[scale=0.7]{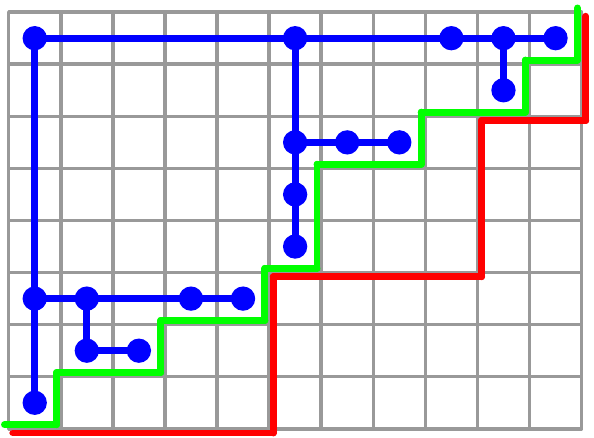}}$ \
and \
$\Phi_{\P}(T) = \raisebox{-1.5cm}{\includegraphics[scale=0.7]{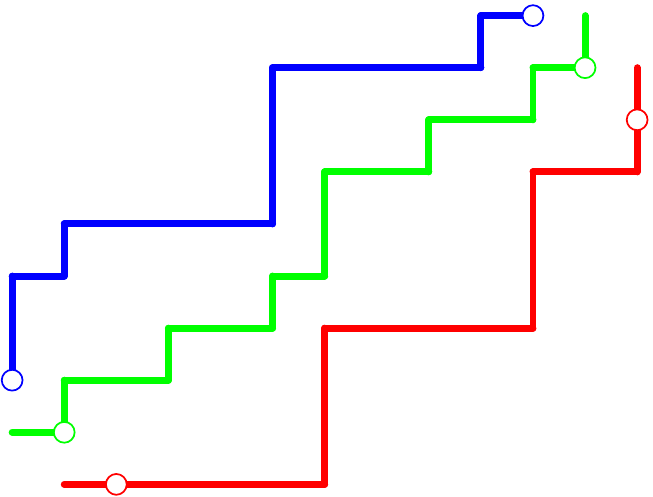}}$
\caption{The bijection $\Phi_{\P}$.}
 \label{fig:psi}
\end{center}
\end{figure} 

Recall from Corollary~\ref{cor:uniqueNAT} that any binary tree is the underlying tree of a unique rectangular Baxter TLT. 
This fact will be useful both for proving that $\Phi_{\P}$ is well-defined 
and that it is a bijection between $\T_n$ and $\P_n$.

\begin{lemma}
$\Phi_{\P}$ is well-defined, \ie for any $T \in \T_n$, $\Phi_{\P}(T)$ is a triple of non-intersecting lattice paths of size $n$. 
\label{lem:phiP-well-defined}
\end{lemma}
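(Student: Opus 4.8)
The plan is to establish the three required path-properties — the right number of steps, the right endpoints, and pairwise non-intersection — essentially one path at a time, and then to isolate the single genuinely combinatorial claim. Throughout, write $B=B(T)$ for the underlying binary tree of $T$, and let $T$ have $k$ rows and $\ell$ columns, so that $n=k+\ell-1$.

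First I would record an elementary numerical fact about TLTs: the underlying tree of a TLT with $k$ rows and $\ell$ columns has exactly $k-1$ vertical (left) edges and $\ell-1$ horizontal (right) edges. Indeed, by condition (2) of Definition~\ref{defi:tlt} a non-root point is joined to its parent by a vertical edge if and only if it has no pointed cell to its left in its row, i.e.\ if and only if it is the leftmost point of its row; since every row is nonempty (condition (3)) and the first row contains the root, the vertical edges are in bijection with the rows $2,\dots,k$, and symmetrically for columns. Consequently the integer $i$ that controls the endpoints in Definition~\ref{defi:TP} is pinned down the same way by all three paths: $w_1(B)$ has as many $N$ steps as $B$ has left internal edges, namely $k-1$; and the Southeast border of $T$ — which runs from $(0,0)$ to $(\ell,k)$ with $\ell$ steps $E$ and $k$ steps $N$, beginning with an $E$ step (the bottom side of the lowest cell of column $1$) and ending with an $N$ step (the right side of the top cell of column $\ell$) — produces, once its first and last edge are dropped, a path $w_{bottom}(T)$ with $\ell-1$ steps $E$ and $k-1$ steps $N$ from $(1,0)$ to $(\ell,k-1)=(n-i,i)$ with $i=k-1$. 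With this bookkeeping, a routine check shows that all three paths of $\Phi_{\P}(T)$ have $n-1$ steps and exactly the start and end points prescribed by Definition~\ref{defi:TP}.

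Second, by construction the pair $(w_{top}(T),w_{middle}(T))$ is the pair $\varphi(B)$ translated by the vector $(-1,1)$. Translation preserves both the step types and the non-intersecting property, so Proposition~\ref{prop:auxiliary_bijection} immediately yields that $w_{top}(T)$ and $w_{middle}(T)$ are two monotone $N,E$-paths of length $n-1$ that never meet. It then remains only to handle the two pairs involving $w_{bottom}(T)$. Since all three paths are $N,E$-monotone and begin at the three consecutive anti-diagonal points $(1,0)$, $(0,1)$, $(-1,2)$, it suffices to prove that $w_{bottom}(T)$ stays strictly below $w_{middle}(T)$: combined with ``$w_{middle}$ strictly below $w_{top}$'' this forces $w_{top}$ and $w_{bottom}$ to be disjoint as well, by transitivity of the strict-below relation for monotone lattice paths. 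Finally, since $w_{middle}(T)=w_2(B)+(-1,1)$ and translating a monotone $N,E$-path by $(-1,1)$ moves it strictly above itself — and, more generally, strictly above any monotone $N,E$-path that shares its endpoints and lies weakly below it — the whole lemma reduces to the following purely combinatorial claim about how the shape and the tree of a TLT fit together: the path $w_{bottom}(T)$ lies weakly below the path $w_2(B)$, both being monotone lattice paths from $(1,0)$ to $(\ell,k-1)$.

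I expect this claim to be the crux of the argument. My plan for it is to use the rigidity provided by Corollary~\ref{cor:uniqueNAT}: every TLT with underlying tree $B$ has $k$ rows and $\ell$ columns, hence is a Ferrers sub-diagram of the $k\times\ell$ rectangle $\hat T$ (the unique rectangular Baxter TLT with tree $B$), and from $T$ one may peel off outer-corner (necessarily empty) cells one at a time, each deletion keeping the object a Baxter TLT with tree $B$ and replacing a local ``$EN$'' of the Southeast border by the local ``$NE$'', i.e.\ pushing $w_{bottom}$ weakly upward. Thus $w_{bottom}(T)$ lies weakly below $w_{bottom}(T^{\ast})$ for a minimal (un-peelable) TLT $T^{\ast}$ with tree $B$, and it is then enough to check that the Southeast border of such a $T^{\ast}$ is exactly $w_2(B)$; this I would verify by matching, along the Southeast border of $T^{\ast}$ read from Southwest to Northeast, its horizontal edges with the left leaves and its vertical edges with the right leaves of the completed tree $B_c$ listed in depth-first order (dropping the first and last edge, resp.\ leaf), exactly as in the definition of $w_2$. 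An alternative route to the same claim is an induction on $n$ via the procedure \emph{RemovePoint}: removing the special point $s$ of $T$ (defined in Section~\ref{sec:tlt}, and which one checks is a leaf of $B$) together with its row or column and its ribbon deletes a single, explicitly located step from each of $w_{bottom}(T)$, $w_1(B)$ and $w_2(B)$, so the inductive hypothesis for the resulting TLT transfers the weak inequality back to $T$. Either way the proof closes; the delicate part is purely the interaction between the Ferrers shape of $T$ and the combinatorics of $w_2(B)$, everything else being translation of paths and bookkeeping.
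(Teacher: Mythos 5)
Your proposal is correct and takes essentially the same route as the paper: the top--middle pair is disposed of by Proposition~\ref{prop:auxiliary_bijection}, and your crux --- that $w_{bottom}(T)$ lies weakly below $w_{2}(B)$ --- is exactly the paper's claim that $E \cdot w_{middle}(T) \cdot N$ is the Southeast border of the thinnest Ferrers diagram containing the points of $T$, proved via the rigidity of Corollary~\ref{cor:uniqueNAT} and the leaf-reading of $B_c$ in depth-first order. Your endpoint bookkeeping and the peeling/translation argument merely spell out what the paper treats as an ``immediate consequence'', so the two proofs coincide in substance.
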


\begin{proof}
From Proposition~\ref{prop:auxiliary_bijection}, 
we know that $w_{top}(T)$ and $w_{middle}(T)$ are a pair of non-intersecting lattice paths of size $n$. 
So the conclusion will follow if we prove that $w_{middle}(T)$ and $w_{bottom}(T)$ also form a pair of non-intersecting lattice paths of size $n$. 
This is an immediate consequence of the following claim (which we then prove): $E \cdot w_{middle}(T) \cdot N$ can also be interpreted as 
the Southeast border of the thinnest Ferrers diagram containing all the points of $T$ (see Figure~\ref{fig:psi}, left). 

From Corollary~\ref{cor:uniqueNAT}, we know that 
there is a unique TLT $T'$ of rectangular shape avoiding the patterns $\Th$ and $\Tv$ 
whose underlying binary tree is $B(T)$. 
Moreover, the proof of Corollary~\ref{cor:uniqueNAT} provides a description of $T'$. 
By uniqueness, the points of $T$ and $T'$ are located in the same cells. 
Consequently, it is enough to show that the
Southeast border of the thinnest Ferrers diagram containing all the points of $T'$ 
is $E \cdot w_{middle}(T) \cdot N$.  

From the description of $T'$ in the proof of Corollary~\ref{cor:uniqueNAT}, 
we see that the Southeast border of the thinnest Ferrers diagram containing all the points of $T'$ 
is nothing but the reading of the leaves of $B(T')_c$ in the depth-first traversal of $B(T')_c$ 
(with the encoding $E$ for a left leaf and $N$ for a right leaf). 
By definition, $E \cdot w_{middle}(T) \cdot N$ describes the leaves of $B(T)_c$ that are visited in the depth-first traversal of $B(T)_c$. 
Because $B(T)=B(T')$, the conclusion follows. 
\end{proof}

\begin{theorem}
For any $n$, $\Phi_{\P}$ is a bijection between $\T_n$ and $\P_n$. 
\end{theorem}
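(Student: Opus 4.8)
The plan is to exhibit the inverse map $\Psi_{\P}\colon\P_n\to\T_n$ explicitly, much as $\Psi_{\F}$ was built in Section~\ref{sec:floorplans}. Given a triple $(u,v,w)\in\P_n$ (upper, middle, lower paths), translate $(u,v)$ so that their starting points become $(0,1)$ and $(1,0)$: this is a pair of non-intersecting lattice paths of size $n$, so Proposition~\ref{prop:auxiliary_bijection} provides a unique binary tree $B=\varphi^{-1}(u,v)$ with $n$ nodes. By Corollary~\ref{cor:uniqueNAT}, $B$ is the underlying tree of a unique rectangular Baxter TLT $T'$, which fixes once and for all the cells occupied by the $n$ points. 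Since a Ferrers diagram is determined by its Southeast border, let $D$ be the Ferrers diagram whose Southeast border is $E\cdot w\cdot N$, and define $\Psi_{\P}(u,v,w)$ to be the filling of $D$ by the points of $T'$.

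The heart of the argument, and the step I expect to be the main obstacle, is to prove that $\Psi_{\P}$ is well-defined, that is, that $D$ is an admissible shape for the point set of $T'$: it must contain all points of $T'$ and have no empty row nor column. Granting this, $\Psi_{\P}(u,v,w)\in\T_n$ is immediate, because conditions (1)--(3) of Definition~\ref{defi:tlt} together with the avoidance of $\Th$ and $\Tv$ all pass from $T'$ to its restriction to $D$ (restricting a rectangular Baxter TLT to a sub-Ferrers diagram that still contains all its points leaves the root in place, alters no parent relation, and cannot create an occurrence of a forbidden pattern). To establish admissibility, let $D_{\min}$ be the thinnest Ferrers diagram containing all points of $T'$. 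Since $T'$ is a valid $k\times\ell$ TLT, its points meet each of the rows $1,\dots,k$ and each of the columns $1,\dots,\ell$, so $D_{\min}$ has exactly $k$ rows and $\ell$ columns and no empty row or column; and by Lemma~\ref{lem:phiP-well-defined} applied to $T'$ (which has the same points and the same underlying tree $B$ as the tableau being built), the Southeast border of $D_{\min}$ is $E\cdot v\cdot N$. On the other hand, the prescribed endpoints $(n-i,i)$, $(n-i-1,i+1)$, $(n-i-2,i+2)$ of the three paths of a triple force $E\cdot w\cdot N$ to have exactly $k$ North steps and $\ell$ East steps too, so $D$ also has exactly $k$ rows and $\ell$ columns. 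Hence the Southeast borders of $D$ and $D_{\min}$ join the same pair of corners, and because $w$ and $v$ never meet with $w$ lying weakly Southeast of $v$, the border of $D$ lies weakly Southeast of that of $D_{\min}$; therefore $D_{\min}\subseteq D\subseteq R$, where $R$ is the full $k\times\ell$ rectangle. In particular each of the $k$ rows and $\ell$ columns of $D$ already contains a point of $T'$, so $D$ has no empty row nor column, as required.

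It remains to verify that $\Psi_{\P}$ and $\Phi_{\P}$ are mutual inverses, which is routine. For $T\in\T_n$, write $b(T)$ for the Southeast border of $T$ with its first and last edge removed, so that $\Phi_{\P}(T)=(w_1(B(T)),w_2(B(T)),b(T))$; applying $\Psi_{\P}$ recovers $\varphi^{-1}(w_1(B(T)),w_2(B(T)))=B(T)$, hence the point positions of $T$ by the uniqueness in Corollary~\ref{cor:uniqueNAT}, while the Ferrers diagram with Southeast border $E\cdot b(T)\cdot N$ is exactly the shape of $T$; thus $\Psi_{\P}(\Phi_{\P}(T))=T$. Conversely, for $(u,v,w)\in\P_n$, the tableau $T=\Psi_{\P}(u,v,w)$ has underlying tree $\varphi^{-1}(u,v)$ and trimmed Southeast border $w$, so $\Phi_{\P}(T)=(w_1(\varphi^{-1}(u,v)),w_2(\varphi^{-1}(u,v)),w)=(u,v,w)$. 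Beyond the admissibility argument above, the only thing demanding care is the bookkeeping matching the coordinate conventions of $\varphi$ with those of $\P_n$ (and the extra leading $E$-step and trailing $N$-step drawn on the middle and lower paths), which goes through without incident.
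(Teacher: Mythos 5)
Your proof is correct, but it handles surjectivity differently from the paper. The paper's proof establishes only that $\Phi_{\P}$ is well-defined (Lemma~\ref{lem:phiP-well-defined}) and injective --- a Baxter TLT is determined by the pair $(B(T),w_{bottom}(T))$ via Corollary~\ref{cor:uniqueNAT}, and $B(T)\leftrightarrow(w_{top},w_{middle})$ is bijective by Proposition~\ref{prop:auxiliary_bijection} --- and then concludes by comparing cardinalities: $|\T_n|=Bax_n$ (via Theorem~\ref{thm:TTW} and the literature) and $|\P_n|=Bax_n$ (via the Lindstr\"om--Gessel--Viennot citation). You instead build the inverse $\Psi_{\P}$ explicitly and prove surjectivity constructively; the substantive new content is your admissibility argument $D_{\min}\subseteq D\subseteq R$, which shows that for an arbitrary triple the lower path really does bound a Ferrers shape compatible with the forced point placement (this is exactly the step the counting argument lets the paper skip). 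Your sandwich argument is sound: the thinnest admissible shape has border $E\cdot v\cdot N$ by the claim inside the proof of Lemma~\ref{lem:phiP-well-defined}, the non-intersection of the middle and lower paths translated by $(1,-1)$ gives the weak domination needed for $D_{\min}\subseteq D$, and matching endpoints force equal row and column counts so that no empty row or column can appear. What your route buys is self-containedness --- the theorem no longer depends on Theorem~\ref{thm:TTW} or on external enumerations of $\P_n$ --- at the cost of the extra containment argument; the paper's route is shorter but logically heavier in its dependencies. Both arguments lean on the same key structural inputs (Corollary~\ref{cor:uniqueNAT} and Proposition~\ref{prop:auxiliary_bijection}), so no step of yours is at odds with the paper.
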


\begin{proof}
Lemma~\ref{lem:phiP-well-defined} ensures that the image of $\T_n$ by $\Phi_{\P}$ is included in $\P_n$. 
In addition, from Theorem~\ref{thm:TTW} and~\cite[for instance]{Gir}, the cardinality of $\T_n$ is $Bax_n$, 
and the same holds for $\P_n$~\cite{Viennot_SLC81}. 
So it is enough to prove that $\Phi_{\P}$ is a bijection. 
By Corollary~\ref{cor:uniqueNAT}, a Baxter TLT $T \in \T_n$ is uniquely characterized by the pair $(B(T),w_{bottom}(T))$. 
Moreover, by Proposition~\ref{prop:auxiliary_bijection}, we may associate with $B(T)$ the pair $(w_{top}(T),w_{middle}(T))$, 
and this correspondence is bijective. 
Therefore, the correspondence between $T \in \T_n$ and $(w_{top}(T),w_{middle}(T),w_{bottom}(T)) \in \P_n$ is a bijection. 
\end{proof}

\subsection{Refined enumeration using the Lindstr\"om-Gessel-Viennot lemma}

We derive easily from the Lindstr\"om-Gessel-Viennot lemma~\cite{lind,GV} 
a refined enumeration of \triples, according to the parameters shown on Figure~\ref{fig:refinement_tlt_triple}. 
This yields a refined enumeration of Baxter TLTs via $\Phi_{\P}$, 
and subsequently of PFPs and permutations avoiding $3-14-2$ and $3-41-2$ via $\Phi_{\F}$ and $\Phi_\B$.

\begin{figure}[ht]
\centering{\includegraphics[scale=0.75]{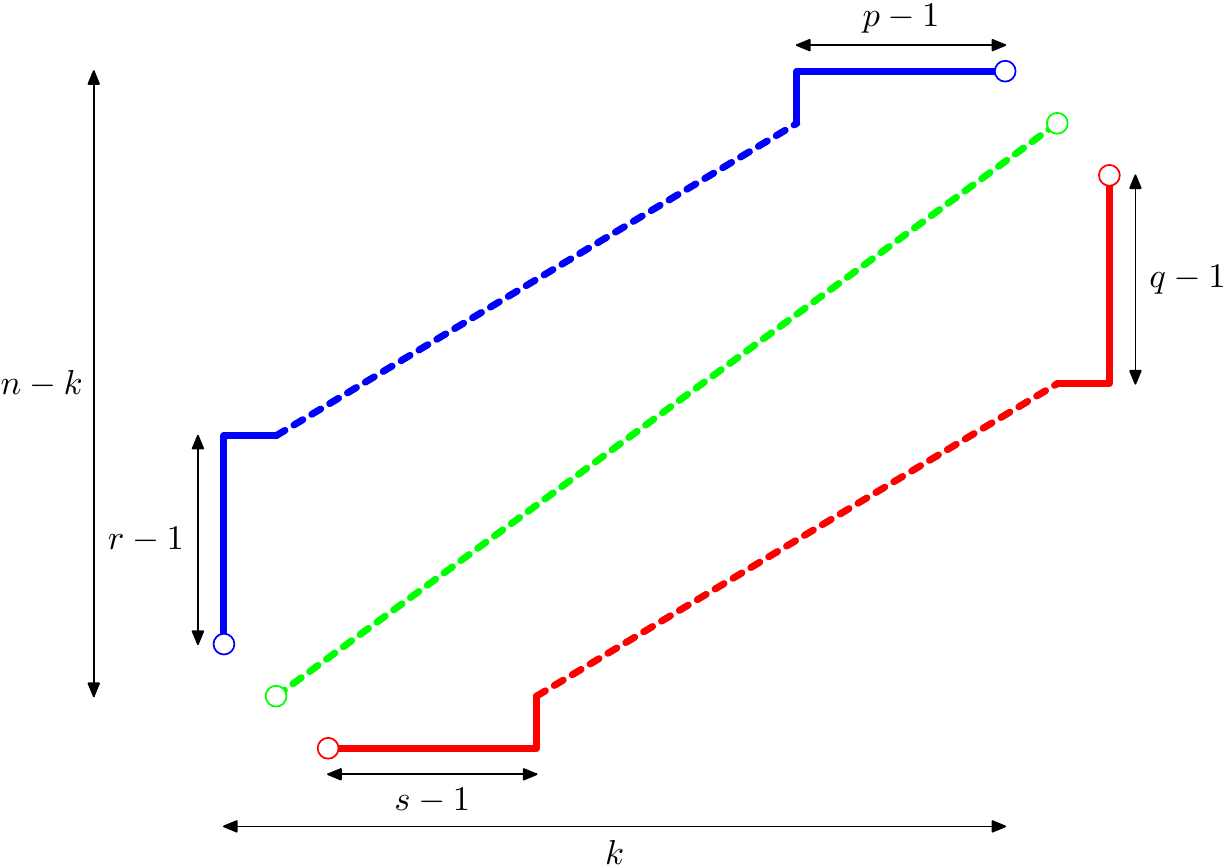}}
\caption{Parameters for the refined enumeration of \triples in Lemma~\ref{lem:LGV}.}
\label{fig:refinement_tlt_triple}
\end{figure} 

\begin{lemma}
The number of \triples of size $n$ such that 
each path has $k$ $E$ steps and $n-1-k$ $N$ steps, 
the upper path starts with $r-1$ $N$ steps followed by an $E$ step, and ends with $p-1$ $E$ steps preceded by a $N$ step, 
and the lower path starts with $s-1$ $E$ steps followed by a $N$ step, and ends with $q-1$ $N$ steps preceded by an $E$ step 
(see Figure~\ref{fig:refinement_tlt_triple}) 
is given by the determinant: 
\[ LGV(n,k,r,p,s,q)= 
\left| \begin{array}{ccc}
{n-1-r-p   \choose k-p} & {n-1-p   \choose k-p} & {n-1-s-p   \choose k-s-p} \\
{n-1-r     \choose k  } & {n-1     \choose k  } & {n-1-s     \choose k-s  } \\
{n-1-r  -q \choose k  } & {n-1  -q \choose k  } & {n-1-s  -q \choose k-s  }
\end{array} \right|.
\]
\label{lem:LGV}
\end{lemma}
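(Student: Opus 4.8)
The plan is to read the formula off the Lindstr\"om--Gessel--Viennot (LGV) lemma~\cite{lind,GV}, after first reducing the three paths to ``free'' $NE$-lattice paths between explicit endpoints by peeling off the prescribed initial and final segments of the upper and lower path.

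First I would record these endpoints. Asking that each of the three paths have exactly $k$ $E$-steps forces $i=n-1-k$ in Definition~\ref{defi:TP}, so the paths run from $(-1,2),(0,1),(1,0)$ to $(k-1,n+1-k),(k,n-k),(k+1,n-1-k)$ respectively. Peeling off from the upper path its prescribed prefix $N^{r-1}E$ and its prescribed suffix $NE^{p-1}$ leaves a free path from $a_1=(0,r+1)$ to $b_1=(k-p,n-k)$; the middle path carries no further constraint and is a free path from $a_2=(0,1)$ to $b_2=(k,n-k)$; and peeling off from the lower path its prescribed prefix $E^{s-1}N$ and prescribed suffix $EN^{q-1}$ leaves a free path from $a_3=(s,1)$ to $b_3=(k,n-k-q)$. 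Next I would observe that the four peeled-off segments, apart from the junction vertices $a_1,b_1,a_3,b_3$ they share with the free portions, lie respectively on the column $x=-1$, the row $y=n-k+1$, the row $y=0$ and the column $x=k+1$, and that no other vertex of the triple can lie on any of these four lines (the three free portions, as well as the prescribed segments themselves, all stay inside the window $-1\le x\le k+1$, $0\le y\le n-k+1$, touching these boundary lines only where indicated). Therefore a triple with the prescribed boundary data is non-intersecting if and only if its three free portions are pairwise vertex-disjoint, and the quantity to compute equals the number of non-intersecting triples of $NE$-lattice paths from $(a_1,a_2,a_3)$ to $(b_1,b_2,b_3)$.

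Then I would invoke the LGV lemma for this reduced problem. Because $a_1,a_2,a_3$ (scanning down the column $x=0$ and then right along $y=1$) and $b_1,b_2,b_3$ (scanning right along $y=n-k$ and then down the column $x=k$) are arranged in the ``non-permutable'' way typical of planar non-intersecting lattice paths, the identity is the only permutation $\pi$ for which a non-intersecting family joining $a_j$ to $b_{\pi(j)}$ can exist; hence the number of non-intersecting triples is $\det M$, where $M_{ij}$ is the number of unconstrained $NE$-lattice paths from $a_i$ to $b_j$, namely $\binom{\Delta x+\Delta y}{\Delta x}$ with $(\Delta x,\Delta y)=b_j-a_i$ (and $0$ when $\Delta x<0$ or $\Delta y<0$, in accordance with the convention $\binom{a}{b}=0$ for $b<0$ or $b>a$). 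Plugging in the coordinates found above gives $M_{11}=\binom{n-1-r-p}{k-p}$, $M_{22}=\binom{n-1}{k}$, $M_{33}=\binom{n-1-s-q}{k-s}$, and analogously for the six off-diagonal entries; I would then note that the matrix $M$ obtained this way is the transpose of the one displayed in the statement, so that $\det M=\det M^{T}=LGV(n,k,r,p,s,q)$, as claimed.

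The two tables of nine binomial coefficients and the check that the source/sink configuration is non-permutable are routine. I expect the one point needing care to be the reduction to free portions: one must make sure that the prescribed prefix (resp. suffix) really depends only on which source (resp. sink) the path uses, so that it is legitimate to strip it before matching sources to sinks, and that the stripped pieces never collide with the rest of the triple except possibly at the common endpoints $a_1,b_1,a_3,b_3$. This is exactly what turns ``non-intersecting triple satisfying the boundary constraints'' into ``non-intersecting triple of free portions'' and lets the LGV determinant count precisely the triples of the lemma. Degenerate inputs (an empty prefix or suffix, or a choice of parameters admitting no such triple) need no separate discussion, being swallowed by the binomial-coefficient convention.
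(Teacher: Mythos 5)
Your proposal is correct and follows exactly the route the paper intends: the paper's own proof consists of the single sentence that the lemma ``is a simple application of the results of~\cite{lind,GV}'', and what you have written is precisely that application carried out in full (correct endpoints $a_1=(0,r+1)$, $a_2=(0,1)$, $a_3=(s,1)$, $b_1=(k-p,n-k)$, $b_2=(k,n-k)$, $b_3=(k,n-k-q)$, correct path counts matching the nine binomial entries up to transposition, and a sound justification that stripping the forced prefixes and suffixes preserves the non-intersection condition). Nothing to correct.
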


\begin{proof}
This is a simple application of the results of~\cite{lind,GV}. 
\end{proof}

\begin{corollary}\label{coro:refinement_tlt_triple}
The determinant $LGV(n,k,r,p,s,q)$ also counts 
the number of Baxter TLTs of size $n$, 
with $k+1$ columns, 
$r$ points in the first column, 
and $p-1$ columns to the right of the rightmost column containing at least two 
points\footnote{Necessarily, each of these $p-1$ columns contains a single point.},
and such that the Southeast border of the Ferrers diagram of the TLT 
starts with $s$ horizontal steps and ends with $q$ vertical steps.
\end{corollary}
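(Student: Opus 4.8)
The plan is to read Corollary~\ref{coro:refinement_tlt_triple} off Lemma~\ref{lem:LGV} through the bijection $\Phi_{\P}\colon\T_n\to\P_n$, by translating each of the six parameters $(n,k,r,p,s,q)$ of Lemma~\ref{lem:LGV} into a property of the Baxter TLT. Since $\Phi_{\P}$ preserves size, $n$ matches trivially, so it suffices to prove, for $T\in\T_n$ with $\Phi_{\P}(T)=(w_{top}(T),w_{middle}(T),w_{bottom}(T))$, the five equivalences: the common number of $E$-steps of the three paths equals $k$ exactly when $T$ has $k+1$ columns; the upper path $w_{top}(T)$ starts with $r-1$ $N$-steps followed by an $E$-step exactly when $T$ has $r$ points in its first column; $w_{top}(T)$ ends with an $N$-step followed by $p-1$ $E$-steps exactly when $T$ has $p-1$ columns to the right of the rightmost column containing at least two points; the lower path $w_{bottom}(T)$ starts with $s-1$ $E$-steps followed by an $N$-step exactly when the Southeast border of $T$ starts with $s$ horizontal steps; and $w_{bottom}(T)$ ends with an $E$-step followed by $q-1$ $N$-steps exactly when the Southeast border of $T$ ends with $q$ vertical steps. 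Once these are established, $\Phi_{\P}$ restricts to a bijection between the two families being enumerated, and Lemma~\ref{lem:LGV} supplies their common cardinality $LGV(n,k,r,p,s,q)$.

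The equivalences involving the lower path are immediate. By definition, $w_{bottom}(T)$ is the Southeast border of the Ferrers diagram of $T$ with its first and last edges deleted; that border starts with a horizontal edge, ends with a vertical edge, and contains exactly one horizontal (resp. vertical) edge per column (resp. row) of $T$. Hence $w_{bottom}(T)$ carries (number of columns of $T$)$\,-1$ horizontal steps, and from the definition of $\P_n$ the three paths of a triple carry equally many $E$-steps, which yields the equivalence for $k$; the equivalences for $s$ and for $q$ are obtained by reading the initial horizontal run and the final vertical run of the Southeast border of $T$ directly.

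The equivalences involving the upper path rest on the structure of $B(T)$. Recall that $w_{top}(T)=w_1(B(T))$ is the subword of edge-letters of $w(B(T))$, so it lists the internal edges of $B(T)$ in the order of the depth-first traversal of $B(T)_c$, writing $N$ for a left edge and $E$ for a right edge. Its initial run of $N$'s therefore has length equal to that of the maximal chain of left edges issuing from the root of $B(T)$; in $T$ this chain is exactly the set of non-root points of the first column (a non-root point of the first column being the left child of the point above it, and a left child remaining in the column of its parent), so this length equals $r-1$ precisely when $T$ has $r$ points in its first column. For the final run of $E$'s of $w_{top}(T)$ — the $E$-steps following its last $N$ — I would argue by induction on $T$. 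Denoting by $B_\ell$ and $B_r$ the left and right subtrees pending from the root of $B(T)$, one has the recursion $w_1(B(T))=N\,w_1(B_\ell)\cdot E\,w_1(B_r)$, with the convention that the prefix $N\,w_1(B_\ell)$ (resp. the factor $E\,w_1(B_r)$) is empty when $B_\ell$ (resp. $B_r$) is empty. On the other side, Corollary~\ref{cor:uniqueNAT} lets us replace $T$ by the unique rectangular Baxter TLT with the same underlying tree $B(T)$ — which has the same points, hence the same columns and the same distribution of points among columns — and the recursive description of that rectangular TLT from the proofs of Corollaries~\ref{cor:uniqueNAT} and~\ref{cor:decomposition_Baxter_TLT} shows that its block of trailing single-point columns is that of the rectangular TLT associated with $B_r$ when $B_r$ has a left edge, and otherwise is all columns of that TLT together with the block of trailing single-point columns of the one associated with $B_\ell$. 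This matches the recursion for the trailing $E$-run of $w_1(B(T))$ term by term, giving the equivalence for $p$.

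Assembling the five equivalences proves the corollary. The step I expect to be the crux is the equivalence for $p$, namely the identification of the trailing $E$-run of $w_1(B(T))$ with the block of trailing single-point columns of $T$, which needs the induction just sketched. In addition, the degenerate shapes — a single-row or a single-column TLT, for which some of the runs above are empty, and for which "the rightmost column containing at least two points" must be understood as the first column when no such column exists, with the dual reading for the first $N$-step of the upper path — should be checked separately, but this is a short finite verification.
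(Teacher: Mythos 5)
Your proposal is correct and follows essentially the same route as the paper: the paper's proof simply asserts that all six parameters of Lemma~\ref{lem:LGV} translate through $\Phi_{\P}$ as stated, and details only the case of $p$ (trailing $E$-run of the upper path $\leftrightarrow$ trailing single-point columns), which is exactly the case you identify as the crux and justify by the recursion $w_1(B)=N\,w_1(B_\ell)\cdot E\,w_1(B_r)$ together with Corollary~\ref{cor:uniqueNAT}. Your treatment is merely a more explicit version of the paper's one-line verification, including the degenerate single-row/single-column cases that the paper leaves implicit.
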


For example, the TLT $T$ of Figure~\ref{fig:psi} has parameters 
$n=18$, $k=10$, $r=3$, $p=2$, $s=5$, and $q=2$. 

\begin{proof}
It can be easily checked that all parameters in Lemma~\ref{lem:LGV} 
are translated on TLTs through $\Phi_{\P}$ 
as stated in Corollary~\ref{coro:refinement_tlt_triple}. 
For instance, the upper path of $\Phi_{\P}(T)$ ends with exactly $p-1$ $E$ steps 
if and only if exactly the last $p-1$ internal edges of $B(T)$ in the depth-first search are right edges, 
which translates into exactly the $p-1$ rightmost columns of $T$ containing a single point. 
\end{proof}

\begin{corollary}
\label{coro:refinement_tlt_PFP+perm}
The number of PFPs of size $n$, in a bounding rectangle of height $n-k$ and width $k+1$, 
with $r$ tiles whose left edge is supported by the left edge of the bounding rectangle, 
and such that the rightmost vertical line that supports the left edge of at least two tiles is at distance $p$ from the right edge of the bounding rectangle
is $\sum_{q,s} LGV(n,k,r,p,s,q)$. 

The number of permutations of size $n$, avoiding $3-14-2$ and $3-41-2$, with $k$ ascents and $r$ left-to-right minima 
is  $\sum_{p,q,s} LGV(n,k,r,p,s,q)$.
\end{corollary}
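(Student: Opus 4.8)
The plan is to deduce both counts from Corollary~\ref{coro:refinement_tlt_triple} by transporting its parameters along the bijections $\Phi_{\F}:\T_n\to\F_n$ and $\Phi_\B:\T_n\to\B_n$, just as Corollary~\ref{coro:refinement_tlt_triple} was itself obtained from Lemma~\ref{lem:LGV} through $\Phi_{\P}$. Since all three maps are size-preserving bijections with common domain $\T_n$, it is enough in each case to match the relevant combinatorial statistics and to identify which of the six parameters $n,k,r,p,s,q$ are \emph{not} recorded on the target objects; summing over those recovers the stated formulas.

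For the PFP count, I would fix $T\in\T_n$ with $k+1$ columns (hence $n-k$ rows) and look at $F=\Phi_{\F}(T)$, which by construction lies in a bounding rectangle of height $n-k$ and width $k+1$. Under $\Phi_{\F}$ each point of $T$ becomes the top-left corner of a tile whose left edge lies on the vertical line $x=j-1$ exactly when the point sits in column $j$ of $T$. Hence the tiles whose left edge is supported by the left edge of the bounding rectangle are in bijection with the points of the first column of $T$ (there are $r$ of them), and for $0\le j\le k$ the line $x=j$ supports the left edge of at least two tiles if and only if column $j+1$ of $T$ contains at least two points; thus the rightmost such line is at distance $p$ from the right edge of the rectangle precisely when exactly $p-1$ columns of $T$ lie to the right of the rightmost column of $T$ containing at least two points. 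The two remaining parameters $s$ and $q$ of Corollary~\ref{coro:refinement_tlt_triple} only record the shape of the underlying Ferrers diagram of $T$ (its Southeast border starts with $s$ horizontal and ends with $q$ vertical steps), which is invisible on the rectangular object $F$. Since $\Phi_{\F}$ is a bijection, the PFPs with the prescribed parameters correspond to \emph{all} Baxter TLTs with $k+1$ columns, $r$ points in the first column, and $p-1$ columns to the right of the rightmost column containing at least two points, irrespective of $s$ and $q$; summing Corollary~\ref{coro:refinement_tlt_triple} over $s$ and $q$ then yields $\sum_{q,s}LGV(n,k,r,p,s,q)$.

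For the permutation count, I would first invoke Observation~\ref{obs:twistedBaxter_patterns} to identify the permutations of size $n$ avoiding $3-14-2$ and $3-41-2$ with the elements of $\B_n$, so that $\Phi_\B$ puts them in bijection with $\T_n$. The two statistics then translate as follows: by Corollary~\ref{cor:TTW-ascent+descent} every ascent of $\sigma=\Phi_\B(T)$ arises from a column-extremal point of $T$, and there is exactly one column-extremal point in each column of $T$ other than the first, so $\sigma$ has $k$ ascents if and only if $T$ has $k+1$ columns; and by Corollary~\ref{cor:LtoR-min} the left-to-right minima of $\sigma$ are the labels of the points in the first column of $T$, so $\sigma$ has $r$ left-to-right minima if and only if $T$ has $r$ points in its first column. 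The parameters $p$, $s$ and $q$ are not recorded on $\sigma$, so the permutations with $k$ ascents and $r$ left-to-right minima correspond to all Baxter TLTs with $k+1$ columns and $r$ points in the first column; summing Corollary~\ref{coro:refinement_tlt_triple} over $p$, $q$ and $s$ gives $\sum_{p,q,s}LGV(n,k,r,p,s,q)$.

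The routine but slightly delicate point will be the parameter dictionary for $\Phi_{\F}$, notably the identification of ``vertical lines supporting at least two tile left-edges'' in $F$ with ``columns containing at least two points'' in $T$, together with the off-by-one bookkeeping relating the distance $p$ to the count $p-1$ of columns. Everything else follows directly from results already established --- Corollaries~\ref{coro:refinement_tlt_triple}, \ref{cor:TTW-ascent+descent} and \ref{cor:LtoR-min} and Observation~\ref{obs:twistedBaxter_patterns} --- together with the bijectivity of $\Phi_{\F}$ and $\Phi_\B$.
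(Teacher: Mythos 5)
Your proposal is correct and follows essentially the same route as the paper: it transports the parameters of Corollary~\ref{coro:refinement_tlt_triple} along $\Phi_{\F}$ and $\Phi_\B$, using the tile/top-left-corner description of $\Phi_{\F}$ for the parameters $r$ and $p$, and Corollaries~\ref{cor:TTW-ascent+descent} and~\ref{cor:LtoR-min} for the ascents and left-to-right minima, then sums over the unrecorded parameters. The only difference is that you spell out the dictionary (columns with at least two points versus vertical lines supporting at least two left edges, and the off-by-one for $p$) in more detail than the paper, which simply asserts that these translations "follow easily".
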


\begin{proof}
It is enough to check that the parameters $n$, $k$, $r$ and $p$ in Corollary~\ref{coro:refinement_tlt_triple} 
are translated on PFPs and permutations avoiding $3-14-2$ and $3-41-2$ via $\Phi_{\F}$ and $\Phi_\B$ 
as expressed in the statement of Corollary~\ref{coro:refinement_tlt_PFP+perm}. 

Because $\Phi_{\F}$ maps $\T_{(n-k,k+1)}$ to $\F_{(n-k,k+1)}$, we are only left with the interpretation of the parameters $r$ and $p$ on PFPs. 
They follow easily from the description of $\Phi_{\F}$, 
since any point $x$ of $T$ is associated with a tile whose top-left corner is the cell containing $x$. 

Consider now $T \in \T_n$ and $\sigma = \Phi_\B(T) \in \B_n$. 
From Corollary~\ref{cor:TTW-ascent+descent}, the number of ascents in $\sigma$ is the number of column-extremal points in $T$. 
Since there is one such point in each column except the first, this solves the case of parameter $k$. 
From Corollary~\ref{cor:LtoR-min}, there is one left-to-right minimum for each point in the first column, proving the statement for the parameter $r$. 
\end{proof}

\section{Specializations of the bijections}
\label{sec:special}

Using the underlying tree structure of TLTs, we define a subfamily (denoted $\TT$) of Baxter TLTs whose nice combinatorial properties are explained in this section. 

For each of the studied bijections, $\Phi_{\P}$, $\Phi_{\F}$ and $\Phi_{\B}$, 
we consider its restriction to the domain $\TT$. 
These restrictions, denoted $\phiPP$, $\phiFF$ and $\phiBB$, provide bijections between 
$\TT_n$ and subclasses of $\P_n$, $\F_n$ and $\B_n$, which we denote $\PP_n$, $\FF_n$ and $\BB_n$. 
As we shall see, the family $\PP_n$ is well-known, 
giving easy access to the enumeration of these restricted families, 
the definition of $\FF_n$ involves a new type of constraint in floorplans, 
and the permutations of $\BB_n$ are natural combinatorial objects. 
In particular, the enumeration of the permutations of $\BB_n$ triggers 
some intriguing enumerative problems, which we discuss at the end of this section. 

For now, we define the subfamily $\TT$ and state a lemma which is essential to analyze the restrictions of our bijections.

\begin{definition}\label{def:complete-BTLT}
A \emph{complete Baxter TLT} is a Baxter TLT whose underlying tree is a complete binary tree. 

An \emph{almost complete Baxter TLT} of size $n$ is a Baxter TLT of size $n$ whose underlying tree is \emph{almost complete}: 
namely, it is complete binary tree from which the following have been removed: 
 \begin{itemize}
 \item the leaf $\ell$ that is reached from the root when following only left edges, if $n$ is even;
 \item $\ell$ and the leaf $r$ that is reached from the root when following only right edges, if $n$ is odd;
\end{itemize}
We denote by $\TT_n$ the set of almost complete Baxter TLT of size $n$. 
\label{def:restricted_TLT}
\end{definition}

Figures~\ref{atn-small} and~\ref{atn} show some examples.

\begin{figure}[ht]
\begin{center}
\begin{tabular}{c|cc|c|c}
\tikz[scale=0.8]{\draw (0,0) [fill] circle (.1); 
\draw (0,-1) [fill] circle (.1);
\draw (1,0) [fill] circle (.1);
\draw (0,-1) -- (0,0);
\draw (1,0) -- (0,0);
} \qquad & \qquad 
\includegraphics[scale=1.0]{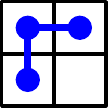} & 
\includegraphics[scale=1.0]{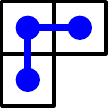} \qquad & \qquad  
\includegraphics[scale=1.0]{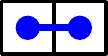} \qquad & \qquad
\includegraphics[scale=1.0]{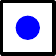} \\ 
\end{tabular}
\end{center}
The complete binary tree with $3$ vertices (denoted $t_3$), \\ with all the complete Baxter TLTs of size $3$, \\
and all the almost complete Baxter TLTs of size $2$ and $1$ (obtained from $t_3$).
\\
\begin{center}
\begin{tabular}{c|ccc|c|c}
\tikz[scale=0.8]{\draw (0,0) [fill] circle (.1); 
\draw (0,-1) [fill] circle (.1);
\draw (1,0) [fill] circle (.1);
\draw (-1,-1) [fill] circle (.1);
\draw (-1,0) [fill] circle (.1);
\draw (0,-1) -- (0,0);
\draw (1,0) -- (0,0);
\draw (-1,0) -- (-1,-1);
\draw(0,0) -- (-1,0);
} \qquad & \qquad 
\includegraphics[scale=1.0]{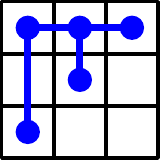} & 
\includegraphics[scale=1.0]{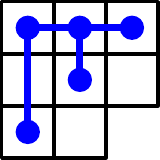} & 
\includegraphics[scale=1.0]{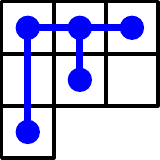} \qquad & \qquad
\includegraphics[scale=1.0]{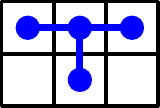} \qquad & \qquad
\includegraphics[scale=1.0]{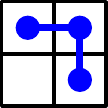} \\ 
& &
\includegraphics[scale=1.0]{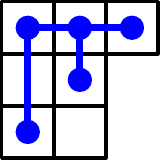} & 
\includegraphics[scale=1.0]{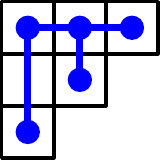} \qquad & \qquad
\includegraphics[scale=1.0]{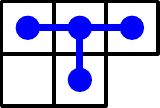}\qquad  &  
 \\ 
\end{tabular}

\bigskip

\begin{tabular}{c|ccc|c|c}
\tikz[scale=0.8]{
\phantom{\draw (-1,0) [fill] circle (.1);}
\draw (0,0) [fill] circle (.1); 
\draw (0,-1) [fill] circle (.1);
\draw (1,0) [fill] circle (.1);
\draw (0,1) [fill] circle (.1);
\draw (1,1) [fill] circle (.1);
\draw (0,-1) -- (0,0);
\draw (1,0) -- (0,0);
\draw (0,1) -- (1,1);
\draw (0,0) -- (0,1);
} \qquad & \qquad 
\includegraphics[scale=1.0]{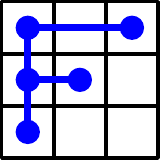} & 
\includegraphics[scale=1.0]{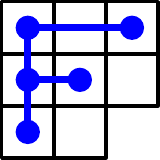} & 
\includegraphics[scale=1.0]{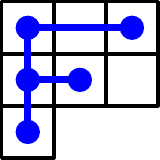} \qquad & \qquad 
\includegraphics[scale=1.0]{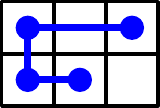} \qquad & \qquad 
\includegraphics[scale=1.0]{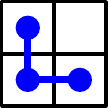} \\ 
& & 
\includegraphics[scale=1.0]{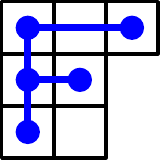} & 
\includegraphics[scale=1.0]{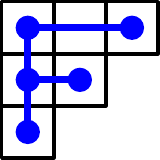} \qquad & \qquad
\includegraphics[scale=1.0]{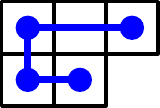} \qquad & 
 \\ 
\end{tabular}
\end{center}
The complete binary trees with $5$ vertices (denoted $t_5$ and $t'_5$), \\ with all the complete Baxter TLTs of size $5$, \\
and all the almost complete Baxter TLTs of size $4$ and $3$ (obtained from $t_5$ and $t'_5$).
\caption{Small complete and almost complete Baxter TLTs. (For consistency with TLTs, 
the roots of binary trees are drawn in the top left.)}\label{atn-small}
\end{figure}

\begin{figure}[ht]
$\begin{array}{c|cc}
 
\includegraphics[scale=1.0]{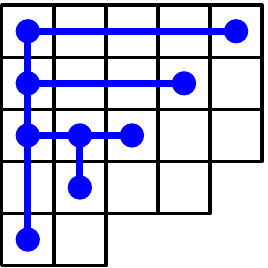}
\qquad & \qquad 
\includegraphics[scale=1.0]{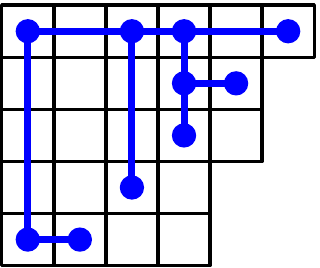}
& \quad
\includegraphics[scale=1.0]{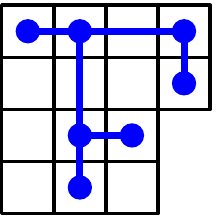} \\
\end{array}$
\caption{Left: A complete Baxter TLT of size $9$. Right: Two almost complete Baxter TLTs, of respective size $10$ and $7$.}\label{atn}
\end{figure}

\begin{lemma}
Let $T$ be a Baxter TLT, and define its \emph{leaves} as its points which correspond to leaves of the underlying tree of $T$. 
The followings are equivalent: 
\begin{itemize}
 \item $T$ is a complete Baxter TLT; 
 \item the leaves of $T$ form a staircase shape, 
\ie they are located on the Southwest-Northeast diagonal which starts at the bottommost point of the first column of $T$, 
and occupy every cell of this diagonal. 
\end{itemize}
\label{lem:escalier}
\end{lemma}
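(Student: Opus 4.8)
The plan is to establish the two implications of Lemma~\ref{lem:escalier} separately. It will be convenient to fix the precise form of the staircase condition: I read it as saying that $T$ has the same number $m$ of rows and of columns and that the leaves of $T$ are exactly the $m$ cells of the Southwest--Northeast diagonal from the bottom-left cell $(m,1)$ of $T$ to its top-right cell $(1,m)$ (the cell $(m,1)$ being then the bottommost point of the first column). Here and below $(i,j)$ denotes the cell in row $i$ from the top and column $j$ from the left.

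For the implication \emph{complete $\Rightarrow$ staircase}, I would argue by induction on the size $n$, proving the strengthened statement: a complete Baxter TLT of size $n$ has $n$ odd, exactly $m:=\tfrac{n+1}{2}$ rows and $m$ columns, its root in the cell $(1,1)$, and its leaves exactly on the anti-diagonal $\{(i,m+1-i):1\le i\le m\}$. The case $n=1$ is clear. For $n\ge 3$, let $B_\ell$ and $B_r$ be the left and right subtrees hanging from the root of the underlying tree; since that tree is complete with at least three nodes, $B_\ell$ and $B_r$ are non-empty complete binary trees. I would then invoke Corollary~\ref{cor:decomposition_Baxter_TLT}: the lines $\mathcal V$ and $\mathcal H$ cut $T$ into a Northwest block (an empty rectangle whose top-left cell carries the root), a Southwest block $T_\ell$ and a Northeast block $T_r$ that are Baxter TLTs with underlying trees $B_\ell$ and $B_r$ --- hence complete Baxter TLTs --- and a Southeast block of crossings. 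Applying the induction hypothesis to $T_\ell$ and $T_r$ (sizes $n_\ell,n_r$, sides $m_\ell,m_r$) and using that the Northwest block shares its rows with $T_r$ and its columns with $T_\ell$, one gets that $T$ has $m_r+m_\ell$ rows and $m_\ell+m_r$ columns; since $n=n_\ell+n_r+1=2(m_\ell+m_r)-1$ this common value is $\tfrac{n+1}{2}$, and the root sits in $(1,1)$. The leaves of $T$ are those of $T_\ell$ together with those of $T_r$ (the root of the underlying tree is not a leaf), and translating their positions given by the induction hypothesis into the coordinates of $T$ shows that the leaves of $T_\ell$ occupy the anti-diagonal cells with column indices $1,\dots,m_\ell$ and those of $T_r$ the anti-diagonal cells with column indices $m_\ell+1,\dots,m$; their union is the full anti-diagonal. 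This closes the induction, and the bottommost point of the first column of $T$ (which is that of $T_\ell$, namely $(m,1)$) is the start of the diagonal, so the staircase condition holds.

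For the implication \emph{staircase $\Rightarrow$ complete} I would use a short counting argument. If the leaves of $T$ are the cells $(i,m+1-i)$, $1\le i\le m$, with $m$ the common number of rows and columns, then the semi-perimeter of $T$ equals $2m$, so $n=2m-1$, and $T$ has exactly $m$ leaves. In any Baxter TLT of size $n$ with underlying tree $B$, writing $U$, $L$, $D$ for the numbers of unary, leaf, and binary nodes of $B$, one has $L+U+D=n$ and $U+2D=n-1$ (the number of edges), hence $L=\tfrac{n+1}{2}-\tfrac{U}{2}$. With $L=m$ and $n=2m-1$ this forces $U=0$: every internal node has two children, $B$ is a complete binary tree, and $T$ is a complete Baxter TLT.

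I expect the only real work to be the geometric bookkeeping inside the inductive step of the first implication --- verifying that the four blocks of Corollary~\ref{cor:decomposition_Baxter_TLT} really do glue the anti-diagonals of $T_\ell$ and $T_r$ into the anti-diagonal of $T$ and that rows and columns add up as claimed. Everything else (the base case, and the entire second implication, which needs only the leaf count in a binary tree together with the identity size $=$ semi-perimeter $-\,1$) is routine.
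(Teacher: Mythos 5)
Your proof is correct. For the direction ``complete $\Rightarrow$ staircase'' you follow essentially the same route as the paper: induction on the size, using the four-block decomposition of Corollary~\ref{cor:decomposition_Baxter_TLT} and the fact that a binary tree is complete if and only if both subtrees of its root are, then gluing the two smaller staircases into the staircase of $T$ (your version merely makes the coordinate bookkeeping explicit). Where you genuinely diverge is in the converse. The paper runs the \emph{same} induction backwards: from the staircase of $T$ it extracts staircases for the Southwest and Northeast blocks, concludes by induction that these are complete, and hence that $T$ is. You instead give a global counting argument: the staircase forces $T$ to be an $m\times m$ square, hence of size $n=2m-1$ with exactly $m$ leaves, and the identities $L+U+D=n$ and $U+2D=n-1$ for the underlying tree then force the number $U$ of unary nodes to vanish, so the tree is complete. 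This is a nice alternative: it is non-inductive, and it sidesteps the verification (left implicit in the paper) that under the staircase hypothesis both subtrees of the root are non-empty, so that Corollary~\ref{cor:decomposition_Baxter_TLT} applies, and that the staircase of $T$ really splits exactly at the block boundary. One small caveat: your reading of the staircase condition builds in the equality of the numbers of rows and columns, whereas the paper asserts that this equality is \emph{forced} by the condition. For that assertion (and for the lemma itself) to hold, the diagonal must be understood as running all the way to the top row, so that its top cell lies in the last column; with that reading your interpretation is equivalent to the paper's, and your counting then goes through exactly as written.
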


\begin{proof}
We note that a complete Baxter TLT is necessarily of odd size. 
We also observe that a TLT satisfying the second condition above is also necessarily of odd size: 
indeed, recalling that there are no empty rows nor columns in TLTs, it is forced to have $n$ rows and $n$ columns for some $n$, and therefore $2n-1$ points.

We prove the claimed statement for all Baxter TLTs of size $2n+1$ by induction on $n$. 

The  base case $n=0$ is obvious.

Let $T$ be a Baxter TLT of size $2n+1$ for $n>0$. 
Consider the bi-partition $(L,R)$ of the non-root points of $T$ where 
$L$ (resp. $R$) contains all points of $T$  
in the left (resp. right) subtree pending from the root of the underlying tree of $T$.
From Proposition~\ref{prop:decomposition_Baxter_TLT} and Corollary~\ref{cor:decomposition_Baxter_TLT}, 
$T$ can be decomposed into $4$ blocks as 
$\begin{array}{c} 
\tikz[scale=0.2]{\node at (-1,1) {$A$};\node at (-1,-1) {$B$};\node at (1,-1) {$C$};\node at (1,1) {$D$}; \draw (0,1.5) -- (0,-1.5); \draw (1.5,0) -- (-1.5,0);}
\end{array}$, 
with $A$ containing only the root of $T$, $B$ (resp. $D$) containing all points of $L$ (resp. $R$), and $C$ containing no points. 
We recall that a binary tree is complete if and only if the left and right subtrees pending from its root are also complete binary trees. 

It follows that, if $T$ is a complete Baxter TLT, then $B$ and $D$ are also complete Baxter TLTs. 
They are smaller than $T$, and by induction have their leaves which form a staircase shape. 
This consequently also holds for the leaves of $T$, which form a staircase shape obtained from the concatenation of those of $B$ and $D$. 
Conversely, if the leaves of $T$ form a staircase shape, then it also holds for those of $B$ and $D$, which are therefore complete Baxter TLTs, 
implying that $T$ also is a complete Baxter TLT. 
\end{proof}

\subsection{Restriction on lattice paths}

\begin{proposition}
Let $\PP_n$ be the set of pairs of Dyck paths with $n$ steps, if $n$ is even
(resp. with $n+1$ and $n-1$ steps respectively, if $n$ is odd). 
Define $\phiPbar$ as follows: 
\begin{itemize}
 \item if $n$ is even, then for all $T \in \TT_n$, writing $\phiPP(T) = (w_{top},w_{middle},w_{bottom})$, we set $\phiPbar(T) = (N \cdot w_{top},w_{bottom} \cdot N)$;
 \item if $n$ is odd, then for all $T \in \TT_n$, writing $\phiPP(T) = (w_{top},w_{middle},w_{bottom})$, we set $\phiPbar(T) = (N \cdot w_{top} \cdot E,w_{bottom})$.
\end{itemize}
Then, it holds that $\phiPbar$ is a bijection between $\TT_n$ and $\PP_n$. 
\label{prop:restriction_paths}
\end{proposition}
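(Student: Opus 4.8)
The plan is to identify the image $\phiPP(\TT_n)\subseteq\P_n$ explicitly and then to check that the prescribed operation (forget the middle path, pad the other two) sends it bijectively onto $\PP_n$. Since $\phiPP$ is by definition the restriction of the bijection $\Phi_{\P}\colon\T_n\to\P_n$, it is automatically a bijection from $\TT_n$ onto $\phiPP(\TT_n)$, so the real content is to pin down that image. I claim it is exactly the set of triples of $\P_n$ whose middle path equals the fixed ``staircase'' word $\mu_n$, where $\mu_n=(EN)^{(n-2)/2}E$ for $n$ even and $\mu_n=(EN)^{(n-1)/2}$ for $n$ odd. To prove this I would use the interpretation from the proof of Lemma~\ref{lem:phiP-well-defined}: $E\cdot w_{middle}(T)\cdot N$ reads the leaves of $B(T)_c$ in depth-first order, a left leaf encoded by $E$ and a right leaf by $N$. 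An easy induction gives that a complete binary tree with $2m+1$ nodes has leaf-word $(EN)^{m+1}$; for $T\in\TT_n$ the tree $B(T)$ is obtained from a complete tree by deleting its leftmost leaf $\ell$ (for $n$ even) or both $\ell$ and its rightmost leaf $r$ (for $n$ odd), and since $\ell$ is a left child and $r$ a right child, deleting $\ell$ (resp. $r$) replaces the corresponding factor $EN$ of the leaf-word by a single $E$ (resp. $N$). Carrying out this substitution and deleting the first and last letters yields exactly $\mu_n$, hence $w_{middle}(T)=\mu_n$.

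For the converse implication, I would argue by counting: since $\varphi$ is a bijection onto pairs of non-intersecting lattice paths (Proposition~\ref{prop:auxiliary_bijection}), the binary trees $B$ with $w_2(B)=\mu_n$ are in bijection with the paths $w$ such that $(w,\mu_n)$ is such a pair, and the third step below shows there are $C_{\lceil n/2\rceil}$ of them; this is also the number of almost complete binary trees of size $n$, so (using the inclusion already proved) these sets of trees coincide, and $w_{middle}(T)=\mu_n$ forces $T\in\TT_n$. Thus $\phiPP(\TT_n)=\{(w_{top},\mu_n,w_{bottom})\in\P_n\}$. (Alternatively one could adapt the recursive proof of Lemma~\ref{lem:escalier}, using the four-block decomposition of Corollary~\ref{cor:decomposition_Baxter_TLT}.)

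Next I would analyze the two remaining paths geometrically. The key observation is that $\mu_n$, drawn from $(0,1)$, occupies the whole ``staircase band'' $\{x\le y\le x+1\}$ over the range of abscissas actually used; therefore, inside a triple of $\P_n$ with middle path $\mu_n$, the path $w_{top}$ avoids $\mu_n$ iff it stays weakly above $y=x+2$, the path $w_{bottom}$ avoids $\mu_n$ iff it stays weakly below $y=x-1$, and two such paths then lie in disjoint half-planes, hence automatically avoid each other. Prepending an $N$-step brings the initial point of $w_{top}$ onto the line $y=x+2$, and appending an $E$-step (needed only for $n$ odd) does the same for its final point, so $N\cdot w_{top}$ (for $n$ even) and $N\cdot w_{top}\cdot E$ (for $n$ odd) are precisely the Dyck paths of semilength $n/2$ and $(n+1)/2$; symmetrically, $w_{bottom}\cdot N$ (for $n$ even) and $w_{bottom}$ itself (for $n$ odd) are precisely the Dyck paths of semilength $n/2$ and $(n-1)/2$. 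These padding maps are bijective, because a Dyck path touching its diagonal only at its endpoints must begin with $N$ and end with the appropriate step, so $w_{top}$ and $w_{bottom}$ are recovered uniquely, and conversely any pair of such Dyck paths yields a legitimate triple of $\P_n$ by the disjointness just noted. In particular this recovers $|\TT_n|=|\PP_n|$, namely $C_{n/2}^2$ for $n$ even and $C_{(n+1)/2}C_{(n-1)/2}$ for $n$ odd.

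Putting the pieces together, $\phiPbar$ is the composition of $\phiPP$ (a bijection onto $\{(w_{top},\mu_n,w_{bottom})\in\P_n\}$), the identification of that set with the product of the valid $w_{top}$'s and the valid $w_{bottom}$'s, and the two padding bijections onto Dyck paths; hence $\phiPbar\colon\TT_n\to\PP_n$ is a bijection, as claimed. I expect the main obstacle to be the first two paragraphs, i.e. proving that the middle path is forced to be $\mu_n$ exactly on $\TT_n$: this requires either a careful tracking of the depth-first leaf-word under the leaf deletions defining ``almost complete'', or a faithful reworking of the induction behind Lemma~\ref{lem:escalier}, and the parity split (delete $\ell$ versus delete $\ell$ and $r$) together with the matching forms of the padding demands some bookkeeping.
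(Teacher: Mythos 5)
Your proof is correct and follows essentially the same route as the paper's: identify $\phiPP(\TT_n)$ as the triples whose middle path is the forced alternating word, then observe that the non-intersection constraints against that rigid middle path turn the padded outer paths into exactly the Dyck paths of the stated lengths. The only local differences are that where the paper invokes the equivalence of Lemma~\ref{lem:escalier} for both directions and reads $N\cdot w_{top}$ (resp. $N\cdot w_{top}\cdot E$) as the edge-code of the complete binary tree, you substitute a direct leaf-word computation, a counting argument via Proposition~\ref{prop:auxiliary_bijection} for the converse, and a geometric half-plane argument for $w_{top}$ --- all of which are sound (your parenthetical ``touching its diagonal only at its endpoints'' is unnecessary; starting on the diagonal and staying weakly above it already forces the first step to be $N$).
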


\begin{proof}
To prove this statement, we must keep in mind the interpretation of the three paths of $\phiPP(T)= (w_{top},w_{middle},w_{bottom})$ for $T \in \T_n$ 
explained in the proof of Lemma~\ref{lem:phiP-well-defined}. 
In particular, the following holds. 
\begin{itemize}
 \item $w_{top}$ encodes the underlying tree structure of $T$. In the present case where $T \in \TT_n$, this implies that $N \cdot w_{top}$ when $n$ is even and $N \cdot w_{top} \cdot E$ when $n$ is odd 
 encodes the complete binary tree from which $T$ was built (hence, in particular, is a generic Dyck path). 
 \item $w_{middle}$ has been obtained from the path which follows the leaves of $T$ by removing the first and the last steps. 
 For $T \in \TT_n$, Lemma~\ref{lem:escalier} implies that $w_{middle}$ is an alternation of $N$ and $E$ steps starting with an $E$. 
 \item $w_{bottom}$ is the Southeast border of $T$ from which the first and last steps have been removed. 
 Since $w_{bottom}$ is a path located to the Southeast of $w_{middle}$, and given the very specific form of $w_{middle}$ in our case, 
 this implies that $w_{bottom} \cdot N$ if $n$ is even (resp. $w_{bottom}$ if $n$ is odd) is the symmetric of a generic Dyck path w.r.t. the main diagonal. 
\end{itemize}

Summing up, this shows that pairs $(d_t,d_b)$ of Dyck paths in $\PP_n$ are (up to removing initial and/or final steps as described above) 
in bijection with \triples of size $n$ whose middle path is an alternation of single $N$ and $E$ steps starting with $E$, 
which are themselves in bijection with $\TT_n$ by Lemma~\ref{lem:escalier}, thus concluding the proof.
\end{proof}

Figure~\ref{atn-triple} shows the \triples\ and the corresponding pairs of Dyck paths of the two almost complete Baxter TLTs of even and odd size given in Figure~\ref{atn}. In this figure, dashed steps correspond
to the leaves removed (according to parity) from the complete binary tree in Definition~\ref{def:complete-BTLT}.

\begin{figure}[ht]
$\begin{array}{c|c}
\includegraphics[scale=1.0]{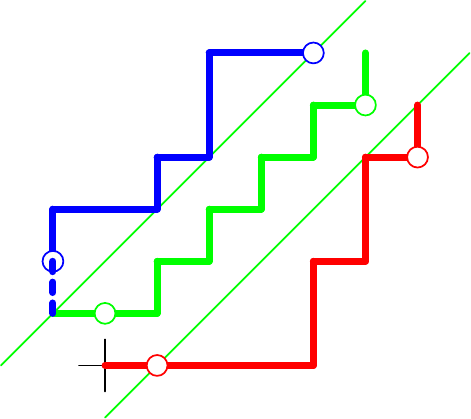} \qquad  & \qquad 
\includegraphics[scale=1.0]{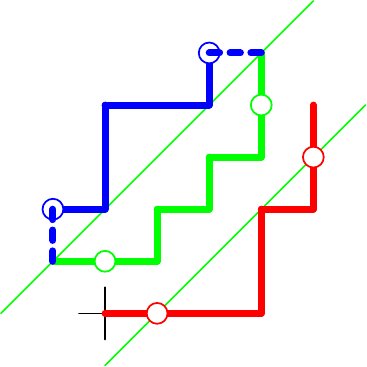} \\
\includegraphics[scale=1.0]{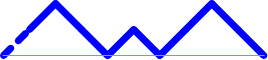} \text{ \ and \ }
\includegraphics[scale=1.0]{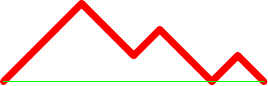} \qquad  & \qquad 
\includegraphics[scale=1.0]{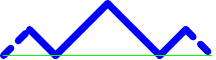} \text{ \ and \ }
\includegraphics[scale=1.0]{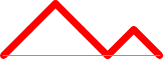}
\end{array}$
\caption{Two \triples\ and the corresponding pairs of Dyck paths.}\label{atn-triple}
\end{figure}

Proposition~\ref{prop:restriction_paths} has an immediate enumerative consequence: 
\begin{corollary}
For any $n$, the cardinality of $\TT_{2n}$ is $C_n^2$, 
and the one of $\TT_{2n+1}$ is $C_n\cdot C_{n+1}$, 
where $C_n = \frac{1}{n+1} {{2n} \choose n}$ is the $n$-th Catalan number. 
\label{cor:product_of_Catalan}
\end{corollary}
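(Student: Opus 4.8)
The plan is to obtain the corollary directly from the bijection $\phiPbar \colon \TT_n \to \PP_n$ established in Proposition~\ref{prop:restriction_paths}. Since $\phiPbar$ is a bijection, we have $|\TT_n| = |\PP_n|$, so it remains only to count the pairs of Dyck paths making up $\PP_n$, using the classical fact that the number of Dyck paths with $2m$ steps (equivalently, $m$ up-steps and $m$ down-steps) is $C_m = \frac{1}{m+1}\binom{2m}{m}$.

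First I would treat the even case $n = 2m$. By definition, $\PP_{2m}$ consists of \emph{pairs} of Dyck paths, each having $2m$ steps; hence $|\PP_{2m}| = C_m \cdot C_m = C_m^2$, and therefore $|\TT_{2m}| = C_m^2$. Next, for the odd case $n = 2m+1$, the set $\PP_{2m+1}$ consists of pairs of Dyck paths, the first with $n+1 = 2m+2$ steps and the second with $n-1 = 2m$ steps; there are $C_{m+1}$ choices for the first and $C_m$ for the second, so $|\PP_{2m+1}| = C_{m+1} \cdot C_m$, giving $|\TT_{2m+1}| = C_m \cdot C_{m+1}$. Rewriting these with the indexing of the statement ($n$ in place of $m$) yields exactly $|\TT_{2n}| = C_n^2$ and $|\TT_{2n+1}| = C_n \cdot C_{n+1}$.

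There is essentially no obstacle in this argument, as all the substantive work has already been carried out in Proposition~\ref{prop:restriction_paths} (and, behind it, in Lemma~\ref{lem:escalier} and the properties of $\phiPP$). The only point that requires a little care is the step-counting convention: a Dyck path is required to have an even number of steps, so one must check that the modifications built into the definition of $\phiPbar$ --- prepending an $N$ step to $w_{top}$ (and appending an $E$ step when $n$ is odd), and appending an $N$ step to $w_{bottom}$ when $n$ is even --- make the two components of $\phiPbar(T)$ genuine Dyck paths of the stated lengths. This is immediate from the description of $w_{top}$, $w_{middle}$ and $w_{bottom}$ recalled in the proof of Proposition~\ref{prop:restriction_paths}.
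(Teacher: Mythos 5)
Your proposal is correct and matches the paper's approach: the paper presents this corollary as an immediate consequence of Proposition~\ref{prop:restriction_paths} without further argument, and your write-up simply spells out the counting of pairs of Dyck paths of the stated lengths via the bijection $\phiPbar$. Nothing is missing.
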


\subsection{Restriction on floorplans}

In the characterization of the image of $\TT_n$ under the bijection $\Phi_{\F}$ between Baxter TLTs and packed floorplans, 
we are led to defining a new class of floorplans with constraints along the Southwest-Northeast diagonal. 

Recall from Section~\ref{sec:floorplans} that floorplans are rectangular partitions of a rectangle such that 
every pair of segments with non-empty intersection forms a {\em T-junction}.

\begin{definition}
An \emph{alternating floorplan} of size $n$ is a 
partition of a rectangle $R$ 
of width $\lceil \frac{n+1}{2} \rceil$ and height $\lfloor \frac{n+1}{2} \rfloor$ 
into $n$ rectangular tiles whose sides have integer lengths 
such that the path from the Southwest corner to the Northeast corner of $R$ 
which moves alternately one unit step East and one unit step North (starting with East) is included in the boundaries of partitioning rectangles of $F$. 
We call this path the \emph{alternating path} of $F$. 

We denote by $\FF_n$ the set of alternating floorplans of size $n$. 
\label{def:alternating_floorplan}
\end{definition}

Figure~\ref{fig:alternating_floorplan} (left) shows an example of an alternating floorplan. 

\begin{figure}[ht]
$$
\begin{array}{ccc}
\includegraphics[scale=0.8]{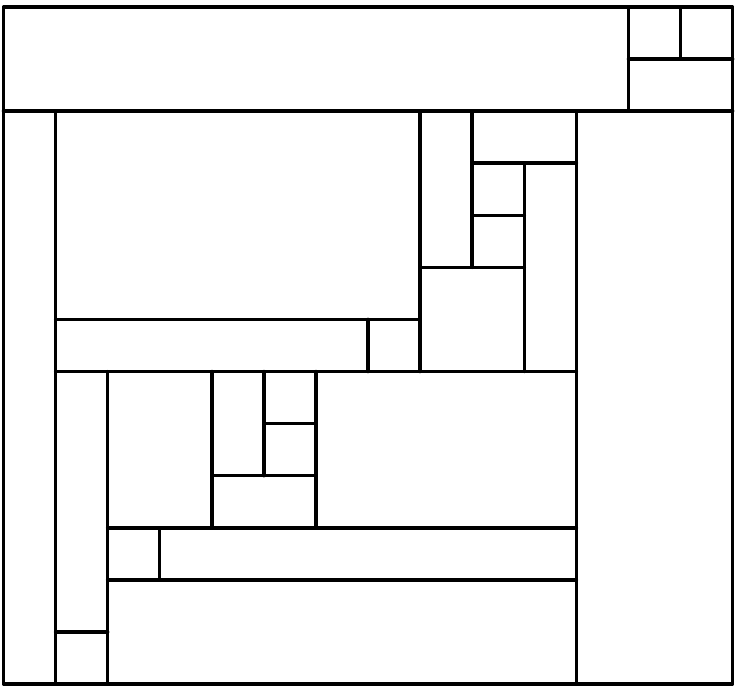} & 
\qquad & 
\includegraphics[scale=0.8]{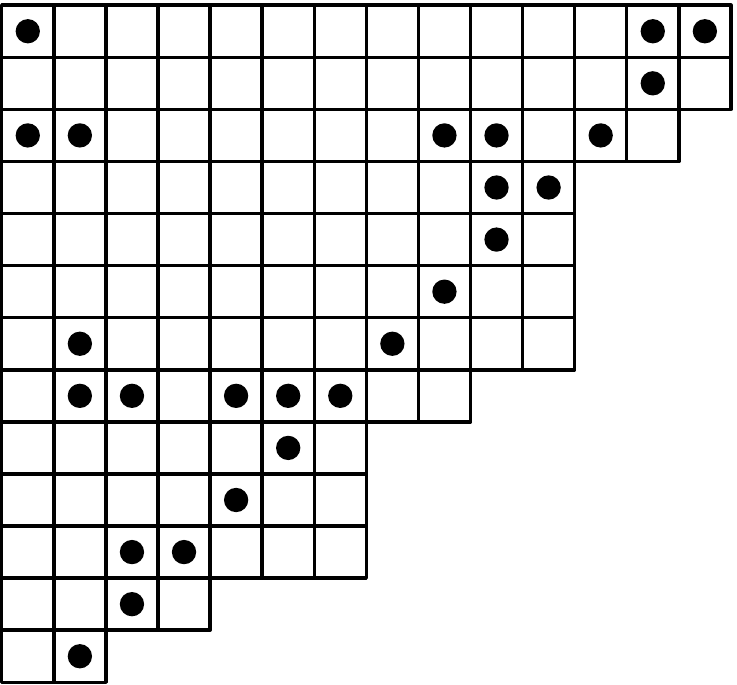} 
\end{array}
$$
\caption{Left: An alternating floorplan $F$ of size $26$. Right: The almost complete Baxter TLT $T$ of size $26$ such that $\phiFF(T)=F$.}
\label{fig:alternating_floorplan}
\end{figure}

We will show that $\FF_n$ is indeed included in the family $\F_n$ of packed floorplans. 
To this end, we first state an important property of the alternating paths of alternating floorplans. 

\begin{lemma}
In any alternating floorplan $F$, 
every tile has either its bottom-right corner or its topleft corner 
on the alternating path of $F$. 
\label{lem:all_tiles_on_alt_path}
\end{lemma}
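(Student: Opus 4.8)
The key fact to exploit is the structure of an alternating floorplan coming from Definition~\ref{def:alternating_floorplan}: the alternating path $P$ runs along the main SW–NE diagonal of the bounding rectangle, taking alternate unit $E$ and $N$ steps. This path has to be covered by sides of tiles, which forces a very rigid local picture near the diagonal. I would argue that every unit $E$ step of $P$ is the bottom side (or part of the bottom side) of a unique tile lying immediately above it, and symmetrically every unit $N$ step of $P$ is the left side (or part of the left side) of a unique tile lying immediately to its right. From this, assigning to each tile $t$ a ``diagonal position'' will let me show $t$ has either its top-left corner or its bottom-right corner on $P$.

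\textbf{Key steps, in order.} First I would set up coordinates so that $P$ is the path $(0,0),(1,0),(1,1),(2,1),(2,2),\dots$ inside a $\lceil (n+1)/2\rceil \times \lfloor(n+1)/2\rfloor$ box, and observe that $P$ visits every cell on the diagonal, so the unit cells just above the horizontal steps of $P$ and just to the right of the vertical steps of $P$ are exactly the cells the diagonal passes through. Second, I would prove that for each unit cell $C$ through which $P$ passes, the tile $t_C$ containing $C$ has its bottom side on $P$ if $P$ enters $C$ from the left (i.e. $C$ sits just above a horizontal step of $P$) and its left side on $P$ if $P$ enters $C$ from below; this uses that $P\subseteq$ boundaries of tiles so the relevant edge of $C$ is a tile edge, together with the fact (Observation~\ref{obs:PFP_obvious}-style reasoning, once we know $\FF_n\subseteq\F_n$; but here we can argue directly) that a tile is a rectangle and hence its bottom (resp. left) side extends maximally — so if the bottom side of $t_C$ meets $P$ in a horizontal step, the bottom-left corner of $t_C$ lies on $P$. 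Third, I would track how the ``diagonal index'' of the tile containing the current cell changes as we traverse $P$: each time $P$ takes a step that leaves the current tile, the corner of the tile we are leaving — which is a bottom-right corner if we leave via a vertical step going up, or we have just passed through the tile's bottom-left corner — lies on $P$, and the new tile we enter has its top-left corner on $P$. A careful bookkeeping shows that each of the $n$ tiles is ``opened'' exactly once along $P$ at its top-left corner or ``closed'' at its bottom-right corner, and that this accounts for all tiles (a counting check: there are $n$ tiles, and $P$ has $2\lfloor(n+1)/2\rfloor$ or so steps, matching up the opening/closing events).

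\textbf{Main obstacle.} The delicate point is the very last counting/matching step: ensuring that \emph{every} tile is touched by $P$, not just those adjacent to the diagonal cells, and that the corner touched is the correct one (top-left \emph{or} bottom-right, never top-right or bottom-left). A tile far from the diagonal could, a priori, avoid $P$ entirely — I need to rule this out. The resolution is that since $P$ goes monotonically from the SW corner to the NE corner hugging the diagonal, and every horizontal (resp. vertical) integer line of the box contains (by the packed/floorplan structure, cf. Observation~\ref{obs:PFP_obvious}\,$(\ref{item:PFP_obvious_oneSegment})$, which applies once $\FF_n\subseteq\F_n$ is established — or by a direct argument) exactly one segment, a tile not meeting $P$ would create a second segment on some diagonal-crossing line or an inner corner, contradicting the floorplan condition. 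I would phrase this as: if a tile $t$ has neither its top-left nor bottom-right corner on $P$, then $t$ lies entirely strictly above $P$ or entirely strictly below $P$; in the first case its bottom-left corner is strictly above $P$, forcing a horizontal segment above a horizontal step of $P$ on the same line, hence two segments on that line — impossible; the second case is symmetric. This contradiction completes the proof, and I expect this step to require the most care to state cleanly.

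\medskip

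Here is the proof.

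\begin{proof}
Place $F$ in the coordinate system so that its bounding rectangle has its bottom-left corner at the origin, width $\lceil\frac{n+1}{2}\rceil$ and height $\lfloor\frac{n+1}{2}\rfloor$; the alternating path $P$ then consists of the points $(0,0),(1,0),(1,1),(2,1),(2,2),\dots$, taking alternate unit $E$ and $N$ steps and ending at the top-right corner. By Definition~\ref{def:alternating_floorplan}, $P$ is contained in the union of the sides of the tiles of $F$.

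Consider a tile $t$ of $F$, and suppose for contradiction that neither its top-left corner nor its bottom-right corner lies on $P$. Since $t$ is a rectangle contained in the bounding rectangle and $P$ runs monotonically from the bottom-left to the top-right corner of that rectangle, and $P$ visits each unit cell lying on the main diagonal, either $t$ lies entirely (weakly) above $P$ or entirely (weakly) below $P$ (a rectangle straddling $P$ would have $P$ pass through its interior, which is impossible as $P\subseteq$ boundaries of tiles).

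Assume $t$ lies weakly above $P$; the other case is symmetric (exchanging the roles of rows and columns, bottom-right and top-left corners). Let $(a,b)$ be the bottom-left corner of $t$. If $(a,b)$ lies on $P$, then since the step of $P$ leaving $(a,b)$ is either an $E$ step (so the bottom side of $t$ begins along $P$, and its bottom-right corner lies on $P$ once we follow $P$ and the bottom side of $t$ together — contradiction) or an $N$ step (so $(a,b)$ is the top-left corner of $t$, as $t$ is above $P$ — contradiction), we get a contradiction either way. Hence $(a,b)$ lies strictly above $P$, \emph{i.e.}, $b > a$ if the diagonal at abscissa $a$ is at height $\le a$, or more precisely the horizontal line $y=b$ meets $P$ in a horizontal unit step, say from $(b,b)$ to $(b+1,b)$ (using the shape of $P$), with $a \geq b+1$. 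Thus on the horizontal line $y=b$ there is a segment of $F$ containing that unit step of $P$, and also the bottom side of $t$, which lies on the same line $y=b$ but at abscissa $\geq a \geq b+1$, hence to the right of and disjoint from the step of $P$. By Observation~\ref{obs:PFP_obvious}\,$(\ref{item:PFP_obvious_oneSegment})$ applied to $F$ (a floorplan, so the conclusion of that item holds), the line $y=b$ contains exactly one segment of $F$; so the bottom side of $t$ and the unit step of $P$ belong to the same segment $s$. But then $s$ contains a point strictly between them on $y=b$, and the portion of $P$ on that line together with $s$ would force $P$ to leave the diagonal, contradicting that $P$ alternates single unit steps. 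This contradiction shows that every tile has its top-left corner or its bottom-right corner on $P$.
\end{proof}
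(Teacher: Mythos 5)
There is a genuine gap here --- in fact two, plus a geometric slip. First, your argument leans on Observation~\ref{obs:PFP_obvious}~$(\ref{item:PFP_obvious_oneSegment})$ (exactly one segment per integer line), asserting that it holds because $F$ ``is a floorplan''. But the paper's proof of that item uses the avoidance of the pattern $\Fp$: it is a statement about \emph{packed} floorplans and fails for general rectangular partitions (think of a pinwheel), and Definition~\ref{def:alternating_floorplan} only requires $F$ to be a rectangular partition satisfying the path condition. The fact that alternating floorplans avoid $\Fp$ is exactly Lemma~\ref{lem:alternating_floorplan_are_packed}, which is proved \emph{after} and \emph{using} the present lemma, so invoking the one-segment property here is circular; the ``direct argument'' you allude to in your plan is never supplied. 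Second, even granting that property, the final deduction is a non sequitur: from the fact that the bottom side of $t$ and the unit step of $P$ at height $b$ lie on the same segment $s$, you conclude that ``$P$ would be forced to leave the diagonal'', but $P$ is a fixed path and the configuration you describe (one long horizontal segment containing both pieces, with the vertical step of $P$ ending on it in a T-junction) is not locally contradictory. A contradiction can be extracted from this setup, but only with additional work (e.g.\ the unique vertical segment through the right side of $t$ must also contain the vertical step of $P$ at the same abscissa and then crosses $s$ at a point interior to both), and that again uses machinery not yet available. Finally, for a tile weakly above $P$ whose bottom-left corner $(a,b)$ is off the path one gets $a\le b-1$, not $a\ge b+1$ as you write: your picture places the tile on the wrong side of the diagonal.

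For comparison, the paper's proof is a short counting argument needing none of this: the alternating path has exactly $\lceil n/2\rceil$ factors $EN$ and $\lfloor n/2\rfloor$ factors $NE$; each such corner, its two incident unit steps lying on tile boundaries, is the bottom-right (resp.\ top-left) corner of a tile; these $n$ tiles are pairwise distinct since the path is monotone and a tile has only one corner of each type; and $F$ has exactly $n$ tiles. You should either adopt that route or restructure your contradiction so that it does not presuppose Lemma~\ref{lem:alternating_floorplan_are_packed}.
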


\begin{proof}
Let us denote by $n$ the size of $F$. 
By definition, the alternating path of $F$ contains $\lceil n/2 \rceil$ factors $EN$ and $\lfloor n/2 \rfloor$ factors $NE$, 
each corresponding to a bottom-right corner or a topleft corner of a tile of $F$, respectively. 
This gives a total of $n$ tiles, necessarily all distinct, and therefore all tiles of $F$ have their bottom-right or topleft corner 
on the alternating path as claimed. 
\end{proof}

When a tile $t$ of an alternating floorplan $F$ has its topleft (resp. bottom-right) corner on the alternating path of $F$, 
we say that $t$ is \emph{below} (resp. \emph{above}) the alternating path of $F$. 

\begin{lemma}
Every alternating floorplan is a packed floorplan.
\label{lem:alternating_floorplan_are_packed}
\end{lemma}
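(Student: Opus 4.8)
The goal is to show that an alternating floorplan $F$ of size $n$ is a packed floorplan, i.e.\ that $F$ is a partition of a $k\times\ell$ rectangle with $k+\ell-1=n$ into $n$ tiles avoiding the pattern $\Fp$. First I would check the shape: by Definition~\ref{def:alternating_floorplan}, $F$ tiles a rectangle of width $\lceil (n+1)/2\rceil$ and height $\lfloor (n+1)/2\rfloor$, and these two dimensions $k,\ell$ satisfy $k+\ell = n+1$, so indeed $k+\ell-1 = n$ matches the number of tiles. Thus the only substantive thing to prove is that $F$ avoids the forbidden pattern $\Fp$.

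The key tool is Lemma~\ref{lem:all_tiles_on_alt_path}: every tile of $F$ has its bottom-right corner or its top-left corner on the alternating path. So I would argue by contradiction: suppose there are two tiles $t_1, t_2$ forming an occurrence of $\Fp$, meaning, writing $(x_1,y_1)$ for the bottom-right corner of $t_1$ and $(x_2,y_2)$ for the top-left corner of $t_2$, that $x_1 \le x_2$ and $y_1 \ge y_2$. I would then split into cases according to whether $t_1$ is above or below the alternating path (equivalently whether it is $(x_1,y_1)$ or the top-left corner of $t_1$ that lies on the path), and likewise for $t_2$. The crucial observation is that the alternating path is monotone: as one moves along it from Southwest to Northeast, both coordinates are weakly increasing, and moreover the points on it with a given $x$-coordinate form a contiguous vertical segment of length at most $1$, and similarly for $y$. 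So if both $(x_1,y_1)$ and $(x_2,y_2)$ lie on the alternating path, the inequalities $x_1\le x_2$ and $y_1 \ge y_2$ force $(x_1,y_1) = (x_2,y_2)$ (the path cannot go ``backwards''), but the bottom-right corner of $t_1$ being a factor $EN$ of the path and the top-left corner of $t_2$ being a factor $NE$ means these are different types of points on the path, so they cannot coincide — contradiction. When only one of the two corners lies on the path, I would use the fact that the other corner of the same tile that does lie on the path is either to the Northwest (if it is the top-left corner of $t_1$) or to the Southeast (if it is the bottom-right corner of $t_2$), and combine this with the monotonicity of the path together with the inequalities $x_1\le x_2$, $y_1 \ge y_2$ to again force a point of the path to the Northwest of another point of the path — impossible.

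The step I expect to be the main obstacle is handling cleanly the case where $t_1$ is below the path (so only its top-left corner, say $(x_1', y_1')$ with $x_1' < x_1$, $y_1' > y_1$, is on the path) and $t_2$ is above the path (so only its bottom-right corner $(x_2', y_2')$ with $x_2' > x_2$, $y_2' < y_2$, is on the path): here the two known path-points $(x_1',y_1')$ and $(x_2',y_2')$ satisfy $x_1' < x_1 \le x_2 < x_2'$ and $y_1' > y_1 \ge y_2 > y_2'$, i.e.\ $(x_1',y_1')$ is strictly Northwest of $(x_2',y_2')$ — but along the alternating path, moving from $(x_1',y_1')$ to $(x_2',y_2')$ requires the $x$-coordinate to increase (fine) while the $y$-coordinate \emph{also} must increase (since the path is weakly increasing in $y$), contradicting $y_1' > y_2'$. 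All the sub-cases reduce to this kind of monotonicity violation, so once I phrase the monotonicity of the alternating path as a lemma-like remark, the rest is a short case check. I would conclude that no $\Fp$ occurrence exists, so $F \in \F_n$.
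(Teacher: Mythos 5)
Your proposal is correct and follows essentially the same route as the paper: both argue by contradiction from an occurrence of $\Fp$, invoke Lemma~\ref{lem:all_tiles_on_alt_path} to place a distinguished corner of each of the two tiles on the alternating path, and then rule out every case (above/above, below/below, mixed) using the geometry of that path. One small slip worth fixing: in the case where both relevant corners lie on the path, the inequalities $x_1\le x_2$ and $y_1\ge y_2$ do \emph{not} force the two corners to coincide --- a weakly monotone Northeast path can contain distinct points $P_1,P_2$ with $P_1$ weakly Northwest of $P_2$, e.g.\ $(1,1)$ and $(1,0)$ on the alternating path --- but the argument closes even more directly by noting that an $EN$-corner of the alternating path has coordinates of the form $(i+1,i)$ while an $NE$-corner has coordinates of the form $(j,j)$, so $x_1\le x_2$ and $y_1\ge y_2$ read $i+1\le j$ and $i\ge j$, which is already a contradiction.
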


\begin{proof}
Let $F$ be an alternating floorplan of size $n$. To prove that $F$ is a packed floorplan, 
comparing with Definition~\ref{defi:PFP}, 
we only need to check that $F$ avoids the pattern $\Fp$.

Assume that two tiles $t$ and $t'$ of $F$ form a $\Fp$ pattern. 
Using Lemma~\ref{lem:all_tiles_on_alt_path}, 
we distinguish three cases (w.l.o.g. up to exchanging $t$ and $t'$): either $t$ is above the alternating path 
and $t'$ is below it, 
or $t$ and $t$' are both above the alternating path, or they are both below. 
In the first case, because of the alternating path condition, 
the bottom-right corner of $t$ is forced to sit either above and to the right, or below and to the left, 
of the topleft corner of $t'$. 
But this is impossible when $t$ and $t'$ form a pattern $\Fp$. 
In the second case, the bottom-right corners of $t$ and $t'$ are forced to sit as $\begin{array}{c}\tikz[scale=0.2]{\node at (0,-1) {$\lrcorner$};\node at (1,0) {$\lrcorner$};}\end{array}$
by the alternating path condition, 
but as $\begin{array}{c}\tikz[scale=0.2]{\node at (1,-1) {$\lrcorner$};\node at (0,0) {$\lrcorner$};}\end{array}$
by the occurrence of $\Fp$, also yielding a contradiction. 
The third case is similar, considering the topleft corners of $t$ and $t'$. 
This concludes the proof that $F$ avoids $\Fp$, hence is a packed floorplan. 
\end{proof}

\begin{proposition}
$\phiFF$ is a bijection between $\TT_n$ and $\FF_n$ . 
\label{prop:restriction_PFP}
\end{proposition}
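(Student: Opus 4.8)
The plan is to build on Theorem~\ref{thm:FT}, which already says $\Phi_{\F}$ is a bijection from $\T_n$ to $\F_n$, rather than re-prove bijectivity. Since $\phiFF$ is by definition the restriction of $\Phi_{\F}$ to $\TT_n$, and since $\FF_n\subseteq\F_n$ by Lemma~\ref{lem:alternating_floorplan_are_packed}, it is enough to show that $\Phi_{\F}$ maps $\TT_n$ \emph{onto} $\FF_n$; that is, that for $T\in\T_n$ one has $T\in\TT_n$ if and only if $\Phi_{\F}(T)\in\FF_n$. The bridge between the two conditions is the staircase of leaves. First I would record, as a consequence of Lemma~\ref{lem:escalier} and Corollary~\ref{cor:decomposition_Baxter_TLT}, that $T\in\TT_n$ if and only if $T$ has $\lfloor(n+1)/2\rfloor$ rows and $\lceil(n+1)/2\rceil$ columns and the leaves of $T$ occupy exactly the cells of the Southwest--Northeast anti-diagonal of the bounding rectangle, except the Southwest-most cell of that diagonal when $n$ is even, and except both extreme cells when $n$ is odd. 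This follows by re-inserting the missing leaf (or two leaves, when $n$ is odd) at the omitted corner cell(s), which produces a genuine \emph{complete} Baxter TLT after a short check, and then invoking Lemma~\ref{lem:escalier}. I will call these the \emph{near-staircase cells}.

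Next I would locate the alternating path relative to the cells of $T$. A short coordinate computation shows that, under the identification of the cells of $T$ with the unit cells of the bounding rectangle, the $N$-steps of the alternating path are precisely the left edges of the near-staircase cells, and its $E$-steps are precisely their top edges, together with one or two further steps lying on the boundary of the rectangle (depending on parity). The forward implication is then almost immediate: if $T\in\TT_n$, then each near-staircase cell carries a leaf point of $T$, hence is the top-left corner of a tile of $\Phi_{\F}(T)$; the topmost unit of that tile's left side and the first unit of its top side are exactly the path edges just described, so the alternating path is contained in the tile boundaries of $\Phi_{\F}(T)$, and therefore $\Phi_{\F}(T)\in\FF_n$.

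For the converse, take $F=\Phi_{\F}(T)\in\FF_n$. Then $T$ has the correct dimensions, since $\Phi_{\F}$ preserves them, and the alternating path --- a monotone staircase contained in the tile boundaries of $F$, hence meeting no tile's interior --- cannot be straddled by any tile. Consequently $F$ splits into a rectangular tiling of the region strictly below the path and one of the region strictly above it, and these two regions are staircases with $\lfloor n/2\rfloor$ and $\lceil n/2\rceil$ steps. Because any rectangular tiling of an $s$-step staircase uses at least $s$ rectangles (each outer corner forces its own rectangle) and $F$ has exactly $n$ tiles, both tilings are minimal; this forces the top cell of every column of the lower staircase to be the top-left corner of a tile, that is, a point of $T$. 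Those cells are exactly the near-staircase cells. One then checks that the points sitting there are leaves of $T$ --- no point of $T$ lies below them in their column or to their right in their row, because every other point of $T$ lies strictly inside the complementary region --- while no point of $T$ lying in the upper region can be a leaf, as it always has a near-staircase point below it in its column. Hence the leaves of $T$ are exactly the near-staircase cells, so $T\in\TT_n$.

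The step I expect to take the most care is this converse: making precise the lower/upper decomposition cut out by the alternating path, the counting lower bound for tiling a staircase, and the identification ``near-staircase points $=$ leaves of $T$'', all while handling the two parity cases uniformly (for $n$ even the two staircases have equally many steps, for $n$ odd the upper one has one more). Everything else --- the reduction via Theorem~\ref{thm:FT} and Lemma~\ref{lem:alternating_floorplan_are_packed}, the leaf characterization of $\TT_n$, and the forward implication --- is routine bookkeeping; and the resulting count can be cross-checked against Corollary~\ref{cor:product_of_Catalan}, since minimal rectangular tilings of an $s$-step staircase are counted by $C_s$.
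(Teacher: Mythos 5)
Your proof is correct and follows essentially the same route as the paper: reduce to showing $T\in\TT_n$ if and only if $\Phi_{\F}(T)\in\FF_n$ (using Theorem~\ref{thm:FT} and Lemma~\ref{lem:alternating_floorplan_are_packed}), use Lemma~\ref{lem:escalier} to force the leaf corners in the forward direction, and for the converse use the pigeonhole count of $n$ corners against $n$ tiles --- your staircase-tiling lower bound is exactly the content (and the proof) of Lemma~\ref{lem:all_tiles_on_alt_path} --- with your explicit verification that the near-staircase points are precisely the leaves filling in a step the paper leaves implicit. One small imprecision: an upper-region point in the first column (or the root itself, when $n$ is odd) has no near-staircase point \emph{below it in its column}, so for those points you must instead invoke the near-staircase point to their right in their row (and, for the root, simply that it has a child once $n\ge 2$); the conclusion is unaffected.
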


\begin{proof} 
Consider an alternating floorplan $F \in \FF_n$ and its preimage $T$ under $\Phi_{\F}$. 
We know that $T$ is a Baxter TLT. 
To prove that it is almost complete, 
we use Lemma~\ref{lem:all_tiles_on_alt_path} to ensure that $T$ as $\lfloor n/2 \rfloor$ points on the main diagonal 
(shifted by one unit to the right): 
namely, those corresponding to the topleft corners of the tiles below the alternating path of $F$. 
Then, Lemma~\ref{lem:escalier} ensures that $T$ is an almost complete Baxter TLT. 

Conversely, for an almost complete Baxter TLT $T$, 
we show that $\Phi_{\F}(T)$ is an alternating floorplan 
by ensuring that it contains a valid alternating path. 
Lemma~\ref{lem:escalier} forces the placement of the topleft corners of some tiles of $F$, 
namely, those whose topleft corner is a leaf of $T$. 
As a consequence, the path $E(NE)^{\lfloor n/2 \rfloor}N^{\delta}$ is supported by the sides of the tiles of $F$, 
for $\delta = 0$ if $n$ is even and $\delta=1$ is $n$ is odd. 
More precisely, the first $E$ step is supported by the bottom edge of the bounding rectangle of $F$ (hence by its bottom-leftmost tile), 
each $NE$ factor surrounds the topleft corner of a tile corresponding to a leaf of $T$, 
and, in case $n$ is odd, the final $N$ step is supported by the right edge of the bounding rectangle of $F$ (hence by its top-rightmost tile).
\end{proof}

Figure~\ref{fig:alternating_floorplan} (right) shows an example of an almost complete Baxter TLT which is in bijection by $\phiFF$ with the alternating floorplan on the left of this figure. 

\subsection{Restriction on permutations}

We recall that a permutation $\sigma$ is \emph{alternating} if the comparisons between consecutive elements alternate between ascents and descents, 
that is to say if $\sigma(1) > \sigma(2) <\sigma(3)>\sigma(4) < \dots$ or $\sigma(1) < \sigma(2) >\sigma(3)<\sigma(4) > \dots$. 
Alternating permutations arise naturally when studying the restriction of our bijection $\Phi_{\B}$ to $\TT_n$.

\begin{proposition}
Let $\BB_n$ be the set of permutations in $\B_n$ that are alternating and start with an ascent. 
$\phiBB$ is a bijection between $\TT_n$ and $\BB_n$ . 
\label{prop:restriction_twisted_Baxter}
\end{proposition}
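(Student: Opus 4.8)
The plan is to reduce the statement to a purely combinatorial equality of sets. Since $\Phi_\B \colon \T_n \to \B_n$ is a bijection (Theorem~\ref{thm:TTW}), it suffices to prove that a Baxter TLT $T\in\T_n$ lies in $\TT_n$ if and only if $\Phi_\B(T)$ is alternating and starts with an ascent. I would prove this by strong induction on $n$, carrying along the ``dual'' statement in which ``ascent'' is replaced by ``descent''. The inductive step rests entirely on the decomposition of Proposition~\ref{prop:placement_of_1}.

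Call a Baxter TLT \emph{ascending} (resp.\ \emph{descending}) if $\Phi_\B(T)$ is alternating and its first comparison is an ascent (resp.\ a descent), and declare the unique TLT of size $1$ to be both. For $T$ of size $n\ge 2$, write its underlying tree as $\mathrm{root}(B_\ell,B_r)$ and let $T_\ell,T_r$ be the Baxter TLTs of Corollary~\ref{cor:decomposition_Baxter_TLT}, of sizes $p$ and $q=n-1-p$ (one of them possibly empty). By Proposition~\ref{prop:placement_of_1}, $\Phi_\B(T)=\sigma_\ell\,1\,\sigma_r$ with $\sigma_\ell$ order-isomorphic to $\Phi_\B(T_\ell)$ and $\sigma_r$ to $\Phi_\B(T_r)$. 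As $1$ is the global minimum, the comparison immediately before it (if $p\ge 1$) is a descent and the one immediately after it (if $q\ge 1$) is an ascent; recording that these occur at positions $p$ and $p+1$ of $\Phi_\B(T)$ and matching the required parity of descent-positions gives, after a short computation, the recursion: $T$ is ascending iff $p$ is even, $T_\ell$ is empty or ascending, and $T_r$ is empty or descending; and $T$ is descending iff $p$ is odd, $T_\ell$ is descending, and $T_r$ is empty or descending.

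For the other side I would unfold Definition~\ref{def:complete-BTLT}. Writing a binary tree as $\mathrm{root}(B_\ell,B_r)$: it is complete iff $B_\ell$ and $B_r$ are; it is ``a complete binary tree with its all-left leaf removed'' iff $B_\ell$ is such a tree or is empty and $B_r$ is complete; and symmetrically for ``all-right''. Let $\TT'_n$ be the set of Baxter TLTs of size $n$ whose underlying tree is a complete binary tree if $n$ is odd, and ``a complete binary tree with its all-right leaf removed'' if $n$ is even. Then the definition of $\TT_n$ together with these unfoldings yields exactly the same recursion as above: $T\in\TT_n$ iff $p$ is even, $T_\ell$ is empty or in $\TT_p$, and $T_r$ is empty or in $\TT'_q$; and $T\in\TT'_n$ iff $p$ is odd, $T_\ell\in\TT'_p$, and $T_r$ is empty or in $\TT'_q$. (One must check that the parity constraints force the subtree sizes to have the right parity, e.g.\ $q$ is odd in the first recursion, so that $\TT'_q$ really does mean ``complete'' and matches ``$T_r$ descending''.) Since the two recursions are identical and agree at $n=1$ (the size-$1$ TLT is ascending, descending, and lies in $\TT_1=\TT'_1$), strong induction gives that the ascending Baxter TLTs of size $n$ are exactly $\TT_n$, whence $\phiBB=\Phi_\B|_{\TT_n}$ is a bijection from $\TT_n$ onto $\BB_n$.

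The main obstacle is the bookkeeping in the third step: translating the recursive description of complete and almost complete binary trees into a left/right recursion requires keeping precise track of which of the all-left and all-right leaves lies in which subtree and of the resulting parities of the subtree sizes, and then verifying that this recursion coincides verbatim with the one coming from Proposition~\ref{prop:placement_of_1}. The parity computation underlying the ascending/descending recursion, by contrast, is routine.
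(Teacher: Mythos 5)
Your proof is correct, but it takes a genuinely different route from the paper's. The paper argues directly on a fixed $T$: for the forward direction it uses the staircase characterization of leaves (Lemma~\ref{lem:escalier}) together with the propagation of $\iso$-labels to read $\sigma=\Phi_\B(T)$ along the path hugging the leaves, so that $\sigma$ alternates between leaf labels and labels of their ancestors (which are smaller); for the converse it invokes the ascent/descent characterization of Proposition~\ref{prop:TTW-ascent+descent} and Corollary~\ref{cor:TTW-ascent+descent} to show that the entries $\sigma(2i)$ correspond to points that are both column- and row-extremal, and then a row/column counting argument forces these leaves into a staircase, after which Lemma~\ref{lem:escalier} concludes. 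You instead set up two parallel ``iff'' recursions driven by the root decomposition of Corollary~\ref{cor:decomposition_Baxter_TLT} and Proposition~\ref{prop:placement_of_1} --- one for ``alternating starting with an ascent/descent'' via the position of the value $1$, one for ``(almost) complete underlying tree'' via the left/right subtrees --- and match them by strong induction carrying the dual statement. I checked your two recursions (including the parity bookkeeping for $p$ and $q=n-1-p$, the empty-subtree cases, and the agreement at $n=1$), and they do coincide. Your approach avoids Lemma~\ref{lem:escalier} and the extremal-point machinery entirely, at the cost of introducing the auxiliary family $\TT'_n$ and the careful tracking of which removed leaf lands in which subtree; the paper's approach is less self-contained but yields the explicit structural picture (leaves sit at the even positions of $\sigma$) that it reuses elsewhere. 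Both are valid proofs of the proposition, given Theorem~\ref{thm:TTW}.
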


\begin{proof}
We first prove that the image of $\TT_n$ by $\phiBB$ is included in $\BB_n$. 
So, consider $T \in \TT_n$ and its image $\sigma=\phiBB(T)$. We know that $\sigma$ is in $\B_n$, 
and want to prove that $\sigma$ is alternating starting with an ascent. 

By construction, $\sigma$ is the sequence of $\iso$-labels read along the Southeast border of $T$. 
Recalling the rule for propagation of $\iso$-labels (and in particular, the first item in Subsection~\ref{subsec:bijectionTLTsPermutations}), 
and the placement of the leaves of $T$ (see Lemma~\ref{lem:escalier}), 
it follows that $\sigma$ is also read on the path ``inside'' $T$ along the boundary determined by the leaves. 
More precisely, we mean that $\sigma$ is obtained by reading the $\iso$-labels of the following cells of $T$, in this order: 
the bottommost cell of the first column, then its right neighbor (which is the first leaf), then the cell above it, then its right neighbor (which is the second leaf), 
and all cells subsequently met by moving alternately one cell to the top and one cell to the right, until the rightmost cell of the top row is reached. 
See Figure~\ref{atn-perm} for an illustration of this fact. 

So, $\sigma$ alternates between reading $\iso$-labels of leaves and $\iso$-labels of pointed or empty cells corresponding to internal nodes.  
The two leaves surrounding a pointed or empty cell with $\iso$-label $x$ 
have larger $\iso$-labels, because the pointed cell of $T$ carrying the $\iso$-label $x$ is an ancestor of both leaves. 
So $\sigma$ is alternating. 
Moreover, $\sigma$ starts with an ascent because the first leaf is the second cell whose $\iso$-label is read when building $\sigma$ 
(the first cell read carrying as above the $\iso$-label of an ancestor of this first leaf). 

\begin{figure}[ht]
\begin{align*}
& \begin{array}{c}\includegraphics[scale=1.0]{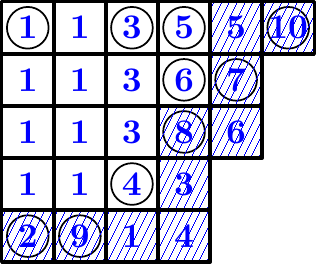}\end{array}
=
\begin{array}{c}\includegraphics[scale=1.0]{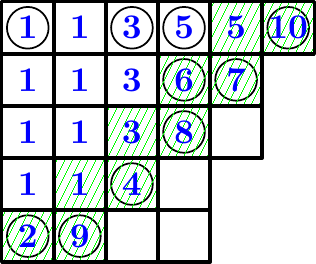}\end{array}
\mapsto 2 \ \mathbf{9} \ 1 \ \mathbf{4} \ 3 \ \mathbf{8} \ 6 \ \mathbf{7} \ 5 \ \mathbf{10} \\
& \begin{array}{c}\includegraphics[scale=1.0]{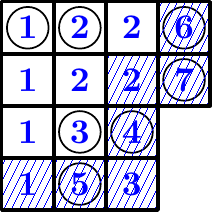}\end{array}
=
\begin{array}{c}\includegraphics[scale=1.0]{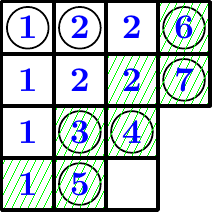}\end{array}
\mapsto
1 \ \mathbf{5} \ 3 \ \mathbf{4} \ 2 \ \mathbf{7} \ 6
\end{align*}
\caption{The two almost complete Baxter TLTs of Figure~\ref{atn} with their $\iso$-labeling, and the corresponding permutations. 
(The bold elements in permutations correspond to the leaves in the TLTs.)}\label{atn-perm}
\end{figure}

\medskip 

Conversely, let $\sigma \in \BB_n$, and let $T = \phiBB^{-1}(\sigma)$. 
$T$ is a Baxter TLT, and we want to prove that $T$ is almost-complete. 
To this end, using Lemma~\ref{lem:escalier}, it is enough to prove that the leafs of $T$ occupy all cells on the main diagonal starting at the second cell of the bottom row of $T$ 
(which we refer to as \emph{staircase shape}). 
To do so, our first step is to show that the points of $T$ with $\iso$-labels $\sigma(2i)$ (for $1 \leq i \leq \lfloor n/2 \rfloor$) are all leaves of $T$. 

For any $1 \leq i \leq \lfloor n/2 \rfloor$, it holds that $\sigma(2i-1) < \sigma(2i) > \sigma(2i+1)$ by the alternating condition 
(the second inequality being undefined in the case $n$ is even and $i=n/2$). 
Denote by $\ell$ the point of $T$ with $\iso$-label $\sigma(2i)$. 
By Proposition~\ref{prop:TTW-ascent+descent} and Corollary~\ref{cor:TTW-ascent+descent}, and according to Definition~\ref{def:ancestors}, 
$\sigma(2i-1) < \sigma(2i) > \sigma(2i+1)$ implies that $\ell$ is both column-extremal and row-extremal 
(except for the case $n$ is even and $i = n/2$, in which case $\ell$ is column-extremal and is the rightmost point in the first row). 
In particular, such points $\ell$ are leaves of $T$. 

Our next step is to show that these leaves form a staircase shape.
We start by noticing that for any point $c$ of $T$ which is both column-extremal and row-extremal, 
all cells of $T$ below and to the right of $c$ must be empty. 
Indeed, assuming that a cell $c'$ below and to the right of $c$ were pointed, 
and considering w.l.o.g. $c'$ topmost and leftmost among such cells, 
$c$, $c'$ and the parent of $c'$ would form a pattern $\Tv$ or $\Th$. 

Therefore, all pointed cells of $T$ with $\iso$-label $\sigma(2i)$ for $1 \leq i \leq \demiinf{n}$ must be in different columns and in different rows (by ``extremality''). 
Moreover, considering these points from left to right gives a sequence of points which have increasing $y$-coordinates (since they have no point below and to their right). 

As a consequence, the number of columns of $T$ is at least $1 + \demiinf{n}$, 
the $1$ accounting for the first column, and $\demiinf{n}$ accounting for the column-extremal points $\sigma(2i)$ for all $i \in [1;\demiinf{n}]$. 
Similarly, the number of rows must be at least $\demiinf{n}+1$ if $n$ is odd (resp. $\demiinf{n}$ if $n$ is even, the point with $\iso$-label $\sigma(n)$ not being row-extremal in this case), 
yielding a total of at least $n + 1$ rows and columns in total (both in the even and in the odd case). 
Since $T \in \TT_n$, we know that $T$ has \emph{exactly} $n+1$ rows and columns in total, 
implying that all the columns (except the leftmost) and all the rows (except the topmost when $n$ is odd) 
are occupied by leaves of $T$, so that the leaves of $T$ must form a staircase shape. 
Lemma~\ref{lem:escalier} then concludes the proof.
\end{proof}

From Proposition~\ref{prop:restriction_twisted_Baxter} and Corollary~\ref{cor:product_of_Catalan}, 
we immediately deduce the enumeration of $\BB_n$. 

\begin{corollary}
For any $n$, there are $C_n^2$ (resp. $C_n\cdot C_{n+1}$) permutations of size $2n$ (resp. $2n+1$)  
which avoid the patterns $3-14-2$ and $3-41-2$ 
and are alternating starting with an ascent. 
\label{cor:enum_alternating_twistedBaxter}
\end{corollary}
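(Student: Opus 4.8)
The plan is to observe that this corollary is a direct combination of two results established earlier, namely Proposition~\ref{prop:restriction_twisted_Baxter} and Corollary~\ref{cor:product_of_Catalan}, so that essentially no new argument is needed; the only point requiring a moment of care is the identification of the set $\BB_n$ with the family of permutations described in the statement.

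First I would recall that, by Observation~\ref{obs:twistedBaxter_patterns}, the set $\B_n$ of inverses of twisted Baxter permutations of size $n$ is exactly the set of permutations of size $n$ that avoid the two patterns $3-14-2$ and $3-41-2$. Consequently, the set $\BB_n$ introduced in Proposition~\ref{prop:restriction_twisted_Baxter} --- defined there as the permutations in $\B_n$ that are alternating and start with an ascent --- coincides precisely with the family of permutations enumerated in the present corollary. This is the one step I would be careful not to skip, since the corollary is phrased purely in terms of pattern avoidance whereas $\BB_n$ is defined through $\B_n$.

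Next, Proposition~\ref{prop:restriction_twisted_Baxter} states that $\phiBB$ is a bijection between $\TT_n$ and $\BB_n$; in particular $|\BB_n| = |\TT_n|$ for every $n$. It then remains to invoke Corollary~\ref{cor:product_of_Catalan}, which gives $|\TT_{2n}| = C_n^2$ and $|\TT_{2n+1}| = C_n \cdot C_{n+1}$, where $C_n$ denotes the $n$-th Catalan number. Chaining these equalities produces the claimed counts for sizes $2n$ and $2n+1$ respectively.

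I do not expect any genuine obstacle here: both ingredients are already proved, and the matching of the pattern-avoidance description with the definition of $\BB_n$ via Observation~\ref{obs:twistedBaxter_patterns} is the only thing to verify explicitly. The real work, which is assumed in this excerpt, was carried out in the proofs of Proposition~\ref{prop:restriction_twisted_Baxter} (showing that $\Phi_\B$ restricts to a bijection between almost complete Baxter TLTs and alternating twisted-Baxter-type permutations starting with an ascent) and of Corollary~\ref{cor:product_of_Catalan} (the product-of-Catalan count of $\TT_n$).
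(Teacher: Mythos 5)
Your proposal is correct and matches the paper's own argument, which deduces the corollary immediately from Proposition~\ref{prop:restriction_twisted_Baxter} and Corollary~\ref{cor:product_of_Catalan}; your extra care in identifying $\BB_n$ with the pattern-avoidance description via Observation~\ref{obs:twistedBaxter_patterns} is a sound (if implicit in the paper) step.
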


\subsection{Enumerative problems opened by the enumeration of alternating twisted Baxter permutations}

Corollary~\ref{cor:enum_alternating_twistedBaxter} provides an enumeration result which we have not been able to find in the literature. 
Our proof is bijective, and obtained as the result of composing two bijections:
one between $\BB_n$ and $\TT_n$ and the other one between $\TT_n$ and $\PP_n$. 
While this shows that TLT can be useful to prove meaningful results on other combinatorial objects, 
this also raises the question of whether $\BB_n$ could be enumerated directly, without appealing to TLTs. 
It is not hard to observe that, for any permutation $\sigma$ of $\BB_n$, its pattern $\sigma_{odd}$ (resp. $\sigma_{even}$) 
corresponding to the odd (resp. even) positions in $[1, n]$ 
is a permutation avoiding $312$ (resp. $231$) as classical patterns. 
And it is well-known that the families $Av(312)$ and $Av(231)$ are enumerated by the Catalan numbers. 
Therefore, considering also the enumeration obtained in Corollary~\ref{cor:enum_alternating_twistedBaxter}, 
it is tempting to conjecture that the map $\sigma \mapsto (\sigma_{odd},\sigma_{even})$ 
is a bijection between $\BB_n$ and $Av_{\lceil n/2\rceil}(312) \times Av_{\lfloor n/2 \rfloor}(231)$.
We leave this question open. 

In addition, we wish to point out that the permutations appearing in Corollary~\ref{cor:enum_alternating_twistedBaxter} 
are enumerated like the alternating Baxter permutations 
(\ie alternating permutations that avoid the patterns $2-41-3$ and $3-14-2$). 
Indeed, in~\cite{CDV}, the authors give a bijective proof that the number of alternating Baxter permutations of size $2n$ (resp. $2n+1$) 
is $C_n^2$ (resp. $C_n\cdot C_{n+1}$). 
Another combinatorial proof using \triples is given in \cite{DG}. 

Another important observation is that, unlike Baxter permutations, 
the permutations that avoid $3-14-2$ and $3-41-2$ are not stable under the reverse symmetry, 
nor under the complement symmetry. 
Therefore, Corollary~\ref{cor:enum_alternating_twistedBaxter} does not solve the enumeration of 
alternating permutations avoiding $3-14-2$ and $3-41-2$ and starting with a \emph{descent}, 
whereas the results of~\cite{CDV,DG} do solve the analogous problem for Baxter permutations. 

In view of these two very similar enumeration results, 
it is natural to look for a (hopefully simple) bijection 
between alternating Baxter permutations and alternating (inverses of) twisted Baxter permutations starting with an ascent. 
We leave this problem open, but point out one possible direction for finding such a bijection. 
In~\cite{ffno}, the authors describe several bijections between families of Baxter objects, 
and in particular a bijection $\Theta_1$ between Baxter permutations and pairs of twin binary trees, 
and a bijection $\Theta_2$ between pairs of twin binary trees and \emph{rectangulations}. 
These objects are exactly our packed floorplans, up to a rotation of $90^{\circ}$. 
The restriction of $\Theta_1$ to alternating Baxter permutations provides a bijection 
with pairs of twin binary trees with additional restrictions. 
A first task would be to examine how these restrictions are translated on the rectangulations (or packed floorplans) {\em via} $\Theta_2$, 
and then on the Baxter TLTs {\em via} $\Phi_\F^{-1}$. 
These restricted Baxter TLTs are equinumerous with the almost complete Baxter TLTs. 
It is actually possible that they are {\em exactly} the almost complete Baxter TLTs. 
If it is not the case, a second task would be to identify a bijection $\Lambda$ between these two families of TLTs. 
A bijection between alternating Baxter permutations and alternating (inverses of) twisted Baxter permutations starting with an ascent 
would then be the composition $\phiBB \circ \Lambda \circ \Phi_\F^{-1} \circ \Theta_2 \circ \Theta_1$. 
Describing this bijection directly would be the third task in this search of a \emph{simple} bijection.

\appendix
\section*{Appendix: Size-preserving bijection between PFPs and mosaic floorplans}

Mosaic floorplans were defined as follows by Hong {\it et.al.}~\cite{hong}. 

In a rectangular partition of a rectangle, a {\em segment} is a straight line, 
not included in the boundary of the partitioned rectangle, 
that is the union of some rectangle sides, 
and is maximal for this property. 
Let us call {\em floorplans} the rectangular partitions of a rectangle such that 
every pair of segments with non-empty intersection 
forms a T-junction (defined on p.~\pageref{pagedef:T-junction}). 
Two floorplans are said $R$-equivalent if one can pass from one to the other by sliding the segments 
to adjust the sizes of the rectangles. 
A {\em mosaic floorplan} is defined as an equivalence class of floorplans under $R$. 

\smallskip

In \cite{AcBaPi}, the authors describe a bijection between mosaic floorplans and Baxter permutations, 
\ie permutations avoiding the patterns $2-41-3$ and $3-14-2$ 
(see the definition of dashed patterns in Section~\ref{sec:bax}). 
This implies that mosaic floorplans are enumerated by Baxter numbers. 
From Theorems~\ref{thm:FT} and~\ref{thm:TTW}, 
and since twisted Baxter permutations are also enumerated by Baxter numbers, 
it follows that PFPs are in size-preserving bijection with mosaic floorplans. 
This correspondence can be made more precise: 

\begin{proposition}
Every mosaic floorplans (\ie every equivalence class of floorplans under $R$) contains exactly one PFP.
\label{prop:PFP_and_mosaicFP}
\end{proposition}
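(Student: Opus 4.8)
The plan is to prove the proposition by showing that the map sending a PFP $F$ to the class $[F]_R$ of all floorplans $R$-equivalent to it is a bijection from $\F_n$ onto the set of mosaic floorplans with $n$ tiles; the word ``exactly'' in the statement is then precisely the conjunction of surjectivity (existence) and injectivity (uniqueness) of this map. The first thing to check is that this map is well-defined, i.e.\ that a PFP is itself a floorplan in the sense recalled above. By Observation~\ref{obs:PFP_obvious}$(\ref{item:PFP_obvious_Tjunction})$, every corner of a tile of a PFP either is a corner of the bounding rectangle or forms a T-junction; hence no interior point of a PFP is a ``$+$''-crossing of two segments (such a crossing would be a tile corner that is neither a rectangle corner nor a T-junction), so any two segments of a PFP with non-empty intersection meet in a single point forming a T-junction. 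Thus a PFP is a floorplan, and since sliding segments neither creates nor destroys a tile, the assignment $F\mapsto[F]_R$ is well-defined and preserves the number of tiles.

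Next I would reduce the statement to injectivity by a counting argument. By Theorem~\ref{thm:FT}, $|\F_n| = |\T_n|$, and by Theorem~\ref{thm:TTW} this equals $|\B_n| = Bax_n$; on the other hand, the bijection of~\cite{AcBaPi} between mosaic floorplans and Baxter permutations shows that the number of mosaic floorplans with $n$ tiles is also $Bax_n$. Since both sets are finite and of equal cardinality, it suffices to prove that $F\mapsto[F]_R$ is injective, that is, that two $R$-equivalent PFPs are equal.

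For injectivity, let $F$ and $F'$ be PFPs with $[F]_R = [F']_R$. The key point is that an $R$-equivalence preserves the \emph{combinatorial type} of a floorplan: its segments, the horizontal/vertical type of each, and the incidence pattern among segments and tiles (which segments bound which tiles, and on which side). In particular $F$ and $F'$ have the same number of horizontal segments and the same number of vertical segments, so by Observation~\ref{obs:PFP_obvious}$(\ref{item:PFP_obvious_oneSegment})$ (and its proof) they have the same size $(k,\ell)$, all their tile corners have integer coordinates, and their horizontal (resp.\ vertical) segments occupy exactly the integer heights $0,1,\dots,k$ (resp.\ abscissas $0,1,\dots,\ell$). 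It then remains to check that the combinatorial type determines \emph{which} segment sits at each height. For this I would consider the relation ``$s$ lies below $s'$ and some tile has its bottom side on $s$ and its top side on $s'$''; a short verification — using that in a PFP every integer line carries a segment (Observation~\ref{obs:PFP_obvious}$(\ref{item:PFP_obvious_oneSegment})$) — shows that this relation connects the segments at any two consecutive heights, so its transitive closure totally orders the horizontal segments of a PFP by increasing height. As this order is visibly preserved by $R$-equivalence, the horizontal segments of $F'$ must sit at the same heights as those of $F$; symmetrically for the vertical segments. Hence $F$ and $F'$ have the same combinatorial type \emph{and} the same position for every segment, so $F = F'$, proving injectivity.

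The step I expect to be the main obstacle is this injectivity argument, and within it the task of making precise which data is preserved under $R$-equivalence and of checking that the ``below'' relation above is a total order on the horizontal segments of a PFP. The argument is designed to sidestep any fine analysis of the equivalence relation by exploiting the rigidity of PFPs coming from Observation~\ref{obs:PFP_obvious}$(\ref{item:PFP_obvious_oneSegment})$: once the combinatorial type is fixed, a PFP realizing it has no metric freedom left, so two realizations lying in the same mosaic class are forced to coincide.
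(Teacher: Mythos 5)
Your reduction via cardinalities is sound and is the mirror image of the paper's: both arguments use that PFPs and mosaic floorplans are equinumerous (both counted by $Bax_n$), but the paper then proves \emph{existence} (every class contains at least one PFP, by exhibiting, for any floorplan containing the pattern $\Fp$, a segment whose slide strictly decreases the number of occurrences of $\Fp$), whereas you aim for \emph{uniqueness}. Your uniqueness argument has a genuine gap: the relation ``some tile has its bottom side on $s$ and its top side on $s'$'' does \emph{not} in general relate the segments at consecutive heights, so its transitive closure need not be a total order. Concretely, consider the PFP of size $(3,2)$ whose four tiles are $[0,1]\times[0,1]$, $[0,1]\times[1,3]$, $[1,2]\times[0,2]$ and $[1,2]\times[2,3]$ (one checks directly that it avoids $\Fp$ and that each integer line carries exactly one segment). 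The strip $[0,2]\times[1,2]$ is covered by the two tiles of height $2$, so no tile joins the internal horizontal segment at height $1$ to the one at height $2$: these two segments are incomparable in your partial order, and the longest-chain heights from the bottom boundary are equal for both, so no variant of the order argument recovers their positions.

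Worse, this example shows the strategy cannot be repaired by a cleverer order: the combinatorial type (segments, their orientations, which segments bound which tiles and on which side) genuinely fails to determine the geometry. Swapping the heights of those two internal horizontal segments yields the floorplan with tiles $[0,1]\times[0,2]$, $[0,1]\times[2,3]$, $[1,2]\times[0,1]$, $[1,2]\times[1,3]$, which has the same combinatorial type and integer coordinates, is $R$-equivalent to the first (the slide passes a T-junction across the central wall, a move that must be permitted for mosaic floorplans to be counted by Baxter numbers), yet contains an occurrence of $\Fp$, namely the pair $\bigl([0,1]\times[2,3],\,[1,2]\times[0,1]\bigr)$ with corners $(1,2)$ and $(1,1)$. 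Hence a given combinatorial type admits several integer realizations, and a correct uniqueness proof must use the avoidance of $\Fp$ beyond what is recorded in Observation~\ref{obs:PFP_obvious} to single out one of them --- which is essentially the statement to be proved. This is precisely the difficulty the paper sidesteps by establishing the existence direction instead and letting the counting argument deliver uniqueness for free.
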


\begin{proof}
Recall that there are as many mosaic floorplans of size $n$ as PFPs of size $n$ (namely, $Bax_n$). 
Thus, it is enough to prove that every mosaic floorplan contains \emph{at least} one PFP. 
To prove this statement, we show that every floorplan containing some patterns $\Fp$ 
is $R$-equivalent to a floorplan containing strictly fewer such patterns. 

Let $F$ be a floorplan containing $\Fp$. Consider two tiles $t_1$ and $t_2$ 
forming a pattern $\Fp$, 
with $t_1$ located Northwest from $t_2$, 
and such that the distance between the bottom rightmost corner of $t_1$ 
and the top leftmost corner of $t_2$ is minimal among all such patterns in $F$. 
Necessarily, the two segments meeting at the bottom rightmost corner of $t_1$ form a T-junction, of type $\TBas$ or $\TDroit$. 
We assume that it is of type $\TBas$. The case $\TDroit$ is easily deduced by symmetry 
(applying reflection along a Northwest-Southeast axis). 

To obtain a floorplan $F'$ $R$-equivalent to $F$ with fewer patterns $\Fp$, 
we slide a segment of $F$, denoted $E$, and defined as follows (see~Figure~\ref{fig:tout_PFP}$(a)$). 

Let $t_3$ be the tile located immediately to the right of $t_1$. 
By minimality of $(t_1,t_2)$, the $x$-coordinate of right side of $t_3$ 
is strictly larger than the $x$-coordinate (denoted $x_c$) of the top leftmost corner $c$ of $t_2$. 
Now, consider the stack of tiles containing $t_3$ and all the tiles stacked on 
$t_3$ such that 
\begin{itemize}
\item the $x$-coordinates of the left sides of the tiles are weakly increasing from bottom to top and are all smaller than or equal to $x_c$;
\item the $x$-coordinates of the right sides of the tiles are strictly larger than $x_c$. 
\end{itemize}
We consider the tiles of the stack whose left sides have maximal $x$-coordinate (denoted $x_E$),
and we define $E$ as the union of all these left sides. 
We claim that $E$ is a segment of $F$, 
\ie that the lower (resp. upper) extremity of $E$ is a T-junction of the form $\TBas$ (resp. $\THaut$). 
This claim is easily proved by contradiction, 
using the above definition of the stack of tiles, its maximality, 
and the fact that the T-junction at the bottom rightmost corner of $t_1$ 
(which is also the bottom leftmost corner of $t_3$) 
is of type $\TBas$. 
It follows that $E$ is also the union of the right sides of some (one or several) tiles, 
and for each of these tiles $t$, $(t,t_2)$ is a pattern $\Fp$. 
The segment $E$ may be slided to the right until $x_E > x_c$, 
to get a floorplan $F'$ which is $R$-equivalent to $F$ 
and contains strictly fewer patterns $\Fp$. 
\end{proof}

Figure~\ref{fig:tout_PFP}$(b)$ shows a floorplan and a PFP that are $R$-equivalent. 

\begin{figure}[ht]
 \centering
 \subfigure[How to pack a floorplan.] 
   { 
\centering{
\includegraphics[scale=0.85]{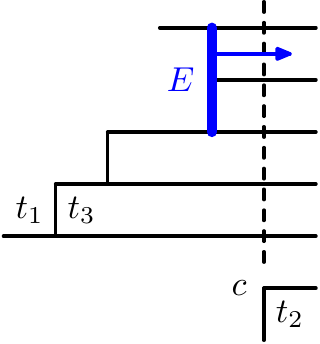}
}
   } \hspace*{1cm}
 \subfigure[A floorplan (left) and its $R$-equivalent PFP (right).] 
{
\centering{
\includegraphics[scale=0.65]{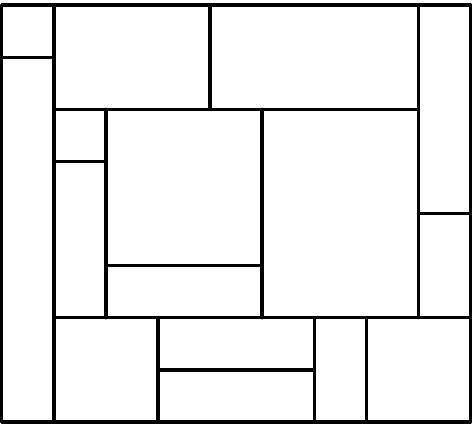}
\hspace*{0.5cm} 
\includegraphics[scale=0.65]{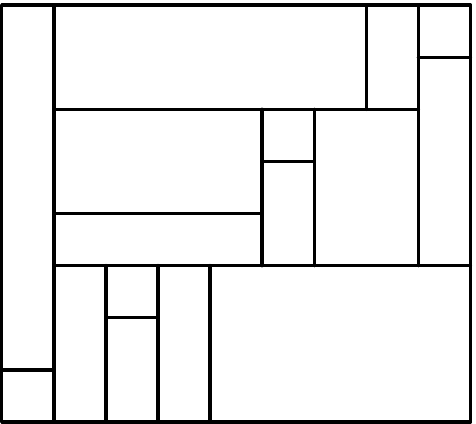}
}
}
 \caption{Every mosaic floorplan contains exactly one PFP.} \label{fig:tout_PFP}
 \end{figure}

\section*{Acknowledgments}
This research has received the support of the ANR, through the ANR -- PSYCO project (ANR-11-JS02-001).


\end{document}